\newtheorem{theorem}{Theorem}[section]
\newtheorem{corollary}{Corollary}[theorem]
\newtheorem{lemma}[theorem]{Lemma}
\newtheorem{definition}{Definition}
\newtheorem*{assumption}{Assumption}
\newenvironment{remark}[1][Remark]{\begin{trivlist}
\item[\hskip \labelsep {\bfseries #1}]}{\hfill\qed\end{trivlist}}
\def\XXint#1#2#3{{\setbox0=\hbox{$#1{#2#3}{\int}$ }
\vcenter{\hbox{$#2#3$ }}\kern-.6\wd0}}
\newcommand{\indep}{\perp \!\!\! \perp}
\DeclareMathOperator*{\essinf}{ess\,inf}
\DeclareMathOperator*{\esssup}{ess\,sup}
\DeclareMathOperator*{\esslim}{ess\,lim}
\DeclareMathOperator*{\esslimsup}{ess\,limsup}
\DeclareMathOperator*{\essliminf}{ess\,liminf}
\newcommand{\tr}[0]{\text{tr}}
\title{A Viscosity Solution Theory of Stochastic Hamilton-Jacobi-Bellman equations in the Wasserstein Space}
\author{Hang Cheung \\ \href{}{hang.cheung@ucalgary.ca}
\and Jinniao Qiu \\\href{}{jinniao.qiu@ucalgary.ca}\and Alexandru Badescu\\\href{}{abadescu@ucalgary.ca}}
\date{}
\begin{document}
\maketitle
\begin{abstract}
    This paper is devoted to a viscosity solution theory of the stochastic Hamilton-Jacobi-Bellman equation in the Wasserstein spaces for the mean-field type control problem which allows for random coefficients and may thus be non-Markovian. 
The value function of the control problem is proven to be the unique viscosity solution. The major challenge lies in the mixture of the lack of local compactness of the Wasserstein spaces and the non-Markovian setting with random coefficients and  various techniques are used, including It\^o processes parameterized by random measures,  the conditional law invariance of the value function,  a novel tailor-made compact subset of measure-valued processes,  finite dimensional approximations via stochastic n-player differential games with common noises, and so on.
\end{abstract}
\section{Introduction}

In the last decade, the theory of mean field type (McKean-Vlasov type) control problems and related mean field games has experienced a significant surge in interest. Unlike standard control theory and stochastic differential games, this captivating research area introduces a novel perspective, allowing both the state process and cost functional to incorporate their own probability measures as arguments. This unique feature enables the modeling of large-scale systems with agents interacting symmetrically, leading to rich and complex dynamics.\\
\hfill\\
The origins of mean field control theory and mean field games can be traced back independently to Caines, Huang and Malhame \cite{cis/1183728987} and Lasry
and Lions \cite{lasry_mean_2007}. As the importance of mean field phenomena continues to be recognized, several comprehensive monographs on these topics have emerged, providing in-depth insights and a broader perspective; see e.g. Carmona and Delaure \cite{carmona_probabilistic_2018_1,carmona_probabilistic_2018}, Bensoussan, Frehse and Yam \cite{bensoussan_mean_2013}, and the references therein.\\
\hfill\\
In the realm of control theory and differential game literature, two principal approaches stand out: the Pontryagin Maximum Principle and the Hamilton-Jacobi-Bellman equation. The Maximum Principle aims to provide necessary conditions characterizing the optimal control, and these necessary conditions often resort to the solvability of a system of mean field forward-backward stochastic differential equations. Notable contributions on this topic include Andersson and Djehiche \cite{andersson_maximum_2011}, Li \cite{li_stochastic_2012}, Buckdahn, Djehiche and Li \cite{buckdahn_general_2011}, Carmona and Delaure \cite{Delaure_Carmona_ECP,Carmona_Delaure_AOP}, Bensoussan and Yam \cite{bensoussan_alain_control_2019}, Bensoussan, Graber and Yam \cite{bensoussan_control_2020}, Bensoussan, Tai and Yam \cite{bensoussan_mean_2023}, Bensoussan, Wong, Yam and Yuan \cite{bensoussan_theory_2023}, Gangbo, Mészáros, Mou and Zhang \cite{Gangbo_Mou_Zhang_AOP}, Mou and Zhang \cite{mou_wellposedness_2019}, Chassagneux, Crisan and Delarue \cite{Dan_Delaure_book}, among others.\\\hfill\\
In this paper, our focus lies on the latter approach, the main goal being to develop a theory of viscosity solution for the Stochastic Hamilton-Jacobi-Bellman (SHJB) equations associated to the mean field type control problems with random coefficients. Specifically, given  a probability space $(\Omega,\mathcal{F},\mathbb{P})$ , we consider the following optimal control problem
\begin{align}
\label{value_intro}
    \inf_{\alpha\in\mathcal{A}} \mathbb{E}\Bigg[\int_0^T f_s(X_s,\mathbb{P}_{X_s}^{W^0},\alpha_s)+g(X_T,\mathbb{P}_{X_T}^{W^0})\Bigg]
\end{align}
subject to
\begin{eqnarray}
\label{basic_dynamics_intro}
\begin{cases}
    dX_s = & b_s(X_s,\mathbb{P}_{X_s}^{W^0},\alpha_s)ds+\sigma_s(X_s,\mathbb{P}_{X_s}^{W^0},\alpha_s)dW_s 
    + \sigma^0_s(X_s,\mathbb{P}_{X_s}^{W^0},\alpha_s)dW^0_s,\quad s\in[0,T],\\
    X_0 \,\,\,= & \xi.
\end{cases}
\end{eqnarray}
Here, $T\in(0,\infty)$ is a fixed deterministic terminal time, $\xi\in L^2((\Omega,\mathcal{F},\mathbb{P});\mathbb{R}^d)$ such that $\mathcal{L}(\xi) = \mu$, where $\mathcal{L}(\xi)$ denotes the law of $\xi$, and $\mu$ is some probability measure with finite second moment. $W$, $W^0$ are two independent Brownian motions on $(\Omega,\mathcal{F},\mathbb{P})$ representing respectively idiosyncratic noise and common noise. Our filtration is taken to be $\{\mathcal{F}_s\}_{s\geq 0} := (\sigma(W_s)\vee \sigma(W^0_s)\vee \mathcal{G})_{s\geq 0}$, where $\mathcal{G}$ is a sub-$\sigma$-algebra of $\mathcal{F}$ such that all the above $\xi$ is $\mathcal{G}$-measurable, and it is independent of $(\sigma(W_s)\vee\sigma(W_s^0))_{s\geq 0}$. Let $A \subset \mathbb{R}^d$ be a nonempty compact set and $\mathcal{A}$ be the set of all $A$-valued and $(\mathcal{F}_t)_{t\geq 0}$-adapted processes. The process $(X_s)_{s\in [0,T]}$ is the state process and $\mathbb{P}_{X_t}^{W^0}$ denotes the conditional distribution of $X_t$ given $W^0$ (or equivalently, given $\mathcal{F}_t^0$, $\mathcal{F}_t^0$ being the $\sigma$-algebra generated by $W^0$). 
We write $X_t^{r,\xi,\alpha}$ for $0\leq r \leq t \leq T$ to indicate the dependence of the state process on the control $\alpha\in\mathcal{A}$, the initial time $r$ and the initial state $\xi\in L^2((\Omega,\mathcal{F},\mathbb{P});\mathbb{R}^d)$.\\
\hfill\\
Herein, we consider the non-Markovian case where the coefficient $b$, $\sigma$, $\sigma^0$, $f$, and $g$ depend not only on time, space, measure and control, but also explicitly on $\omega^0 \in \Omega^0$. Here, $\Omega^0$ is a subset of $\Omega$ supporting only the common noise. For $t\in[0,T]$, we also let $\{\mathcal{F}^t_s\}_{s\geq 0}:= \{\sigma(W_{s\vee t}-W_t)\vee \sigma(W^0_s)\vee \mathcal{G}\}_{s\geq 0}$, and $\mathcal{A}_t$ be the set of all $A$-valued and $\mathcal{F}^t$-adapted process. The dynamic cost functional is defined by
\begin{align}
\label{functional_intro}
    J(t,\xi,\alpha):=& \mathbb{E}\Bigg[\int_t^T f_s(X_s^{t,\xi,\alpha},\mathbb{P}_{X_s^{t,\xi,\alpha}}^{W^0},\alpha_s)+g(X_T^{t,\xi,\alpha},\mathbb{P}_{X_T^{t,\xi,\alpha}}^{W^0})\Big|\mathcal{F}^0_t\Bigg],\quad t\in[0,T],
\end{align}
and the value function is given by 
\begin{align}
\label{value_intro_2}
    v(t,\xi) = \essinf_{\alpha\in\mathcal{A}_t} J(t,\xi,\alpha),\quad t\in[0,T]. 
\end{align}
Due to the randomness of the coefficients, the value functions $v$ is a function of time $t$, random variables $\xi$ and $\omega^0\in\Omega^0$. However, from the structure of our mean field dynamics from~\eqref{basic_dynamics_intro}-\eqref{value_intro_2}, and our richness of the control space, it can be proven that $v$ is conditional law invariant, as opposed to the law invariance in Cosso, Gozzi, Kharroubi, Pham, and Rosestolato  \cite{cosso_optimal_2022}. In fact, $v$ is shown to be the unique viscosity solution (in a suitable sense) to the following Stochastic Hamilton-Jacobi-Bellman (SHJB) equation on the Wasserstein space:
\begin{align}
\label{intro_HJB}
    \begin{cases}
        -\Game_t u(t,\mu)-\mathbb{H}(t,\mu,\partial_\mu u,\partial_x\partial_\mu u,\partial_\mu^2 u,\partial_\mu\Game_w u) = 0,\quad (t,\mu)\in[0,T]\times\mathcal{P}_2(\mathbb{R}^d),\\
        u(T,\mu) = g(\mu),\quad \mu\in \mathcal{P}_2(\mathbb{R}^d),
    \end{cases}
\end{align}
with
\begin{align*}
    \mathbb{H}(t,\mu,P,Q,R,S):=& \int_{\mathbb{R}^d}\essinf_{\alpha,\alpha'\in A}\Bigg\{f_t(x,\mu,\alpha)+\Big\langle b_t(x,\mu,\alpha),P\Big\rangle+\frac{1}{2}\tr\Big\{ (\sigma_t\sigma_t^{\intercal}+\sigma_t^0\sigma_t^{0;\intercal})(x,\mu,\alpha)Q\Big\}\\
    &+\displaystyle\int_{\mathbb{R}^d}\frac{1}{2}\tr\Big\{\sigma_t^0(x,\mu,\alpha)\sigma_t^{0;\intercal}(x',\mu,\alpha')R\Big\}\mu(dx')+\tr\Big\{\sigma_t^{0;\intercal}(x,\mu,\alpha)S\Big\}\Bigg\}\mu(dx),
\end{align*}
and $\mathbb{H}:\Omega^0\times[0,T]\times\mathcal{P}_2(\mathbb{R}^d)\times\mathcal{L}^2(L^2(\mathbb{R}^d,\mathcal{B}(\mathbb{R}^d),\mu;\mathbb{R}^d))\times\mathcal{L}^2(L^2(\mathbb{R}^d,\mathcal{B}(\mathbb{R}^d),\mu;\mathbb{R}^{d\times d})\times\mathcal{L}^2(L^2(\mathbb{R}^d\times\mathbb{R}^d,\mathcal{B}(\mathbb{R}^d\times\mathbb{R}^d),\mu\otimes\mu;\mathbb{R}^{d\times d}))\times\mathcal{L}^2(L^2(\mathbb{R}^d,\mathcal{B}(\mathbb{R}^d),\mu;\mathbb{R}^{d\times d})) \to\mathbb{R}$. The aforementioned $\mathcal{P}_2(\mathbb{R}^d)$ is the Wasserstein space, namely the space of probability measures with finite second moments equipped with the $2$-Wasserstein distance (see Definition \ref{Wasserstein_space}); $\partial_\mu$, $\partial_\mu^2$ are correspondingly Lions' first and second derivative (see Definition \ref{Lions' derivative}). Also, the unknown adapted random field $u$ in (\ref{intro_HJB}) is confined to the following form:
\begin{align}
\label{u_doob_meyer}
    u(t,\mu) = u(T,\mu) - \int_t^T\Game_s u(s,\mu)ds - \int_t^T\Game_w u(s,\mu)dW_s^0,\quad\forall (t,\mu)\in[0,T]\times\mathcal{P}_2(\mathbb{R}^d).
\end{align}
The Doob-Meyer decomposition theorem indicates the uniqueness of the pair $(\Game_t u,\Game_w u)$ and thus the linear operators $\Game_t$ and $\Game_w$ may be well defined in certain spaces. The pair $(\Game_t u,\Game_w u)$ may also be defined as two differential operators; see \cite[Theorem 4.3]{Leao_Ohashi} for instance. By comparing $(\ref{intro_HJB})$ and $(\ref{u_doob_meyer})$, we may rewrite the SHJB $(\ref{intro_HJB})$ formally into
\begin{align}
\begin{cases}
    -d u(t,\mu)=\,\mathbb{H}\left(t,\mu,\partial_\mu u,\partial_x\partial_\mu u,\partial_\mu^2 u, \partial_x\psi\right) -\psi(t,\mu)dW^0_t,\quad (t,\mu)\in[0,T]\times\mathcal{P}_2(\mathbb{R}^d),\\
        u(T,\mu)= \, g(\mu),\quad \mu\in \mathcal{P}_2(\mathbb{R}^d),
\end{cases}
\end{align}
where the pair $(u,\psi) = (u,\Game_w u)$ is unknown.\\\hfill\\
Several works on viscosity solutions have already been conducted in the case when all the coefficients are deterministic.    Pham and Wei \cite{Pham_Wei_Dynamic_Programming} established a dynamic programming principle for the case where the control is simply $(\mathcal{F}_t^0)_{t\geq 0}$-adapted. Under this assumption, the law invariance is an immediate consequence. The HJB equation derived from their work is lifted to the Hilbert space of random variables, and the existing viscosity theory on Hilbert spaces is then applied. Utilizing the mature viscosity theory on the Hilbert space, as a corollary, one can obtain both the uniqueness and existence of the viscosity solution. A similar Hilbert space approach can also be found in Pham and Wei \cite{pham_bellman_2018}. 
The reason they lifted the problem to the Hilbert space is due to the lack of local compactness in the infinite-dimensional Wasserstein space. To avoid this lifting, Wu and Zhang in \cite{mean_field_path} introduced a notion of viscosity solution that differs from the traditional Crandall-Lions definition. Instead, they required the maximum/minimum condition to hold on some compact subset of the Wasserstein space, rather than just in a local neighborhood. By doing so, they successfully developed a comprehensive theory of viscosity solutions. To maintain consistency with the original Crandall-Lions definition and overcome the local compactness issue arising from the Wasserstein space, Cosso, Gozzi, Kharroubi, Pham, and Rosestolato in \cite{cosso_master_2022} built their strategy upon the Borwein-Preiss generalization of Ekeland’s variational principle on the Wasserstein space. By incorporating finite-dimensional approximations of the value function derived from the related cooperative $n$-player game, they successfully extended Crandall-Lions' definition of viscosity solution to the Wasserstein space while assuming boundedness and Lipschitz continuity of the coefficients. A large body of literature continues to explore this topic. For instance, Burzoni, Ignazio, Reppen, and Soner in \cite{soner_jump} developed a viscosity solution theory for processes with jumps, while Soner and Yan in \cite{soner_viscosity_2022} introduced the problem formulated under the intrinsic linear derivative on a torus.\\\hfill\\
For the case involving random coefficients without the measure term, it was initially introduced by Peng in \cite{peng_shjb}. In his work, Peng established the existence and uniqueness of weak solutions in Sobolev spaces for the superparabolic semilinear stochastic HJB equations, while acknowledging the well-posedness of general cases as an open problem. Our previous works Qiu \cite{qiu_stochastic_hjb,qiu_controlled_2022}, Qiu and Wei \cite{qiu_uniqueness_2019}, Qiu and Zhang \cite{qiu_stochastic_2023}, Qiu and Yang \cite{qiu_optimal_2023} addressed this problem, and extended it to several other cases, including the basic case on $\mathbb{R}^d$, the case of stochastic differential game on $\mathbb{R}^d$ and the case of controlled ordinary differential equations with random path-dependent coefficients in finite and infinite dimension.\\\hfill\\
To the best of our knowledge, there is currently no existing literature on the viscosity solution theory of SHJB equations on the Wasserstein space when the coefficients are random. Thus, our primary objective is to fill this important void in the field by proposing a  viscosity theory on the Wasserstein space for SHJB equations, for which we need to deal with not just the lack of local compactness of the underlying spaces but also the non-Markovian setting with random coefficients. \\\hfill\\
First, the lack of local compactness of the Wasserstein space poses serious technical difficulties when establishing the comparison principle. To address this issue, we draw inspiration from Wu and Zhang \cite{mean_field_path} and define our compact subset $\mathcal{P}_L^\tau(t_0,\rho_0)$ (see Appendix \ref{compact_subset}). We require our maximum/minimum conditions to hold in this compact set, rather than just in a local neighborhood. Unlike the approach in   \cite{mean_field_path}, our set is a compact subset of Wasserstein-space-valued random variables representing the conditional distribution of the state process with drift and diffusion bounded by a constant $L$. To prove our compactness result, we supplement it with the relatively new result from Wang, Zhu, and Kloeden in \cite{wang2019compactness}, instead of using the classic Prokhorov's theorem or the result in Zheng \cite{zheng_tightness_nodate}. Moreover, since our compact subset involves random variables of laws, this necessitates rigorous methods to ensure the validity and reliability of our results. A crucial note to mention is that the approach of Borwein-Preiss variational principle in \cite{cosso_master_2022} cannot be readily generalized to the case at hand due to the presence of the measurability issue. This aspect will be a focus of our future research.\\
\hfill\\
The second major challenge lies in defining the appropriate notion of a viscosity solution. The coefficients involved are merely measurable with respect to $\omega^0 \in \Omega^0$, lacking any equipped topology. Therefore, defining the viscosity solution pointwise with respect to $\omega^0 \in \Omega^0$ is not suitable. Instead, we choose the class of test functions to be functions in the form of random fields (\ref{u_doob_meyer}), possessing sufficient regularity regarding the measure variable. This choice allows us to apply the It\^o-Wentzell formula from \cite{dos2022ito}. Furthermore, in establishing uniqueness, we encounter a lack of regularity results regarding the measure variable in the literature of second-order parabolic equations on Wasserstein space. To overcome this, we employ an $n$-player games approximation of the value function, akin to \cite{cosso_master_2022}. However, we face a significant difference: our convergence scheme involves a game system with common noise, necessitating the use of the limiting theory from \cite{limit_theory_tan}, in contrast to the result in \cite{daniel_limit} used in \cite{cosso_master_2022}. Moreover, the  aforementioned convergence result requires our control set to be $\mathcal{F}^t$-adapted, not just $\mathcal{F}^0$-adapted as in \cite{Pham_Wei_Dynamic_Programming}, adding additional technical difficulties in proving the conditional law invariance of the value function.  
 \\
\hfill\\
This paper is structured as follows: In Section 2, we introduce some notations, present the standing assumption on coefficients, formulate the problem, sketch the conditional law invariance interpretation of the value function and prove the dynamic programming. Moving to Section 3, we first recall the readers the definition of the Lions' derivative and an It\^o-Wentzell formula that suits our case by \cite{dos2022ito}. After that, we define the viscosity solution and verify the value function as a viscosity solution. A comparison theorem is then provided, and we establish the uniqueness of the viscosity solution, being the value function of the control problem. In the Appendix, we provide supplementary details of the measurable selection theorem and offer a proof of the law invariance property of the value function. Additionally, we construct the compact subset $\mathcal{P}_L^\tau(t_0,\rho_0)$. At the end, we present the detailed finite-dimensional approximation utilized in our proof of uniqueness.
\section{Dynamic Programming Principle of the McKean Vlasov Control Problem}
\subsection{Setup of the Probability Space}
\label{setup}
Fix  a probability space $(\Omega, \mathcal{F},\mathbb{P})$ of the form $(\Omega^0\times\Omega^1,\mathcal{F}^0\otimes \mathcal{F}^1,\mathbb{P}^0\otimes \mathbb{P}^1)$. The random coefficients live on $(\Omega^0,\mathcal{F}^0,\mathbb{P}^0)$, which supports an $m$-dimensional Brownian motion $W^0$.\\
\hfill\\
For $(\Omega^1,\mathcal{F}^1,\mathbb{P}^1)$, it is of the form $(\tilde{\Omega}^1\times \hat{\Omega}^1,\mathcal{G}\otimes\hat{\mathcal{F}}^1,\tilde{\mathbb{P}}^1\otimes\hat{\mathbb{P}}^1)$. On $(\hat{\Omega}^1,\hat{\mathcal{F}}^1,\hat{\mathbb{P}}^1)$
there lives an $m$-dimensional Brownian motion $W$, we regard this $W$ as the idiosyncratic noise. $(\tilde{\Omega}^1,\mathcal{G},\tilde{\mathbb{P}}^1)$ is where the initial random variables lives. We assume that $(\tilde{\Omega}^1,\mathcal{G},\tilde{\mathbb{P}}^1)$ is rich enough to support all probability laws in $\mathbb{R}^d$, i.e., for any probability law $\mu$ in $\mathbb{R}^d$, there exists $X\in \tilde{\Omega}^1$ such that $\mathbb{P}_X = \mu$.\\
\hfill\\
We write $\omega\in\Omega$ as $\omega = (\omega^0,\omega^1)$, and we regard the Brownian motions $W(\omega) = W(\omega^1)$, $W^0(\omega) = W^0(\omega^0)$. We denote by $\mathbb{E}^0$ (resp., $\mathbb{E}^1$) the expectation under $\mathbb{P}^0$ (resp., $\mathbb{P}^1$), by $\mathbb{F} = (\mathcal{F}_s)_{s\geq 0}:=(\sigma(W^0_s)\vee\sigma(W_s)\vee\mathcal{G})_{s\geq 0}$, $\mathbb{F}^t = (\mathcal{F}_s^t)_{s\geq 0}:=(\sigma(W_s^0)\vee\sigma(W_{s\vee t}-W_t)\vee\mathcal{G})_{s\geq 0}$ and $\mathbb{F}^{W^0} = (\mathcal{F}_s^{W^0})_{s\geq 0}:=(\sigma(W_s^0))_{s\geq 0}$. Without loss of generality, we assume they are $\mathbb{P}$-complete.
\subsection{Function Spaces and Notations}
We denote by $\mathscr{P}$ the $\sigma$-algebra of the predictable sets on $\Omega^0\times[0,T]$ associated with $\{\mathcal{F}_t^0\}_{t\geq 0}$.
\begin{definition}
    \label{Wasserstein_space}
    We introduce over $\mathbb{R}^d$ the space of probability measures $\mathcal{P}(\mathbb{R}^d)$ and its subset $\mathcal{P}_p(\mathbb{R}^d)$ of those with finite $p$-th moment, $p\geq 1$. The space $\mathcal{P}_p(\mathbb{R}^d)$ is equipped with the $p$-Wasserstein distance
\begin{align}\label{Wasserstein}
\mathcal{W}_p(\mu, \nu)=\inf _{\pi \in \Pi(\mu, \nu)}\Bigg(\int_{\mathbb{R}^d \times \mathbb{R}^d}|x-y|^p \pi(\mathrm{d} x, \mathrm{~d} y)\Bigg)^{\frac{1}{p}}, \quad \mu, \nu \in \mathcal{P}_p(\mathbb{R}^d),
\end{align}
where $\Pi(\mu, \nu)$ is the set of probability measures on $\mathbb{R}^d\times\mathbb{R}^d$ such that for all $\pi \in \Pi(\mu,v)$, $\pi(\mathbb{R}^d\times\cdot) = \mu$ and $\pi(\cdot\times\mathbb{R}^d) = \nu$. We call $(\mathcal{P}_p(\mathbb{R}^d),\mathcal{W}_p)$ the $p$-th Wasserstein space over $\mathbb{R}^d$, and it is a Polish space. Note that we shall be working on $\mathcal{P}_2(\mathbb{R}^d)$ most of the time. Finally, we denote $\operatorname{Supp}(\mu)$ the support of $\mu \in \mathcal{P}(\mathbb{R}^d)$.
\end{definition}
\begin{definition}
(Function Spaces). Let $\mathbb{B}$ be a Banach space equipped with norm $\|\cdot\|_{\mathbb{B}}$. For each $t\in[0,T]$, denote by $L^0((\Omega,\mathcal{F}_t,\mathbb{P});\mathbb{B})$ the space of $\mathbb{B}$-valued $\mathcal{F}_t$-measurable random variables. For $p\geq 1$, denote $L^p((\Omega,\mathcal{F}_t,\mathbb{P});\mathbb{B})$ the Banach space of $p$-integrable $\mathbb{B}$-valued $\mathcal{F}_t$-measurable random variables equipped with the norm:
\begin{align*}
    \|X\|_p:=(\mathbb{E}\|X\|_{\mathbb{B}}^p)^{1/p}.
\end{align*}
For $p\in[1,\infty]$, $\mathcal{S}^p(\mathbb{B})$ is the set of all the $\mathbb{B}$-valued, $\mathscr{P}$-measurable continuous process $\{\mathcal{X}_t\}_{t\in[0,T]}$ such that
\begin{align*}
    \|\mathcal{X}\|_{\mathcal{S}^p(\mathbb{B})}:=\Bigg\|\sup_{t\in[0,T]}\|\mathcal{X}_t\|_{\mathbb{B}}\Bigg\|_{L^p(\Omega^0,\mathcal{F}^0,\mathbb{P}^0)}<\infty.
\end{align*}
Denote by $\mathcal{L}^p(\mathbb{B})$ the totality of all the $\mathbb{B}$-valued, $\mathscr{P}$-measurable process $\{\mathcal{X}_t\}_{t\in[0,T]}$ such that
\begin{align*}
    \|\mathcal{X}\|_{\mathcal{L}^p(\mathbb{B})}:=\Bigg\|\Bigg(\int_0^T\|\mathcal{X}_t\|^p_{\mathbb{B}}dt\Bigg)^{1/p}\Bigg\|_{L^p(\Omega^0,\mathcal{F}^0,\mathbb{P}^0)}<\infty.
\end{align*}
Obviously $(\mathcal{S}^p(\mathbb{B}),\|\cdot\|_{\mathcal{S}^p(\mathbb{B})})$ and $(\mathcal{L}^p(\mathbb{B}),\|\cdot\|_{\mathcal{L}^p(\mathbb{B})})$ are Banach spaces. If a function of such classes admits a version with better properties, we always denote this version by itself.\\\hfill\\
If $\mathbb{B}$ is not separable, for $p\in[1,\infty]$, we define the spaces, 
\begin{align*}
    \mathcal{B}\mathcal{S}^p(\mathbb{B}):=\Big\{(\mathcal{X}_t)_{t\in [0,T]}\in\mathcal{S}^p(\mathbb{B})\,|\,\mathcal{X}_t:\Omega\to\mathbb{B}\text{ is Bochner/strong measurable for all }t\in[0,T]  \Big\},
\end{align*}
and similarly,
\begin{align*}
    \mathcal{B}\mathcal{L}^p(\mathbb{B}):=\Big\{(\mathcal{X}_t)_{t\in [0,T]}\in\mathcal{L}^p(\mathbb{B})\,|\,\mathcal{X}_t:\Omega\to\mathbb{B}\text{ is Bochner/strong measurable for all }t\in[0,T]  \Big\}.
\end{align*}
\end{definition}
\begin{definition}
For any $\mu\in\mathcal{P}_2(\mathbb{R}^d)$, $\varphi\in L^2((\mathbb{R}^d,\mathcal{B}(\mathbb{R}^d),\mu);\mathbb{R}^d)$, $\mu\otimes\mu$, $\psi \in L^2(\mathbb{R}^d\times\mathbb{R}^d,\mathcal{B}(\mathbb{R}^d\times\mathbb{R}^d),\mu\otimes\mu)$, we define the notation,
\begin{align*}
    \mu(\varphi) := \int_{\mathbb{R}^d}\varphi(x)\mu(dx),\quad\mu\otimes\mu(\psi):=\int_{\mathbb{R}^d\times\mathbb{R}^d}\psi(x,x')\mu(dx)\mu(dx'). 
\end{align*}
\end{definition}

\subsection{The Set of Admissible Controls}
We are given a compact subset $A$ of the Euclidean space equipped with the distance $d_A$. Let $t>0$, denote $\mathcal{A}$ (resp. $\mathcal{A}_t$) the set of $\mathbb{F}$-progressive process (resp. $\mathbb{F}^{t}$-progressively measurable process) $\alpha$ valued in $A$. Note that $\mathcal{A}$ (resp. $\mathcal{A}_t$) is a separable metric space endowed with the Krylov distance $\Delta(\alpha,\beta) = \mathbb{E}^0[\int_0^T d_A(\alpha_r,\beta_r)dr]$ 
 (resp. $\Delta(\alpha,\beta) = \mathbb{E}^0[\int_t^T d_A(\alpha_r,\beta_r)dr]$). Denote by $\mathcal{B}_{\mathcal{A}}$ (resp. $\mathcal{B}_{\mathcal{A}_t}$) the Borel $\sigma$-algebra of $\mathcal{A}$ (resp. $\mathcal{A}_t$).\\
\hfill\\
We assume that $(\Omega^0, W^0, \mathbb{P}^0)$ is the canonical space, i.e., $\Omega^0 = C(\mathbb{R}_{+},\mathbb{R}^m)$. We introduce the class of shifted control processes constructed by concatenation of paths: for $\alpha \in \mathcal{A}$ (resp. $\mathcal{A}_t$), $(r,\hat{\omega}^{0}) \in [0,T]\times\Omega^0$, set
\begin{align*}
    \alpha_s^{r,\hat{\omega}^0}(\omega^0):= \alpha_s(\hat{\omega}^0\otimes_r\omega^0),\,\,\,(s,\omega^0)\in [0,T]\times\Omega^0, 
\end{align*}
where
\begin{align*}
    \hat{\omega}^0\otimes_r\omega^0(s) := \hat{w}^0(s)1_{s<r}+\big(\hat{w}^0(r)+\omega^0(s)-\omega^0(r)\big)1_{s\geq r}.
\end{align*}
Denote $\mathcal{T}_{t,T}^0$ the set of $\mathcal{F}^0$-stopping times valued in $[t,T]$. Let $\alpha \in \mathcal{A}$, for $\theta \in \mathcal{T}_{t,T}^0$, denote $\alpha^\theta$ the map
\begin{align*}
    \alpha^\theta:(\Omega^0,\mathcal{F}_\theta^t)\to(\mathcal{A},\mathcal{B}_\mathcal{A}),\\
    \omega^0\mapsto \alpha^{\theta(\omega^0),\omega^0},
\end{align*}
the case for $\alpha\in\mathcal{A}_t$ is defined analogously.
\subsection{Assumptions}
The following assumption will be used throughout our work.
\begin{assumption}[$\mathcal{A}$1]
\label{assumption_A1}
    For the coefficients $g$ and $h = f$, $b$, $\sigma$, $\sigma^{0}$, we assume the following:
    \begin{enumerate}
        \item $g \in L^\infty\Big((\Omega^0,\mathcal{F}_T^0,\mathbb{P}^0);C(\mathbb{R}^d\times\mathcal{P}_2(\mathbb{R}^d))\Big)$. 
        \item For all $a \in A$, $h(\cdot,\cdot,\cdot,a)$ is $\left(\mathcal{F}_t^0\right)_{t\geq 0}$-adapted.
        \item For almost all $(\omega^0,t)\in\Omega^0\times[0,T]$, $h(t,\cdot,\cdot,\cdot)$ is continuous on $\mathbb{R}^d\times\mathcal{P}_2(\mathbb{R}^d)\times A$.
        \item There exists $K>0$ such that for all $x$, $x'\in\mathbb{R}^d$, $t\in [0,T]$ and $\mu,\mu'\in\mathcal{P}_2(\mathbb{R}^d)$, there holds
        \begin{align*}
        \esssup_{\omega^0\in\Omega^0}|g(x,\mu)-g(x',\mu')|+\esssup_{\omega^0\in\Omega^0}\sup_{a\in A}|h(t,x,\mu,a)-h(t,x',\mu',a)|&\leq K(|x-x'|+\mathcal{W}_2(\mu,\mu')),\\
        \esssup_{\omega^0\in\Omega^0}|g(x,\mu)|+\esssup_{\omega^0\in\Omega^0}\sup_{a\in A}|h(t,x,\mu,a)|&\leq K.
        \end{align*}
    \end{enumerate}
\end{assumption}
\begin{assumption}[$\mathcal{A}$2]
\label{assumption_A2}
    The following assumption is needed to simplify technical issue in establishing the existence of our viscosity solution theory.
    \begin{enumerate}
        \item $\sigma^0 := \sigma^0(\omega^0,t,x,\mu)$, i.e., independent of control.
    \end{enumerate}
\end{assumption}
\begin{assumption}[$\mathcal{A}$3]
\label{assumption_A3}
\label{bochner_assumption}
    This following assumption is needed in establishing the uniqueness of our viscosity solution theory.
    \begin{enumerate}
        \item For $\mathbb{P}^0$ a.e. $\omega^0\in\Omega^0$, $f$, $b$ are continuous on $[0,T]\times\mathbb{R}^d\times\mathcal{P}_2(\mathbb{R}^d)\times A$.
        \item $f$, $b\in \mathcal{B}\mathcal{S}^1(C(\mathcal{P}_2(\mathbb{R}^d)\times\mathbb{R}^d))$.
        \item $g:(\Omega^0,\mathcal{F}_T^0,\mathbb{P}^0)\to C(\mathbb{R}^d\times\mathcal{P}_2(\mathbb{R}^d))$ is Bochner measurable.
    \end{enumerate}
\end{assumption}
\noindent As a direct consequence of the above Assumption ($\mathcal{A}$3), the following approximation is immediate and standard.
\begin{lemma}
\label{approximation_markovian}
For each $\varepsilon > 0$, there exists a partition $0 = t_0 <t_1 <\cdot<t_{N-1}<t_N = T$ for some $N>3$ and functions
\begin{align*}
    (g^N, f^N, b^N) \in C^3(\mathcal{P}_2(\mathbb{R}^d);\mathbb{R}^{m\times N}\times\mathbb{R}^d)\times C\Big(A\times\mathcal{P}_2(\mathbb{R}^d);C^3([0,T]\times\mathbb{R}^{m\times N}\times \mathbb{R}^d)\Big)\times\\
    \times C\Big(A\times\mathcal{P}_2(\mathbb{R}^d);C^3([0,T]\times\mathbb{R}^{m\times N}\times\mathbb{R}^d)\Big)
\end{align*}
such that 
\begin{align*}
    g^\varepsilon :=& \esssup_{(x,\rho)\in\mathbb{R}^d\times\mathcal{P}_2(\mathbb{R}^d)}\Big|g^N(W^0_{t_1},\ldots,W^0_{t_N},x,\rho)-g(x,\rho)\Big|,\\
    f_t^\varepsilon :=& \esssup_{(x,\rho,\alpha)\in\mathbb{R}^d\times\mathcal{P}_2(\mathbb{R}^d)\times A}\Big|f_t^N(W^0_{t_1\wedge t},\ldots,W^0_{t_N\wedge t},x,\rho,\alpha)-f_t(x,\rho,\alpha)\Big|,\\
    b_t^\varepsilon :=& \esssup_{(x,\rho,\alpha)\in\mathbb{R}^d\times\mathcal{P}_2(\mathbb{R}^d)\times A}\Big|b_t^N(W^0_{t_1\wedge t},\ldots,W^0_{t_N\wedge t},x,\rho,\alpha)-b_t(x,\rho,\alpha)\Big|
\end{align*}
are $\mathcal{F}^0_t$ adapted with
\begin{align*}
    \|g^\varepsilon\|_{L^{2}((\Omega^0,\mathcal{F}^0_T,\mathbb{P}^0);\mathbb{R})} + \|f^\varepsilon\|_{\mathcal{L}^2(\mathbb{R})} + \|b^\varepsilon\|_{\mathcal{L}^2(\mathbb{R})} < \varepsilon.
\end{align*}
and $g^N$, $f^N$ and $b^N$ are uniformly Lipschitz continuous in the variables $\rho$, $x$ with an identical Lipschitz constant $K$ independent of $N$ and $\varepsilon$.
\end{lemma}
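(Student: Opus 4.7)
The plan is a three-stage approximation that respects the Lipschitz constant $K$ from Assumption~($\mathcal{A}$1). For each of $g$, $f$, $b$ I will (i) project onto a finite collection of Brownian marginals to discretize the $\omega^0$-dependence, (ii) invoke the Doob--Dynkin lemma to realize the projection as a Borel function on $\mathbb{R}^{m\times N}$, and (iii) mollify in every remaining variable --- including the measure variable --- while preserving the $K$-Lipschitz bound in $(x,\rho)$.

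For step~(i) I pick a partition $\pi=\{0=t_0<t_1<\cdots<t_N=T\}$, set $\mathcal{G}_\pi := \sigma(W^0_{t_1},\ldots,W^0_{t_N})$, and define
\[
g_\pi(\omega^0,x,\rho) := \mathbb{E}^0\bigl[g(\cdot,x,\rho)\bigm|\mathcal{G}_\pi\bigr](\omega^0),
\]
together with the analogues $f_\pi,b_\pi$. Bochner-measurability in~($\mathcal{A}$3) plus the $L^\infty$ bound of~($\mathcal{A}$1) allows the Banach-valued L\'evy upward theorem to be applied in $L^2(\Omega^0;C(\mathbb{R}^d\times\mathcal{P}_2(\mathbb{R}^d)))$, giving $g_\pi\to g$ as $|\pi|\to 0$. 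A pointwise Jensen inequality combined with a dense-countable continuity argument in $(x,\rho)$ furnishes a version of $g_\pi$ that is jointly $K$-Lipschitz in $(x,\rho)$ for every $\omega^0$. Doob--Dynkin then produces a Borel function $\widetilde g:\mathbb{R}^{m\times N}\times\mathbb{R}^d\times\mathcal{P}_2(\mathbb{R}^d)\to\mathbb{R}$ with $g_\pi(\omega^0,x,\rho)=\widetilde g(W^0_{t_1}(\omega^0),\ldots,W^0_{t_N}(\omega^0),x,\rho)$, remaining $K$-Lipschitz in $(x,\rho)$ for Gaussian-a.e.~$w$. For $f$ and $b$ the time variable will be additionally discretized by piecewise-constant-in-$t$ averaging on $\pi$, while the control variable $\alpha\in A$ is carried through unchanged.

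Step~(iii) is convolution against standard $C_c^\infty$ mollifiers in $t$, $w\in\mathbb{R}^{m\times N}$, and $x\in\mathbb{R}^d$; each such convolution preserves the $K$-Lipschitz property in $(x,\rho)$. For the measure variable I would smooth $\rho\mapsto\widetilde g(\ldots,\rho)$ to be $C^3$ in the Lions sense by a standard construction --- for instance, by lifting to $L^2(\tilde\Omega;\mathbb{R}^d)$ and applying an infimal-convolution (Moreau--Yosida) regularization of the law-invariant lift followed by re-symmetrization --- which retains the Lipschitz constant $K$ because each such step is a contraction on $\mathcal{W}_2$. Choosing the mesh $|\pi|$ and the mollification parameters small enough drives the cumulative $L^2$ approximation error below $\varepsilon$ and yields the announced bounds on $g^\varepsilon$, $f^\varepsilon$, $b^\varepsilon$.

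The hard part will be the measure-variable smoothing in step~(iii): one must simultaneously achieve $C^3$-regularity in the Lions sense and retain the Lipschitz constant $K$ \emph{uniformly} in $N$ and $\varepsilon$. This is standard in the mean-field viscosity literature (see e.g.~\cite{cosso_master_2022}) but is the only non-elementary ingredient; the rest is routine conditional-expectation, Doob--Dynkin, and Euclidean-convolution machinery. Continuity of $f^N$ and $b^N$ in $(\alpha,\rho)$ survives the entire construction because every smoothing operator acts on a disjoint group of variables.
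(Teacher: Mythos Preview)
The paper does not actually prove this lemma: immediately before the statement it declares the approximation ``immediate and standard'' as a consequence of Assumption~($\mathcal{A}$3), and no argument is given. Your outline via conditional expectation on $\sigma(W^0_{t_1},\ldots,W^0_{t_N})$, Doob--Dynkin, and Euclidean mollification is a correct way to carry this out, and the Lipschitz-preservation observations (Jensen for the conditional expectation, contraction for convolution) are exactly what is needed.

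One over-reading, however: the function spaces in the statement do \emph{not} ask for $C^3$ regularity in the measure variable. For $f^N$ and $b^N$ the target space is $C\bigl(A\times\mathcal{P}_2(\mathbb{R}^d);\,C^3([0,T]\times\mathbb{R}^{m\times N}\times\mathbb{R}^d)\bigr)$, i.e.\ merely continuous in $(\alpha,\rho)$ and $C^3$ only in the Euclidean arguments $(t,w,x)$; the space written for $g^N$ is evidently a typographical slip and should be read analogously as $C\bigl(\mathcal{P}_2(\mathbb{R}^d);\,C^3(\mathbb{R}^{m\times N}\times\mathbb{R}^d)\bigr)$. Consequently your step~(iii) measure-variable smoothing (Moreau--Yosida on the $L^2$-lift, etc.) is unnecessary: you only need continuity and the $K$-Lipschitz bound in $\rho$, both of which survive conditional expectation and Euclidean convolution automatically. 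This eliminates precisely the piece you flagged as ``the hard part'', and the construction becomes genuinely elementary. Smoothness in the empirical-measure argument is manufactured later in the paper, in the definition of $b_{n,m}^{N;i}$, $f_{n,m}^{N;i}$, $g_{n,m}^{N;i}$, by mollifying in $\bar{x}\in\mathbb{R}^{nd}$ --- not here.
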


\subsection{The Problem}
The dynamic of our control problem is
\begin{align}
\label{basic_dynamics}
\begin{cases}
    dX_s^{t,\xi,\alpha} =& b_s(X_s^{t,\xi,\alpha},\mathbb{P}_{X_s^{t,\xi,\alpha}}^{W^0},\alpha_s)ds+\sigma_s(X_s^{t,\xi,\alpha},\mathbb{P}_{X_s^{t,\xi,\alpha}}^{W^0},\alpha_s)dW_s \\
    &+ \sigma^0_s(X_s^{t,\xi,\alpha},\mathbb{P}_{X_s^{t,\xi,\alpha}}^{W^0},\alpha_s)dW^0_s,\,\,\,s\in[t,T],\\
    X_t^{t,\xi,\alpha} =& \xi,
\end{cases}
\end{align}
where $\xi \in L^2((\Omega,\mathcal{F}_t,\mathbb{P});\mathbb{R}^d)$. The cost functional on $[0,T]\times L^2((\Omega,\mathcal{F}_t,\mathbb{P});\mathbb{R}^d)\times\mathcal{A}_t\times\Omega^0$ is given by
\begin{align}
    J(t,\xi,\alpha):=& \mathbb{E}\Bigg[\int_t^T f_s(X_s^{t,\xi,\alpha},\mathbb{P}_{X_s^{t,\xi,\alpha}}^{W^0},\alpha_s)+g(X_T^{t,\xi,\alpha},\mathbb{P}_{X_T^{t,\xi,\alpha}}^{W^0})\Big|\mathcal{F}^0_t\Bigg].
\end{align}
The value function on $[0,T]\times L^2((\Omega,\mathcal{F}_t,\mathbb{P});\mathbb{R}^d)\times\Omega^0$ will then be
\begin{align}
    v(t,\xi) = \essinf_{\alpha\in\mathcal{A}_t} J(t,\xi,\alpha) = \essinf_{\alpha\in\mathcal{A}_t} \mathbb{E}\Bigg[\int_t^T f_s(X_s^{t,\xi,\alpha},\mathbb{P}_{X_s^{t,\xi,\alpha}}^{W^0},\alpha_s)+g(X_T^{t,\xi,\alpha},\mathbb{P}_{X_T^{t,\xi,\alpha}}^{W^0})\Big|\mathcal{F}_t^0\Bigg].
\end{align}
Below are some standard properties of the strong solutions for SDEs.
\begin{lemma}
\label{SDE_property}
    Let ($\mathcal{A}$1) hold. Given $\alpha \in \mathcal{A}_r$, for the strong solution of the SDE (\ref{basic_dynamics}), there exists a constant $C>0$ depending on the constant $K$, $p$ and $T$ such that, for any $0\leq r\leq t\leq s\leq T$, and $\xi \in L^p((\Omega_r,\mathcal{F}_r,\mathbb{P});\mathbb{R}^d)$, we have
    \begin{enumerate}[(1)]
        \item the two processes $(X_s^{r,\xi,\alpha})_{t\leq s\leq T}$ and $(X_s^{t,X_t^{r,\xi,\alpha},\alpha})_{t\leq s\leq T}$ are indistinguishable.
        \item $\mathbb{E}_{\mathcal{F}_r^0}\max_{r\leq l \leq T}|X_l^{r,\xi,\alpha}|^p\leq C(1+\mathbb{E}_{\mathcal{F}_r^0}|\xi|^p)$ $\mathbb{P}^0$\text{-}a.e..
        \item $\mathbb{E}_{\mathcal{F}_r^0}|X_s^{r,\xi,\alpha}-X_t^{r,\xi,\alpha}|^p \leq C(1+\mathbb{E}_{\mathcal{F}_r^0}|\xi|^p)(s-t)^{p/2}$ $\mathbb{P}^0$\text{-}a.e..
        \item Given another $\eta\in L^p((\Omega,\mathcal{F}_r,\mathbb{P});\mathbb{R}^d)$,
        \begin{align*}
        \mathbb{E}_{\mathcal{F}_r^0}\max_{r\leq l\leq T}|X_l^{r,\xi,\alpha}-X_l^{r,\eta,\alpha}|^p \leq C\mathbb{E}_{\mathcal{F}_r^0}|\xi-\eta|^p\quad\mathbb{P}^0 \text{-a.e.}.
        \end{align*}
    \end{enumerate}
\end{lemma}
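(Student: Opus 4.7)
The plan is to treat the four items in order, invoking the standard pathway (BDG plus Gronwall, with the conditional law handled via a moment comparison), while being careful that every estimate is taken under the conditional expectation $\mathbb{E}_{\mathcal{F}_r^0}$ so that the bounds hold $\mathbb{P}^0$-a.e.\ rather than only in expectation.

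For (1), note that restricting $(X_s^{r,\xi,\alpha})_{s\in[t,T]}$ to $[t,T]$ yields a strong solution of the same SDE on $[t,T]$ with initial datum $X_t^{r,\xi,\alpha}$ at time $t$ and with the same coefficients evaluated at the same (already determined) conditional law $\mathbb{P}_{X_s^{r,\xi,\alpha}}^{W^0}$. By pathwise uniqueness for the McKean--Vlasov SDE under $(\mathcal{A}1)$, it must coincide $\mathbb{P}$-a.s.\ with $(X_s^{t,X_t^{r,\xi,\alpha},\alpha})_{s\in[t,T]}$; a.s.\ continuity of both paths upgrades the equality to indistinguishability.

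For (2), I would apply It\^o's formula to $|X_l^{r,\xi,\alpha}|^p$, take the supremum over $l\in[r,s]$, and then take the conditional expectation $\mathbb{E}_{\mathcal{F}_r^0}[\cdot]$. By BDG the martingale pieces (driven by $W$ and $W^0$, after subtracting their $\mathcal{F}_r^0$-conditional martingale parts) are controlled by the corresponding quadratic variations, which are bounded thanks to the uniform bound $K$ on $b,\sigma,\sigma^0$. This yields
\begin{align*}
\mathbb{E}_{\mathcal{F}_r^0}\!\!\sup_{r\leq l\leq s}|X_l^{r,\xi,\alpha}|^p
\leq C\bigl(1+\mathbb{E}_{\mathcal{F}_r^0}|\xi|^p\bigr)
+ C\int_r^s \mathbb{E}_{\mathcal{F}_r^0}\!\!\sup_{r\leq l'\leq l}|X_{l'}^{r,\xi,\alpha}|^p\,dl,
\end{align*}
and the conclusion follows by a conditional Gronwall argument (justified since the integrand is $\mathbb{F}^0$-adapted). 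For (3), the SDE representation together with H\"older's inequality on the $ds$-integral and BDG on the $dW,dW^0$-integrals gives the $(s-t)^{p/2}$ rate, where the coefficients are bounded by $K(1+|X_l|+\mathcal{W}_2(\mathbb{P}_{X_l}^{W^0},\delta_0))$, and the moment bound from (2) closes the estimate.

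For (4), apply It\^o to $|X_l^{r,\xi,\alpha}-X_l^{r,\eta,\alpha}|^p$ and exploit the Lipschitz continuity of $b,\sigma,\sigma^0$ in $(x,\mu)$ from $(\mathcal{A}1)$. The key mean-field observation is that the coupling $\bigl(X_l^{r,\xi,\alpha},X_l^{r,\eta,\alpha}\bigr)$ is admissible in the Wasserstein infimum conditional on $\mathcal{F}_l^0$, so
\begin{align*}
\mathcal{W}_2\bigl(\mathbb{P}_{X_l^{r,\xi,\alpha}}^{W^0},\mathbb{P}_{X_l^{r,\eta,\alpha}}^{W^0}\bigr)^2
\;\leq\; \mathbb{E}_{\mathcal{F}_l^0}\bigl|X_l^{r,\xi,\alpha}-X_l^{r,\eta,\alpha}\bigr|^2,
\end{align*}
and the analogous $p$-th moment comparison reduces the Wasserstein-distance terms to pointwise differences. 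Taking $\mathbb{E}_{\mathcal{F}_r^0}[\cdot]$, using BDG, the tower property $\mathbb{E}_{\mathcal{F}_r^0}\mathbb{E}_{\mathcal{F}_l^0}=\mathbb{E}_{\mathcal{F}_r^0}$, and conditional Gronwall yields the stated estimate.

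The main obstacle is the careful bookkeeping around the conditional law $\mathbb{P}_{X_s^{r,\xi,\alpha}}^{W^0}$ and the conditional expectation $\mathbb{E}_{\mathcal{F}_r^0}$: one must verify that all integrands are $\mathbb{F}^0$-measurable where needed, that the Wasserstein-to-$L^p$ comparison uses the correct conditioning $\sigma$-algebra ($\mathcal{F}_l^0$, not $\mathcal{F}_r^0$), and that the conditional BDG and Gronwall steps are applicable. Once these measurability points are settled, each of (1)--(4) is a standard adaptation of the classical McKean--Vlasov SDE estimates to the conditional (common-noise) setting.
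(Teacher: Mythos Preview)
The paper does not supply a proof of this lemma; it is stated as ``standard properties of the strong solutions for SDEs'' and left unproved. Your sketch correctly supplies exactly the standard argument one would expect here---flow property via pathwise uniqueness, conditional BDG plus Gronwall for the moment and stability bounds, and the key coupling observation $\mathcal{W}_2(\mathbb{P}_{X_l^{r,\xi,\alpha}}^{W^0},\mathbb{P}_{X_l^{r,\eta,\alpha}}^{W^0})^2\leq \mathbb{E}_{\mathcal{F}_l^0}|X_l^{r,\xi,\alpha}-X_l^{r,\eta,\alpha}|^2$ to handle the measure argument---so there is nothing to compare against and your approach is the intended one. One minor simplification: under $(\mathcal{A}1)$ the coefficients $b,\sigma,\sigma^0$ are uniformly bounded by $K$ (not merely of linear growth), so in (2) and (3) the Gronwall loop is not even needed and the bounds follow directly from BDG.
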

\noindent We also have the following regular properties for the cost functional $J$ and value function $v$. The results are similar to \cite{qiu_stochastic_hjb} Proposition 3.3 and the proof is omitted.
\begin{lemma}
\label{regularity_J_V}
    Let ($\mathcal{A}$1) hold. We have the following regularity properties regarding the cost functional $J$ and the value function $v$:
    \begin{enumerate}[(1)]
        \item For all $(t,\xi,\alpha)\in [0,T]\times L^2((\Omega,\mathcal{F}_t,\mathbb{P});\mathbb{R}^d)\times\mathcal{A}_t$, $(J(s,X_s^{t,\xi,\alpha},\alpha))_{t\leq s\leq T}$ is a continuous process. \label{Lemma_2_3_1}
        \item For all $(t,\xi)\in [0,T]\times L^2((\Omega,\mathcal{F}_t,\mathbb{P});\mathbb{R}^d)$, $(v(s,X_s^{t,\xi,\alpha}))_{t\leq s\leq T}$ is a continuous process.
        \label{Lemma_2_3_2}
        \item There exists $C>0$ depending on $K$ and $T$ such that for any $\alpha\in\mathcal{A}_t$, $\xi$, $\eta\in L^2((\Omega,\mathcal{F}_t,\mathbb{P});\mathbb{R}^d)$,
        \begin{align*}
            |v(t,\xi)-v(t,\eta)|+|J(t,\xi,\alpha)-J(t,\eta,\alpha)|\leq C\mathbb{E}_{\mathcal{F}_r^0}|\xi-\eta|^2\quad\mathbb{P}^0\text{-a.e.}
        \end{align*}
        \item With probability 1, $v(t,\xi)$ and $J(t,\xi,\alpha)$ for each $\alpha\in\mathcal{A}_t$ are continuous on $[0,T]\times L^2((\Omega,\mathcal{F}_t,\mathbb{P});\mathbb{R}^d)$ and
        \begin{align*}
            \esssup_{(t,\xi)\in [0,T]\times L^2(\Omega,\mathcal{F}_t,\mathbb{P};\mathbb{R}^d)} \max\{|v(t,\xi)|,|J(t,\xi,\alpha)|\}\leq K(T+1)\quad\mathbb{P}^0\text{-a.e..} 
        \end{align*}
    \end{enumerate}
\end{lemma}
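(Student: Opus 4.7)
The strategy is to combine the flow and stability estimates from Lemma~\ref{SDE_property} with the Lipschitz and boundedness conditions in ($\mathcal{A}$1), handling the conditional expectations through martingale continuity in the Brownian filtration $\mathbb{F}^0$.

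For (3), I would start from
\[
|J(t,\xi,\alpha)-J(t,\eta,\alpha)| \le \mathbb{E}\Big[\int_t^T |f_s(X^{\xi}_s,\mathbb{P}^{W^0}_{X^{\xi}_s},\alpha_s)-f_s(X^{\eta}_s,\mathbb{P}^{W^0}_{X^{\eta}_s},\alpha_s)|\,ds + |g(X^{\xi}_T,\mathbb{P}^{W^0}_{X^{\xi}_T})-g(X^{\eta}_T,\mathbb{P}^{W^0}_{X^{\eta}_T})|\,\Big|\,\mathcal{F}^0_t\Big],
\]
with $X^{\xi}:=X^{t,\xi,\alpha}$. Applying the Lipschitz bound from ($\mathcal{A}$1.4) together with the coupling estimate $\mathcal{W}_2(\mathbb{P}^{W^0}_X,\mathbb{P}^{W^0}_Y)^2 \le \mathbb{E}[|X-Y|^2|\mathcal{F}^0_t]$ and Lemma~\ref{SDE_property}(4) then yields the asserted conditional $L^2$-type Lipschitz estimate in $\xi$. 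The analogous estimate for $v$ follows from the pointwise inequality $|v(t,\xi)-v(t,\eta)| \le \esssup_{\alpha\in\mathcal{A}_t}|J(t,\xi,\alpha)-J(t,\eta,\alpha)|$, an elementary consequence of the definition of essinf.

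For (1), the flow identity in Lemma~\ref{SDE_property}(1) yields
\[
J(s,X^{t,\xi,\alpha}_s,\alpha) \;=\; M_s \;-\; \mathbb{E}\Big[\int_t^s f_r(X^{t,\xi,\alpha}_r,\mathbb{P}^{W^0}_{X^{t,\xi,\alpha}_r},\alpha_r)\,dr\,\Big|\,\mathcal{F}^0_s\Big],
\]
where $M_s:=\mathbb{E}[\int_t^T f_r(\cdots)\,dr + g(\cdots)|\mathcal{F}^0_s]$ is a bounded $\mathbb{F}^0$-martingale admitting a continuous version by the Brownian martingale representation theorem, and the remaining term is continuous in $s$ via dominated convergence using the uniform bound $|f|\le K$. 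For (2), I would split
\[
|v(s+h,X^{t,\xi,\alpha}_{s+h})-v(s,X^{t,\xi,\alpha}_s)| \le |v(s+h,X^{t,\xi,\alpha}_{s+h})-v(s+h,X^{t,\xi,\alpha}_{s})| + |v(s+h,X^{t,\xi,\alpha}_{s})-v(s,X^{t,\xi,\alpha}_{s})|;
\]
the first summand is controlled by (3) combined with Lemma~\ref{SDE_property}(3), while the second reduces to (1) after a concatenation-of-controls argument reconciling $\mathcal{A}_s$ with $\mathcal{A}_{s+h}$.

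Part (4) then follows by assembling (1)--(3): joint continuity on $[0,T]\times L^2((\Omega,\mathcal{F}_t,\mathbb{P});\mathbb{R}^d)$ is immediate from the uniform-in-$t$ Lipschitz continuity in $\xi$ from (3) together with the $s$-continuity from (1)--(2), while the uniform bound $K(T+1)$ is a direct consequence of the $L^\infty$ bounds on $f$ and $g$ in ($\mathcal{A}$1.4). The principal technical obstacle I anticipate is the $s$-continuity of $v(\cdot,\zeta)$ in (2): since the admissible class $\mathcal{A}_s$ varies with $s$ through the filtration $\mathbb{F}^s$, a direct comparison of $\essinf_{\mathcal{A}_{s+h}}$ with $\essinf_{\mathcal{A}_s}$ is not automatic and requires careful concatenation or restriction of $\varepsilon$-optimal controls remaining admissible for both times, supplemented by a measurable selection argument to extract such controls in an $\omega^0$-wise fashion.
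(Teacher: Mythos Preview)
The paper does not prove this lemma; it simply states that ``the results are similar to \cite{qiu_stochastic_hjb} Proposition 3.3 and the proof is omitted.'' Your outline is precisely the standard route taken in that reference: item~(3) via the Lipschitz bounds in ($\mathcal{A}$1) combined with Lemma~\ref{SDE_property}(4), item~(1) via the flow property and the continuity of Brownian martingales in $\mathbb{F}^0$, item~(4) by assembling the pieces, and item~(2) by the spatial/temporal splitting you describe. So your proposal is correct and matches the approach the paper defers to.

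Your diagnosis of the main obstacle is also accurate: the $s$-dependence of the admissible class $\mathcal{A}_s$ through the shifted filtration $\mathbb{F}^s$ is the genuine technical point in (2). Note that $\mathcal{A}_{s+h}\subset\mathcal{A}_s$ (since $\mathcal{F}^{s+h}_r\subset\mathcal{F}^s_r$), so one inequality is free; the other direction is exactly where the concatenation/measurable-selection argument you anticipate is needed. In the referenced work this is handled by taking an $\varepsilon$-optimal $\beta\in\mathcal{A}_s$, freezing it on $[s,s+h]$ (or replacing it there by a constant control), and using the boundedness of the coefficients together with Lemma~\ref{SDE_property}(3) to show the resulting cost perturbation is $O(h^{1/2})$; the modified control then lies in $\mathcal{A}_{s+h}$ after an obvious relabeling of the $W$-increments. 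One small caveat: in your bound for the first summand in the splitting for (2), the conditional expectation appearing from (3) is $\mathbb{E}_{\mathcal{F}^0_{s+h}}[\cdot]$, which itself depends on $h$, so you will need a backward martingale or dominated-convergence step to pass to the limit $h\to 0$ almost surely.
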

\noindent We can now define the function $\bar{J}$ on $[0,T]\times L^2((\Omega^1,\mathcal{F}_t^1,\mathbb{P}^1);\mathbb{R}^d)\times \mathcal{A}_t\times\Omega^0$ by
\begin{align*}
    \bar{J}(t,\xi,\alpha) := J(t,\xi,\alpha) .
\end{align*}
We have the following lemma relating $J$ and $\bar{J}$.
\begin{lemma}
\label{nonrandom_random_functional}
Let ($\mathcal{A}$1) hold. For $\xi \in L^2((\Omega,\mathcal{F}_t,\mathbb{P});\mathbb{R}^d)$, we have
\begin{align*}
    J(t,\xi,\alpha)(\omega^0) = \bar{J}(t,\xi(\omega^0),\alpha)(\omega^0),\,\,\,\mathbb{P}^0\,a.e..
\end{align*}    
\end{lemma}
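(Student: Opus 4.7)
The plan is to exploit the product structure $(\Omega,\mathcal{F},\mathbb{P}) = (\Omega^0\times\Omega^1, \mathcal{F}^0\otimes\mathcal{F}^1, \mathbb{P}^0\otimes\mathbb{P}^1)$ together with pathwise uniqueness of the strong solution of~(\ref{basic_dynamics}) to identify, for $\mathbb{P}^0$-a.e.\ $\omega^0$, the $\omega^0$-slice of the joint state process with the solution of the SDE obtained by freezing $\omega^0$, and then conclude by a Fubini/disintegration argument for the conditional expectation $\mathbb{E}[\cdot\mid\mathcal{F}^0_t]$.

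First I would check that the right-hand side is well-defined: since $\mathcal{F}_t\subset\mathcal{F}^0\otimes\mathcal{F}^1_t$ (because $\sigma(W^0_s)\subset\mathcal{F}^0$ while $\sigma(W_s)$ and $\mathcal{G}$ sit inside $\mathcal{F}^1$), Fubini yields, for $\mathbb{P}^0$-a.e.\ $\omega^0$, that the section $\xi(\omega^0,\cdot)$ belongs to $L^2((\Omega^1,\mathcal{F}^1_t,\mathbb{P}^1);\mathbb{R}^d)$, so $\bar{J}(t,\xi(\omega^0),\alpha)(\omega^0)$ is meaningful. Then, because the coefficients $b,\sigma,\sigma^0$ are $\mathbb{F}^0$-adapted, the control $\alpha\in\mathcal{A}_t$ is $\mathbb{F}^t$-progressive, and the Brownian motions $W^0$ and $W$ live on the separate factors $\Omega^0$ and $\Omega^1$, freezing $\omega^0$ turns~(\ref{basic_dynamics}) into an SDE on $(\Omega^1,\mathcal{F}^1,\mathbb{P}^1)$ driven by the idiosyncratic Brownian motion $W$, with the $\sigma^0\,dW^0$ term becoming a pathwise stochastic integral against the frozen trajectory $W^0(\omega^0)$ (handled via a standard Fubini-type theorem for It\^o integrals obtained by approximation with simple integrands).

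Second I would apply strong uniqueness from Lemma~\ref{SDE_property}(1) to the sliced SDE to deduce
\[
X_s^{t,\xi,\alpha}(\omega^0,\cdot) \;=\; X_s^{t,\xi(\omega^0,\cdot),\alpha(\omega^0,\cdot)}(\omega^0,\cdot) \qquad \mathbb{P}^1\text{-a.s.},
\]
simultaneously for all $s\in[t,T]$ (using pathwise continuity of both sides from Lemma~\ref{SDE_property}), for $\mathbb{P}^0$-a.e.\ $\omega^0$. Consequently the conditional law $\mathbb{P}_{X_s^{t,\xi,\alpha}}^{W^0}(\omega^0)$ coincides with the $\mathbb{P}^1$-law of the sliced solution. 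Invoking the standard disintegration $\mathbb{E}[\,\cdot\,\mid\mathcal{F}^0_t] = \mathbb{E}^1[\,\cdot\,](\omega^0,\cdot)$ for the product measure and the sub-$\sigma$-algebra $\mathcal{F}^0_t\subset\mathcal{F}^0$, the definition of $J(t,\xi,\alpha)(\omega^0)$ reduces to the $\mathbb{P}^1$-expectation of the running and terminal costs evaluated along the sliced dynamics, which is exactly $\bar{J}(t,\xi(\omega^0),\alpha)(\omega^0)$ by construction.

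The main obstacle is making the fibering step rigorous in a jointly measurable way: one has to argue that the $\omega^0$-slice of the It\^o integral $\int\sigma_s\,dW_s$ agrees $\mathbb{P}^1$-a.s.\ with the It\^o integral constructed inside the slice, and that the countable collection of $\mathbb{P}^0$-null sets accumulated at each stage (for $\xi$, for the coefficients, for the stochastic integrals, and for the conditional law identification) can be combined into a single $\mathbb{P}^0$-null exceptional set outside which the identity holds. This is achieved by a monotone-class/density argument for the stochastic integral and the continuity of the state processes in $s$.
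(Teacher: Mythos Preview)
Your fibering idea is natural, but you have the difficulty backwards: the genuine obstacle is not the $\int\sigma\,dW$ slice (where $W$ lives on $\Omega^1$, so an It\^o integral inside the slice makes perfectly good sense) but the $\int\sigma^0\,dW^0$ term that you dismiss as ``handled via a standard Fubini-type theorem''. When $\omega^0$ is frozen, the integrand $\sigma^0_s(X_s,\mathbb{P}_{X_s}^{W^0},\alpha_s)$ still varies with $\omega^1$ through $X_s$ and $\alpha_s$, while the would-be integrator $W^0(\omega^0)$ is a fixed, nowhere-BV continuous path: there is no pathwise Stieltjes meaning, and approximation by simple integrands yields convergence only in $L^2(\Omega)$, not $\mathbb{P}^1$-a.s.\ convergence of the $\omega^0$-slices. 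Turning this into a bona fide ``SDE on $\Omega^1$'' would require a conditional-SDE framework or a careful subsequence/Fubini bookkeeping through every Picard iterate---neither a one-line standard fact. A secondary issue: your disintegration $\mathbb{E}[\,\cdot\mid\mathcal{F}^0_t]=\mathbb{E}^1[\,\cdot\,]$ is false because $\mathcal{F}^0_t\subsetneq\mathcal{F}^0$, and the right-hand side $\bar J(t,\xi(\omega^0),\alpha)(\omega^0)$ is a diagonal evaluation (the initial datum and the evaluation point share the same $\omega^0$) that a plain product-Fubini does not unravel.

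The paper avoids all of this by never slicing the dynamics. It first treats simple $\xi=\sum_{j=1}^m\xi_j(\omega^1)1_{B_j^0}(\omega^0)$ with $\xi_j\in L^2(\Omega^1)$ and $\{B_j^0\}\subset\mathcal{F}^0_t$ a partition of $\Omega^0$: solving~\eqref{basic_dynamics} on the \emph{full product space} $\Omega$ with each $\omega^0$-independent initial datum $\xi_j$, one checks by direct substitution that $\sum_j X_s^{t,\xi_j,\alpha}1_{B_j^0}$ solves~\eqref{basic_dynamics} with initial datum $\xi$, hence by pathwise uniqueness equals $X^{t,\xi,\alpha}$. This immediately gives $J(t,\xi,\alpha)=\sum_j\bar J(t,\xi_j,\alpha)1_{B_j^0}=\bar J(t,\xi(\omega^0),\alpha)$ for simple $\xi$. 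The general case then follows by approximating $\xi$ in $L^2$ by simple functions and invoking the Lipschitz estimate of Lemma~\ref{regularity_J_V}(3) for both $J$ and $\bar J$; no pathwise or conditional interpretation of $\int\sigma^0\,dW^0$ is ever needed.
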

\begin{proof}
First of all we look at simple functions. For $i = 1,\cdots,n$, let $a_i\in\mathbb{R}^d$, $B_i^{0'}\times B_{i}^{1'}\in\mathcal{F}_t^0\times\mathcal{F}_t^1$, $\{B_i^{0'}\times B_{i}^{1'}\}_{i=1}^n$ a disjoint partition of $\Omega^0\times\Omega^1$, and we consider $\xi(\omega^0,\omega^1) = \sum_{i=1}^{n}a_{i}1_{B^{0'}_i}(\omega^0)1_{B^{1'}_i}(\omega^1)$. Note that $\xi$ could also be written as $\xi= \sum_{i=1}^{n}\Big(a_i 1_{B_i^{1'}}(\omega^1)\Big)1_{B_i^{0'}}(\omega^0)=\sum_{j=1}^m\xi_j(\omega^1)1_{B_j^0}(\omega^0)$, where for $j=1,\ldots,m$, $B_j^0\in\mathcal{F}_t^0$, $\{B_j^0\}_{j=1}^m$ is a disjoint partition of $\Omega^0$, $\xi_j\in L^2((\Omega^1,\mathcal{F}^1_t,\mathbb{P}^1);\mathbb{R}^d)$. Solving for $j=1,\ldots,m$,
\begin{align}
\begin{cases}
    dX_s^{t,\xi_j,\alpha} =& b_s(X_s^{t,\xi_j,\alpha},\mathbb{P}_{X_s^{t,\xi_j,\alpha}}^{W^0},\alpha_s)ds+\sigma_s(X_s^{t,\xi_j,\alpha},\mathbb{P}_{X_s^{t,\xi_j,\alpha}}^{W^0},\alpha_s)dW_s \\
    &+ \sigma^0_s(X_s^{t,\xi_j,\alpha},\mathbb{P}_{X_s^{t,\xi_j,\alpha}}^{W^0},\alpha_s)dW^0_s,\,\,\,s\in[t,T],\\
    X_t^{t,\xi_j,\alpha} \,\,\,=&  \xi_j,
\end{cases}
\end{align}
we have 
\begin{align*}
    &\sum_{j=1}^m X_s^{t,\xi_j,\alpha}1_{B_j^0}(\omega^0) \\
    =& \sum_{j=1}^m \xi_j1_{B_j^0}(\omega^0)+ \sum_{j=1}^m \Bigg(\int_t^s b_r(X_r^{t,\xi_j,\alpha},\mathbb{P}_{X_r^{t,\xi_j,\alpha}}^{W^0},\alpha_r)dr\Bigg)1_{B_j^0}(\omega^0)\\
    &+\sum_{j=1}^m \Bigg(\int_t^s\sigma_r(X_r^{t,\xi_j,\alpha},\mathbb{P}_{X_r^{t,\xi_j,\alpha}}^{W^0},\alpha_r)dW_r\Bigg)1_{B_j^0}(\omega^0) + \sum_{j=1}^m\Bigg(\int_t^s\sigma^0_r(X_r^{t,\xi_j,\alpha},\mathbb{P}_{X_r^{t,\xi_j,\alpha}}^{W^0},\alpha_r)dW^0_r\Bigg)1_{B_j^0}(\omega^0)\\
    =& \sum_{j=1}^m \xi_j1_{B_j^0}(\omega^0)+ \int_t^s  b_r\Big(\sum_{j=1}^m X_r^{t,\xi_j,\alpha}1_{B_j^0}, \mathbb{P}_{\sum_{j=1}^m X_r^{t,\xi_j,\alpha}1_{B_j^0}}^{W^0},\alpha_r\Big)dr\\
    &+ \int_t^s\sigma_r\Big(\sum_{j=1}^m X_r^{t,\xi_j,\alpha}1_{B_j^0},\mathbb{P}_{\sum_{j=1}^m X_r^{t,\xi_j,\alpha}1_{B_j^0}}^{W^0},\alpha_r\Big)dW_r + \int_t^s\sigma^0_r\Big(\sum_{j=1}^m X_r^{t,\xi_j,\alpha}1_{B_j^0},\mathbb{P}_{\sum_{j=1}^m X_r^{t,\xi_j,\alpha}1_{B_j^0}}^{W^0},\alpha_r\Big)dW^0_r.
\end{align*}
Thus by uniqueness of solution we have
\begin{align*}
    \sum_{j=1}^m X_s^{t,\xi_j,\alpha}1_{B_j^0}(\omega^0)
\end{align*}
solves
\begin{align}
\begin{cases}
    dX_s^{t,\xi,\alpha} = &b_s(X_s^{t,\xi,\alpha},\mathbb{P}_{X_s^{t,\xi,\alpha}}^{W^0},\alpha_s)ds+\sigma_s(X_s^{t,\xi,\alpha},\mathbb{P}_{X_s^{t,\xi,\alpha}}^{W^0},\alpha_s)dW_s \\
    &+ \sigma^0_s(X_s^{t,\xi,\alpha},\mathbb{P}_{X_s^{t,\xi,\alpha}}^{W^0},\alpha_s)dW^0_s,\,\,\,s\in[t,T],\\
    X_t^{t,\xi,\alpha} \,\,\,= &\xi.
\end{cases}
\end{align}
Therefore it holds that
\begin{align*}
    J(t,\xi,\alpha) 
    =& \mathbb{E}\Bigg[\int_t^T f_s(X_s^{t,\xi,\alpha},\mathbb{P}_{X_s^{t,\xi,\alpha}}^{W^0},\alpha_s)+g(X_T^{t,\xi,\alpha},\mathbb{P}_{X_T^{t,\xi,\alpha}}^{W^0})\Big|\mathcal{F}_t^0\Bigg]\\
    =&\mathbb{E}\Bigg[\int_t^T f_s(\sum_{j=1}^m X_s^{t,\xi_j,\alpha}1_{B_j^0}(\omega^0),\mathbb{P}_{\sum_{j=1}^m X_s^{t,\xi_j,\alpha}1_{B_j^0}(\omega^0)}^{W^0},\alpha_s)\\&+g(\sum_{j=1}^m X_T^{t,\xi_j,\alpha}1_{B_j^0}(\omega^0),\mathbb{P}_{\sum_{j=1}^m X_T^{t,\xi_j,\alpha}1_{B_j^0}(\omega^0)}^{W^0})\Big|\mathcal{F}_t^0\Bigg]\\
    =&\sum_{j=1}^m\mathbb{E}\Bigg[\int_t^T f_s( X_s^{t,\xi_j,\alpha},\mathbb{P}_{ X_s^{t,\xi_j,\alpha}}^{W^0},\alpha_s)+g(X_T^{t,\xi_j,\alpha},\mathbb{P}_{X_T^{t,\xi_j,\alpha}}^{W^0})\Big|\mathcal{F}_t^0\Bigg]1_{B_j^0}(\omega^0)\\
    =&\sum_{j=1}^m \bar{J}(t,\xi_j,\alpha)1_{B_j^0}(\omega^0)\\
    =&\bar{J}(t,\xi(\omega^0),\alpha).
\end{align*}
Now, for general $\xi\in L^2((\Omega,\mathcal{F}_t,\mathbb{P});\mathbb{R}^d)$, there exists a sequence of simple functions $\xi^n(\omega^0,\omega^1)$ converging to $\xi(\omega^0,\omega^1)$ pointwisely, and $\xi^n$ can be written in the form $\xi^n = \sum_{j=1}^m\xi_j^n(\omega^1)1_{B_j^{n;0}}(\omega^0)$. By Lemma \ref{regularity_J_V} we have 
\begin{align*}
    &\mathbb{E}^0\Big[|J(t,\xi,\alpha)-\bar{J}(t,\xi(\omega^0),\alpha)|^2\Big]\\
    \leq&C\mathbb{E}^0\Big[|J(t,\xi,\alpha)-J(t,\xi^n,\alpha)|^2\Big]+C\mathbb{E}^0\Big[J(t,\xi^n,\alpha)-\bar{J}(t,\xi(\omega^0),\alpha)|^2\Big]\\
    =&C\mathbb{E}^0\Big[|J(t,\xi,\alpha)-J(t,\xi^n,\alpha)|^2\Big]+C\mathbb{E}^0\Big[\bar{J}(t,\xi^n,\alpha)-\bar{J}(t,\xi(\omega^0),\alpha)|^2\Big]\\
    \leq &C\mathbb{E}[|\xi-\xi^n|^2],
\end{align*}
Taking $n\to\infty$ concludes the proof.
\end{proof}
\noindent Define the function $\bar{v}$ on $[0,T]\times L^2((\Omega^1,\mathcal{F}_t^1,\mathbb{P}^1);\mathbb{R}^d)\times\Omega^0$ by
\begin{align*}
    \bar{v}(t,\xi):=\essinf_{\alpha\in\mathcal{A}_t}\bar{J}(t,\xi,\alpha),
\end{align*}
we have a similar relation between $v$ and $\bar{v}$ as $J$ and $\bar{J}$.
\begin{lemma}
\label{random_nonrandom_value}
Let ($\mathcal{A}$1) hold. Let $\xi\in L^2((\Omega,\mathcal{F}_t,\mathbb{P});\mathbb{R}^d)$, we have that
\begin{align*}
    v(t,\xi)(\omega^0) = \bar{v}(t,\xi(\omega^0))(\omega^0),\,\,\,\mathbb{P}^0\,a.e..
\end{align*}
\end{lemma}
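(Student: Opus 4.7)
The plan is to mirror Lemma~\ref{nonrandom_random_functional}: I will verify the identity on simple initial data and close by an $L^2$-density argument using the Lipschitz regularity already at our disposal. As a preliminary, note that Lemma~\ref{regularity_J_V}(3) transfers verbatim to $\bar v$: for $y_1,y_2\in L^2((\Omega^1,\mathcal{F}_t^1,\mathbb{P}^1);\mathbb{R}^d)$ viewed as $\mathcal{F}_t$-measurable random variables on $\Omega$ that are trivially independent of $\omega^0$, one has $\bar J(t,y_i,\alpha)=J(t,y_i,\alpha)$, hence $|\bar v(t,y_1)-\bar v(t,y_2)|\leq C\,\mathbb{E}^1|y_1-y_2|^2$ $\mathbb{P}^0$-a.e.

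For the elementary case I would take $\xi^n=\sum_{j=1}^{m_n}\xi_j^n\,1_{B_j^{n,0}}$ exactly as in the proof of Lemma~\ref{nonrandom_random_functional}, with $\xi_j^n\in L^2((\Omega^1,\mathcal{F}_t^1,\mathbb{P}^1);\mathbb{R}^d)$ and $(B_j^{n,0})_j$ a finite disjoint $\mathcal{F}_t^0$-partition of $\Omega^0$, and show
\begin{align*}
v(t,\xi^n)=\sum_{j=1}^{m_n}\bar v(t,\xi_j^n)\,1_{B_j^{n,0}}=\bar v(t,\xi^n(\omega^0))(\omega^0)\quad\mathbb{P}^0\text{-a.e.}
\end{align*}
The ``$\geq$'' half is immediate: for every $\alpha\in\mathcal{A}_t$, Lemma~\ref{nonrandom_random_functional} yields $J(t,\xi^n,\alpha)=\sum_j\bar J(t,\xi_j^n,\alpha)\,1_{B_j^{n,0}}\geq\sum_j\bar v(t,\xi_j^n)\,1_{B_j^{n,0}}$, and $\essinf_\alpha$ preserves the inequality. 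For the ``$\leq$'' half, I would use that the family $\{\bar J(t,\xi_j^n,\alpha)\}_{\alpha\in\mathcal{A}_t}$ is downward directed: for any $\alpha,\beta\in\mathcal{A}_t$ and any $A\in\mathcal{F}_t^0$ the control $\alpha 1_A+\beta 1_{A^c}$ is still $\mathbb{F}^t$-progressive, and pathwise uniqueness for~\eqref{basic_dynamics} together with Lemma~\ref{nonrandom_random_functional} gives $\bar J(t,\xi_j^n,\alpha 1_A+\beta 1_{A^c})=\bar J(t,\xi_j^n,\alpha)\,1_A+\bar J(t,\xi_j^n,\beta)\,1_{A^c}$. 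Hence there are $(\alpha^{j,k})_k\subset\mathcal{A}_t$ with $\bar J(t,\xi_j^n,\alpha^{j,k})\downarrow \bar v(t,\xi_j^n)$ $\mathbb{P}^0$-a.e.; setting $\alpha^k:=\sum_j\alpha^{j,k}\,1_{B_j^{n,0}}\in\mathcal{A}_t$ and invoking the same locality, $J(t,\xi^n,\alpha^k)\downarrow\sum_j\bar v(t,\xi_j^n)\,1_{B_j^{n,0}}$, which upper-bounds $v(t,\xi^n)$.

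To finish, I would approximate a general $\xi\in L^2((\Omega,\mathcal{F}_t,\mathbb{P});\mathbb{R}^d)$ in $L^2(\Omega)$ by simple $\xi^n$ of the above form. Lemma~\ref{regularity_J_V}(3) gives $\|v(t,\xi)-v(t,\xi^n)\|_{L^2(\Omega^0)}\to 0$, while the preliminary Lipschitz bound for $\bar v$ applied $\omega^0$-wise produces
\begin{align*}
\mathbb{E}^0\bigl|\bar v(t,\xi(\omega^0))(\omega^0)-\bar v(t,\xi^n(\omega^0))(\omega^0)\bigr|^2\leq C\,\mathbb{E}|\xi-\xi^n|^2\to 0,
\end{align*}
so passing to the limit in the elementary identity closes the proof. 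The step I expect to be delicate is the downward-directedness, which requires the concatenation formula for $\bar J$ on $\mathcal{F}_t^0$-measurable sets; this relies on the specific structure of $\mathbb{F}^t$ combined with pathwise uniqueness for~\eqref{basic_dynamics}, but once it is established the rest is a routine density closure.
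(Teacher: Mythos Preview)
Your proposal is correct and follows essentially the same route as the paper: reduce to simple $\xi=\sum_j\xi_j 1_{B_j^0}$, use the splitting of $J$ over the $\mathcal{F}_t^0$-partition for the lower bound, patch near-optimal controls $\alpha^k=\sum_j\alpha^{j,k}1_{B_j^0}$ for the upper bound, and close by $L^2$-density via Lemma~\ref{regularity_J_V}. Your explicit treatment of downward directedness is in fact more careful than the paper, which invokes the existence of an a.e.\ minimizing sequence without spelling out why such a sequence can be taken from $\mathcal{A}_t$ itself.
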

\begin{proof}
    Again, we start with simple functions. From the beginning of the proof of Lemma \ref{nonrandom_random_functional} we can assume $\xi(\omega^0,\omega^1) =\sum_{j=1}^m\xi_j(\omega^1)1_{B_j^0}(\omega^0)$, where for $j=1,\ldots,m$, $B_j^0\in\mathcal{F}_t^0$, $\{B_j^0\}_{j=1}^m$ is a disjoint partition of $\Omega^0$, $\xi_j\in L^2((\Omega^1,\mathcal{F}^1_t,\mathbb{P}^1);\mathbb{R}^d)$, and it holds that
\begin{align*}
     \sum_{j=1}^m X_s^{t,\xi_j,\alpha}1_{B_j^0}(\omega^0)
\end{align*}
solves the SDE 
\begin{align}
\label{random_nonrandom_eq2}
\begin{cases}
    dX_s^{t,\xi,\alpha} = &b_s(X_s^{t,\xi,\alpha},\mathbb{P}_{X_s^{t,\xi,\alpha}}^{W^0},\alpha_s)ds+\sigma_s(X_s^{t,\xi,\alpha},\mathbb{P}_{X_s^{t,\xi,\alpha}}^{W^0},\alpha_s)dW_s \\
    &+ \sigma^0_s(X_s^{t,\xi,\alpha},\mathbb{P}_{X_s^{t,\xi,\alpha}}^{W^0},\alpha_s)dW^0_s,\,\,\,s\in[t,T],\\
    X_t^{t,\xi,\alpha} = \xi.
\end{cases}
\end{align}
Thus,
\begin{align*}
    J(t,\xi,\alpha)
    = &\,\mathbb{E}\Bigg[\int_t^T f_s(X_s^{t,\xi,\alpha},\mathbb{P}^{W^0}_{X_s^{t,\xi,\alpha}},\alpha_s)+g(X_T^{t,\xi,\alpha},\mathbb{P}^{W^0}_{X_T^{t,\xi,\alpha}})\Big|\mathcal{F}_t^0\Bigg]\\
    =&\sum_{j=1}^m\mathbb{E}\Bigg[\int_t^T f_s(X_s^{t,\xi_j,\alpha},\mathbb{P}^{W^0}_{X_s^{t,\xi_j,\alpha}},\alpha_s)+g(X_T^{t,\xi_j,\alpha},\mathbb{P}^{W^0}_{X_T^{t,\xi_j,\alpha}})\Big|\mathcal{F}_t^0\Bigg]1_{B_j^0}(\omega^0)\\
    \geq &\sum_{j=1}^m \bar{v}(t,\xi_j)1_{B_j^0}(\omega^0)\\
    =&  \bar{v}(t,\sum_{j=1}^m\xi_j 1_{B_j^0}(\omega^0))
\end{align*}
By arbitrariness of $\alpha$ we have
\begin{align*}
    v(t,\xi)\geq \bar{v}(t,\sum_{j=1}^m\xi_j 1_{B_j^0}(\omega^0)).
\end{align*}
On the other hand, for each $j= 1,\ldots,m$, there exists $\alpha_{ji}\in\mathcal{A}_t$, $i\in\mathbb{N}$ such that 
\begin{align*}
    \lim_{i\to\infty} J(t,\xi_j,\alpha_{ji})= \bar{v}(t,\xi_j),\quad\mathbb{P}^0\text{-}a.e..
\end{align*}
Define $\hat{\alpha}_i := \sum_{j=1}^m \alpha_{ji}1_{B_j^0}$. Then
\begin{align*}
    v(t,\xi) 
    \leq&J(t,\xi,\hat{\alpha}_i)
    = \sum_{j=1}^m\mathbb{E}\Bigg[\int_t^T f_s(X_s^{t,\xi_j,\alpha_{ji}},\mathbb{P}^{W^0}_{X_s^{t,\xi_j,\alpha_{ji}}},\alpha_{ji,s})+g(X_T^{t,\xi_j,\alpha_{ji}},\mathbb{P}^{W^0}_{X_T^{t,\xi_j,\alpha_{ji}}})\Big|\mathcal{F}_t^0\Bigg]1_{B_j^0}(\omega^0)\\
    \to&\sum_{j=1}^m\bar{v}(t,\xi_j)1_{B_j^0},\text{ as }i\to\infty.
\end{align*}
Thus we have
\begin{align*}
    v(t,\xi)\leq\bar{v}(t,\sum_{j=1}^m\xi_j 1_{B_j^0}(\omega^0)).
\end{align*}
Finally, for general $\xi\in L^2((\Omega,\mathcal{F},\mathbb{P});\mathbb{R}^d)$, the proof is essentially the same as the last part in Lemma \ref{nonrandom_random_functional}.  
\end{proof}
\begin{lemma}
     Assume ($\mathcal{A}$1) holds. Letting $\theta\in\mathcal{T}_{t,T}^0$, for all $\xi\in L^2((\Omega,\mathcal{F}_\theta,\mathbb{P});\mathbb{R}^d)$, we have
    \begin{align}
    \label{pseudo_markov}
        J(\theta,X_\theta^{t,\xi,\alpha},\alpha^\theta)=&\, \mathbb{E}\Bigg[\int_\theta^T f_s(X_s^{t,\xi,\alpha},\mathbb{P}_{X_s^{t,\xi,\alpha}}^{W^0},\alpha_s)+g(X_T^{t,\xi,\alpha},\mathbb{P}_{X_T^{t,\xi,\alpha}}^{W^0})\Big|\mathcal{F}^0_\theta\Bigg],\,\,\,\mathbb{P}^0\text{-}a.e..
    \end{align}
\end{lemma}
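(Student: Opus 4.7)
The plan is to establish this pseudo-Markov identity by first proving the deterministic-time case and then extending via stopping-time approximation. Fix $t_0\in[t,T]$ and $\hat\omega^0\in\Omega^0$. On the event $\{W^0|_{[0,t_0]}=\hat\omega^0|_{[0,t_0]}\}$ the concatenation satisfies $\hat\omega^0\otimes_{t_0}\omega^0 = \omega^0$, and consequently $\alpha^{t_0,\hat\omega^0}_s(\omega^0) = \alpha_s(\omega^0)$ for all $s\in[t_0,T]$. Hence on this event the processes $(X^{t,\xi,\alpha}_s)_{s\in[t_0,T]}$ and $(X^{t_0,X^{t,\xi,\alpha}_{t_0},\alpha^{t_0,\hat\omega^0}}_s)_{s\in[t_0,T]}$ satisfy the same SDE with the same initial data $X^{t,\xi,\alpha}_{t_0}$, so by pathwise uniqueness together with Lemma \ref{SDE_property}(1) they are indistinguishable, and hence their $W^0$-conditional laws agree. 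Disintegrating the conditional expectation on the right-hand side of \eqref{pseudo_markov} against a regular conditional probability of $\mathbb{P}$ given $\mathcal{F}^0_{t_0}$ on the canonical space $\Omega^0=C(\mathbb{R}_+,\mathbb{R}^m)$, this identification turns the right-hand side into $\bar J(t_0, X^{t,\xi,\alpha}_{t_0}(\hat\omega^0,\cdot), \alpha^{t_0,\hat\omega^0})(\hat\omega^0)$, which by Lemma \ref{nonrandom_random_functional} equals $J(t_0, X^{t,\xi,\alpha}_{t_0}, \alpha^{t_0})(\hat\omega^0)$ for $\mathbb{P}^0$-a.e.\ $\hat\omega^0$.

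For a general $\theta\in\mathcal{T}^0_{t,T}$, I would first treat a simple $\mathbb{F}^0$-stopping time $\theta = \sum_{i=1}^n t_i \mathbf{1}_{A_i}$ with $A_i\in\mathcal{F}^0_{t_i}$ partitioning $\Omega^0$ and $t\leq t_1<\cdots<t_n\leq T$; restricting to each $A_i$, applying the deterministic case at $t_0 = t_i$, and summing recovers \eqref{pseudo_markov}. A general $\theta$ is then approximated from above by dyadic simple stopping times $\theta_k\searrow\theta$: by Lemma \ref{SDE_property}(3) the state process is continuous, so $X^{t,\xi,\alpha}_{\theta_k}\to X^{t,\xi,\alpha}_\theta$ in $L^2$, and Lemma \ref{regularity_J_V}(3)--(4) yields convergence of the left-hand side; on the right-hand side the integrand is bounded by $K(T+1)$ by Assumption ($\mathcal{A}$1), so the conditional dominated convergence theorem together with the right-continuity of $\mathbb{F}^0$ yields the desired identity in the limit.

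The main obstacle is the disintegration step in the deterministic case --- making rigorous the factorization of the right-hand side through the shifted control $\alpha^{t_0,\hat\omega^0}$ at the level of regular conditional probabilities. Since $\alpha$ is only Borel in its $\omega^0$-argument, one needs to leverage the canonical-space structure from Section \ref{setup}, the joint measurability of $(\hat\omega^0,\omega^0)\mapsto \alpha^{t_0,\hat\omega^0}_s(\omega^0)$, and the invariance of Wiener measure on $[t_0,\infty)$ under the shift map $\hat\omega^0\otimes_{t_0}\cdot$; only after this book-keeping does the cost functional $J$ factor cleanly and Lemma \ref{nonrandom_random_functional} can be invoked to conclude.
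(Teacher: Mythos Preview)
Your proposal is correct and follows the same overall architecture as the paper's proof: establish the identity for simple $\mathbb{F}^0$-stopping times, then pass to general $\theta$ by approximation. The substantive difference lies in how the base case is handled. You work pointwise in $\hat\omega^0$ via disintegration against a regular conditional probability on the canonical Wiener space, then invoke Lemma~\ref{nonrandom_random_functional} to reassemble $J$; you rightly flag the measurability book-keeping in this step as the main obstacle. The paper bypasses this entirely: for a simple $\theta=\sum_i t_i 1_{B_i^0}$ it tests both sides of \eqref{pseudo_markov} against an arbitrary $A\in\mathcal{F}^0_\theta$, splits the integral along the partition $\{\theta=t_i\}$, uses $\{\theta=t_i\}\cap A\in\mathcal{F}^0_{t_i}$ to replace $\mathcal{F}^0_\theta$-conditioning by $\mathcal{F}^0_{t_i}$-conditioning, applies the flow property $X_s^{t,\xi,\alpha}=X_s^{\theta,X_\theta^{t,\xi,\alpha},\alpha^\theta}$, and recognises the resulting conditional expectation as the very definition of $J(t_i,X_{t_i}^{t,\xi,\alpha},\alpha^{t_i})$. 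No regular conditional probabilities are needed, and Lemma~\ref{nonrandom_random_functional} is not invoked. For the approximation step the two proofs are again parallel: you use conditional dominated convergence together with right-continuity of $\mathbb{F}^0$, while the paper cites martingale convergence for the right-hand side and Lemma~\ref{regularity_J_V}(\ref{Lemma_2_3_1}) for the left. Both routes are valid; the paper's is shorter precisely because it sidesteps the disintegration issue you identified as delicate.
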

\begin{proof}
    Recall that from the uniqueness of SDE we have the flow property:
\begin{align*}
    X_s^{t,\xi,\alpha}(\omega^0,\omega^1) = X_s^{\theta,X_{\theta}^{t,\xi,\alpha},\alpha^\theta}(\omega^0,\omega^1).
\end{align*}
Assume $\theta = \sum_{i=1}^n t_i 1_{B^0_i}$, where for $i=1,\cdots,n$, $t\leq t_i\leq T$, and $\{B^0_i\}_{i=1}^n$ is a disjoint partition of $\Omega^0$, thus $B_i^0 = \{\theta = t_i\} $, and $B_i^0\in\mathcal{F}_{t_i}^0$. Let $A\in\mathcal{F}_\theta^0$. Then 
\begin{align*}
    &\int_A \mathbb{E}\Bigg[\int_\theta^T f_s(X_s^{t,\xi,\alpha},\mathbb{P}_{X_s^{t,\xi,\alpha}}^{W^0},\alpha_s)+g(X_T^{t,\xi,\alpha},\mathbb{P}_{X_T^{t,\xi,\alpha}}^{W^0})\Big|\mathcal{F}^0_\theta\Bigg] d\mathbb{P}^0\\
    =&\int_A \Bigg(\int_\theta^T f_s(X_s^{t,\xi,\alpha},\mathbb{P}_{X_s^{t,\xi,\alpha}}^{W^0},\alpha_s)+g(X_T^{t,\xi,\alpha},\mathbb{P}_{X_T^{t,\xi,\alpha}}^{W^0}) \Bigg)\sum_{i=1}^n 1_{\{\theta = t_i\}}d\mathbb{P}^0\\
    =&\int_{\Omega^0} \Bigg(\int_\theta^T f_s(X_s^{t,\xi,\alpha},\mathbb{P}_{X_s^{t,\xi,\alpha}}^{W^0},\alpha_s)+g(X_T^{t,\xi,\alpha},\mathbb{P}_{X_T^{t,\xi,\alpha}}^{W^0}) \Bigg)\sum_{i=1}^n 1_{\{\theta = t_i\}\cap A}d\mathbb{P}^0\\
    =&\sum_{i=1}^n\int_{\Omega^0} \mathbb{E}\Bigg[\int_\theta^T f_s(X_s^{t,\xi,\alpha},\mathbb{P}_{X_s^{t,\xi,\alpha}}^{W^0},\alpha_s)+g(X_T^{t,\xi,\alpha},\mathbb{P}_{X_T^{t,\xi,\alpha}}^{W^0})\Big|\mathcal{F}^0_{t_i}\Bigg] 1_{\{\theta = t_i\}\cap A} d\mathbb{P}^0\\
    =&\sum_{i=1}^n\int_{\Omega^0} \mathbb{E}\Bigg[\int_{t_i}^T f_s(X_s^{t_i,X^{t,\xi,\alpha}_{t_i},\alpha^{t_i}},\mathbb{P}_{X_s^{t_i,X^{t,\xi,\alpha}_{t_i},\alpha^{t_i}}}^{W^0},\alpha^{t_i}_s)+g(X_T^{t_i,X^{t,\xi,\alpha}_{t_i},\alpha^{t_i}},\mathbb{P}_{X_T^{t_i,X^{t,\xi,\alpha}_{t_i},\alpha^{t_i}}}^{W^0})\Big|\mathcal{F}^0_{t_i}\Bigg] 1_{\{\theta = t_i\}\cap A} d\mathbb{P}^0\\
    =&\sum_{i=1}^n\int_{\Omega^0} J(t_i,X_{t_i}^{t,\xi,\alpha},\alpha^{t_i}) 1_{\{\theta = t_i\}\cap A}d\mathbb{P}^0\\
    =&\int_A J(\theta,X_\theta^{t,\xi,\alpha},\alpha^\theta) d\mathbb{P}^0.
\end{align*}
For the general case, let $\theta_i$ be an increasing sequence of discrete stopping times such that $\lim_i \theta_i = \theta$. By Theorem 4.6.10 in \cite{durrett2019probability} and Chapter 1, Exercises 24 and 27 in \cite{protter2005stochastic}, it holds that
\begin{align*}
    &\mathbb{E}\Bigg[\int_{\theta_i}^T f_s(X_s^{t,\xi,\alpha},\mathbb{P}_{X_s^{t,\xi,\alpha}}^{W^0},\alpha_s)+g(X_T^{t,\xi,\alpha},\mathbb{P}_{X_T^{t,\xi,\alpha}}^{W^0})\Big|\mathcal{F}_{\theta_i}^0\Bigg]\\
    \to&\mathbb{E}\Bigg[\int_{\theta}^T f_s(X_s^{t,\xi,\alpha},\mathbb{P}_{X_s^{t,\xi,\alpha}}^{W^0},\alpha_s)+g(X_T^{t,\xi,\alpha},\mathbb{P}_{X_T^{t,\xi,\alpha}}^{W^0})\Big|\mathcal{F}_{\theta}^0\Bigg]\quad\mathbb{P}^0\text{-}a.e.,
\end{align*}
which, together with Lemma \ref{regularity_J_V} \ref{Lemma_2_3_1}, implies that
\begin{align*}
    &J(\theta,X_\theta^{t,\xi,\alpha},\alpha^\theta)- \mathbb{E}\Bigg[\int_\theta^T f_s(X_s^{t,\xi,\alpha},\mathbb{P}_{X_s^{t,\xi,\alpha}}^{W^0},\alpha_s)+g(X_T^{t,\xi,\alpha},\mathbb{P}_{X_T^{t,\xi,\alpha}}^{W^0})\Big|\mathcal{F}^0_\theta\Bigg]\\
    =&J(\theta,X_\theta^{t,\xi,\alpha},\alpha^\theta)- J(\theta_i,X_{\theta_i}^{t,\xi,\alpha},\alpha^{\theta_i})\\
    &+J(\theta_i,X_{\theta_i}^{t,\xi,\alpha},\alpha^{\theta_i})-\mathbb{E}\Bigg[\int_\theta^T f_s(X_s^{t,\xi,\alpha},\mathbb{P}_{X_s^{t,\xi,\alpha}}^{W^0},\alpha_s)+g(X_T^{t,\xi,\alpha},\mathbb{P}_{X_T^{t,\xi,\alpha}}^{W^0})\Big|\mathcal{F}^0_\theta\Bigg]\\
    =&J(\theta,X_\theta^{t,\xi,\alpha},\alpha^\theta)- J(\theta_i,X_{\theta_i}^{t,\xi,\alpha},\alpha^{\theta_i})\\
    &+\mathbb{E}\Bigg[\int_{\theta_i}^T f_s(X_s^{t,\xi,\alpha},\mathbb{P}_{X_s^{t,\xi,\alpha}}^{W^0},\alpha_s)+g(X_T^{t,\xi,\alpha},\mathbb{P}_{X_T^{t,\xi,\alpha}}^{W^0})\Big|\mathcal{F}_{\theta_i}^0\Bigg]\\
    &-\mathbb{E}\Bigg[\int_\theta^T f_s(X_s^{t,\xi,\alpha},\mathbb{P}_{X_s^{t,\xi,\alpha}}^{W^0},\alpha_s)+g(X_T^{t,\xi,\alpha},\mathbb{P}_{X_T^{t,\xi,\alpha}}^{W^0})\Big|\mathcal{F}^0_\theta\Bigg]\\
    \to &0\,\,\,\mathbb{P}^0\,a.e.\text{ as }i\to \infty. 
\end{align*}
\end{proof}
\begin{theorem}
\label{DPP_THM}
(Dynamic Programming Principle). Let ($\mathcal{A}$1) hold. We have for all $(t,\xi)\in [0,T]\times L^2((\Omega,\mathcal{F}_t,\mathbb{P});\mathbb{R}^d)$,
\begin{align*}
    v(t,\xi) = &\essinf_{\alpha\in\mathcal{A}_t}\essinf_{\theta\in\mathcal{T}_{t,T}^0}\mathbb{E}\Bigg[\int_t^\theta f_s(X_s^{t,\xi,\alpha},\mathbb{P}^{W^0}_{X_s^{t,\xi,\alpha}},\alpha_s)+v(\theta,X_{\theta}^{t,\xi,\alpha}) \Big|\mathcal{F}_t^0\Bigg]\\
    =&\essinf_{\alpha\in\mathcal{A}_t}\esssup_{\theta\in\mathcal{T}_{t,T}^0}\mathbb{E}\Bigg[\int_t^\theta f_s(X_s^{t,\xi,\alpha},\mathbb{P}^{W^0}_{X_s^{t,\xi,\alpha}},\alpha_s)+v(\theta,X_{\theta}^{t,\xi,\alpha}) \Big|\mathcal{F}_t^0\Bigg].
\end{align*}
\end{theorem}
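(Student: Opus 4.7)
The plan is to prove two chains of inequalities: $v(t,\xi) \geq \essinf_{\alpha \in \mathcal{A}_t} \esssup_{\theta \in \mathcal{T}_{t,T}^0} \mathbb{E}[\int_t^\theta f_s\,ds + v(\theta, X_\theta^{t,\xi,\alpha}) \mid \mathcal{F}_t^0]$ and $\essinf_{\alpha} \essinf_{\theta} \mathbb{E}[\cdots] \geq v(t,\xi)$; the trivial sandwich $\essinf_\theta \leq \esssup_\theta$ then collapses both chains into the required equalities, since the two outer expressions coincide with $v(t,\xi)$.

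The first inequality follows directly from the auxiliary identity \eqref{pseudo_markov}. For any $\alpha \in \mathcal{A}_t$ and $\theta \in \mathcal{T}_{t,T}^0$, combining \eqref{pseudo_markov} with the tower property of conditional expectation, the definition of $v$, and Lemma \ref{random_nonrandom_value} to interpret $v$ at the random time $\theta$, I obtain
\begin{align*}
J(t,\xi,\alpha) &= \mathbb{E}\Bigg[\int_t^\theta f_s(X_s^{t,\xi,\alpha}, \mathbb{P}^{W^0}_{X_s^{t,\xi,\alpha}}, \alpha_s)\,ds + J(\theta, X_\theta^{t,\xi,\alpha}, \alpha^\theta) \,\Big|\, \mathcal{F}_t^0 \Bigg] \\
&\geq \mathbb{E}\Bigg[\int_t^\theta f_s(X_s^{t,\xi,\alpha}, \mathbb{P}^{W^0}_{X_s^{t,\xi,\alpha}}, \alpha_s)\,ds + v(\theta, X_\theta^{t,\xi,\alpha}) \,\Big|\, \mathcal{F}_t^0 \Bigg].
\end{align*}
Taking $\esssup$ over $\theta$ and then $\essinf$ over $\alpha$ delivers $v(t,\xi) \geq \essinf_\alpha \esssup_\theta (\cdots)$.

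For the opposite direction, fix $\alpha \in \mathcal{A}_t$, $\theta \in \mathcal{T}_{t,T}^0$, and $\varepsilon > 0$. A measurable selection argument (relying on the appendix advertised by the paper) produces $\alpha^\varepsilon \in \mathcal{A}_\theta$ such that $J(\theta, X_\theta^{t,\xi,\alpha}, \alpha^\varepsilon) \leq v(\theta, X_\theta^{t,\xi,\alpha}) + \varepsilon$, $\mathbb{P}^0$-a.e. Define the concatenation $\tilde\alpha_s := \alpha_s\mathbf{1}_{\{s < \theta\}} + \alpha^\varepsilon_s\mathbf{1}_{\{s \geq \theta\}}$, which lies in $\mathcal{A}_t$. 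SDE uniqueness yields $X_s^{t,\xi,\tilde\alpha} = X_s^{t,\xi,\alpha}$ on $[t,\theta]$, and applying \eqref{pseudo_markov} to $\tilde\alpha$ gives
\begin{align*}
v(t,\xi) \leq J(t,\xi,\tilde\alpha) &= \mathbb{E}\Bigg[\int_t^\theta f_s(X_s^{t,\xi,\alpha}, \mathbb{P}^{W^0}_{X_s^{t,\xi,\alpha}}, \alpha_s)\,ds + J(\theta, X_\theta^{t,\xi,\alpha}, \alpha^\varepsilon) \,\Big|\, \mathcal{F}_t^0 \Bigg] \\
&\leq \mathbb{E}\Bigg[\int_t^\theta f_s(X_s^{t,\xi,\alpha}, \mathbb{P}^{W^0}_{X_s^{t,\xi,\alpha}}, \alpha_s)\,ds + v(\theta, X_\theta^{t,\xi,\alpha}) \,\Big|\, \mathcal{F}_t^0 \Bigg] + \varepsilon.
\end{align*}
Taking $\essinf$ over $\theta$ and $\alpha$, and sending $\varepsilon \to 0$, yields $v(t,\xi) \leq \essinf_\alpha \essinf_\theta (\cdots)$.

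The main obstacle is the measurable selection step. The essential infimum defining $v(\theta, X_\theta^{t,\xi,\alpha})$ must be approximated, up to $\varepsilon$, by a single $\alpha^\varepsilon$ depending measurably on $\omega^0$ with the correct shift structure; this typically demands a preliminary lemma showing the family $\{J(\theta, X_\theta^{t,\xi,\alpha}, \beta): \beta \in \mathcal{A}_\theta\}$ is directed downward, so that the $\essinf$ is realized by a decreasing sequence, followed by a measurable selection theorem for set-valued maps into the Polish space $(\mathcal{A}_\theta, \Delta)$. In addition, the concatenation must be verified to be $\mathbb{F}^t$-progressively measurable and $A$-valued; here the shifted-control formalism $\alpha^\theta$ from Section 2.3 and Lemma \ref{random_nonrandom_value} are indispensable to convert between the conditional viewpoint (treating $\omega^0$ as a parameter) and the fully stochastic one.
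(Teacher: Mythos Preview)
Your overall strategy matches the paper's: prove the two outer inequalities and sandwich. The first direction is essentially the same, though you glide over one point the paper treats carefully. The inequality $J(\theta,X_\theta^{t,\xi,\alpha},\alpha^\theta)\geq v(\theta,X_\theta^{t,\xi,\alpha})$ at a \emph{random} $\theta$ does not follow directly from the definition of $v$, which is posed at deterministic times; the paper handles this by first taking $\theta=\sum_i t_i 1_{B_i^0}$ simple, decomposing the conditional expectation over the $B_i^0$'s so that the inequality reduces to the obvious $J(t_i,\cdot,\alpha^{t_i})\geq v(t_i,\cdot)$, and then passing to general $\theta$ via the continuity in Lemma~\ref{regularity_J_V}\,\ref{Lemma_2_3_2}. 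Lemma~\ref{random_nonrandom_value} alone does not do this job.

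For the second direction your route genuinely diverges from the paper's. You propose a measurable selection yielding a single $\alpha^\varepsilon\in\mathcal{A}_\theta$ with $J(\theta,X_\theta,\alpha^\varepsilon)\leq v(\theta,X_\theta)+\varepsilon$; the paper instead again reduces to simple $\theta=\sum_i t_i 1_{B_i^0}$, picks for each \emph{deterministic} level $t_i$ a sequence $\alpha_{ij}\in\mathcal{A}_{t_i}$ with $J(t_i,X_{t_i}^{t,\xi,\alpha},\alpha_{ij})\to v(t_i,X_{t_i}^{t,\xi,\alpha})$ (this needs nothing beyond the definition of $\essinf$), concatenates $\alpha_j^*:=\alpha 1_{s<\theta}+\sum_i\alpha_{ij}1_{s\geq t_i}1_{B_i^0}$, and passes to the limit in $j$, then in the simple approximation of $\theta$. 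The paper's approach is more elementary: it sidesteps any measurable-selection theorem, and it never needs the class $\mathcal{A}_\theta$ for a random $\theta$, which the paper in fact does not define. Your approach would work in principle, but the selection you need is not the one in Appendix~\ref{measurable_selection_construct} (that one selects from $A$, not from $\mathcal{A}_\theta$), and you would have to build the corresponding framework from scratch, including defining $\mathcal{A}_\theta$ and its Polish structure.
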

\begin{proof}
Let $(t,\xi,\alpha)\in [0,T]\times L^2((\Omega;\mathcal{F}_t,\mathbb{P});\mathbb{R}^d)\times\mathcal{A}_t$. From (\ref{pseudo_markov}) it follows that
\begin{align*}
    J(t,\xi,\alpha)
    =&\mathbb{E}\Bigg[\int_t^\theta f(X_s^{t,\xi,\alpha},\mathbb{P}^{W^0}_{X_s^{t,\xi,\alpha}},\alpha_s)+\mathbb{E}\Bigg[\int_\theta^T f(X_s^{t,\xi,\alpha},\mathbb{P}^{W^0}_{X_s^{t,\xi,\alpha}},\alpha_s) + g(X_T^{t,\xi,\alpha},\mathbb{P}^{W^0}_{X_T^{t,\xi,\alpha}})\Big|\mathcal{F}_\theta^0\Bigg]\Big|\mathcal{F}_t^0\Bigg]\\
    =&\mathbb{E}\Bigg[\int_t^\theta f(X_s^{t,\xi,\alpha},\mathbb{P}^{W^0}_{X_s^{t,\xi,\alpha}},\alpha_s)+J(\theta,X^{t,\xi,\alpha}_\theta,\alpha^\theta)\Big|\mathcal{F}_t^0\Bigg].
    \end{align*}
    Now assume that $\theta = \sum_{i=1}^n t_i 1_{B_i^0}$, where for $i=1,\cdots,n$, $t\leq t_i\leq T$, and $\{B^0_i\}_{i=1}^n$ is a disjoint partition of $\Omega^0$, then
    \begin{align*}
        &\mathbb{E}\Bigg[\int_t^\theta f(X_s^{t,\xi,\alpha},\mathbb{P}^{W^0}_{X_s^{t,\xi,\alpha}},\alpha_s)+J(\theta,X^{t,\xi,\alpha}_\theta,\alpha^\theta)\Big|\mathcal{F}_t^0\Bigg]\\
        =&\sum_{i=1}^n\mathbb{E}\Bigg[\Bigg[\int_t^{t_i} f(X_s^{t,\xi,\alpha},\mathbb{P}^{W^0}_{X_s^{t,\xi,\alpha}},\alpha_s)+J(t_i,X^{t,\xi,\alpha}_{t_i},\alpha^{t_i})\Bigg]1_{B_i^0}\Big|\mathcal{F}_t^0\Bigg]\\
        \geq&\sum_{i=1}^n\mathbb{E}\Bigg[\Bigg[\int_t^{t_i} f(X_s^{t,\xi,\alpha},\mathbb{P}^{W^0}_{X_s^{t,\xi,\alpha}},\alpha_s)+v(t_i,X^{t,\xi,\alpha}_{t_i})\Bigg]1_{B_i^0}\Big|\mathcal{F}_t^0\Bigg]\\
        =&\mathbb{E}\Bigg[\int_t^\theta f(X_s^{t,\xi,\alpha},\mathbb{P}^{W^0}_{X_s^{t,\xi,\alpha}},\alpha_s)+v(\theta,X^{t,\xi,\alpha}_{\theta})\Big|\mathcal{F}_t^0\Bigg].
    \end{align*}
    Thus if $\theta$ is simple, 
    \begin{align*}
    J(t,\xi,\alpha)\geq \mathbb{E}\Bigg[\int_t^\theta f(X_s^{t,\xi,\alpha},\mathbb{P}^{W^0}_{X_s^{t,\xi,\alpha}},\alpha_s)+v(\theta,X^{t,\xi,\alpha}_{\theta})\Big|\mathcal{F}_t^0\Bigg].
    \end{align*}
    Now, let $\theta\in\mathcal{T}_{t,T}^0$ and $\theta_i\to\theta$ a simple increasing approximation of stopping times of $\theta$, then we have
    \begin{align*}
        J(t,\xi,\alpha)\geq \mathbb{E}\Bigg[\int_t^{\theta_i} f(X_s^{t,\xi,\alpha},\mathbb{P}^{W^0}_{X_s^{t,\xi,\alpha}},\alpha_s)+v(\theta_i,X^{t,\xi,\alpha}_{\theta_i})\Big|\mathcal{F}_t^0\Bigg],\text{ for every }i\in\mathbb{N}.
    \end{align*}
    Take limit on the right hand side and from Lemma \ref{regularity_J_V} \ref{Lemma_2_3_2} we have
    \begin{align*}
        J(t,\xi,\alpha)\geq \mathbb{E}\Bigg[\int_t^{\theta} f(X_s^{t,\xi,\alpha},\mathbb{P}^{W^0}_{X_s^{t,\xi,\alpha}},\alpha_s)+v(\theta,X^{t,\xi,\alpha}_{\theta})\Big|\mathcal{F}_t^0\Bigg].
    \end{align*}
    Since $\theta\in\mathcal{T}_{t,T}^0$ and $\alpha\in\mathcal{A}_t$ are arbitrary, we have
    \begin{align*}
        v(t,\xi)\geq\essinf_{\alpha\in\mathcal{A}_t}\esssup_{\theta\in\mathcal{T}_{t,T}^0} \mathbb{E}\Bigg[\int_t^{\theta} f(X_s^{t,\xi,\alpha},\mathbb{P}^{W^0}_{X_s^{t,\xi,\alpha}},\alpha_s)+v(\theta,X^{t,\xi,\alpha}_{\theta})\Big|\mathcal{F}_t^0\Bigg].
    \end{align*}
    For the other side of inequality, again assume that $\theta = \sum_{i=1}^n t_i 1_{B_i^0}$, where for $i=1,\cdots,n$, $t\leq t_i\leq T$, and $\{B^0_i\}_{i=1}^n$ is a disjoint partition of $\Omega^0$. Let $\alpha\in\mathcal{A}_t$ be arbitrary. For each $t_i$, there exists $\alpha_{ij}\in\mathcal{A}_{t_i}$, $j\in\mathbb{N}$ such that
\begin{align*}
    \lim_{j\to\infty} J(t_i,X_{t_i}^{t,\xi,\alpha},\alpha_{ij}) = v(t_i,X_{t_i}^{t,\xi,\alpha}),\quad\mathbb{P}^0\text{-}a.e..
\end{align*}
Define $\alpha^*_{j;s} :=  \alpha_s 1_{s <\theta} + \sum_{i=1}^n \alpha_{ij;s}1_{s\geq t_i}1_{B_i^0}$.  Then we have
\begin{align*}
    v(t,\xi)
    \leq&J(t,\xi,\alpha^*_j)
    = \sum_{i=1}^n\mathbb{E}\Bigg[\Bigg[\int_t^{t_i} f(X_s^{t,\xi,\alpha},\mathbb{P}^{W^0}_{X_s^{t,\xi,\alpha}},\alpha_s)+J(t_i,X^{t,\xi,\alpha}_{t_i},\alpha_{ij})\Bigg]1_{B_i^0}\Big|\mathcal{F}_t^0\Bigg]\\
    \to&\sum_{i=1}^n\mathbb{E}\Bigg[\Bigg[\int_t^{t_i} f(X_s^{t,\xi,\alpha},\mathbb{P}^{W^0}_{X_s^{t,\xi,\alpha}},\alpha_s)+v(t_i,X^{t,\xi,\alpha}_{t_i})\Bigg]1_{B_i^0}\Big|\mathcal{F}_t^0\Bigg]\\
    =&\mathbb{E}\Bigg[\int_t^{\theta} f(X_s^{t,\xi,\alpha},\mathbb{P}^{W^0}_{X_s^{t,\xi,\alpha}},\alpha_s)+v(\theta,X^{t,\xi,\alpha}_{\theta})\Big|\mathcal{F}_t^0\Bigg].
\end{align*}
Again, using increasing simple function approximation to stopping times and from Lemma \ref{regularity_J_V}, we have for any $\theta\in\mathcal{T}_{t,T}^0$,
\begin{align*}
    v(t,\xi)\leq \mathbb{E}\Bigg[\int_t^{\theta} f(X_s^{t,\xi,\alpha},\mathbb{P}^{W^0}_{X_s^{t,\xi,\alpha}},\alpha_s)+v(\theta,X^{t,\xi,\alpha}_{\theta})\Big|\mathcal{F}_t^0\Bigg],
\end{align*}
and by arbitrariness of $\alpha\in\mathcal{A}_t$, $\theta\in\mathcal{T}_{t,T}^0$, we have
\begin{align*}
    v(t,\xi)\leq \essinf_{\alpha\in\mathcal{A}_t}\essinf_{\theta\in\mathcal{T}_{t,T}^0}\mathbb{E}\Bigg[\int_t^{\theta} f(X_s^{t,\xi,\alpha},\mathbb{P}^{W^0}_{X_s^{t,\xi,\alpha}},\alpha_s)+v(\theta,X^{t,\xi,\alpha}_{\theta})\Big|\mathcal{F}_t^0\Bigg],
\end{align*}
and we conclude the required equality.
\end{proof}
\subsection{Conditional Law Invariant Interpretation}
\begin{theorem}
\label{law_invariance}
    Let ($\mathcal{A}$1) hold. Let $\xi$, $\eta\in L^2((\Omega,\mathcal{F}_t,\mathbb{P});\mathbb{R}^d)$. If for $\mathbb{P}^0$-a.e. $\omega^0$, $\mathcal{L}(\xi(\omega^0,\cdot)) = \mathcal{L}(\eta(\omega^0,\cdot))$, then
    \begin{align*}
        v(t,\xi) = v(t,\eta)\quad\mathbb{P}^0\text{-}a.e..
    \end{align*}
\end{theorem}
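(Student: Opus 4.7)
The plan is to reduce the statement to a law-invariance property of the auxiliary value function $\bar v$, and then to produce the required equality through a rearrangement of admissible controls built on the richness of $(\tilde\Omega^1,\mathcal{G},\tilde{\mathbb{P}}^1)$. First I would invoke Lemma \ref{random_nonrandom_value} to write $v(t,\xi)(\omega^0)=\bar v(t,\xi(\omega^0,\cdot))(\omega^0)$ and $v(t,\eta)(\omega^0)=\bar v(t,\eta(\omega^0,\cdot))(\omega^0)$ for $\mathbb{P}^0$-a.e.\ $\omega^0$. This reduces the theorem to showing that, for $\mathbb{P}^0$-a.e.\ $\omega^0$, the function $\bar v(t,\cdot)(\omega^0)$ on $L^2((\Omega^1,\mathcal{F}^1_t,\mathbb{P}^1);\mathbb{R}^d)$ depends on its argument only through its law.

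Fix such an $\omega^0$ and put $\xi':=\xi(\omega^0,\cdot)$, $\eta':=\eta(\omega^0,\cdot)$; by hypothesis they are equidistributed on $\Omega^1$. Because $\bar v(t,\cdot)(\omega^0)=\essinf_{\alpha\in\mathcal{A}_t}\bar J(t,\cdot,\alpha)(\omega^0)$, it is enough to show that for each $\alpha\in\mathcal{A}_t$ there is $\beta\in\mathcal{A}_t$ with $\bar J(t,\eta',\beta)(\omega^0)=\bar J(t,\xi',\alpha)(\omega^0)$, together with the symmetric statement obtained by swapping $(\xi',\alpha)$ with $(\eta',\beta)$. To construct $\beta$, I would use the product structure $\Omega^1=\tilde\Omega^1\times\hat\Omega^1$ and the assumption that $\tilde\Omega^1$ supports every probability law on $\mathbb{R}^d$: this enables the construction of a measurable, $\mathbb{P}^1$-preserving map $\phi:\Omega^1\to\Omega^1$ that acts as the identity on the $\sigma$-algebra $\sigma(W_{s\vee t}-W_t:s\ge 0)$ of post-$t$ increments of the idiosyncratic noise and satisfies $\xi'=\eta'\circ\phi$ $\mathbb{P}^1$-a.s. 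Setting $\beta:=\alpha\circ\phi$ (and extending $\phi$ to $\Omega$ by the identity on $\Omega^0$) then yields an $\mathbb{F}^t$-progressively measurable $A$-valued process, so that $\beta\in\mathcal{A}_t$, because $\phi$ leaves $W^0$ and the post-$t$ increments of $W$ invariant. Strong uniqueness of the SDE in Lemma \ref{SDE_property}(1) gives $X^{t,\eta',\beta}_s(\omega)=X^{t,\xi',\alpha}_s(\phi^{-1}(\omega))$; since $\phi$ preserves $\mathbb{P}^1$ and leaves $\mathcal{F}^{W^0}_\cdot$ untouched, all conditional laws given $W^0$ coincide, and substituting this identity into the definition of $\bar J$ produces the desired equality. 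The reverse inequality follows by swapping the roles of $\xi$ and $\eta$.

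The hard part is the construction of the rearrangement $\phi$ with measurable dependence on $\omega^0$. Fiberwise existence of a measure-preserving map identifying two equidistributed square-integrable random variables is classical on standard Borel spaces, but here one needs $\phi=\phi(\omega^0,\cdot)$ to be jointly measurable in $(\omega^0,\omega^1)$ so that $\beta$ is a genuine element of $\mathcal{A}_t$ rather than an $\omega^0$-indexed family, and to moreover respect the restriction that the post-$t$ increments of $W$ be fixed. This joint measurability is precisely what the measurable selection theorem deferred to the appendix is designed to deliver; once that is in hand, the rest of the argument is essentially pathwise uniqueness of the SDE combined with the definition of $\bar J$.
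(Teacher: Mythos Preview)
Your rearrangement approach has a genuine gap in the admissibility of $\beta$. Recall that $\mathcal{F}_s^t = \sigma(W^0_r:r\le s)\vee\sigma(W_{r\vee t}-W_t:r\le s)\vee\mathcal{G}$ contains $\mathcal{G}$ but \emph{not} the pre-$t$ idiosyncratic path $\sigma(W_r:r\le t)$. An admissible $\alpha\in\mathcal{A}_t$ therefore depends on $\omega^1$ only through $(\tilde\omega^1,(W_{r\vee t}-W_t)_r)$. For $\beta=\alpha\circ\phi$ to remain $\mathbb{F}^t$-adapted it is not enough that $\phi$ fix the post-$t$ increments of $W$: you also need the $\tilde\Omega^1$-component of $\phi$ to be a function of $\tilde\omega^1$ alone, so that $\mathcal{G}$-measurable functions pull back to $\mathcal{G}$-measurable functions. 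But $\xi$ and $\eta$ are only $\mathcal{F}_t$-measurable and may depend nontrivially on $W_{\cdot\wedge t}$; equality of their $\mathbb{P}^1$-laws does not force their conditional laws given $\tilde\omega^1$ to be related by any permutation of $\tilde\Omega^1$, so a $\phi$ of the required structure need not exist. (Concretely: take $\xi'$ whose conditional law given $\tilde\omega^1$ is $N(0,1)$ on one half of $\tilde\Omega^1$ and $N(0,4)$ on the other, while $\eta'$ is built from $W_{\cdot\wedge t}$ alone with unconditional law $\tfrac12 N(0,1)+\tfrac12 N(0,4)$; then $\eta'\circ\phi$ has the same conditional law given $\tilde\omega^1$ for every admissible $\phi$, never matching $\xi'$.) Your invocation of $\phi^{-1}$ is also unjustified, since a measure-preserving map need not be invertible.

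The paper does not attempt a pointwise rearrangement. It first reduces to discrete $\xi,\eta$, then uses a \emph{conditional transfer theorem} (Theorem~\ref{Thm_6_10_conditional_version}, a conditional version of Kallenberg's Theorem~6.10) together with an auxiliary $\mathcal{F}_t$-measurable uniform variable $U_\xi$ conditionally independent of $\xi$ given $\mathcal{F}_t^0$ (Lemma~\ref{independent_rv_uniform_distribution}) to represent the control in law as $b(\xi,U_\xi)$, where $b$ is measurable with respect to $Prog((\mathcal{F}^{0}\vee\hat{\mathcal{F}}^{1;t}))\otimes\mathcal{B}(\mathbb{R}^d)\otimes\mathcal{B}([0,1])$. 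One then sets $\beta:=b(\eta,U_\eta)$; equality of conditional laws of $(\xi,U_\xi)$ and $(\eta,U_\eta)$ given $\mathcal{F}_t^0$, combined with independence from post-$t$ increments, yields equality of the conditional laws of the two controlled systems (Lemma~\ref{X_equal_in_law}), and the general case follows by continuity. The measurable selection theorem you point to is used elsewhere in the paper (to build $\varepsilon$-optimal controls in the existence proof) and plays no role here; the relevant tool is disintegration/transfer, not selection.
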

\begin{proof}
  The proof is inspired by \cite[Theorem 3.6]{cosso_optimal_2022}. First of all, we suppose that both $\xi$, $\eta$ are discrete. By Lemma \ref{independent_rv_uniform_distribution}, there exists $U_\xi$ and $U_\eta$, which are $\mathcal{F}_t$ measurable, and when given $\mathcal{F}_t^0$ (which is characterized by the random variable $\zeta$ in Definition (\ref{zeta})), they are independent of $\xi$ and $\eta$ respectively, having the uniform distribution. By Theorem \ref{new_control}, there exists $b:[0,T]\times\Omega^0\times\Omega^1\times\mathbb{R}^d\times [0,1]\to A$, measurable with respect to $Prog((\mathcal{F}^{0} \vee \hat{\mathcal{F}}^{1;t}))\otimes\mathcal{B}(\mathbb{R}^d)\otimes\mathcal{B}([0,1])$ such that $(b_s(\xi,U_\xi))_{s\in[0,T]}$ is $\mathbb{F}^t$ progressively measurable, and
\begin{align*}
    &\mathcal{L}\Big(\xi,(\alpha_s)_{s\in[0,T]},(W_s^0)_{s\in [0,T]},(W_{s\vee t}-W_t)_{s\in [0,T]} \Big)\Bigg|_\zeta\\
    =&\mathcal{L}\Big(\xi,(b_s(\xi,U_\xi))_{s\in[0,T]},(W_s^0)_{s\in [0,T]},(W_{s\vee t}-W_t)_{s\in [0,T]} \Big)\Bigg|_\zeta\quad\mathbb{P}^0\text{-}a.e..
\end{align*}
Note that when $\zeta$ is given,
\begin{align*}
    &\mathcal{L}\Big(\xi,(b_s(\xi,U_\xi))_{s\in[0,T]},(W_s^0)_{s\in [0,T]},(W_{s\vee t}-W_t)_{s\in [0,T]} \Big)\Bigg|_\zeta\\
    =&\mathcal{L}\Big(\xi,(b_s(\xi,U_\xi))_{s\in[0,T]},(W_s^0)_{s\in [0,t]},(W_s^0-W_t^0)_{s\in [t,T]},(W_{s\vee t}-W_t)_{s\in [0,T]} \Big)\Bigg|_\zeta\\
    =&\mathcal{L}\Big(\eta,(b_s(\eta,U_\eta))_{s\in[0,T]},(W_s^0)_{s\in [0,t]},(W_s^0-W_t^0)_{s\in [t,T]},(W_{s\vee t}-W_t)_{s\in [0,T]} \Big)\Bigg|_\zeta\\
    =&\mathcal{L}\Big(\eta,(\beta_s)_{s\in[0,T]},(W_s^0)_{s\in [0,t]},(W_s^0-W_t^0)_{s\in [t,T]},(W_{s\vee t}-W_t)_{s\in [0,T]} \Big)\Bigg|_\zeta, 
\end{align*}
where $\beta_s:= b_s(\eta,U_\eta)$, and the second last equality follows from the fact that $\xi$, $U_\xi$ are $\mathcal{F}_t$-measurable, and therefore independent of $W_s^0 - W_t^0$, $W_s - W_t$, $s\geq t$ when given $\mathcal{F}_t^0$. By Lemma \ref{X_equal_in_law}, we have \begin{align*}
    \mathcal{L}\Big((X^{t,\xi,\alpha}_s)_{s\in[t,T]},(\alpha_s)_{s\in[0,T]},(W_s^0)_{s\in [0,T]}\Big)\Bigg|_\zeta = \mathcal{L}\Big((X^{t,\eta,\beta}_s)_{s\in[t,T]},(\beta_s)_{s\in[0,T]},(W_s^0)_{s\in [0,T]}\Big)\Bigg|_\zeta\,\,\mathbb{P}^0\text{-}a.e..
\end{align*}
Note that this also implies $\mathbb{P}^{W^0}_{X^{t,\xi,\alpha}_s}= \mathbb{P}^{W^0}_{X^{t,\eta,\beta}_s}$ $\mathbb{P}^0$-a.e., $\forall s\in [t,T]$. Therefore we have
\begin{align*}
    J(t,\xi,\alpha) = J(t,\eta,\beta)\quad\mathbb{P}^0\text{-}a.e..
\end{align*}
As a consequence we have that $v(t,\xi) = v(t,\eta)$. The general case follows from standard continuity arguments and the fact that for $\mathbb{P}^0$-a.e. $\omega^0$, $v(t,\cdot)$ is continuous.
\end{proof}
\noindent As we have assumed that $\mathcal{G}$ is rich enough to support all laws in $\mathcal{P}_2(\mathbb{R}^d)$, by leveraging the results obtained in the above theorem and Lemma \ref{random_nonrandom_value}, albeit with a slight abuse of notation, we can define the following function
\begin{align*}
    v:[0,T]\times\mathcal{P}_2(\mathbb{R}^d)\times\Omega^0\to\mathbb{R},\\
    v(t,\mu) := v(t,\xi),
\end{align*}
for all $\xi \in L^2((\Omega^1,\mathcal{F}_t^1,\mathbb{P}^1);\mathbb{R}^d)$ such that $\mathcal{L}(\xi):= \mu$. Immediately we have the following corollary from Theorem \ref{DPP_THM}:
\begin{corollary}
(Dynamic Programming Principle). Let ($\mathcal{A}$1) hold. For all $(t,\mu)\in [0,T]\times \mathcal{P}_2(\mathbb{R}^d)$,
\begin{align*}
    v(t,\mu) = &\essinf_{\alpha\in\mathcal{A}}\essinf_{\theta\in\mathcal{T}_{t,T}^0}\mathbb{E}\Bigg[\int_t^\theta f_s(X_s^{t,\xi,\alpha},\mathbb{P}^{W^0}_{X_s^{t,\xi,\alpha}},\alpha_s)+v(\theta,\mathbb{P}_{X_{\theta}^{t,\xi,\alpha}}^{W^0}) \Big|\mathcal{F}_t^0\Bigg]\\
    =&\essinf_{\alpha\in\mathcal{A}}\esssup_{\theta\in\mathcal{T}_{t,T}^0}\mathbb{E}\Bigg[\int_t^\theta f_s(X_s^{t,\xi,\alpha},\mathbb{P}^{W^0}_{X_s^{t,\xi,\alpha}},\alpha_s)+v(\theta,\mathbb{P}_{X_{\theta}^{t,\xi,\alpha}}^{W^0})) \Big|\mathcal{F}_t^0\Bigg],
\end{align*}
for all $\xi \in L^2(\mathcal{G};\mathbb{R}^d)$, $\mathcal{L}(\xi) = \mu$.
\end{corollary}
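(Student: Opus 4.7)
The plan is to leverage the already-proven random-variable DPP (Theorem \ref{DPP_THM}) together with the conditional law invariance (Theorem \ref{law_invariance}) and the abuse-of-notation definition $v(t,\mu):=v(t,\xi)$. First, I would fix $\xi\in L^2(\mathcal{G};\mathbb{R}^d)$ with $\mathcal{L}(\xi)=\mu$, so that $v(t,\mu)=v(t,\xi)$ by definition, and write down
\begin{align*}
v(t,\xi) &= \essinf_{\alpha\in\mathcal{A}_t}\essinf_{\theta\in\mathcal{T}_{t,T}^0}\mathbb{E}\Big[\int_t^\theta f_s(X_s^{t,\xi,\alpha},\mathbb{P}^{W^0}_{X_s^{t,\xi,\alpha}},\alpha_s)+v(\theta,X_\theta^{t,\xi,\alpha})\Big|\mathcal{F}_t^0\Big] \\
&= \essinf_{\alpha\in\mathcal{A}_t}\esssup_{\theta\in\mathcal{T}_{t,T}^0}\mathbb{E}\Big[\int_t^\theta f_s(X_s^{t,\xi,\alpha},\mathbb{P}^{W^0}_{X_s^{t,\xi,\alpha}},\alpha_s)+v(\theta,X_\theta^{t,\xi,\alpha})\Big|\mathcal{F}_t^0\Big].
\end{align*}
The entire task of the proof is then to justify replacing $v(\theta,X_\theta^{t,\xi,\alpha})$ by $v\bigl(\theta,\mathbb{P}^{W^0}_{X_\theta^{t,\xi,\alpha}}\bigr)$, where on the right the first slot of $v$ contains a random measure rather than a random vector.

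Second, I would establish the pointwise identity
$$v(\theta,X_\theta^{t,\xi,\alpha})(\omega^0)\;=\;v\bigl(\theta(\omega^0),\mathbb{P}^{W^0}_{X_\theta^{t,\xi,\alpha}}(\omega^0),\omega^0\bigr),\qquad \mathbb{P}^0\text{-a.e.},$$
first at a deterministic time $s\in[t,T]$ and then at the stopping time $\theta$. For fixed $s$, the richness of $(\tilde{\Omega}^1,\mathcal{G},\tilde{\mathbb{P}}^1)$ produces a $\mathcal{G}$-measurable $\zeta$ whose (deterministic) law matches a given target measure; by Theorem \ref{law_invariance} this yields $v(s,\eta)=v(s,\zeta)=v(s,\mathcal{L}(\zeta))$ whenever $\eta$ and $\zeta$ share their conditional law given $W^0$ almost surely. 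To allow the target measure itself to depend on $\omega^0$, I would first treat the case where $\mathbb{P}^{W^0}_{X_s^{t,\xi,\alpha}}$ is a simple $\mathcal{F}_s^0$-measurable $\mathcal{P}_2(\mathbb{R}^d)$-valued random variable, patching a finite family of $\mathcal{G}$-measurable surrogates across the atoms of its range, and then pass to the general case via the continuity of $v(s,\cdot)$ in the measure argument (Lemma \ref{regularity_J_V}(3)) together with a standard approximation in $\mathcal{W}_2$.

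Third, to promote the identity from a deterministic $s$ to a stopping time $\theta\in\mathcal{T}_{t,T}^0$, I would repeat the simple-stopping-time discretisation $\theta_i\uparrow\theta$ already used in the proof of Theorem \ref{DPP_THM}: on each level set $\{\theta_i=t_j\}$ the previous step gives the identity at the deterministic time $t_j$, and continuity of $s\mapsto v(s,X_s^{t,\xi,\alpha})$ from Lemma \ref{regularity_J_V}(2) together with continuity of $s\mapsto \mathbb{P}^{W^0}_{X_s^{t,\xi,\alpha}}$ in $\mathcal{W}_2$ (which follows from Lemma \ref{SDE_property}(3)) lets me pass to the limit $i\to\infty$. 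Substituting the resulting identity into the two expressions displayed in the first step, and noting that the enlargement from $\mathcal{A}_t$ to $\mathcal{A}$ in the statement costs nothing because the conditional law invariance allows one to realise any $\mathbb{F}$-progressive control by an $\mathbb{F}^t$-progressive one with the same conditional distribution (exactly as in the proof of Theorem \ref{law_invariance}), I obtain the two claimed equalities.

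The step I expect to be the main obstacle is the second one: promoting the deterministic-measure definition $v(s,\mu)$ to a genuinely $\omega^0$-random measure $\mathbb{P}^{W^0}_{X_s^{t,\xi,\alpha}}(\omega^0)$ without stumbling on the measurability difficulties flagged in the introduction. The simple-function reduction together with the richness of $\mathcal{G}$ and the Lipschitz continuity of $v$ in $\mathcal{W}_2$ should give a clean workaround, but one must be careful that the $\mathcal{G}$-measurable surrogates chosen on each atom are compatible with the $\mathcal{F}_s^0$-measurable structure of the conditional law before patching.
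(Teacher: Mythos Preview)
Your proposal is correct and follows exactly the route the paper intends. The paper in fact presents this corollary as immediate from Theorem~\ref{DPP_THM} once the conditional law invariance (Theorem~\ref{law_invariance}) and the definition $v(t,\mu):=v(t,\xi)$ are in place, and does not spell out a proof; your three steps simply make explicit what the paper leaves tacit, including the passage from $\mathcal{A}_t$ to $\mathcal{A}$ via the law-invariance construction. The only remark is that the pointwise identity $v(\theta,X_\theta^{t,\xi,\alpha})(\omega^0)=v(\theta(\omega^0),\mathbb{P}^{W^0}_{X_\theta^{t,\xi,\alpha}}(\omega^0),\omega^0)$ can be obtained slightly more directly by combining Lemma~\ref{random_nonrandom_value} (which already handles the $\omega^0$-dependence of the initial datum) with Theorem~\ref{law_invariance}, rather than rebuilding the simple-function patching from scratch.
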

Moreover, for all $(t,\xi,\alpha)\in [0,T]\times L^2((\Omega,\mathcal{F}_t,\mathbb{P});\mathbb{R}^d)\times\mathcal{A}_t$, by solving the dynamics equation (\ref{basic_dynamics}), we obtain a process $(X_s^{t,\xi,\alpha})_{t\leq s\leq T}$. By taking conditional expectation with respect to $\Omega^0$, this process can actually be interpreted as a $\mathcal{P}_2(\mathbb{R}^d)$-valued process, and shares the following property from \cite{Pham_Wei_Dynamic_Programming}:
\begin{lemma}
\label{pham_wei_rho_process}
     Let ($\mathcal{A}$1) hold. For any $t \in[0, T], \mu \in \mathcal{P}_2(\mathbb{R}^d), \alpha \in \mathcal{A}$, the relation given by
$$
\rho_s^{t, \mu, \alpha}:=\mathbb{P}_{X_s^{t, \xi, \alpha}}^{W^0}, \quad t \leq s \leq T, \text { for } \xi \in L^2(\mathcal{F}_t ; \mathbb{R}^d) \text { s.t. } \mathbb{P}_{\xi}^{W^0}=\mu,
$$
defines a square integrable $\mathbb{F}^{W^0}$-progressive continuous process in $\mathcal{P}_2(\mathbb{R}^d)$. Moreover, the map $(s, t, \omega^0, \mu, \alpha) \in[0, T] \times[0, T] \times \Omega^0 \times \mathcal{P}_2(\mathbb{R}^d) \times \mathcal{A} \rightarrow \rho_s^{t, \mu, \alpha}(\omega^0) \in \mathcal{P}_2(\mathbb{R}^d)$ (with the convention that $\rho_s^{t, \mu, \alpha}=\mu$ for $s \leq t$) is measurable.
\end{lemma}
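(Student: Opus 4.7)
The strategy is to verify, in turn, four claims: (i) well-definedness of $\rho_s^{t,\mu,\alpha}$, independent of the choice of representative $\xi$; (ii) square integrability; (iii) continuity in $s$; and (iv) $\mathbb{F}^{W^0}$-progressive measurability together with joint measurability in $(s,t,\omega^0,\mu,\alpha)$.

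First I would fix a canonical representative. By the richness of $(\tilde{\Omega}^1,\mathcal{G},\tilde{\mathbb{P}}^1)$, for every $\mu\in\mathcal{P}_2(\mathbb{R}^d)$ there exists $\xi\in L^2(\mathcal{G};\mathbb{R}^d)$ with $\mathcal{L}(\xi)=\mu$, and since $\mathcal{G}$ is independent of $W^0$, we have $\mathbb{P}_\xi^{W^0}=\mu$ $\mathbb{P}^0$-a.e. Well-definedness for two different representatives $\xi,\eta$ with $\mathbb{P}_\xi^{W^0}=\mathbb{P}_\eta^{W^0}=\mu$ is a by-product of the argument already used in the proof of Theorem~\ref{law_invariance}: the measurable-selection construction (Theorem~\ref{new_control}) produces a control $\beta$ such that $(X_\cdot^{t,\xi,\alpha},W^0)$ and $(X_\cdot^{t,\eta,\beta},W^0)$ share the same conditional law given $\mathcal{F}_t^0$, so in particular $\mathbb{P}_{X_s^{t,\xi,\alpha}}^{W^0}=\mathbb{P}_{X_s^{t,\eta,\beta}}^{W^0}$; applied symmetrically with $\alpha$ replaced by the rearrangement of $\beta$, this forces the conditional law to depend only on $\mu$ and $\alpha$.

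Square integrability and continuity are routine consequences of Lemma~\ref{SDE_property}. Indeed, the second moment of $\rho_s^{t,\mu,\alpha}$ equals $\mathbb{E}[|X_s^{t,\xi,\alpha}|^2\mid\mathcal{F}_s^0]$, which is $\mathbb{P}^0$-essentially bounded by $C(1+\mathbb{E}_{\mathcal{F}_t^0}|\xi|^2)$ by Lemma~\ref{SDE_property}(2). For $\mathbb{P}^0$-a.s.\ continuity in $s$, the standard coupling bound
\[
\mathcal{W}_2^2\bigl(\rho_{s_1}^{t,\mu,\alpha},\rho_{s_2}^{t,\mu,\alpha}\bigr)\leq \mathbb{E}\bigl[\bigl|X_{s_1}^{t,\xi,\alpha}-X_{s_2}^{t,\xi,\alpha}\bigr|^2\,\big|\,\mathcal{F}_{s_1\vee s_2}^0\bigr]
\]
together with Lemma~\ref{SDE_property}(3) yields H\"older continuity with exponent $1/2$. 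Progressive measurability with respect to $\mathbb{F}^{W^0}$ then follows, since $\rho_s^{t,\mu,\alpha}$ is $\mathcal{F}_s^{W^0}$-measurable by construction of the conditional distribution and the process has $\mathbb{P}^0$-a.s.\ continuous sample paths in the Polish space $\mathcal{P}_2(\mathbb{R}^d)$.

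For joint measurability in $(s,t,\omega^0,\mu,\alpha)$, I would combine a Rosenblatt-type measurable parametrization $\mu\mapsto\xi_\mu\in L^2(\mathcal{G};\mathbb{R}^d)$ (available thanks to the richness of $\mathcal{G}$) with the standard measurable dependence of the strong solution to \eqref{basic_dynamics} on $(t,\xi,\alpha)$, obtained via Picard iteration (each iterate being jointly measurable). Computing the conditional distribution with respect to $W^0$ preserves measurability in the remaining parameters because the conditional distribution can be expressed as a pushforward under a measurable kernel. The \emph{main obstacle} I anticipate is the combination of well-definedness and measurability in $\mu$: the Wasserstein space lacks a linear structure, so the measurable lift $\mu\mapsto\xi_\mu$ and the conditional-law rearrangement arguments from Theorem~\ref{law_invariance} have to be executed together to guarantee that the resulting object $\rho_s^{t,\mu,\alpha}(\omega^0)$ does not secretly depend on the arbitrary choices made in the construction.
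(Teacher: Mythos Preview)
The paper does not prove this lemma itself; it simply refers the reader to \cite{Pham_Wei_Dynamic_Programming}, Lemma~3.1. In Pham--Wei the admissible controls are $\mathbb{F}^{W^0}$-progressive only, and in that setting your claim~(i) is immediate: conditioning on $W^0$ freezes both $\alpha$ and the common-noise path, so the conditional law of $X_s^{t,\xi,\alpha}$ is determined by the law of $\xi$ alone via the strong-solution representation. Your parts (ii)--(iv) are exactly the standard verifications and are fine as sketched.

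Your argument for (i), however, has a genuine gap once $\alpha$ is allowed to depend on $\omega^1$. Theorem~\ref{new_control} produces a \emph{different} control $\beta$ and yields
\[
\mathcal{L}\bigl(X^{t,\xi,\alpha}_\cdot \,\big|\, W^0\bigr)=\mathcal{L}\bigl(X^{t,\eta,\beta}_\cdot \,\big|\, W^0\bigr),
\]
not the desired equality with the \emph{same} $\alpha$ on both sides. ``Applied symmetrically'' does not close this: swapping the roles gives $\mathcal{L}(X^{t,\eta,\alpha}\mid W^0)=\mathcal{L}(X^{t,\xi,\gamma}\mid W^0)$ for yet another control $\gamma$, and nothing forces $\beta$ or $\gamma$ to equal $\alpha$. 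In fact, for $\alpha\in\mathcal{A}$ depending on $\mathcal{G}$ (e.g.\ $\alpha_s\equiv f(\xi)$ for a fixed $\mathcal{G}$-measurable $\xi$) and $\eta$ an independent $\mathcal{G}$-measurable copy of $\xi$, one easily manufactures $\mathbb{P}^{W^0}_{X^{t,\xi,\alpha}_s}\neq\mathbb{P}^{W^0}_{X^{t,\eta,\alpha}_s}$. So the statement, read literally with $\alpha\in\mathcal{A}$ and \emph{arbitrary} representative $\xi$, requires either the Pham--Wei restriction on $\alpha$ or a fixed canonical choice of $\xi$. The paper's downstream use of the lemma (in the proof of Theorem~\ref{existence_solution}) works with a specific $\xi$ already fixed, so this subtlety is harmless there.
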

\begin{proof}
    See \cite{Pham_Wei_Dynamic_Programming} Lemma 3.1.
\end{proof}
\section{A Theory of Viscosity Solution}
\subsection{Lions Derivative, The Space $\mathcal{K}^{1,2}(\mathcal{P}_2(\mathbb{R}^d))$
and It\^o-Wentzell Formula}
\label{Ito_Wentzell}
The following is adapted from \cite{dos2022ito} but not in its fullest generality.
\begin{definition}
\label{Lions' derivative}
(Lions Derivative). Fix a probability space $(\Omega,\mathcal{F},\mathbb{P})$. We consider a canonical lifting of the function $u: \mathcal{P}_2(\mathbb{R}^d) \rightarrow \mathbb{R}$ to $\tilde{u}$ : $L^2((\Omega, \mathcal{F}, \mathbb{P}); \mathbb{R}^d) \ni X \rightarrow \tilde{u}(X)=u(\mathcal{L}(X)) \in \mathbb{R}$. We say that $u$ is $L$-differentiable at $\mu$, if $\tilde{u}$ is Frechèt differentiable (in $L^2$) at some $X$, such that $\mu=\mathcal{L}(X)$. Denoting the gradient by $D \tilde{u}$ and using a Hilbert structure of the $L^2$ space, we can identify $D \tilde{u}$ as an element of its dual, $L^2$ itself. It is well known that $D\tilde{u}$ is a $\sigma(X)$-measurable random variable and given by the function $D u(\mu, \cdot): \mathbb{R}^d \rightarrow \mathbb{R}^d$, depending on the law of $X$ and satisfying $D u(\mu, \cdot) \in L^2(\mathbb{R}^d, \mathcal{B}(\mathbb{R}^d), \mu ; \mathbb{R}^d)$. Hereinafter the $L$-derivative of $u$ at $\mu$ is the map $\partial_\mu u(\mu, \cdot): \mathbb{R}^d \ni x \rightarrow \partial_\mu u(\mu, x) \in \mathbb{R}^d$, satisfying $D \tilde{u}(X)=\partial_\mu u(\mu, X)$. We always denote $\partial_\mu u$ as the version of the $L$-derivative that is continuous in the product topology of all components of $u$. Moreover, let $\partial_\mu^2$ denote second derivative in measure and $\partial_v \partial_\mu u$ denote the derivative with respect to new variable arisen after applying derivative in measure.
\end{definition}
\begin{definition}
    (The Space $\mathcal{K}^{1,2}(\mathcal{P}_2(\mathbb{R}^d))$). In the context of the settings outlined in Section \ref{setup}, we say that $u \in \mathcal{K}^{1,2}(\mathcal{P}_2(\mathbb{R}^d))$ if 
    \begin{enumerate}
        \item $u\in \mathcal{S}^2(C(\mathcal{P}_2(\mathbb{R}^d)))$.
        \item There exists $(\Game_t u,\Game_w u)\in\mathcal{L}^2(C(\mathcal{P}_2(\mathbb{R}^d)))$ such that for $\mathbb{P}^0$-a.e. $\omega^0$,
    \begin{align*}
        u(t,\mu) = u(T,\mu) - \int_t^T\Game_s u(s,\mu)ds - \int_t^T\Game_w u(s,\mu)dW_s^0\quad\forall (t,\mu)\in[0,T]\times\mathcal{P}_2(\mathbb{R}^d).
    \end{align*}
    \item For any $t\in[0,T]$ the map $\mu\mapsto u(t,\mu)$ is $L$-differentiable $\mathbb{P}^0$-a.e. at every $\mu\in\mathcal{P}_2(\mathbb{R}^d)$, and $\partial_\mu u\in\mathcal{S}^\infty(C(\mathcal{P}_2(\mathbb{R}^d)\times\mathbb{R}^d))$.
    \item For any $(t,\mu)\in[0,T]\times\mathcal{P}_2(\mathbb{R}^d)$ the map $x\mapsto \partial_\mu u(\mu,x)$ is $\mathbb{R}^d$ differentiable $\mathbb{P}^0$-a.e. at every $x\in \text{Supp($\mu$)}$, and $\partial_x\partial_\mu u \in \mathcal{S}^\infty(C(\mathcal{P}_2(\mathbb{R}^d)\times\mathbb{R}^d))$.
    \item For any $(t,x)\in [0,T]\times\text{Supp($\mu$)}$, the map $\mu\mapsto \partial_\mu u(t,\mu,x)$ is $L$-differentiable $\mathbb{P}^0$-a.e. at every $\mu\in\mathcal{P}_2(\mathbb{R}^d)$, and $\partial_\mu^2 u \in \mathcal{S}^\infty(C(\mathcal{P}_2(\mathbb{R}^d)\times\mathbb{R}^d\times\mathbb{R}^d))$.
    \item For any $t\in[0,T]$, the map $\mu\mapsto \Game_w u(\mu)$ is $L$-differentiable $\mathbb{P}^0$-a.e. at every point $\mu\in\mathbb{P}_2(\mathbb{R}^d)$, and $\partial_\mu\Game_w u\in\mathcal{S}^\infty(C(\mathcal{P}_2(\mathbb{R}^d)\times\mathbb{R}^d))$.
    \end{enumerate}
\end{definition}
\begin{theorem}
\label{Ito_Wentzell_Formula}
    (It\^o-Wentzell Formula). Let $u\in \mathcal{K}^{1,2}(\mathcal{P}_2(\mathbb{R}^d))$, and $(X_t)_{0\leq t\leq T}$ solves (\ref{basic_dynamics}). For almost all $\omega^0\in\Omega^0$ take $(\rho_t)_{t\in[0,T]}:=\mathcal{L}(X_t(\omega^0,\cdot))_{t\in[0,T]}$. Then $(u_t(\rho_t))_{t\in [0,T]}$ is an It\^o process $\mathbb{P}^0$-a.e. satisfying the expansion
        \begin{align*}
            u_t(\rho_T) - u_0(\rho_0) =& \int_0^T\Game_s u(s,\rho_s)ds + \int_0^T \Game_w u(s,\rho_s) dW_s^0\\
            &+\int_0^T \tilde{\mathbb{E}}^1\Big[\partial_\mu u(s,\rho_s)(\tilde{X}_s)\cdot \tilde{b}_s\Big]ds + \int_0^T \tilde{\mathbb{E}}^1\Big[\tilde{\sigma_s}^{0;\intercal}\partial_\mu u(s,\rho_s)(\tilde{X}_s)\Big]\cdot dW_s^0\\
            &+\int_0^T\frac{1}{2}\tilde{\mathbb{E}}^1\Big[\tr\{\partial_x\partial_\mu u(s,\rho_s)(\tilde{X}_s)(\tilde{\sigma}_s^0\tilde{\sigma}_s^{0;\intercal}+\tilde{\sigma}_s\tilde{\sigma}_s^{\intercal})\}\Big]ds\\
            &+\int_0^T\frac{1}{2}\hat{\mathbb{E}}^1\Big[\tilde{\mathbb{E}}^1\Big[\tr\{\partial_\mu^2 u(s,\rho_s)(\tilde{X}_s,\hat{X}_s)\tilde{\sigma}_s^0\hat{\sigma}_s^{0;T}\}\Big]\Big]ds\\
            &+\int_0^T\tilde{\mathbb{E}}^1\Big[\tr\{\partial_\mu \Game_w u(s,\rho_s)(\tilde{X}_s)\sigma_s^{0;\intercal}\}\Big]ds,
        \end{align*}
        where the formula above $\tilde{\mathbb{E}}$ and $\hat{\mathbb{E}}$ denote the expectation acting on the model twin spaces $(\tilde{\Omega}, \tilde{\mathbb{F}}, \tilde{\mathbb{P}})$ and $(\hat{\Omega}, \hat{\mathbb{F}}, \hat{\mathbb{P}})$ respectively, and let the processes $(\tilde{X}_t, \tilde{b}_t, \tilde{\sigma}_t,\tilde{\sigma}_t^0)_{t \in[0, T]}$ and $(\hat{X}_t, \hat{b}_t, \hat{\sigma}_t,\hat{\sigma}^0_t)_{t \in[0, T]}$ be the independent twin processes of $(X_t, b_t, \sigma_t,\sigma_t^0)_{t \in[0, T]}$ respectively living within.
\end{theorem}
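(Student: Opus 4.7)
The plan is to adapt the It\^o-Wentzell formula of \cite{dos2022ito} to our setting via a propagation-of-chaos reduction. Specifically, I would approximate the conditional law $\rho_t$ by the empirical measure of $N$ conditionally i.i.d. copies of $X$, apply the classical (finite-dimensional) Kunita It\^o-Wentzell formula to the lifted random field, identify each resulting term in the language of Lions derivatives, and then pass to the limit $N \to \infty$ using the propagation of chaos for McKean-Vlasov SDEs with common noise.

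Concretely, I would introduce $N$ copies $(\tilde{X}^{i})_{i=1}^N$ of $X$ driven by independent idiosyncratic Brownian motions $(\tilde{W}^{i})_{i=1}^N$ but sharing the common noise $W^0$, so that by the conditional law of large numbers the empirical measure $\mu^N_t := \frac{1}{N}\sum_i \delta_{\tilde{X}^{i}_t}$ converges to $\rho_t$ in $\mathcal{W}_2$, $\mathbb{P}^0$-a.s., uniformly in $t$. The lifted random field $U^N(t,x_1,\ldots,x_N) := u(t,\frac{1}{N}\sum_i \delta_{x_i})$ inherits the Doob-Meyer decomposition of $u$ with partial derivatives expressed through the Lions derivatives by
\begin{align*}
\partial_{x_i}U^N = \tfrac{1}{N}\partial_\mu u(\mu^N)(x_i), \quad \partial_{x_i x_i}^2 U^N = \tfrac{1}{N}\partial_x\partial_\mu u(\mu^N)(x_i) + \tfrac{1}{N^2}\partial_\mu^2 u(\mu^N)(x_i,x_i),
\end{align*}
together with $\partial_{x_i x_j}^2 U^N = \tfrac{1}{N^2}\partial_\mu^2 u(\mu^N)(x_i,x_j)$ for $i\neq j$ and $\partial_{x_i}\Game_w U^N = \tfrac{1}{N}\partial_\mu\Game_w u(\mu^N)(x_i)$. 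Applying the classical It\^o-Wentzell formula to $U^N(t,\tilde{X}^{1}_t,\ldots,\tilde{X}^{N}_t)$ then produces four groups of contributions: (i) the $\Game_s u\,ds$ and $\Game_w u\,dW^0$ terms from the time/noise decomposition of $U^N$; (ii) first-order chain-rule sums $\sum_i \partial_{x_i}U^N\cdot d\tilde{X}^{i}$ yielding, after identification, the $\tilde{\mathbb{E}}^1[\partial_\mu u \cdot \tilde{b}]\,ds$ drift and the $\tilde{\mathbb{E}}^1[\tilde{\sigma}^{0;\intercal}\partial_\mu u]\cdot dW^0$ stochastic integral; (iii) second-order It\^o corrections splitting into diagonal $i=j$ terms of order $1/N$ (giving the $\partial_x\partial_\mu u$ term with full diffusion $\tilde{\sigma}\tilde{\sigma}^\intercal+\tilde{\sigma}^0\tilde{\sigma}^{0;\intercal}$) and off-diagonal $i\neq j$ terms of order $1/N^2$ summed $N^2-N$ times (giving the $\partial_\mu^2 u$ double-expectation term, with only the common-noise piece surviving between independent copies); and (iv) the cross-variation between the random field and each $d\tilde{X}^i$, $\sum_i \partial_{x_i}\Game_w U^N\cdot \tilde{\sigma}^{0,i;\intercal}\,ds$, producing the characteristic Wentzell term $\tilde{\mathbb{E}}^1[\operatorname{tr}\{\partial_\mu\Game_w u(\rho_s)(\tilde{X}_s)\sigma^{0;\intercal}_s\}]\,ds$.

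Passing to the limit $N\to\infty$ uses the $\mathcal{S}^\infty$ regularity built into the definition of $\mathcal{K}^{1,2}(\mathcal{P}_2(\mathbb{R}^d))$ together with the boundedness of $b,\sigma,\sigma^0$ from Assumption $\mathcal{A}1$: continuity of the Lions derivatives in the product topology and $\mathcal{W}_2$-convergence $\mu^N_t \to \rho_t$ give pointwise convergence of the Riemann integrals, while dominated convergence and BDG handle the $dW^0$ stochastic integrals; the diagonal $1/N^2$ contribution vanishes by the $\mathcal{S}^\infty$ bound on $\partial_\mu^2 u$. The principal obstacle is the It\^o-Wentzell cross term: justifying the limit of the cross-variation between the $W^0$-martingale part of $U^N$ (inherited from $\Game_w u$) and the common-noise martingale part of each $d\tilde{X}^i$ is delicate because both sides depend simultaneously on $W^0$ and on the empirical measure. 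Its identification as a single expectation against a representative twin process relies crucially on the $\mathcal{S}^\infty$ regularity of $\partial_\mu\Game_w u$ and a uniform-in-$N$ continuity estimate in the measure variable, which permits interchanging the limit with the $dW^0$ stochastic integral.
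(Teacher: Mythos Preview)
The paper does not actually prove this statement: its entire proof reads ``We refer the readers to \cite{dos2022ito}.'' Your proposal therefore goes well beyond what the paper does, supplying a genuine argument where the paper only cites one.

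Your empirical-projection strategy is a standard and correct route to It\^o formulas on the Wasserstein space (it is the approach taken, for instance, in the Carmona--Delarue monograph and in Chassagneux--Crisan--Delarue). The derivative identifications for the lifted field $U^N$ are right, the grouping of terms into (i)--(iv) is accurate, and the mechanism by which only the common-noise covariance survives in the off-diagonal second-order sum is exactly the right one. Two points to keep clean. First, make explicit that your $N$ copies each solve the McKean--Vlasov SDE with the \emph{true} conditional law $\rho_t$ in the coefficients (not the interacting particle system); this is what makes them conditionally i.i.d.\ given $\mathcal{F}^{W^0}$ and lets you invoke the conditional law of large numbers directly, without an auxiliary propagation-of-chaos estimate. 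Second, the control $\alpha$ in \eqref{basic_dynamics} is $\mathbb{F}^t$-adapted and hence depends on the idiosyncratic noise, so the ``copies'' must also copy the control; in practice one treats $(b_s,\sigma_s,\sigma_s^0)$ as given $\mathbb{F}$-adapted processes and copies the entire $\omega^1$-coordinate, which is precisely how the twin processes $(\tilde X,\tilde b,\tilde\sigma,\tilde\sigma^0)$ in the statement are to be read. With those two clarifications your sketch is sound; the cross-variation term you flag as the principal obstacle is indeed the delicate point, and the $\mathcal{S}^\infty$ bound on $\partial_\mu\Game_w u$ in the definition of $\mathcal{K}^{1,2}$ is exactly what is needed to close it.
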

\begin{proof}
    We refer the readers to \cite{dos2022ito}.
\end{proof}
\subsection{Test Functions, Definition of Viscosity solution}
We are concerned with the following BSPDE: for $(t,\mu)\in [0,T]\times\mathcal{P}_2(\mathbb{R}^d)$, $v:\Omega^0\times [0,T]\times\mathcal{P}_2(\mathbb{R}^d)\to\mathbb{R}$ satisfies 
\begin{align}
\label{BSPDE}
\begin{cases}
    \displaystyle-\Game_t v(t,\mu) = \Bigg(\int_{\mathbb{R}^d}\essinf_{\alpha,\alpha'\in A}\Bigg\{f_t(x,\mu,\alpha)+\Big\langle b_t(x,\mu,\alpha),\partial_\mu v(t,\mu)(x)\Big\rangle+\frac{1}{2}\tr\Big\{ (\sigma_t\sigma_t^{\intercal}\\+\sigma_t^0\sigma_t^{0;\intercal})(x,\mu,\alpha)\partial_x\partial_\mu v(t,\mu)(x)\Big\}
    +\displaystyle\int_{\mathbb{R}^d}\frac{1}{2}\tr\Big\{\sigma_t^0(x,\mu,\alpha)\sigma_t^{0;\intercal}(x',\mu,\alpha')\partial^2_{\mu}v(t,\mu)(x,x')\Big\}\mu(dx')\\+
    \tr\Big\{\sigma_t^{0;\intercal}(x,\mu,\alpha)\partial_\mu\Game_w v(t,\mu)(x)\Big\}\Bigg\}\mu(dx)\Bigg)dt \displaystyle,\\
    \displaystyle v(T,\mu) = \int_{\mathbb{R}^d}g(x,\mu)\mu(dx).
\end{cases}
\end{align}
For ease of notation, we define the following function:
\begin{align*}
    \mathbb{H}(t,\mu,P,Q,R,S):=& \int_{\mathbb{R}^d}\essinf_{\alpha,\alpha'\in A}\Bigg\{f_t(x,\mu,\alpha)+\Big\langle b_t(x,\mu,\alpha),P\Big\rangle+\frac{1}{2}\tr\Big\{ (\sigma_t\sigma_t^{\intercal}+\sigma_t^0\sigma_t^{0;\intercal})(x,\mu,\alpha)Q\Big\}\\
    &+\displaystyle\int_{\mathbb{R}^d}\frac{1}{2}\tr\Big\{\sigma_t^0(x,\mu,\alpha)\sigma_t^{0;\intercal}(x',\mu,\alpha')R\Big\}\mu(dx')+\tr\Big\{\sigma_t^{0;\intercal}(x,\mu,\alpha)S\Big\}\Bigg\}\mu(dx),
\end{align*}
and $\mathbb{H}:\Omega^0\times[0,T]\times\mathcal{P}_2(\mathbb{R}^d)\times\mathcal{L}^2(L^2(\mathbb{R}^d,\mathcal{B}(\mathbb{R}^d),\mu;\mathbb{R}^d))\times\mathcal{L}^2(L^2(\mathbb{R}^d,\mathcal{B}(\mathbb{R}^d),\mu;\mathbb{R}^{d\times d})\times\mathcal{L}^2(L^2(\mathbb{R}^d\times\mathbb{R}^d,\mathcal{B}(\mathbb{R}^d\times\mathbb{R}^d),\mu\otimes\mu;\mathbb{R}^{d
\times d}))\times\mathcal{L}^2(L^2(\mathbb{R}^d,\mathcal{B}(\mathbb{R}^d),\mu;\mathbb{R}^{d\times d})) \to\mathbb{R}$.
\\\hfill\\
We say that $u\in\mathscr{S}$ if
\begin{enumerate}
    \item $u\in\mathcal{K}^{1,2}(\mathcal{P}_2(\mathbb{R}^d))$.
    \item There exists a constant $\beta\in(0,1)$ such that there exists a constant $L_{u,\beta}>0$ such that for $\mathbb{P}^0$ a.e. $\omega^0\in\Omega^0$, 
    \begin{enumerate}
        \item for all $t\in [0,T]$, $\mu,\mu'\in\mathcal{P}_2(\mathbb{R}^d)$, 
        \begin{align*}
            |\Game_t u(t,\mu) - \Game_t u(t,\mu')|\leq L_{u,\beta}\mathcal{W}_2^\beta(\mu,\mu'),
        \end{align*}
        \item for all $t\in [0,T]$, $x,x'\in\mathbb{R}^d$, $\mu,\mu'\in\mathcal{P}_2(\mathbb{R}^d)$, $h = \partial_\mu u, \partial_x\partial_\mu u$, $\partial_\mu\Game_w u$,
        \begin{align*}
            |h(t,\mu)(x) - h(t,\mu')(x')|\leq L_{u,\beta}\Big(\mathcal{W}_2^\beta(\mu,\mu')+|x-x'|^\beta\Big),
        \end{align*}
        \item for all $t\in [0,T]$, $x,x',y,y'\in\mathbb{R}^d$, $\mu,\mu'\in\mathcal{P}_2(\mathbb{R}^d)$, 
        \begin{align*}
            |\partial_\mu^2 u(t,\mu)(x,y) - \partial_\mu^2 u(t,\mu')(x',y')|\leq L_{u,\beta}\Big(\mathcal{W}_2^\beta(\mu,\mu')+|x-x'|^\beta+|y-y'|^\beta\Big).
        \end{align*}
    \end{enumerate}
\end{enumerate}
\begin{remark}
As a consequence that $u \in \mathcal{K}^{1,2}(\mathcal{P}_2(\mathbb{R}^d))$, there exists a constant $L_{u}>0$ such that for $\mathbb{P}^0$ a.e. $\omega^0$, for all $t\in[0,T]$, $x,x'\in\mathbb{R}^d$, $\mu\in\mathcal{P}_2(\mathbb{R}^d)$, 
    \begin{align*}
        |\partial_\mu u(t,\mu)(x)|+|\partial_x\partial_\mu u(t,\mu)(x)|+|\partial^2_\mu u(t,\mu)(x,x')|+|\partial_\mu\Game_w u(t,\mu)(x)|< L_u.
    \end{align*}
\end{remark}
Each $u \in \mathscr{S}$ can be thought of as an Itô process and thus a semi-martingale parameterized by $\mu \in \mathcal{P}_2(\mathbb{R}^d)$. The Doob-Meyer decomposition theorem ensures the uniqueness of the integrable pair $(\Game_t u, \Game_\omega u)$. 
\\\hfill\\
Recall for each $\mathcal{F}^0$-stopping time $\tau\leq T$, we denote by $\mathcal{T}_{t,T}^0$ the set of $\mathcal{F}^0$-stopping times $\tau$ valued in $[t,T]$ and by $\mathcal{T}_{t+}^0$ the subset of $\mathcal{T}^0_{t,T}$ such that $\tau>t$ for any $\tau\in\mathcal{T}_{t,T}^0$. When we are referring to $\mathcal{P}_2(\mathbb{R}^d)$-valued random variables, we will use bold fonts to avoid potential confusions. Before we proceed, readers are referred to Appendix \ref{compact_subset} for the definition of our compact subset $\mathcal{P}_{L}^\tau(t_0,\rho_0)$.
\begin{definition}
    (Set of test functions). Let $(t_0,\rho_0)\in [0,T]\times\mathcal{P}_2(\mathbb{R}^d)$, $\tau \in \mathcal{T}_{t,T}^0$, $\hat{\tau}\in\mathcal{T}_{\tau+}^0$, $L>0$, $\boldsymbol{\rho}\in\mathcal{P}_L^\tau(t_0,\rho_0)$, $\Omega_\tau^0\in\mathcal{F}_\tau^0$ with $\mathbb{P}^0(\Omega_\tau^0)>0$, we define the following set of test functions: 
    \begin{align*}
        \underline{\mathcal{G}}u(\tau,\boldsymbol{\rho},\hat{\tau};t_0,\rho_0;\Omega_\tau^0):=&\Bigg\{\phi \in \mathscr{S}\,\Big|\,(\phi - u)(\tau,\boldsymbol{\rho})1_{\Omega_\tau^0}=0 \\
        &= \essinf_{\bar{\tau}\in\mathcal{T}_{\tau,T}^0}\mathbb{E}_{\mathcal{F}^0_\tau}\Big[\essinf_{\boldsymbol{\mu}\in\mathcal{P}_L^{\bar{\tau}\wedge\hat{\tau}}(t_0,\rho_0)}(\phi-u)(\bar{\tau}\wedge\hat{\tau},\boldsymbol{\mu})\Big]1_{\Omega_\tau^0} a.e. \Bigg\},
    \end{align*}
    and
    \begin{align*}
        \overline{\mathcal{G}}u(\tau,\boldsymbol{\rho},\hat{\tau};t_0,\rho_0;\Omega_\tau^0):=&\Bigg\{\phi \in \mathscr{S}\,\Big|\,(\phi - u)(\tau,\boldsymbol{\rho})1_{\Omega_\tau^0}=0 \\
        &= \esssup_{\bar{\tau}\in\mathcal{T}_{\tau,T}^0}\mathbb{E}_{\mathcal{F}^0_\tau}\Big[\esssup_{\boldsymbol{\mu}\in\mathcal{P}_L^{\bar{\tau}\wedge\hat{\tau}}(t_0,\rho_0)}(\phi-u)(\bar{\tau}\wedge\hat{\tau},\boldsymbol{\mu})\Big]1_{\Omega_\tau^0} a.e.\Bigg\}.
    \end{align*}
\end{definition}
\begin{definition}
    (Viscosity solution). We say $u\in \mathcal{S}^2(C(\mathcal{P}_2(\mathbb{R}^d)))$ is a viscosity subsolution (resp. supersolution) of the BSPDE (\ref{BSPDE}) if there exists $L$ > 0 such that
    \begin{enumerate}[(i)]
        \item For the terminal condition, we have for all $\mu\in\mathcal{P}_2(\mathbb{R}^d)$,
        \begin{align*}
            u(T,\mu)\leq(\text{resp. }\geq)\, \int_{\mathbb{R}^d}g(x,\mu)\mu(dx)\,\,\,\mathbb{P}^0 a.e..
        \end{align*}
        \item For any $(t_0,\rho_0)\in [0,T]\times \mathcal{P}_2(\mathbb{R}^d)$, $\tau \in \mathcal{T}_{t,T}^0$, $\hat{\tau}\in\mathcal{T}_{\tau+}^0$, $\boldsymbol{\rho}\in\mathcal{P}_L^\tau(t_0,\rho_0)$, $\Omega_\tau^0 \in \mathcal{F}_\tau^0$ with $\mathbb{P}(\Omega_\tau^0)>0$ and any $\phi \in \underline{\mathcal{G}}u(\tau,\boldsymbol{\rho},\hat{\tau};t_0,\rho_0;\Omega_\tau^0)$ (resp. $\phi \in \overline{\mathcal{G}}u(\tau,\boldsymbol{\rho},\hat{\tau};t_0,\rho_0;\Omega_\tau^0)$), there holds
    \begin{align*}
        &\essliminf_{\substack{(s,\boldsymbol{\mu})\to (\tau^+,\boldsymbol{\rho}),\\ \boldsymbol{\mu}\in \mathcal{P}_L^s(t_0,\rho_0)}}\mathbb{E}_{\mathcal{F}^0_\tau}\Bigg[1_{s<\hat{\tau}}\Big[-\Game_s \phi(s,\boldsymbol{\mu})\\
        &- \mathbb{H}\Big(s,\boldsymbol{\mu},\partial_\mu \phi(s,\boldsymbol{\mu})(\cdot),\partial_x\partial_\mu \phi(s,\boldsymbol{\mu})(\cdot),\partial_\mu^2 \phi(s,\boldsymbol{\mu})(\cdot,\cdot),\partial_\mu\Game_w \phi(s,\boldsymbol{\mu})(\cdot)\Big)\Big]\Bigg]\leq 0,
    \end{align*}
    for almost all $\omega^0\in\Omega_\tau^0$ (resp.
    \begin{align*}
       &\esslimsup_{\substack{(s,\boldsymbol{\mu})\to (\tau^+,\boldsymbol{\rho}),\\ \boldsymbol{\mu}\in \mathcal{P}_L^s(t_0,\rho_0)}}\mathbb{E}_{\mathcal{F}^0_\tau}\Bigg[1_{s<\hat{\tau}}\Big[-\Game_s \phi(s,\boldsymbol{\mu})\\
        &- \mathbb{H}\Big(s,\boldsymbol{\mu},\partial_\mu \phi(s,\boldsymbol{\mu})(\cdot),\partial_x\partial_\mu \phi(s,\boldsymbol{\mu})(\cdot),\partial_\mu^2 \phi(s,\boldsymbol{\mu})(\cdot,\cdot),\partial_\mu\Game_w \phi(s,\boldsymbol{\mu})(\cdot)\Big)\Big]\Bigg]\geq 0,
    \end{align*}
    for almost all $\omega^0\in\Omega_\tau^0$).
    \end{enumerate}
    We say $u$ is a viscosity solution of the BSPDE (\ref{BSPDE}) if it is both viscosity subsolution and supersolution. Our definition is a simple extension of the case of $\mathbb{R}^d$, in which it is typical that the classical solution $u$ may not be differentiable in the time variable $t$, and $(\Game_t u,\Game_w u)$ may not be time-continuous but just measurable in $t$, this fact motivating us to use essential limits in the above.
    \begin{remark}
        For a progressively measurable function $H:[0,T]\times\mathcal{P}_2(\mathbb{R}^d)\times\Omega^0\to\mathbb{R}$, when we refer to $\displaystyle\essliminf_{\substack{(s,\boldsymbol{\mu})\to(\tau^+,\boldsymbol{\rho})\\\boldsymbol{\mu}\in\mathcal{P}_L^s(t_0,\rho_0)}}$ \Big(resp. $\displaystyle\esslimsup_{\substack{(s,\boldsymbol{\mu})\to(\tau^+,\boldsymbol{\rho})\\\boldsymbol{\mu}\in\mathcal{P}_L^s(t_0,\rho_0)}}$ \Big), we are taking the following limit for stopping time $s$, $s\geq \tau$ and $\mathcal{F}_{s}$-measurable $\mu$:
        \begin{align*}
            \displaystyle\essliminf_{\substack{(s,\boldsymbol{\mu})\to(\tau^+,\boldsymbol{\rho})\\\boldsymbol{\mu}\in\mathcal{P}_L^s(t_0,\rho_0)}}H(s,\boldsymbol{\mu}) &:= \esslim_{\delta\to 0} \Bigg(\essinf_{\substack{s\in B_{\delta}(\tau), s\geq \tau\\\boldsymbol{\mu}\in B_{\delta}(\boldsymbol{\rho})\cap\mathcal{P}_L^s(t_0,\rho_0)}} H(s,\boldsymbol{\mu})\Bigg),\\
            (\text{resp. } \displaystyle\esslimsup_{\substack{(s,\boldsymbol{\mu})\to(\tau^+,\boldsymbol{\rho})\\\boldsymbol{\mu}\in\mathcal{P}_L^s(t_0,\rho_0)}}H(s,\boldsymbol{\mu}) &:= \esslim_{\delta\to 0} \Bigg(\esssup_{\substack{s\in B_{\delta}(\tau), s\geq \tau\\\boldsymbol{\mu}\in B_{\delta}(\boldsymbol{\rho})\cap\mathcal{P}_L^s(t_0,\rho_0)}} H(s,\boldsymbol{\mu})\Bigg)).
        \end{align*}
    \end{remark}

\end{definition}
\subsection{Existence of the viscosity solution}
We introduce the following notation which will be useful in the proof of existence of viscosity solution. Write
\begin{align*}
    Q_r^+(t,\boldsymbol{\rho};t_0,\rho_0) := \big\{(s,\boldsymbol{\mu})\in [t,t+r^2)\times B_r(\boldsymbol{\rho})\,|\,\boldsymbol{\mu}\in\mathcal{P}_L^{s}(t_0,\rho_0)\big\}.
\end{align*}
Also, for convenience of notation, for $\phi \in \mathscr{S}$, $\alpha\in \mathcal{A}$, we denote
    \begin{align*}
        &\mathscr{L}^\alpha\phi(t,\mu)\\
        :=&
        \Game_t\phi(t,\mu) + \Bigg\{\int_{\mathbb{R}^d}\Big\langle b_t(x,\mu,\alpha),\partial_\mu v(t,\mu)(x)\Big\rangle+\frac{1}{2}\tr\Big\{ (\sigma_t\sigma_t^{\intercal}+\sigma_t^0\sigma_t^{0;\intercal})(x,\mu,\alpha)\partial_x\partial_\mu v(t,\mu)(x)\Big\}
    \\&+\displaystyle\int_{\mathbb{R}^d}\frac{1}{2}\tr\Big\{\sigma_t^0(x,\mu,\alpha)\sigma_t^{0;\intercal}(x',\mu,\alpha')\partial^2_{\mu}v(t,\mu)(x,x')\Big\}\mu(dx')+
    \tr\Big\{\sigma_t^{0;\intercal}(x,\mu,\alpha)\partial_\mu\Game_w v(t,\mu)(x)\Big\}\Bigg\}\mu(dx).
    \end{align*}
    Also, for $\mu\in\mathcal{P}_2(\mathbb{R}^d)$, $\alpha\in\mathcal{A}$, we write
    \begin{align*}
        \hat{f}_s(\mu,\alpha):= \int_{\mathbb{R}^d} f_s(x,\mu,\alpha)\mu(dx).
    \end{align*}
We have the following existence theorem:
\begin{theorem}
\label{existence_solution}
    Let ($\mathcal{A}$1), ($\mathcal{A}$2) hold. The value function $v$ is a viscosity solution of the stochastic HJB equation (\ref{BSPDE}).
\end{theorem}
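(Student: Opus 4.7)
My plan is to verify the two parts of the viscosity definition separately and to prove each infinitesimal inequality by contradiction, using three ingredients: the dynamic programming principle of Theorem~\ref{DPP_THM}, the It\^o--Wentzell formula of Theorem~\ref{Ito_Wentzell_Formula} composed with the conditional-law process $(\rho_s)$ of Lemma~\ref{pham_wei_rho_process}, and a measurable selection applied to the pointwise-in-$x$ infimum defining $\mathbb{H}$. The terminal condition $v(T,\mu)=\int g(x,\mu)\,\mu(dx)$ is immediate from the definition of $v$ once the integral $\int_T^T f\,ds$ vanishes. Assumption ($\mathcal{A}$2) plays a crucial technical role by eliminating the auxiliary variable $\alpha'$ from $\mathbb{H}$, which is what makes the selection tractable with a single admissible control.

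For the subsolution property I fix $(t_0,\rho_0)$, $\tau\leq\hat\tau$, $\boldsymbol{\rho}\in\mathcal{P}_L^\tau(t_0,\rho_0)$, $\Omega_\tau^0$ and $\phi\in\underline{\mathcal{G}}v(\tau,\boldsymbol{\rho},\hat\tau;t_0,\rho_0;\Omega_\tau^0)$, and suppose toward contradiction that $-\Game_s\phi-\mathbb{H}>\varepsilon$ on an $(s,\boldsymbol{\mu})$-neighborhood of $(\tau,\boldsymbol{\rho})$ inside $\{s<\hat\tau\}\cap\mathcal{P}_L^s(t_0,\rho_0)$. Because the integrand defining $\mathbb{H}$ is continuous in $(x,\mu,\alpha)$ on the compact set $A$, the measurable selection theorem produces an admissible feedback $\alpha^{\ast}\in\mathcal{A}_\tau$ satisfying $\hat f_s(\boldsymbol{\mu},\alpha^{\ast})+\mathscr{L}^{\alpha^{\ast}}\phi(s,\boldsymbol{\mu})<-\varepsilon/2$ in the same neighborhood. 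Letting $(\rho_s^{\alpha^{\ast}})$ be the conditional law started at $(\tau,\boldsymbol{\rho})$ and $\tau'=(\tau+\delta)\wedge\hat\tau\wedge\sigma$, where $\sigma$ is the exit time of $\rho_s^{\alpha^{\ast}}$ from the cylindrical neighborhood, the DPP yields $v(\tau,\boldsymbol{\rho})\leq\mathbb{E}_{\mathcal{F}_\tau^0}[\int_\tau^{\tau'}\hat f_s\,ds+v(\tau',\rho_{\tau'}^{\alpha^{\ast}})]$; the test function inequality $\phi\geq v$ on $\mathcal{P}_L^{\tau'}(t_0,\rho_0)$ with $\tau'\leq\hat\tau$ upgrades $v$ to $\phi$ on the right-hand side; and the It\^o--Wentzell formula applied to $\phi(s,\rho_s^{\alpha^{\ast}})$, with the $dW^0$-martingale terms killed under $\mathbb{E}_{\mathcal{F}_\tau^0}$, rewrites $\mathbb{E}_{\mathcal{F}_\tau^0}[\phi(\tau',\rho_{\tau'}^{\alpha^{\ast}})-\phi(\tau,\boldsymbol{\rho})]$ as $\mathbb{E}_{\mathcal{F}_\tau^0}[\int_\tau^{\tau'}\mathscr{L}^{\alpha^{\ast}}\phi\,ds]$. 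Chaining these produces
\begin{align*}
0\;\leq\;\mathbb{E}_{\mathcal{F}_\tau^0}\Big[\int_\tau^{\tau'}\bigl(\hat f_s+\mathscr{L}^{\alpha^{\ast}}\phi\bigr)ds\Big]\;\leq\;-\tfrac{\varepsilon}{2}\,\mathbb{E}_{\mathcal{F}_\tau^0}[\tau'-\tau]\;<\;0
\end{align*}
on $\Omega_\tau^0$, the desired contradiction.

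The supersolution part is dual, and in fact requires no measurable selection, since $\mathbb{H}\leq\int[\text{integrand}](\alpha)\,\mu(dx)$ holds for \emph{every} admissible $\alpha$; thus a hypothetical $\esslimsup<-\varepsilon$ forces $\hat f_s(\boldsymbol{\mu},\alpha)+\mathscr{L}^\alpha\phi>\varepsilon$ uniformly in $\alpha$ on the neighborhood. Picking an $\bigl(\tfrac{\varepsilon}{4}\mathbb{E}_{\mathcal{F}_\tau^0}[\tau'-\tau]\bigr)$-optimal control $\alpha^{\ast}$ from the essinf-esssup form of the DPP, invoking $\phi\leq v$ on $\mathcal{P}_L^{\tau'}(t_0,\rho_0)$ (coming from $\phi\in\overline{\mathcal{G}}v$), and applying the same It\^o--Wentzell identity to $\phi(s,\rho_s^{\alpha^{\ast}})$ yields the reverse chain
\begin{align*}
v(\tau,\boldsymbol{\rho})+\tfrac{\varepsilon}{4}\,\mathbb{E}_{\mathcal{F}_\tau^0}[\tau'-\tau]\;\geq\;\mathbb{E}_{\mathcal{F}_\tau^0}\Big[\int_\tau^{\tau'}\hat f_s(\alpha^{\ast})ds+\phi(\tau',\rho^{\alpha^{\ast}}_{\tau'})\Big]\;\geq\;v(\tau,\boldsymbol{\rho})+\varepsilon\,\mathbb{E}_{\mathcal{F}_\tau^0}[\tau'-\tau],
\end{align*}
which is impossible for $\delta$ sufficiently small. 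I expect the main obstacle to be the bookkeeping around the compact subset $\mathcal{P}_L^s(t_0,\rho_0)$: one has to argue that, for $L$ chosen at least as large as the constant $K$ from Assumption ($\mathcal{A}$1), the trajectory $\rho_s^{\alpha^{\ast}}$ genuinely stays inside $\mathcal{P}_L^s(t_0,\rho_0)$ for $s$ in a right-neighborhood of $\tau$ so that the test function inequalities are applicable at $\tau'$, and that the exit time $\sigma$ is strictly greater than $\tau$ a.s.\ on $\Omega_\tau^0$. A secondary point is promoting the pointwise contradiction on a cylindrical neighborhood to one in the essliminf/esslimsup formulation of the definition, which I carry out by specializing to sequences $(s_n,\boldsymbol{\mu}_n)\to(\tau^+,\boldsymbol{\rho})$ with $\boldsymbol{\mu}_n\in\mathcal{P}_L^{s_n}(t_0,\rho_0)$ and by a standard Lebesgue differentiation of the $\mathbb{F}^0$-conditional integrals as $\delta\downarrow0$.
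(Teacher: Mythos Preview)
Your overall architecture---contradiction, dynamic programming, It\^o--Wentzell on $\phi(s,\rho_s)$, measurable selection only for the subsolution half---matches the paper exactly, and your subsolution argument is essentially the paper's Step~1 (the paper avoids the exit time there by comparing $(\hat f_s+\mathscr{L}^{\overline\alpha}\phi)(s,\rho_s)$ to its value at the fixed point $\boldsymbol\rho$ via the $\beta$-H\"older regularity built into $\mathscr{S}$, but your exit-time variant works just as well for a single fixed control).

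The supersolution half, however, has a genuine gap. You write ``picking an $\bigl(\tfrac{\varepsilon}{4}\mathbb{E}_{\mathcal{F}_\tau^0}[\tau'-\tau]\bigr)$-optimal control $\alpha^{\ast}$'', but your $\tau'=(\tau+\delta)\wedge\hat\tau\wedge\sigma$ contains the exit time $\sigma$ of the trajectory $\rho^{\alpha^{\ast}}$, which depends on $\alpha^{\ast}$ itself: the approximation level is defined in terms of the control you are trying to select. If instead you take $\tau'=(\tau+\delta)\wedge\hat\tau$ with a fixed approximation level (say $\varepsilon\delta/4$), the circularity disappears, but now the integrand $\hat f_s+\mathscr{L}^{\alpha^{\ast}}\phi$ along $\rho_s^{\alpha^{\ast}}$ need not satisfy the lower bound $\varepsilon$ once the trajectory leaves the $\tilde\delta$-ball, and you still have to control that leakage. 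The paper's resolution (its Step~2) is not to choose a near-optimal control at all: it keeps the $\essinf_{\alpha\in\mathcal{A}_\tau}$ through the entire chain of inequalities and isolates the contribution from $\{\tau+h>\tau^\alpha\}$ using the key uniform estimate
\[
\mathbb{E}_{\mathcal{F}^0_\tau}\bigl[1_{\tau+h>\tau^\alpha}\bigr]\;\leq\;C_{\tilde\delta,T,K}\bigl(1+\mathbb{E}_{\mathcal{F}^0_\tau}|\boldsymbol\rho|^2\bigr)\,h,
\]
with the constant independent of $\alpha$. This uniformity lets the error terms be pulled outside the $\essinf$, and a Cauchy--Schwarz splitting then shows they are $o(1)$ after dividing by $h$. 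Your awareness that ``$\sigma>\tau$ a.s.'' is the issue is correct, but that pointwise statement is too weak; what is actually needed is the quantitative, control-independent rate above.
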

\begin{proof}
In the following we shall take $L>K$. \\
    \textbf{Step 1}. ($v$ is a subsolution). Suppose to the contrary, that there exists $(t_0,\rho_0) \in [0,T]\times \mathcal{P}_2(\mathbb{R}^d)$, $\tau \in \mathcal{T}_{t,T}^0$, $\hat{\tau} \in \mathcal{T}_{\tau+}^0$, $\boldsymbol{\rho}\in\mathcal{P}_L^\tau(t_0,\rho_0)$, $\Omega_\tau^0\in\mathcal{F}_\tau^0$, $\phi\in\underline{\mathcal{G}}v(\tau,\boldsymbol{\rho},\hat{\tau};t_0,\rho_0;\Omega_\tau^0)$ such that there exists $\varepsilon,\tilde{\delta}>0$, and $\Omega^{0'}\in\mathcal{F}_\tau^0$, $\mathbb{P}(\Omega^{0'})>0$, with
    \begin{align*}
        &\essinf_{(s,\boldsymbol{\mu})\in Q_{\tilde{\delta}}^+(\tau,\boldsymbol{\rho};t_0,\rho_0)}\mathbb{E}_{\mathcal{F}^0_\tau}\Bigg[1_{s<\hat{\tau}}\Big[-\Game_s \phi(s,\boldsymbol{\mu})\\
        &- \mathbb{H}\Big(s,\boldsymbol{\mu},\partial_\mu \phi(s,\boldsymbol{\mu})(\cdot),\partial_x\partial_\mu \phi(s,\boldsymbol{\mu})(\cdot),\partial_\mu^2 \phi(s,\boldsymbol{\mu})(\cdot,\cdot),\partial_\mu\Game_w \phi(s,\boldsymbol{\mu})(\cdot)\Big)\Big]\Bigg]\geq 2\varepsilon,
    \end{align*}
    $\mathbb{P}^0$-a.e. in $\Omega^{0'}$. 
    By the measurable selection theorem (interested readers may refer to Appendix \ref{measurable_selection_construct}), there exists $\overline{\alpha}\in \mathcal{A}_\tau$ such that for almost all $\omega^0\in\Omega_\tau^0$,
    \begin{align}
    \label{overline_alpha_eq}
        \nonumber&-\mathscr{L}^{\overline{\alpha}}\phi(s,\boldsymbol{\rho})-\int_{\mathbb{R}^d}f_s(x,\boldsymbol{\rho},\overline{\alpha}_s)\boldsymbol{\rho}(dx)\\
        \geq &-\Game_s \phi(s,\boldsymbol{\rho})-\mathbb{H}\Big(s,\boldsymbol{\rho},\partial_\mu \phi(s,\boldsymbol{\rho})(\cdot),\partial_x\partial_\mu \phi(s,\boldsymbol{\rho})(\cdot),\partial_\mu^2 \phi(s,\boldsymbol{\rho})(\cdot,\cdot),\partial_\mu\Game_w \phi(s,\boldsymbol{\rho})(\cdot)\Big)-\varepsilon
    \end{align}
    for almost all $\tau \leq s < T$. Therefore,
    \begin{align*}
        \essinf_{\tau \leq s <(\tau+\tilde{\delta}^2)\wedge T}\mathbb{E}_{\mathcal{F}^0_\tau}\Bigg[1_{s\leq \hat{\tau}}\Big[-\mathscr{L}^{\overline{\alpha}}\phi(s,\boldsymbol{\rho})-\int_{\mathbb{R}^d}f_s(x,\boldsymbol{\rho},\overline{\alpha}_s)\boldsymbol{\rho}(dx)\Big]\Bigg]\geq \varepsilon,
    \end{align*}
    $\mathbb{P}^0$-a.e. in $\Omega^{0'}$. As $\boldsymbol{\rho}\in\mathcal{P}_L^\tau(t_0,\rho_0)$, there exists $\xi$ on $(\Omega,\mathcal{F}_\tau,\mathbb{P})$ such that $\mathcal{L}(\xi(\omega^0,\cdot)) = \boldsymbol{\rho}(\omega^0)$. Let $(X_s^{\tau,\xi;\overline{\alpha}})_{\tau\leq s\leq T}$ solves (\ref{basic_dynamics}) with inital time $\tau$, initial data $\xi$ and control $\overline{\alpha}$. By Lemma \ref{pham_wei_rho_process}, we can now examine the $\mathcal{P}_2(\mathbb{R}^d)$-valued process $(\rho_s^{\tau,\boldsymbol{\rho};\overline{\alpha}})_{\tau\leq s\leq T}:= \mathcal{L}(X_s^{\tau,\xi;\overline{\alpha}}(\omega^0,\cdot))_{\tau\leq s\leq T}$. \\
    \hfill\\
    By the dynamic programming principle (Theorem \ref{DPP_THM}) and It\^o-Wentzell formula (Theorem \ref{Ito_Wentzell_Formula}), for any $h\in(0,\tilde{\delta}^2/4)$, $h$ small enough and $\mathbb{P}^0$-a.e. $\omega^0 \in \Omega^{0'}$, we have
    \begin{align*}
        0\geq&\frac{1}{h} \mathbb{E}_{\mathcal{F}^0_\tau}\Big[(\phi-v)(\tau,\boldsymbol{\rho})-(\phi-v)\Big((\tau+h)\wedge\hat{\tau},\rho_{(\tau+h)\wedge \hat{\tau}}^{\tau,\boldsymbol{\rho};\overline{\alpha}}\Big)\Big]\\
        \geq&\frac{1}{h}\mathbb{E}_{\mathcal{F}^0_\tau}\Big[\phi(\tau,\boldsymbol{\rho})-\phi((\tau+h)\wedge \hat{\tau},\rho_{(\tau+h)\wedge \hat{\tau}}^{\tau,\boldsymbol{\rho};\overline{\alpha}})-\int_\tau^{(\tau+h)\wedge \hat{\tau}}\hat{f}_s(\rho_s^{\tau,\boldsymbol{\rho};\overline{\alpha}},\overline{\alpha}_s)ds\Big]\\
        =&\frac{1}{h}\mathbb{E}_{\mathcal{F}^0_\tau}\Big[\int_\tau^{(\tau+h)\wedge \hat{\tau}}-\mathscr{L}^{\overline{\alpha}}\phi(s,\rho_{s}^{\tau,\boldsymbol{\rho};\overline{\alpha}})-\hat{f}_{s}(\rho_{s}^{\tau,\boldsymbol{\rho};\overline{\alpha}},\overline{\alpha}_{s})ds\Big]\\
        \geq&\frac{1}{h}\mathbb{E}_{\mathcal{F}^0_\tau}\Big[\int_\tau^{(\tau+h)\wedge \hat{\tau}}-\mathscr{L}^{\overline{\alpha}}\phi(s,\boldsymbol{\rho})-\hat{f}_{s}(\boldsymbol{\rho},\overline{\alpha}_{s})ds\Big]\\
        &-\frac{1}{h}\mathbb{E}_{\mathcal{F}^0_\tau}\Big[\int_\tau^{(\tau+h)\wedge \hat{\tau}}\Big|-\mathscr{L}^{\overline{\alpha}}\phi(s,\rho_{s}^{\tau,\boldsymbol{\rho};\overline{\alpha}})-\hat{f}_{s}(\rho_{s}^{\tau,\boldsymbol{\rho};\overline{\alpha}},\overline{\alpha}_{s})ds+\mathscr{L}^{\overline{\alpha}}\phi(s,\boldsymbol{\rho})+\hat{f}_{s}(\boldsymbol{\rho},\overline{\alpha}_{s})\Big|ds\Big]\\
        \geq &\varepsilon-\frac{1}{h}\mathbb{E}_{\mathcal{F}^0_\tau}\Big[\int_\tau^{(\tau+h)\wedge \hat{\tau}}\Big|-\mathscr{L}^{\overline{\alpha}}\phi(s,\rho_{s}^{\tau,\boldsymbol{\rho};\overline{\alpha}})-\hat{f}_{s}(\rho_{s}^{\tau,\boldsymbol{\rho};\overline{\alpha}},\overline{\alpha}_{s})ds+\mathscr{L}^{\overline{\alpha}}\phi(s,\boldsymbol{\rho})+\hat{f}_{s}(\boldsymbol{\rho},\overline{\alpha}_{s})\Big|ds\Big]\\
        \geq &\varepsilon- C(K,L_\phi,L_{\phi,\beta})\mathbb{E}_{\mathcal{F}^0_\tau}\max_{\tau\leq s\leq \tau+h}\mathcal{W}_2^\beta(\rho_{s}^{\tau,\boldsymbol{\rho},\bar{\alpha}},\boldsymbol{\rho})\\
        \to &\varepsilon, \text{ as }h \to 0,
        \end{align*}
    where $C(K,L_\phi,L_{\phi,\beta})>0$ is a constant depending $K$, and $L_\phi$, $L_{\phi,\beta}$. The last two are constants from the fact that $\phi\in\mathscr{S}$. The above draws a contradiction, and hence $v$ is a viscosity subsolution.\\
    \hfill\\
    \textbf{Step 2}. Now we prove that $v$ is a viscosity supersolution. Suppose to the contrary, that there exists $(t_0,\rho_0) \in [0,T]\times \mathcal{P}_2(\mathbb{R}^d)$, $\tau \in \mathcal{T}^0_{t,T}$, $\hat{\tau} \in \mathcal{T}_{\tau+}^0$, $\boldsymbol{\rho}\in\mathcal{P}_L^\tau(t_0,\rho_0)$, $\Omega_\tau^0\in\mathcal{F}_\tau^0$, $\phi\in\overline{\mathcal{G}}v(\tau,\boldsymbol{\rho},\hat{\tau};t_0,\rho_0;\Omega_\tau^0)$ such that there exists $\varepsilon,\tilde{\delta}>0$, and $\Omega^{0'}\in\mathcal{F}_\tau^0$, $\mathbb{P}(\Omega^{0'})>0$, with
    \begin{align*}
        &\esssup_{(s,\boldsymbol{\mu})\in Q_{\tilde{\delta}}^+(\tau,\boldsymbol{\rho};t_0,\rho_0)}\mathbb{E}_{\mathcal{F}^0_\tau}\Bigg[1_{s<\hat{\tau}}\Big[-\Game_s \phi(s,\boldsymbol{\mu})\\
        &- \mathbb{H}\Big(s,\boldsymbol{\mu},\partial_\mu \phi(s,\boldsymbol{\mu})(\cdot),\partial_x\partial_\mu \phi(s,\boldsymbol{\mu})(\cdot),\partial_\mu^2 \phi(s,\boldsymbol{\mu})(\cdot,\cdot),\partial_\mu\Game_w \phi(s,\boldsymbol{\mu})(\cdot)\Big)\Big]\Bigg]\leq-\varepsilon,
    \end{align*}
    $\mathbb{P}^0$ a.e. in $\Omega^{0'}$. As in Step 1, there exists $\xi$ on $(\Omega,\mathcal{F}_\tau,\mathbb{P})$ such that $\mathcal{L}(\xi(\omega^0,\cdot)) = \boldsymbol{\rho}(\omega^0)$. 
    For each $\alpha\in\mathcal{A}_\tau$, define $\tau^\alpha :=\inf\{s>\tau:\rho_s^{\tau,\boldsymbol{\rho};\alpha}\notin B_{\tilde{\delta}/4}(\boldsymbol{\rho})\}$,
    by Lemma \ref{SDE_property} we have the estimate:
    \begin{align}
    \label{control_growth_estimate}    \mathbb{E}_{\mathcal{F}^0_\tau}[1_{\tau+h>\tau^\alpha}] = &\mathbb{E}\Big[1_{\max_{\tau\leq s\leq\tau+h}\mathcal{W}_2(\rho_s^{\tau,\boldsymbol{\rho},\alpha},\boldsymbol{\rho})>\tilde{\delta}/4}\Big]\nonumber\\
        \leq &C_{\tilde{\delta},T,K}\mathbb{E}_{\mathcal{F}^0_\tau}\max_{\tau\leq s\leq \tau+h}\mathcal{W}_2^2(\rho_s^{\tau,\boldsymbol{\rho},\alpha},\boldsymbol{\rho})\nonumber\\
        \leq &C_{\tilde{\delta},T,K}(1+\mathbb{E}_{\mathcal{F}^0_\tau}|\boldsymbol{\rho}|^2)h,
    \end{align}
    where $C_{\tilde{\delta},T,K}$ is a constant depending only on $\tilde{\delta}$, $T$ and $K$ and independent of the control $\alpha$.
    For $h\in(0,\tilde{\delta}^2/4)$ and $h$ small enough, by the dynamic programming principle and Ito-Wentzell formula, we have
    \begin{align*}
        0= &\frac{1}{h}\Big(v(\tau,\boldsymbol{\rho})-\phi(\tau,\boldsymbol{\rho})\Big)\\
         =&\frac{1}{h}\essinf_{\alpha\in\mathcal{A}_\tau}\mathbb{E}_{\mathcal{F}^0_\tau}\Bigg[\int_\tau^{\hat{\tau}\wedge(\tau+h)}\hat{f}_s(\rho_s^{\tau,\boldsymbol{\rho};\alpha},\alpha_s)ds+v(\hat{\tau}\wedge(\tau+h),\rho_{\hat{\tau}\wedge(\tau+h)}^{\tau,\boldsymbol{\rho};\alpha})-\phi(\tau,\boldsymbol{\rho})\Bigg]\\
        \geq &\frac{1}{h}\essinf_{\alpha\in\mathcal{A}_\tau}\mathbb{E}_{\mathcal{F}^0_\tau}\Bigg[\int_\tau^{\hat{\tau}\wedge(\tau+h)}\hat{f}_s(\rho_s^{\tau,\boldsymbol{\rho};\alpha},\alpha_s)ds+\phi(\hat{\tau}\wedge(\tau+h),\rho_{\hat{\tau}\wedge(\tau+h)}^{\tau,\boldsymbol{\rho};\alpha})-\phi(\tau,\boldsymbol{\rho})\Bigg]\\
        = &\frac{1}{h}\essinf_{\alpha\in\mathcal{A}_\tau}\mathbb{E}_{\mathcal{F}^0_\tau}\Bigg[\int_\tau^{\tau+h}\Big(\hat{f}_s(\rho_s^{\tau,\boldsymbol{\rho};\alpha},\alpha_s)+\mathscr{L}^\alpha\phi(s,\rho_s^{\tau,\boldsymbol{\rho},\alpha})\Big)1_{s\leq\hat{\tau}}ds\Bigg]\\
        \geq &\frac{1}{h}\essinf_{\alpha\in\mathcal{A}_\tau}\mathbb{E}_{\mathcal{F}^0_\tau}\Bigg[\int_\tau^{\tau+h}\Big(\hat{f}_{s}(\rho_{s\wedge\tau^\alpha}^{\tau,\boldsymbol{\rho};\alpha},\alpha_{s\wedge\tau^\alpha})+\mathscr{L}^\alpha\phi({s},\rho_{s\wedge\tau^\alpha}^{\tau,\boldsymbol{\rho},\alpha})\Big)1_{s\leq\hat{\tau}}1_{\tau+h\leq\tau^\alpha}ds\Bigg]\\
        &-\frac{1}{h}\essinf_{\alpha\in\mathcal{A}_\tau}\mathbb{E}_{\mathcal{F}^0_\tau}\Bigg[\int_\tau^{\hat{\tau}\wedge(\tau+h)}\Big|\hat{f}_s(\rho_s^{\tau,\boldsymbol{\rho};\alpha},\alpha_s)+\mathscr{L}^\alpha\phi(s,\rho_s^{\tau,\boldsymbol{\rho},\alpha})\Big|1_{\tau+h>\tau^\alpha}ds\Bigg]\\
        \geq &\frac{1}{h}\essinf_{\alpha\in\mathcal{A}_\tau}\mathbb{E}_{\mathcal{F}^0_\tau}\Bigg[\int_\tau^{\tau+h}\Big(\hat{f}_{s}(\rho_{s\wedge\tau^\alpha}^{\tau,\boldsymbol{\rho};\alpha},\alpha_{s\wedge\tau^\alpha})+\mathscr{L}^\alpha\phi({s},\rho_{s\wedge\tau^\alpha}^{\tau,\boldsymbol{\rho},\alpha})\Big)1_{s\leq\hat{\tau}}ds\Bigg]\\
        &-\frac{1}{h}\esssup_{\alpha\in\mathcal{A}_\tau}\mathbb{E}_{\mathcal{F}^0_\tau}\Bigg[\int_\tau^{\hat{\tau}\wedge(\tau+h)}\Big|\hat{f}_s(\rho_s^{\tau,\boldsymbol{\rho};\alpha},\alpha_s)+\mathscr{L}^\alpha\phi(s,\rho_s^{\tau,\boldsymbol{\rho},\alpha})\Big|1_{\tau+h>\tau^\alpha}ds\Bigg]\\
        &-\frac{1}{h}\esssup_{\alpha\in\mathcal{A}_\tau}\mathbb{E}_{\mathcal{F}^0_\tau}\Bigg[\int_\tau^{\tau+h}\Big|\hat{f}_{s}(\rho_{s\wedge\tau^\alpha}^{\tau,\boldsymbol{\rho};\alpha},\alpha_{s\wedge\tau^\alpha})+\mathscr{L}^\alpha\phi({s},\rho_{s\wedge\tau^\alpha}^{\tau,\boldsymbol{\rho},\alpha})\Big|1_{s\leq\hat{\tau}}1_{\tau+h>\tau^\alpha}ds\Bigg]\\
        \geq &\varepsilon - \frac{1}{h}\esssup_{\alpha\in\mathcal{A}_\tau}\sqrt{\mathbb{E}_{\mathcal{F}^0_\tau}\int_\tau^{\tau+h}\Big|\hat{f}_s(\rho_s^{\tau,\boldsymbol{\rho};\alpha},\alpha_s)+\mathscr{L}^\alpha\phi(s,\rho_s^{\tau,\boldsymbol{\rho},\alpha})\Big|^2ds}\sqrt{\mathbb{E}_{\mathcal{F}^0_\tau}\int_\tau^{\tau+h}\Big|1_{\tau+h>\tau^\alpha}\Big|^2ds}\\
        &-\frac{1}{h}\esssup_{\alpha\in\mathcal{A}_\tau}\sqrt{\mathbb{E}_{\mathcal{F}^0_\tau}\int_\tau^{\tau+h}\Big|\hat{f}_s(\rho_{s\wedge\tau^\alpha}^{\tau,\boldsymbol{\rho};\alpha},\alpha_{s\wedge\tau^\alpha})+\mathscr{L}^\alpha\phi(s,\rho_{s\wedge\tau^\alpha}^{\tau,\boldsymbol{\rho},\alpha})\Big|^2ds}\sqrt{\mathbb{E}_{\mathcal{F}^0_\tau}\int_\tau^{\tau+h}\Big|1_{\tau+h>\tau^\alpha}\Big|^2ds}\\
        \to&\varepsilon,\text{ as }h\to 0, \text{ with the help of (\ref{control_growth_estimate})}.
        \end{align*}
       The above draws a contradiction and hence the value function $v$ is a viscosity supersolution.
\end{proof}
\subsection{Uniqueness of the viscosity solution}
\subsubsection{A comparison result}
\begin{theorem}
\label{uniqueness_result}
    Let ($\mathcal{A}$1) hold. Let $u$ be a viscosity supersolution (resp. subsolution) of the BSPDE (\ref{BSPDE}). Let $\phi\in\mathcal{S}^2 (C(\mathcal{P}_2(\mathbb{R}^d)))$ be such that
    \begin{enumerate}
        \item [(i)] There exists $\phi_{n,m}\in\mathscr{S}$, $(n,m)\in\mathbb{N}\times\mathbb{N}$, such that for $\mathbb{P}^0$-a.e. $\omega^0$, for all $(t,\mu)\in [0,T]\times\mathcal{P}_2(\mathbb{R}^d)$ such that there exists $q>2$, $\mu\in\mathcal{P}_q(\mathbb{R}^d)$, we have
        \begin{align*}
            \lim_{n}\lim_m\phi_{n,m}(t,\mu) = \phi(t,\mu), a.s..
        \end{align*}
        \item [(ii)] There exists a continuous $F:\mathcal{P}_2(\mathbb{R}^d)\to\mathbb{R}$, for $(n,m)\in\mathbb{N}\times\mathbb{N}$, for $\mathbb{P}^0$-a.e. $\omega^0$, for all $(t_0,\rho_0)\in [0,T]\times\mathcal{P}_2(\mathbb{R}^d)$ such that there exists $q>2$, $\rho_0\in\mathcal{P}_q(\mathbb{R}^d)$, for all $\tau\in\mathcal{T}_{t_0,T}^0$, $\boldsymbol{\rho}\in\mathcal{P}_L^\tau(t_0,\rho_0)$, $\hat{\tau}\in\mathcal{T}_{\tau+}^0$, we have a.s.
    \begin{align*}
        &\esslimsup_{\substack{(s,\boldsymbol{\mu})\to (\tau^+,\boldsymbol{\rho})\\
        \boldsymbol{\mu}\in \mathcal{P}_L^s(t_0,\rho_0)}}\mathbb{E}_{\mathcal{F}^0_\tau}\Bigg[1_{s\leq\hat{\tau}}\Big[-\Game_s \phi_{n,m}\\
        &- \mathbb{H}\Big(s,\boldsymbol{\mu},\partial_\mu \phi_{n,m}(s,\boldsymbol{\mu})(\cdot),\partial_x\partial_\mu \phi_{n,m}(s,\boldsymbol{\mu})(\cdot),\partial_\mu^2 \phi_{n,m}(s,\boldsymbol{\mu})(\cdot,\cdot),\partial_\mu\Game_w \phi_{n,m}(s,\boldsymbol{\mu})(\cdot)\Big)-F(\boldsymbol{\mu})h_n - l_m\Big]\Bigg]\\
        &\leq 0,
    \end{align*}
    (resp. 
    \begin{align*}
        &\esslimsup_{\substack{(s,\boldsymbol{\mu})\to (\tau^+,\boldsymbol{\rho})\\
        \boldsymbol{\mu}\in \mathcal{P}_L^s(t_0,\rho_0)}}\mathbb{E}_{\mathcal{F}^0_\tau}\Bigg[1_{s\leq\hat{\tau}}\Big[-\Game_s \phi_{n,m}\\
        &- \mathbb{H}\Big(s,\boldsymbol{\mu},\partial_\mu \phi_{n,m}(s,\boldsymbol{\mu})(\cdot),\partial_x\partial_\mu \phi_{n,m}(s,\boldsymbol{\mu})(\cdot),\partial_\mu^2 \phi_{n,m}(s,\boldsymbol{\mu})(\cdot,\cdot),\partial_\mu\Game_w \phi_{n,m}(s,\boldsymbol{\mu})(\cdot)\Big)+F(\boldsymbol{\mu})h_n + l_m\Big]\Bigg]\\
        &\geq 0),
    \end{align*}
    where $h_n$, $l_m$ are sequences of real numbers, $h_n\to 0$ as $n\to\infty$ and $l_m\to 0$ as $m\to\infty$. In addition, we assume that for all $(t_0,\rho_0)\in[0,T]\times\mathcal{P}_2(\mathbb{R}^d)$, $\mathbb{E}F:\mathcal{P}_L^T(t_0,\rho_0)\to\mathbb{R}$ is continuous in the topology of convergence in probability. 
    \item [(iii)] For all $\mu\in\mathcal{P}_2(\mathbb{R}^d)$ such that there exists $q > 2$, $\mu\in\mathcal{P}_q(\mathbb{R}^d)$, we have
    \begin{align*}
    \displaystyle\phi_{n,m}(T,\mu)\leq \int_{\mathbb{R}^d}g(x,\mu)\mu(dx)+F(\mu)h_n+l_m,
    \end{align*}
    (resp.
    \begin{align*}
    \displaystyle\phi_{n,m}(T,\mu)\geq \int_{\mathbb{R}^d}g(x,\mu)\mu(dx)-F(\mu)h_n-l_m),
    \end{align*}
    where $F$, $h_n$ and $l_m$ are defined in (ii).
        \item [(iv)] $\{\phi_{n,m}(t,\mu)\}_{n,m\geq 1}$ is equicontinuous in $\mu$ uniformly in $\omega^0$.
        \item [(v)] $(\phi - u)^+ \in \mathcal{S}^\infty (C(\mathcal{P}_2(\mathbb{R}^d)))$, $(\phi_{n,m} - u)^+ \in \mathcal{S}^\infty (C(\mathcal{P}_2(\mathbb{R}^d)))$\\
        (resp. $(u - \phi)^+ \in \mathcal{S}^\infty (C(\mathcal{P}_2(\mathbb{R}^d)))$, $(u - \phi_{n,m})^+ \in \mathcal{S}^\infty (C(\mathcal{P}_2(\mathbb{R}^d)))$). 
    \end{enumerate}
    Then it holds that for all $(t,\mu)\in [0,T]\times\mathcal{P}_2(\mathbb{R}^d)$, $u(t,\mu)\geq(\text{resp. $\leq$}) \phi(t,\mu)$ for $\mathbb{P}^0$ a.e. $\omega^0$.
\end{theorem}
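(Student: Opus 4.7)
The plan is to argue by contradiction in the supersolution case (the subsolution case is symmetric). Suppose, for the sake of contradiction, that there are $t_0 \in [0, T)$, $\rho_0 \in \mathcal{P}_q(\mathbb{R}^d)$ for some $q > 2$, a $\delta > 0$, and $\Omega' \in \mathcal{F}_{t_0}^0$ with $\mathbb{P}^0(\Omega') > 0$ on which $\phi(t_0, \rho_0) - u(t_0, \rho_0) > 2\delta$. Hypotheses (i) and (iv) then yield, for $n,m$ sufficiently large, $\phi_{n,m}(t_0, \rho_0) - u(t_0, \rho_0) > \delta$ on $\Omega'$.

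To create strict slack for the eventual contradiction I would introduce a linear-in-time penalty $\tilde\psi_{n,m}(s, \mu) := \phi_{n,m}(s, \mu) - \lambda(T - s)$ with $\lambda \in \bigl(0,\delta/(4(T - t_0))\bigr)$. Then $\tilde\psi_{n,m} \in \mathscr{S}$, $\Game_s \tilde\psi_{n,m} = \Game_s \phi_{n,m} + \lambda$, and all measure-derivatives coincide with those of $\phi_{n,m}$. The key boundary estimates are $(\tilde\psi_{n,m} - u)(t_0, \rho_0) \geq 3\delta/4$ on $\Omega'$, while (iii) combined with the supersolution terminal bound $u(T, \mu) \geq \int_{\mathbb{R}^d} g(x, \mu)\,\mu(dx)$ gives $(\tilde\psi_{n,m} - u)(T, \mu) \leq F(\mu) h_n + l_m$, which is uniformly $< \delta/4$ for $n,m$ large, since $F$ is continuous (hence bounded) on the compact set $\mathcal{P}_L^T(t_0, \rho_0)$.

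The crux is to locate a touching pair $(\tau_{n,m}, \boldsymbol{\rho}_{n,m})$ where the shifted penalty becomes an admissible supersolution test. I would define the Snell-envelope-like process
\begin{align*}
Y_s := \esssup_{\bar\tau \in \mathcal{T}_{s, T}^0} \mathbb{E}_{\mathcal{F}_s^0}\Bigl[\esssup_{\mu \in \mathcal{P}_L^{\bar\tau}(t_0, \rho_0)} (\tilde\psi_{n,m} - u)(\bar\tau, \mu)\Bigr],
\end{align*}
and invoke optimal-stopping theory together with a measurable selection on the random compact set $\mathcal{P}_L^{\cdot}(t_0, \rho_0)$ (Appendix \ref{compact_subset}) to extract an optimal $\tau_{n,m} \in \mathcal{T}_{t_0, T}^0$ and an $\mathcal{F}_{\tau_{n,m}}^0$-measurable maximizer $\boldsymbol{\rho}_{n,m}$ satisfying $Y_{\tau_{n,m}} = (\tilde\psi_{n,m} - u)(\tau_{n,m}, \boldsymbol{\rho}_{n,m}) \geq 3\delta/4$ on an event $\Omega'' \subseteq \Omega'$ of positive measure, with $\tau_{n,m} < T$ there (by the boundary estimate). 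Setting $c_{n,m} := (\tilde\psi_{n,m} - u)(\tau_{n,m}, \boldsymbol{\rho}_{n,m})$ and $\bar\psi_{n,m} := \tilde\psi_{n,m} - c_{n,m}$, the function $\bar\psi_{n,m}$ then lies in $\overline{\mathcal{G}}u(\tau_{n,m}, \boldsymbol{\rho}_{n,m}, \hat\tau; t_0, \rho_0; \Omega'')$ for any $\hat\tau \in \mathcal{T}_{\tau_{n,m}+}^0$. Applying the supersolution inequality to $\bar\psi_{n,m}$, and using $\Game_s \bar\psi_{n,m} = \Game_s \phi_{n,m} + \lambda$ with measure-derivatives unchanged, yields
\begin{align*}
\esslimsup_{(s, \boldsymbol{\mu}) \to (\tau_{n,m}^+, \boldsymbol{\rho}_{n,m})} \mathbb{E}_{\mathcal{F}_{\tau_{n,m}}^0}\Bigl[1_{s < \hat\tau}\bigl(-\Game_s \phi_{n,m} - \mathbb{H}(s, \boldsymbol{\mu}, \partial_\mu \phi_{n,m}, \ldots)\bigr)\Bigr] \geq \lambda.
\end{align*}
Combining this with the upper bound coming from (ii), namely $\mathbb{E}_{\mathcal{F}_{\tau_{n,m}}^0}[F(\boldsymbol{\rho}_{n,m})] h_n + l_m$, and letting $n,m \to \infty$ (using uniform boundedness of $F$ on the compact set and the continuity-in-probability of $\mathbb{E}F$) forces $\lambda \leq 0$, contradicting $\lambda > 0$.

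The main obstacle is the rigorous construction of $(\tau_{n,m}, \boldsymbol{\rho}_{n,m})$ and the verification that $\bar\psi_{n,m}$ genuinely belongs to $\overline{\mathcal{G}}u$. This requires three non-trivial ingredients: (a) right-continuity and an optimal-stopping theorem for $Y_s$ with respect to the non-Markovian filtration $\mathbb{F}^0$; (b) a measurable selection of the $\mu$-maximizer inside the random compact set $\mathcal{P}_L^{\tau_{n,m}}(t_0, \rho_0)$; and (c) careful handling of the shift $c_{n,m}$, which is only $\mathcal{F}_{\tau_{n,m}}^0$-measurable, so that $\bar\psi_{n,m}$ possesses the full $\mathscr{S}$-regularity on $[0, T]$ --- equivalently, a localization argument showing that only the behaviour of the test function on $[\tau_{n,m}, T]$ enters the essential-limsup viscosity inequality. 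The continuity hypothesis imposed on $\mathbb{E}F$ in (ii) is precisely what is needed to pass $F(\boldsymbol{\rho}_{n,m})$ through the essential limsup as $(s, \boldsymbol{\mu}) \to (\tau_{n,m}^+, \boldsymbol{\rho}_{n,m})$.
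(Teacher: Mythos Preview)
Your strategy coincides with the paper's: contradiction, a time-linear penalty, a Snell-envelope argument on $s\mapsto\esssup_{\boldsymbol{\mu}\in\mathcal{P}_L^s}(\phi_{n,m}-u)^+(s,\boldsymbol{\mu})$, extraction of a touching pair $(\tau,\boldsymbol{\rho})$, and the supersolution inequality combined with (ii). Two points need more than you have sketched.

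First, you begin by assuming the contradiction point $\rho_0$ already lies in $\mathcal{P}_q$ for some $q>2$. The conclusion is claimed for \emph{all} $\mu\in\mathcal{P}_2$, while hypotheses (i)--(iii) are only stated for $\mathcal{P}_q$-measures; the reduction is a separate step. The paper truncates a representative $\xi$ of the original $\rho_0'\in\mathcal{P}_2$, uses the $\omega^0$-a.e.\ continuity of $\phi(t_0,\cdot)-u(t_0,\cdot)$ to transfer the strict inequality to $\rho_{0,K}=\mathcal{L}(\xi 1_{|\xi|\le K})\in\bigcap_{q>2}\mathcal{P}_q$, and then applies Egorov's theorem to recover a set of positive $\mathbb{P}^0$-measure on which the inequality is uniform.

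Second, your obstacle (c) is the crux, and localisation as you describe it does not close the gap: membership in $\mathscr{S}$ requires the It\^o decomposition on all of $[0,T]$, so subtracting the merely $\mathcal{F}^0_{\tau}$-measurable shift $c_{n,m}$ is illegal. The paper's device is to add to $\phi_{n,m}$ not a frozen constant but the bounded martingale $s\mapsto\mathbb{E}_{\mathcal{F}^0_s}[Y_\tau]$, where $Y$ is the penalised auxiliary process and $\tau$ its first contact time with the Snell envelope $Z$. This martingale is globally adapted, lies in $\mathscr{S}$ with vanishing $\Game_t$-component, contributes nothing to the Hamiltonian, and on $\{s\ge\tau\}$ equals the desired shift; the test-function membership $\Phi\in\overline{\mathcal{G}}u$ then follows from the single line $Y_\tau=Z_\tau\le\mathbb{E}_{\mathcal{F}^0_\tau}[Y_{\bar\tau\wedge\hat\tau}]$. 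Your obstacles (a) and (b) are disposed of in the paper by the compactness of $\mathcal{P}_L^\cdot(t_0,\rho_0)$ in probability together with its closedness under finite $\mathcal{F}^0$-measurable partitions, which delivers the $\omega^0$-wise maximiser $\boldsymbol{\mu}_s^{n,m}$ directly, without an external selection theorem.
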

\begin{proof}\hfill
\hfill\\
\textbf{Step 1}
    Suppose to the contrary, that there exists constant $\kappa'>0$, $(t_0,\rho_0')\in [0,T]\times\mathcal{P}_2(\mathbb{R}^d)$, $\Omega_{t_0}'\in\mathcal{F}_{t_0}^0$, $\mathbb{P}^0(\Omega_{t_0}')>0$, $\kappa' \leq \phi(t_0,\rho_0')-u(t_0,\rho_0')$ for all $\omega^0\in\Omega_{t_0}'$. We claim that there exists $\kappa>0$, $\rho_0 \in \mathcal{P}_q(\mathbb{R}^d)$, $q>2$ and a $\Omega_{t_0}\in\mathcal{F}_{t_0}^0$, $\mathbb{P}^0(\Omega_{t_0})>0$ such that 
    \begin{align*}
         \kappa \leq \phi(t_0,\rho_0) - u(t_0,\rho_0)\text{ for all }\omega^0\in\Omega_{t_0}.
    \end{align*}
    Indeed, this is apparent from the following: First of all, there exists $\xi:(\tilde{\Omega}^1,\mathcal{G},\tilde{\mathbb{P}})\to\mathbb{R}^d$ such that $\mathcal{L}(\xi) = \rho_0'$. Define $\rho_{0;K} := \mathcal{L}(\xi 1_{|\xi|\leq K})$. Then by the $\mathbb{P}^0$-a.e. continuity of $u(t_0,\cdot)$ and $\phi(t_0,\cdot)$, 
    \begin{align*}
        \phi(t_0,\rho_{0;K})-u(t_0,\rho_{0;K})\to\phi(t_0,\rho_{0}')-u(t_0,\rho_{0}')\text{ for }\mathbb{P}^0\text{-a.e. } \omega^0\in\Omega_{t_0}'.
    \end{align*}
    Egorov's theorem tells us that there exists a $\Omega_{t_0}$, $\mathbb{P}^0(\Omega_{t_0})>0$, 
    \begin{align*}
        \phi(t_0,\rho_{0;K})-u(t_0,\rho_{0;K}) \to \phi(t_0,\rho_{0}')-u(t_0,\rho_{0}')\text{ uniformly for all }\omega^0\in\Omega_{t_0}, 
    \end{align*}
    thus there exists $K$ large enough such that defining $\rho_0:=\rho_{0;K}$, we have
    \begin{align*}
        0<\kappa:=\kappa'/2 \leq \phi(t_0,\rho_0) - u(t_0,\rho_0)\text{ for all }\omega^0\in\Omega_{t_0}.
    \end{align*}
    \textbf{Step 2} As our compact set $\mathcal{P}_L^{T}(0,\rho_0)$ (compact in the topology induced by the convergence in probability as proven in Theorem \ref{P_L_compact}) is a subset of Wasserstein-space-valued random variables, there is a canonical way of looking at the functions $\phi$, $u$:
    \begin{align*}
        &\boldsymbol{\phi}:[0,T]\times\mathcal{P}_L^{T}(0,\rho_0)\to L^2((\Omega^0,\mathcal{F}^0,\mathbb{P}^0);\mathbb{R}),\\
        &\boldsymbol{\phi}(t,\boldsymbol{\mu})(\omega^0)=\phi(t,\boldsymbol{\mu}(\omega^0),\omega^0).
    \end{align*}
    From Step 1 and $(v)$, there exists $\kappa > 0$ such that
    \begin{align*}
        \esssup_{\boldsymbol{\mu}\in\mathcal{P}_L^{t_0}(0,\rho_0)}\{\boldsymbol{\phi}(t_0,\boldsymbol{\mu})-\boldsymbol{u}(t_0,\boldsymbol{\mu})\}\geq \kappa\text{ for $\mathbb{P}^0$-a.e. }\omega^0 \in \Omega_{t_0}.
    \end{align*}
    First of all, we claim that this essential supremum can actually be attained. Given $\boldsymbol{\mu}$, $\boldsymbol{\mu}' \in \mathcal{P}_L^{t_0}(0,\rho_0)$, setting $\boldsymbol{\mu}^* := \boldsymbol{\mu} 1_{\boldsymbol{\phi}(t_0,\boldsymbol{\mu})-\boldsymbol{u}(t_0,\boldsymbol{\mu})>\boldsymbol{\phi}(t_0,\boldsymbol{\mu}')-\boldsymbol{u}(t_0,\boldsymbol{\mu}')} + \boldsymbol{\mu}'1_{\boldsymbol{\phi}(t_0,\boldsymbol{\mu})-\boldsymbol{u}(t_0,\boldsymbol{\mu})\leq \boldsymbol{\phi}(t_0,\boldsymbol{\mu}')-\boldsymbol{u}(t_0,\boldsymbol{\mu}')}$, then
    \begin{align*}
        \boldsymbol{\phi}(t_0,\boldsymbol{\mu}^*) -\boldsymbol{u}(t_0,\boldsymbol{\mu}^*) = \max\{\boldsymbol{\phi}(t_0,\boldsymbol{\mu}) -\boldsymbol{u}(t_0,\boldsymbol{\mu}),\boldsymbol{\phi}(t_0,\boldsymbol{\mu}') -\boldsymbol{u}(t_0,\boldsymbol{\mu}')\}.
    \end{align*}
    Note that our constructed $\mathcal{P}_L^{t_0}$ is closed under finite partition addition, therefore $\boldsymbol{\mu}^*\in\mathcal{P}_L^{t_0}$. Thus there exists a sequence $\{\boldsymbol{\mu}_k\}_{k\in \mathbb{N}}\in\mathcal{P}_L^{t_0}(0,\rho_0)$ such that
    \begin{align*}
        \lim_k \boldsymbol{\phi}(t_0,\boldsymbol{\mu}_k) -\boldsymbol{u}(t_0,\boldsymbol{\mu}_k) = \esssup_{\boldsymbol{\mu}\in\mathcal{P}_L^{t_0}(0,\rho_0)}\{\boldsymbol{\phi}(t_0,\boldsymbol{\mu})-\boldsymbol{u}(t_0,\boldsymbol{\mu})\}.
    \end{align*}
    Moreover, since $\mathcal{P}_L^{t_0}(0,\rho_0)$ is compact, there exists $\boldsymbol{\mu}_\infty$ such that (up to a subsequence) $\boldsymbol{\mu}_k\to\boldsymbol{\mu}_\infty$ in probability, and there exists a subsequence $\boldsymbol{\mu}_{n_k}\to\boldsymbol{\mu}_\infty$ $\mathbb{P}^0$-a.e. $\omega^0$. Since for $\mathbb{P}^0$-a.e. $\omega^0$, $\phi(t_0,\cdot)-u(t_0,\cdot)$ is continuous,
    \begin{align*}
        \esssup_{\boldsymbol{\mu}\in\mathcal{P}_L^{t_0}(0,\rho_0)}\{\boldsymbol{\phi}(t_0,\boldsymbol{\mu})-\boldsymbol{u}(t_0,\boldsymbol{\mu})\}(\omega^0) =& \lim_k \phi(t_0,\boldsymbol{\mu}_{n_k}(\omega^0),\omega^0) -u(t_0,\boldsymbol{\mu}_{n_k}(\omega^0),\omega^0) \\
        =& \phi(t_0,\boldsymbol{\mu}_\infty(\omega^0),\omega^0) -u(t_0,\boldsymbol{\mu}_\infty(\omega^0),\omega^0),\text{ for $\mathbb{P}^0$-a.e. $\omega^0\in\Omega_{t_0}$}. 
    \end{align*}
    Therefore the essential supremum is attained by $\boldsymbol{\mu}_\infty$.\\
    \hfill\\
    \textbf{Step 3} Moreover, for all $\boldsymbol{\mu} \in \mathcal{P}_L^{T}(0,\rho_0)$, as $\mathcal{P}_L^T(0,\rho_0)$ is compact, and by $(ii)$, there exists a constant $C(\mathcal{P}_L^{T}(0,\rho_0))$ depending on $\mathcal{P}_L^{T}(0,\rho_0)$ such that $\mathbb{E}F(\boldsymbol{\mu})h_n \leq C(\mathcal{P}_L^{T}(0,\rho_0))h_n$, for all $\boldsymbol{\mu}\in\mathcal{P}_L^T(0,\rho_0)$. Hence one can choose $N_1$ such that for all $n,m\geq N_1$, $\boldsymbol{\mu} \in \mathcal{P}_L^{T}(0,\rho_0)$:
    \begin{align*}
        \max\{\mathbb{E}F(\boldsymbol{\mu})h_n,l_m\} < \min\Big\{\frac{\kappa}{16(T-t_0)},\frac{\kappa}{8}\Big\}.
    \end{align*}
    \textbf{Step 4} We claim that
    \begin{align*}
        \lim_n \lim_m\esssup_{\boldsymbol{\mu}\in\mathcal{P}_L^{t_0}(0,\rho_0)}\{\boldsymbol{\phi}_{n,m}(t_0,\boldsymbol{\mu})-\boldsymbol{u}(t_0,\boldsymbol{\mu})\} = \esssup_{\boldsymbol{\mu}\in\mathcal{P}_L^{t_0}(0,\rho_0)}\{\boldsymbol{\phi}(t_0,\boldsymbol{\mu})-\boldsymbol{u}(t_0,\boldsymbol{\mu})\}\text{ in probability}. 
    \end{align*}
    To see this, note that
    \begin{enumerate}[(1)]
        \item  $\mathcal{P}_{L}^{t_0}(0,\rho_0)$ is compact in the topology of convergence in probability,
        \item For a metric space $(M,d)$, let $\xi$ and $\eta$ be $M$-valued random variables, then $d_P(\xi,\eta):=\mathbb{E}[d(\xi,\eta)\wedge 1]$ generates the topology of convergence in probability. Given $\varepsilon > 0$, by $(iv)$, there exists $\delta$ such that if $\mathcal{W}_2(\mu',\mu)< \delta$, $|\phi_{n,m}(t_0,\mu',\omega) - \phi_{n,m}(t_0,\mu,\omega)|< \varepsilon/2$, independent of $n$, $m$, $\mu$ and $\omega$. Without loss of generality, assume $\delta<1$. So if $d_P(\boldsymbol{\mu}',\boldsymbol{\mu}) = \mathbb{E}[\mathcal{W}_2(\boldsymbol{\mu},\boldsymbol{\mu}')\wedge 1]<\delta':=\frac{\varepsilon}{2}\delta$, then
        \begin{align*}
            \frac{\varepsilon}{2}\delta >& \mathbb{E}[\mathcal{W}_2(\boldsymbol{\mu}',\boldsymbol{\mu})\wedge 1]\\
            =& \mathbb{E}[(\mathcal{W}_2(\boldsymbol{\mu},\boldsymbol{\mu}')\wedge 1) 1_{\mathcal{W}_2(\boldsymbol{\mu},\boldsymbol{\mu}')<\delta}]+\mathbb{E}[(\mathcal{W}_2(\boldsymbol{\mu},\boldsymbol{\mu}')\wedge 1) 1_{\mathcal{W}_2(\boldsymbol{\mu},\boldsymbol{\mu}')\geq \delta}]\\
            \geq& \delta \mathbb{P}^0(\mathcal{W}_2(\boldsymbol{\mu},\boldsymbol{\mu}')\geq \delta).
        \end{align*}
        Thus
        \begin{align*}
            &\mathbb{E}[|\boldsymbol{\phi}_{n,m}(t_0,\boldsymbol{\mu}')-\boldsymbol{\phi}_{n,m}(t_0,\boldsymbol{\mu})|\wedge 1]\\
            =& \mathbb{E}[(|\boldsymbol{\phi}_{n,m}(t_0,\boldsymbol{\mu}')-\boldsymbol{\phi}_{n,m}(t_0,\boldsymbol{\mu})|\wedge 1) 1_{\mathcal{W}_2(\boldsymbol{\mu},\boldsymbol{\mu}')<\delta}]+\mathbb{E}[(|\boldsymbol{\phi}_{n,m}(t_0,\boldsymbol{\mu}')-\boldsymbol{\phi}_{n,m}(t_0,\boldsymbol{\mu})|\wedge 1) 1_{\mathcal{W}_2(\boldsymbol{\mu},\boldsymbol{\mu}')\geq\delta}]\\
            \leq& \varepsilon/2+\mathbb{E}[1_{\mathcal{W}_2(\boldsymbol{\mu},\boldsymbol{\mu}')\geq\delta}] \leq \varepsilon.
        \end{align*}
    \end{enumerate}
    Thus there exists $\delta'$ such that if $d_P(\boldsymbol{\mu},\boldsymbol{\mu}')<\delta'$, then $d_P(\boldsymbol{\phi}_{n,m}(t_0,\boldsymbol{\mu}),\boldsymbol{\phi}_{n,m}(t_0,\boldsymbol{\mu}'))<\varepsilon$, and it is independent of $n$, $m$ $\mu$, so $\boldsymbol{\phi}_{n,m}$'s are continuous in probability in $\boldsymbol{\mu}$ uniformly in $n$, $m$, $\omega^0$. Since $\mathcal{P}_{L}^{t_0}(0,\rho_0)$ is compact, there exists a finite cover $\cup_{i=1}^r B_{\delta'}^{d_P}(\boldsymbol{\mu}_i)$. Thanks to our Step 1, there exists $q>2$ such that $\boldsymbol{\mu}_i$'s are $\mathcal{P}_q(\mathbb{R}^d)$-valued random variables, as $\rho_0 \in\mathcal{P}_q(\mathbb{R}^d)$. By $(i)$, $\lim_n\lim_m \phi_{n,m}(t_0,\boldsymbol{\mu}_i(\omega^0),\omega^0) = \phi(t_0,\boldsymbol{\mu}_i(\omega^0),\omega^0)$ $\mathbb{P}^0$ a.e., so $\boldsymbol{\phi}_{n,m}(t_0,\boldsymbol{\mu}_i)\to\boldsymbol{\phi}(t_0,\boldsymbol{\mu}_i)$ in probability as $m\to\infty$, then $n\to\infty$. There exists $N$ such that for all $n\geq N$, there exists $M:=M(n)$ such that for all $m\geq M$, 
    \begin{align*}
        d_P(\boldsymbol{\phi}_{n,m}(t_0,\boldsymbol{\mu}_i),\boldsymbol{\phi}(t_0,\boldsymbol{\mu}_i)) < \varepsilon,\text{ for all $1 \leq i \leq r$.}
    \end{align*}
    Now let $\boldsymbol{\mu} \in \mathcal{P}_{L}^{t_0}(0,\rho_0)$ be arbitrary, there exists some $i'$ such that $\boldsymbol{\mu} \in B_{\delta'}^{d_P}(\boldsymbol{\mu}_{i'})$, then for all $n\geq N$, $m\geq M(n)$,
    \begin{align*}
        d_P(\boldsymbol{\phi}_{n,m}(t_0,\boldsymbol{\mu}),\boldsymbol{\phi}(t_0,\boldsymbol{\mu}))\leq d_P(\boldsymbol{\phi}_{n,m}(t_0,\boldsymbol{\mu}),\boldsymbol{\phi}_{n,m}(t_0,\boldsymbol{\mu}_{i'}))+d_P(\boldsymbol{\phi}_{n,m}(t_0,\boldsymbol{\mu}_{i'}),\boldsymbol{\phi}(t_0,\boldsymbol{\mu}_{i'}))\leq 2\varepsilon. 
    \end{align*}
    Hence the convergence of $(i)$ can be made uniform in $\boldsymbol{\mu}\in\mathcal{P}_{L}^{t_0}(0,\rho_0)$ in the sense of probability. It is then easy to see that 
    \begin{align*}
    \lim_n\lim_m\esssup_{\boldsymbol{\mu}\in\mathcal{P}_L^{t_0}(0,\rho_0)}\{\boldsymbol{\phi}_{n,m}(t_0,\boldsymbol{\mu})-\boldsymbol{u}(t_0,\boldsymbol{\mu})\} = \esssup_{\boldsymbol{\mu}\in\mathcal{P}_L^{t_0}(0,\rho_0)}\{\boldsymbol{\phi}(t_0,\boldsymbol{\mu})-\boldsymbol{u}(t_0,\boldsymbol{\mu})\}\,\,\,\text{in probability}.
    \end{align*}
    Indeed, we have
    \begin{align*}
        &\mathbb{P}\Bigg(\Big|\esssup_{\boldsymbol{\mu}\in\mathcal{P}_L^{t_0}(0,\rho_0)}\{\boldsymbol{\phi}_{n,m}(t_0,\boldsymbol{\mu})-\boldsymbol{u}(t_0,\boldsymbol{\mu})\} - \esssup_{\boldsymbol{\mu}\in\mathcal{P}_L^{t_0}(0,\rho_0)}\{\boldsymbol{\phi}(t_0,\boldsymbol{\mu})-\boldsymbol{u}(t_0,\boldsymbol{\mu})\}\Big|\geq \varepsilon\Bigg)\\
        \leq &\mathbb{P}\Bigg(\esssup_{\boldsymbol{\mu}\in\mathcal{P}_L^{t_0}(0,\rho_0)}\{\boldsymbol{\phi}_{n,m}(t_0,\boldsymbol{\mu})-\boldsymbol{u}(t_0,\boldsymbol{\mu})\} - \esssup_{\boldsymbol{\mu}\in\mathcal{P}_L^{t_0}(0,\rho_0)}\{\boldsymbol{\phi}(t_0,\boldsymbol{\mu})-\boldsymbol{u}(t_0,\boldsymbol{\mu})\}\geq \varepsilon\Bigg)\\
        &+\mathbb{P}\Bigg(\esssup_{\boldsymbol{\mu}\in\mathcal{P}_L^{t_0}(0,\rho_0)}\{\boldsymbol{\phi}_{n,m}(t_0,\boldsymbol{\mu})-\boldsymbol{u}(t_0,\boldsymbol{\mu})\} - \esssup_{\boldsymbol{\mu}\in\mathcal{P}_L^{t_0}(0,\rho_0)}\{\boldsymbol{\phi}(t_0,\boldsymbol{\mu})-\boldsymbol{u}(t_0,\boldsymbol{\mu})\}\leq -\varepsilon\Bigg)\\
        \leq &\mathbb{P}\Bigg(\esssup_{\boldsymbol{\mu}\in\mathcal{P}_L^{t_0}(0,\rho_0)}\{\boldsymbol{\phi}_{n,m}(t_0,\boldsymbol{\mu})-\boldsymbol{\phi}(t_0,\boldsymbol{\mu})\}\geq \varepsilon\Bigg)\\  &+\mathbb{P}\Bigg(\boldsymbol{\phi}_{n,m}(t_0,\boldsymbol{\mu}^*)-\boldsymbol{\phi}(t_0,\boldsymbol{\mu}^*)\leq -\varepsilon \Bigg),\\
        \to& 0 \text{ as }m\to \infty,\text{ then }n\to\infty,
    \end{align*}
    where $\boldsymbol{\phi}(t_0,\boldsymbol{\mu}^*)-\boldsymbol{u}(t_0,\boldsymbol{\mu}^*)=\esssup_{\boldsymbol{\mu}\in\mathcal{P}_L^{t_0}(0,\rho_0)}\{\boldsymbol{\phi}(t_0,\boldsymbol{\mu})-\boldsymbol{u}(t_0,\boldsymbol{\mu})\}$.\\
    \hfill\\
    \textbf{Step 5} For all $n,m$, there exists $\boldsymbol{\mu}_{t_0}^{n,m} \in \mathcal{F}_{t_0}^0$ such that
    \begin{align*}
        \boldsymbol{\phi}_{n,m}(t_0,\boldsymbol{\mu}_{t_0}^{n,m})-u(t_0,\boldsymbol{\mu}_{t_0}^{n,m}) = \esssup_{\boldsymbol{\mu}\in\mathcal{P}_L^{t_0}(0,\rho_0)}\{\boldsymbol{\phi}_{n,m}(t_0,\boldsymbol{\mu})-\boldsymbol{u}(t_0,\boldsymbol{\mu})\}, 
    \end{align*}
    and let 
    \begin{align*}
        \boldsymbol{\phi}(t_0,\boldsymbol{\mu}_{t_0})-u(t_0,\boldsymbol{\mu}_{t_0}) := \esssup_{\boldsymbol{\mu}\in\mathcal{P}_L^{t_0}(0,\rho_0)}\{\boldsymbol{\phi}(t_0,\boldsymbol{\mu})-\boldsymbol{u}(t_0,\boldsymbol{\mu})\}.
    \end{align*}
    With an argument similar to Step 1, up to subsequence, there exists $N_2$ such that $\forall n \geq N_2$, there exists $M_2(n)$ such that for $m\geq M_2$,
    \begin{align*}
        \boldsymbol{\phi}_{n,m}(t_0,\boldsymbol{\mu}_{t_0}^{n,m}) - \boldsymbol{u}(t_0,\boldsymbol{\mu}_{t_0}^{n,m}) \geq \esssup_{\boldsymbol{\mu}\in\mathcal{P}_L^{t_0}(0,\rho_0)}\{\boldsymbol{\phi}(t_0,\boldsymbol{\mu})-\boldsymbol{u}(t_0,\boldsymbol{\mu})\}-\kappa/2\geq \kappa/2>0\text{ for almost }\omega\in\Omega_{t_0}^{n,m},
    \end{align*}
    for some $\Omega_{t_0}^{n,m}$, $\mathbb{P}(\Omega_{t_0}^{n,m})>0$. Now we fix an $n \geq \max(N_1,N_2)$ and $m\geq \max(N_1,M_2(n))$. Define \begin{align*}
    \alpha:=\boldsymbol{\phi}_{n,m}(t_0,\boldsymbol{\mu}_{t_0}^{n,m}) - \boldsymbol{u}(t_0,\boldsymbol{\mu}_{t_0}^{n,m}),
    \end{align*}
    and we work on the induced $\Omega_{t_0}^{n,m}$. For each $s\in(t_0,T]$, get an $\mathcal{F}_s^0$ measurable r.v. $\boldsymbol{\mu}^{n,m}_s$ such that
    \begin{align*}
        (\boldsymbol{\phi}_{n,m}(s,\boldsymbol{\mu}_s^{n,m})-\boldsymbol{u}(s,\boldsymbol{\mu}_s^{n,m}))^+ = \esssup_{\boldsymbol{\mu}\in\mathcal{P}_L^s(0,\rho)}(\boldsymbol{\phi}_{n,m}(s,\boldsymbol{\mu})-\boldsymbol{u}(s,\boldsymbol{\mu}))^+.
    \end{align*}
    Set 
    \begin{align*}
        Y_s :=& -(\boldsymbol{\phi}_{n,m}(s,\boldsymbol{\mu}_s^{n,m})-\boldsymbol{u}(s,\boldsymbol{\mu}_s^{n,m}))^+-\frac{\alpha(s-t_0)}{2(T-t_0)},\\
        Z_s :=& \essinf_{\tau\in\mathcal{T}_{s,T}^0}\mathbb{E}_{\mathcal{F}_s^0}[Y_\tau].
    \end{align*}
    From our assumption, the time continuity of $(\boldsymbol{\phi}_{n,m}(s,\boldsymbol{\mu}^{n,m}_s)-\boldsymbol{u}(s,\boldsymbol{\mu}^{n,m}_s))^+$ follows. Therefore, the process $(Y_s)_{t_0\leq s\leq T}$ has continuous trajectories. Define $\tau:= \inf\{s\geq t_0 : Y_s=Z_s\}$. Note that
    \begin{align*}
        \boldsymbol{\phi}_{n,m}(T,\boldsymbol{\mu}_T^{n,m})\leq& \int_{\mathbb{R}^d}g(x,\boldsymbol{\mu}_T^{n,m})\boldsymbol{\mu}_T^{n,m}(dx) +F(\boldsymbol{\mu}_T^{n,m})h_n+l_m\\
        \leq&\boldsymbol{u}(T,\boldsymbol{\mu}_T^{n,m})+F(\boldsymbol{\mu}_T^{n,m})h_n+l_m
    \end{align*}
    implying
    \begin{align*}
        (\boldsymbol{\phi}_{n,m}(T,\boldsymbol{\mu}_T^{n,m})-\boldsymbol{u}(T,\boldsymbol{\mu}_{T}^{n,m}))^+\leq F(\boldsymbol{\mu}_T^{n,m})h_n+l_m,
    \end{align*}
    so
    \begin{align*}
        \mathbb{E}(\boldsymbol{\phi}_{n,m}(T,\boldsymbol{\mu}_T^{n,m})-\boldsymbol{u}(T,\boldsymbol{\mu}_T^{n,m}))^+< \kappa/4 < \mathbb{E}\alpha/2.
    \end{align*}
    By optimal stopping theory, in particular Snell's envelope, 
    \begin{align*}
        \mathbb{E}Y_T >-\mathbb{E}\alpha/2 - \mathbb{E}\alpha/2 = -\mathbb{E}\alpha = \mathbb{E}Y_{t_0}\geq \mathbb{E}Z_{t_0} = \mathbb{E}\mathbb{E}_{\mathcal{F}_{t_0}^0}Y_{\tau} = \mathbb{E}Y_\tau, 
    \end{align*}
    thus $\mathbb{P}(\tau < T) >0$. As
    \begin{align*}
        -(\boldsymbol{\phi}_{n,m}(\tau,\boldsymbol{\mu}_\tau^{n,m})-\boldsymbol{u}(s,\boldsymbol{\mu}_s^{n,m}))^+-\frac{\alpha(\tau-t_0)}{2(T-t_0)} = Z_\tau \leq \mathbb{E}_{\mathcal{F}^0_\tau}[Y_T] \leq -\frac{\alpha}{2},
    \end{align*}
    we have $\mathbb{P}((\boldsymbol{\phi}_{n,m}(\tau,\boldsymbol{\mu}_\tau^{n,m})-\boldsymbol{u}(\tau,\boldsymbol{\mu}_\tau^{n,m}))^+>0)>0$. Define $\hat{\tau} = \inf\{s\geq \tau : (\boldsymbol{\phi}_{n,m}(s,\boldsymbol{\mu}_s^{n,m})-\boldsymbol{u}(s,\boldsymbol{\mu}_s^{n,m}))^+\leq 0\}$. Put $\Omega_{\tau} = \{\tau<\hat{\tau}\}$, then $\Omega_{\tau} = \{\tau<\hat{\tau}\} \in \mathcal{F}_\tau^0$ and $\mathbb{P}(\Omega_\tau)>0$. Set $\Phi(s,\mu):= \phi_{n,m}(s,\mu)+\frac{\alpha(s-t_0)}{2(T-t_0)}+\mathbb{E}_{\mathcal{F}_s^0}[Y_\tau]$, then $\Phi \in \mathscr{S}$. For each $\bar{\tau}\in\mathcal{T}^0_{\tau,T}$, we have for almost all $\omega^0\in\Omega_\tau$,
    \begin{align*}
        (\Phi - u)(\tau,\boldsymbol{\mu}_\tau^{n,m}) = 0 = Y_\tau - Z_\tau \geq Y_\tau - \mathbb{E}_{\mathcal{F}^0_\tau}[Y_{\bar{\tau}\wedge\hat{\tau}}] = \mathbb{E}_{\mathcal{F}^0_\tau}\Bigg[\esssup_{\boldsymbol{\mu}\in\mathcal{P}_{L}^{\hat{\tau}\wedge\bar{\tau}}(0,\rho_0)}(\Phi-u)(\bar{\tau}\wedge\hat{\tau},\boldsymbol{\mu})\Bigg],
    \end{align*}
   which together with the arbitrariness of $\bar{\tau}$ implies that $\Phi\in \overline{\mathcal{G}}u(\tau,\boldsymbol{\mu}_\tau^{n,m},\hat{\tau};0,\rho_0;\Omega_\tau)$. As $u$ is a viscosity supersolution, we have that for almost all $\omega^0\in\Omega_\tau$,
    \begin{align*}
        0\leq &\esslimsup_{\substack{(s,\boldsymbol{\mu})\to (\tau^+,\boldsymbol{\mu}_\tau^{n,m})\\
        \boldsymbol{\mu}\in \mathcal{P}_L^s(t_0,\rho_0)}}\mathbb{E}_{\mathcal{F}^0_\tau}\Bigg[1_{s\leq\hat{\tau}}\Big[-\Game_s \Phi\\
        &- \mathbb{H}\Big(s,\boldsymbol{\mu},\partial_\mu \Phi(s,\boldsymbol{\mu})(\cdot),\partial_x\partial_\mu \Phi(s,\boldsymbol{\mu})(\cdot),\partial_\mu^2 \Phi(s,\boldsymbol{\mu})(\cdot,\cdot),\partial_\mu\Game_w \Phi(s,\boldsymbol{\mu})(\cdot)\Big)\Bigg]\\
        \leq &-\frac{\alpha}{2(T-t)}+\esslimsup_{\substack{(s,\boldsymbol{\mu})\to (\tau^+,\boldsymbol{\mu}_\tau^{n,m})\\
        \boldsymbol{\mu}\in \mathcal{P}_L^s(t_0,\rho_0)}}\mathbb{E}_{\mathcal{F}^0_\tau}\Bigg[1_{s\leq\hat{\tau}}\Big[-\Game_s \phi_{n,m}\\
        &- \mathbb{H}\Big(s,\boldsymbol{\mu},\partial_\mu \phi_{n,m}(s,\boldsymbol{\mu})(\cdot),\partial_x\partial_\mu \phi_{n,m}(s,\boldsymbol{\mu})(\cdot),\partial_\mu^2 \phi_{n,m}(s,\boldsymbol{\mu})(\cdot,\cdot),\partial_\mu\Game_w \phi_{n,m}(s,\boldsymbol{\mu})(\cdot)\Big)\Bigg]\\
        \leq &-\frac{\kappa}{4(T-t)}+F(\boldsymbol{\mu}_\tau^{n,m})h_n + l_m.
    \end{align*}
    Taking expectation on both sides leads to
    \begin{align*}
        0\leq &-\frac{\kappa}{4(T-t)}+\mathbb{E}F(\boldsymbol{\mu}_\tau^{n,m})h_n + l_m
        \leq -\frac{\kappa}{8(T-t)},
    \end{align*}
   which is a contradiction.
\end{proof}
\subsubsection{Uniqueness}
To proceed, we have to assume one of the following additional assumptions on the diffusion coefficients $\sigma$, $\sigma^0$:
\begin{assumption}
    \begin{enumerate}
    \item [($\mathcal{B}$1)]
    \label{uniqueness_assumption_degenerate} For possible degenerate coefficients $\sigma$, $\sigma^0$, the diffusion coefficient $\sigma$, $\sigma^0: [0,T]\to\mathbb{R}^{d\times m}$ does not depend on $(\omega^0,x,\mu,a)\in\Omega^0\times\mathbb{R}^d\times\mathcal{P}_2(\mathbb{R}^d)\times A$.
    \item [($\mathcal{B}$2)]
    \label{uniqueness_assumption_superparabolic} The diffusion coefficients $\sigma:[0,T]\times \mathbb{R}^d\times A \to\mathbb{R}^{d\times m}$, $\sigma^0:[0,T]\times \mathbb{R}^d\to\mathbb{R}^{d\times m}$, and does not depend on $\omega$, and there exists $\lambda > 0$ such that $\forall (t,x,a,\xi)\in [0,T]\times\mathbb{R}^d\times A\times\mathbb{R}^d$, 
    \begin{align*}
        \sum_{i,j=1}^d\sum_{k=1}^m \sigma^{ik}\sigma^{jk}(t,x,a)\xi^i\xi^j\wedge\sum_{i,j=1}^d\sum_{k=1}^m \sigma^{0;ik}\sigma^{0;jk}(t,x)\xi^i\xi^j\geq \lambda |\xi|^2.
    \end{align*}
     \end{enumerate}
   
\end{assumption}
\begin{theorem}
\label{uniqueness_full}
    Let ($\mathcal{A}1$), ($\mathcal{A}2$) and ($\mathcal{A}3$) hold. Under either ($\mathcal{B}1$) or ($\mathcal{B}2$), the value function $v$ is the unique viscosity solution.
\end{theorem}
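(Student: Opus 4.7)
The plan is to reduce uniqueness to the comparison Theorem \ref{uniqueness_result} applied with $\phi := v$. Existence is already provided by Theorem \ref{existence_solution}, so it suffices to show that any viscosity solution $u$ agrees with $v$: invoking the comparison once with $u$ as supersolution yields $u \geq v$, once as subsolution yields $u \leq v$, hence $u = v$. The entire burden is therefore to exhibit an approximating family $\{\phi_{n,m}\}_{n,m\geq 1} \subset \mathscr{S}$ for $v$ satisfying conditions (i)--(v) of that theorem; this is exactly the route the introduction anticipates via the $n$-player-game / common-noise approximation.

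I would build $\phi_{n,m}$ in two nested stages. \emph{Outer stage ($n$):} invoke Lemma \ref{approximation_markovian} with $\varepsilon = 1/n$ to replace the random coefficients $(f,b,g)$ by smooth Markovian approximants $(f^N,b^N,g^N)$ that depend on $W^0$ only through $(W^0_{t_1},\ldots,W^0_{t_N})$; let $v^n$ denote the mean-field value function with these smoothed coefficients. The Lipschitz-in-data estimate of Lemma \ref{regularity_J_V}(3), combined with the $L^2$-bound on $(g^\varepsilon, f^\varepsilon, b^\varepsilon)$, yields $v^n \to v$ with rate $h_n = O(1/n)$. \emph{Inner stage ($m$):} for fixed $n$, consider the cooperative symmetric $m$-player stochastic differential game driven by the common noise $W^0$ with these smoothed coefficients, and let $V^{n,m}:[0,T]\times(\mathbb{R}^d)^m\to\mathbb{R}$ be its value function; define $\phi_{n,m}(t,\mu)$ through the symmetric-lift identification $V^{n,m}(t,x_1,\ldots,x_m) = \phi_{n,m}\!\left(t, \frac{1}{m}\sum_{i=1}^m\delta_{x_i}\right)$. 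Under ($\mathcal{B}$1) the diffusion is time-only, so a characteristic/flow representation transfers the smoothness of $(f^N,b^N,g^N)$ to $V^{n,m}$; under ($\mathcal{B}$2) uniform parabolicity and Schauder estimates on the finite-dimensional HJB equation deliver Hölder control on $\partial_x^k V^{n,m}$ for $k=1,2,3$ with constants uniform in $m$. The standard identification between symmetric Euclidean gradients and Lions derivatives then places each $\phi_{n,m}$ in $\mathscr{S}$.

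The five conditions of Theorem \ref{uniqueness_result} are then verified in turn. Condition (i), $\lim_n\lim_m \phi_{n,m} = v$, combines the propagation-of-chaos limit with common noise from \cite{limit_theory_tan} (inner $m\to\infty$, rate $l_m$) with the outer-stage stability (outer $n\to\infty$, rate $h_n$). Condition (ii), the quantitative HJB residual, follows because $\phi_{n,m}$ solves the BSPDE with the smoothed Hamiltonian $\mathbb{H}^n$ up to a symmetrization error of order $1/m$ absorbed into $l_m$, while $|\mathbb{H}-\mathbb{H}^n|$ is pointwise bounded by $(|f^\varepsilon|+|b^\varepsilon|)\cdot F(\mu)$, where $F(\mu)$ collects the uniform Lipschitz bounds on $\partial_\mu\phi_{n,m}$, $\partial_x\partial_\mu\phi_{n,m}$, $\partial_\mu^2\phi_{n,m}$ and $\partial_\mu\Game_w\phi_{n,m}$. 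Condition (iii) is the corresponding $g$-estimate at $T$; condition (iv) follows from the uniform-in-$(n,m)$ Lipschitz control of $\partial_\mu\phi_{n,m}$; and condition (v) follows from the uniform $L^\infty$-bound of Lemma \ref{regularity_J_V}(4) together with the symmetric lift (restricting, if needed, to the class of bounded viscosity solutions). The hard part will be producing uniform-in-$m$ Hölder regularity for $V^{n,m}$ up to third order together with a propagation-of-chaos rate $l_m$ compatible with the continuity-in-probability of $\mathbb{E}F$ on the compact set $\mathcal{P}_L^T(0,\rho_0)$; this is exactly where the dichotomy ($\mathcal{B}$1) versus ($\mathcal{B}$2) matters, the first case handled by characteristics and the second by Schauder, with the full construction deferred to the Appendix.
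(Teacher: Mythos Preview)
Your overall architecture is right and close to the paper's: smooth the random coefficients via Lemma~\ref{approximation_markovian}, pass to a cooperative $n$-player game to obtain a finite-dimensional value function with classical regularity, lift it back to $\mathcal{P}_2(\mathbb{R}^d)$, and feed the result into the comparison Theorem~\ref{uniqueness_result}. But there is a structural gap in how you propose to absorb the coefficient-approximation error, and a second gap in the regularization under ($\mathcal{B}$1).

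\textbf{The random residual does not fit the form $F(\boldsymbol{\mu})h_n + l_m$.} Lemma~\ref{approximation_markovian} gives only $\|f^\varepsilon\|_{\mathcal{L}^2(\mathbb{R})}+\|b^\varepsilon\|_{\mathcal{L}^2(\mathbb{R})}+\|g^\varepsilon\|_{L^2}<\varepsilon$; the processes $f^\varepsilon_t,b^\varepsilon_t,g^\varepsilon$ are $\mathcal{F}^0$-adapted random variables, not pointwise bounded by $\varepsilon$. Consequently your bound $|\mathbb{H}-\mathbb{H}^n|\leq(|f^\varepsilon|+|b^\varepsilon|)F(\mu)$ is a random bound in $\omega^0$ and cannot be matched to the \emph{deterministic} sequences $h_n,l_m$ that condition~(ii) of Theorem~\ref{uniqueness_result} demands. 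The paper resolves this by a device you are missing: it solves the linear BSDE
\[
Y^\varepsilon_{\varepsilon^0,\varepsilon^1}(t)=g^\varepsilon+C(\varepsilon^0,\varepsilon^1)+\int_t^T\big[f^\varepsilon_s+C_Kb^\varepsilon_s+C'(\varepsilon^0,\varepsilon^1)\big]\,ds-\int_t^T Z^\varepsilon_{\varepsilon^0,\varepsilon^1}\,dW^0_s
\]
and \emph{adds} $\pm Y^\varepsilon$ to the lifted $n$-player value to form $\overline{\mathfrak{v}},\underline{\mathfrak{v}}$. Since $\Game_t Y^\varepsilon=-(f^\varepsilon_t+C_Kb^\varepsilon_t+\cdots)$, the random residual cancels exactly when one computes $-\Game_t\underline{\mathfrak{v}}-\mathbb{H}(\cdot,\partial_\mu\underline{\mathfrak{v}},\ldots)$, leaving only the deterministic empirical-measure and mollification errors $c_dK(\int|x|^q\mu(dx))^{1/q}h_n+\frac{2K}{m}\int|y|\Phi(y)dy$, which \emph{do} fit the required form. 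The extra parameters $\varepsilon,\varepsilon^0,\varepsilon^1$ are then sent to $0$ \emph{after} applying Theorem~\ref{uniqueness_result}, using $\|Y^\varepsilon_{\varepsilon^0,\varepsilon^1}\|_{\mathcal{S}^2}\leq\tilde{C}_K(\varepsilon+\varepsilon^0+\varepsilon^1)$. Without this BSDE correction your Step~(ii) and~(iii) verifications fail.

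\textbf{Regularization under ($\mathcal{B}$1).} Your suggestion that ``characteristics transfer smoothness'' is too optimistic: even with $\sigma,\sigma^0$ depending only on $t$, the finite-dimensional HJB for $\bar{v}^N$ involves the cross-terms $\partial_{x_iy}^2$ and $\partial_{x_ix_j}^2$ coming from the common noise, and the equation can be degenerate in $(\bar{x},y)$. The paper instead perturbs the $n$-player dynamics by independent auxiliary Brownian motions $\varepsilon^0\bar{B}^0,\varepsilon^1\bar{B}^i$ (see Appendix~\ref{n_player_approximation}), which renders the finite-dimensional HJB uniformly parabolic for every $\varepsilon^0,\varepsilon^1>0$ and yields $\bar{v}^N_{\varepsilon^0,\varepsilon^1,n,m}\in C^{1+\beta/2,2+\beta}$ via classical theory (Theorem~\ref{approximation_properties}); the lifted test function is then $\mathfrak{v}^N_{\varepsilon^0,\varepsilon^1,n,m}(t,\mu)=\mathbb{E}^{0'}v^N_{\varepsilon^0,\varepsilon^1,n,m}(t,\mu\ast N(\varepsilon^0\bar{B}^0_t,(\varepsilon^1)^2t),W^0_t)$, and the $\varepsilon^0,\varepsilon^1$ errors are absorbed into the same BSDE $Y^\varepsilon_{\varepsilon^0,\varepsilon^1}$ above. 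So the five parameters $(\varepsilon,\varepsilon^0,\varepsilon^1,n,m)$ in the paper are not redundant: $\varepsilon$ handles the random-coefficient approximation, $\varepsilon^0,\varepsilon^1$ handle degeneracy, $m$ is the spatial mollification for the empirical-measure coefficients, and $n$ is the number of players. Your two-parameter scheme collapses several of these roles and loses the mechanism for turning the $L^2$-small random errors into test-function corrections.
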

\begin{proof}
    We will present the proof under Assumption ($\mathcal{B}$1). It is worth noting that the proof under Assumption ($\mathcal{B}$2) is analogous and simpler, as there is no necessity to incorporate additional auxiliary Brownian motions to handle potential degeneracy. Define 
    \begin{align*}
        \overline{\mathscr{V}}:=\Bigg\{\phi\in\mathscr{S}\,\Big|\,\phi \text{ satisfy the corresponding viscosity subsolution version of  }(i)-(v)\text{ in Theorem }\ref{uniqueness_result}\Bigg\},
    \end{align*}
    and
    \begin{align*}
        \underline{\mathscr{V}}:=\Bigg\{\phi\in\mathscr{S}\,\Big|\,\phi \text{ satisfy the corresponding viscosity supersolution version of }(i)-(v)\text{ in Theorem }\ref{uniqueness_result}\Bigg\}.
    \end{align*}
    Set 
    \begin{align*}
        \overline{u}:= \essinf_{\phi\in\overline{\mathscr{V}}} \phi,\,\,\,\underline{u}:= \esssup_{\phi\in\underline{\mathscr{V}}} \phi. 
    \end{align*}
    In view of Theorem \ref{uniqueness_result}, each viscosity solution $u$ satisfy $\underline{u}\leq u \leq \overline{u}$. So to establish the uniqueness, it suffices to show that $\underline{u} = v = \overline{u}$, where $v$ is our value function.\\
    \hfill\\
    \textbf{Step 1.} For each fixed $\varepsilon\in(0,1)$, choose $(g^\varepsilon,f^\varepsilon,b^\varepsilon)$ and $(g^N,f^N,b^N)$ as in Lemma \ref{approximation_markovian}. Recall that $\{t_i\}_{i=1,\ldots,N}$ denotes the time partition from Lemma \ref{approximation_markovian}. Without loss of generality, we focus our attention on the time interval $t \in [t_{N-1},t_{N}]$, as similar arguments can be applied to the time intervals $[t_{N-2},t_{N-1})$, and the results obtained on $[t_{N-1},t_{N}]$ can be utilized as the terminal value, and we could repeat until the last interval $[0,t_1)$. We enlarge our probability space as described in Appendix \ref{n_player_approximation}. Recall the definition of $\bar{v}^N_{\varepsilon^0,\varepsilon^1,n,m}$ in Theorem \ref{approximation_properties}. Applying the finite dimensional version of It\^o-Kunita Formula to $\bar{v}^N_{\varepsilon^0,\varepsilon^1,n,m}$, together with stochastic Fubini theorem we conclude that
    \begin{align*}
        &-d\bar{v}_{\varepsilon^0,\varepsilon^1,n,m}^N (t,x_1-\varepsilon^0\bar{B}^0_t-\varepsilon^1\bar{B}^1_t,\ldots,x_n-\varepsilon^0\bar{B}^0_t-\varepsilon^1\bar{B}^n_t,W_t^0)\\
        =&\Bigg(\sum_{i=1}^n\essinf_{a_i\in A}\Bigg\{\frac{1}{n}f_{n,m}^{N;i} (t,W_{t_1}^0,\ldots,W_{t_{N-1}}^0,W_t^0,x_1-\varepsilon^0\bar{B}^0_t-\varepsilon^1\bar{B}^1_t,\ldots,x_n-\varepsilon^0\bar{B}^0_t-\varepsilon^1\bar{B}^n_t,a_i) \\
            &\displaystyle+ \Big\langle b_{n,m}^{N;i}(t,W_{t_1}^0,\ldots,W_{t_{N-1}}^0,W_t^0,x_1-\varepsilon^0\bar{B}^0_t-\varepsilon^1\bar{B}^1_t,\ldots,x_n-\varepsilon^0\bar{B}^0_t-\varepsilon^1\bar{B}^n_t,a_i),\\
        &\,\,\,\,\,\,\,\,\,\partial_{x_i}\bar{v}^N_{\varepsilon^0,\varepsilon^1,n,m}(t,x_1-\varepsilon^0\bar{B}^0_t-\varepsilon^1\bar{B}^1_t,\ldots,x_n-\varepsilon^0\bar{B}^0_t-\varepsilon^1\bar{B}^n_t,W_t^0) \Big\rangle\Bigg\}\\
        &\displaystyle+ \sum_{i=1}^n \frac{1}{2}\tr\Big[\big(\sigma_t\sigma^\intercal_t+\sigma^0_t\sigma^{0;\intercal}_t
        \big)\partial_{x_i x_i}^2 \bar{v}^N_{\varepsilon^0,\varepsilon^1,n,m}(t,x_1-\varepsilon^0\bar{B}^0_t-\varepsilon^1\bar{B}^1_t,\ldots,x_n-\varepsilon^0\bar{B}^0_t-\varepsilon^1\bar{B}^n_t,W_t^0)\Big]
        \\
        &+\displaystyle\sum_{i=1}^n \tr\big(\sigma^0_t\partial_{x_i y}^2\bar{v}^N_{\varepsilon^0,\varepsilon^1,n,m}(t,x_1-\varepsilon^0\bar{B}^0_t-\varepsilon^1\bar{B}^1_t,\ldots,x_n-\varepsilon^0\bar{B}^0_t-\varepsilon^1\bar{B}^n_t,W_t^0)\big)\\
        &\displaystyle+\frac{1}{2}\sum_{i,j=1,i\neq j}^n\tr\Big[\big(\sigma^0_t\sigma^{0;\intercal}_t\big)\partial^2_{x_i x_j}\bar{v}^N_{\varepsilon^0,\varepsilon^1,n,m}(t,x_1-\varepsilon^0\bar{B}^0_t-\varepsilon^1\bar{B}^1_t,\ldots,x_n-\varepsilon^0\bar{B}^0_t-\varepsilon^1\bar{B}^n_t,W_t^0)\Big]\Bigg)dt\\
        &+\varepsilon^0\sum_{i=1}^n \partial_{x_i}\bar{v}_{\varepsilon^0,\varepsilon^1,n,m}^N(t,x_1-\varepsilon^0\bar{B}^0_t-\varepsilon^1\bar{B}^1_t,\ldots,x_n-\varepsilon^0\bar{B}^0_t-\varepsilon^1\bar{B}^n_t,W_t^0)d\bar{B}^0_t\\
        &+\varepsilon^1\sum_{i=1}^n\partial_{x_i} \bar{v}_{\varepsilon^0,\varepsilon^1,n,m}^N(t,x_1-\varepsilon^0\bar{B}^0_t-\varepsilon^1\bar{B}^1_t,\ldots,x_n-\varepsilon^0\bar{B}^0_t-\varepsilon^1\bar{B}^n_t,W_t^0) d\bar{B}^i_t\\
        &-\partial_y \bar{v}_{\varepsilon^0,\varepsilon^1,n,m}^N(t,x_1-\varepsilon^0\bar{B}^0_t-\varepsilon^1\bar{B}^1_t,\ldots,x_n-\varepsilon^0\bar{B}^0_t-\varepsilon^1\bar{B}^n_t,W_t^0) dW_t^0.
    \end{align*}
    Let us denote the normal distribution with mean $\mu$ and variance $\sigma^2$ by $N(\mu, \sigma^2)$. Now, we  take the expectation with respect to $\mu\otimes\ldots\otimes\mu$, and integrate out the Brownian motions $\bar{B}^0$, $\bar{B}^i$, where $i = 1,\ldots,n$. This means we are considering the expectation with respect to the probabilities $\mathbb{P}^{0'}$ and $\mathbb{P}^{1'}$ as outlined in Appendix \ref{n_player_approximation}. For simplicity, we denote the expectation with respect to $\mathbb{P}^{0'}$ as $\mathbb{E}^{0'}$ and the expectation with respect to $\mathbb{P}^{1'}$ as $\mathbb{E}^{1'}$. Defining \begin{align}
    \label{definition_mathfrak_v}
    \mathfrak{v}^{N}_{\varepsilon^0,\varepsilon^1,n,m}(t,\mu):=& \mathbb{E}^{0'}v^N_{\varepsilon^0,\varepsilon^1,n,m}\Big(t,\mu\ast N(\varepsilon^0 \bar{B}_t^0,(\varepsilon^1)^2t),W_t^0\Big)\\
    = &\mathbb{E}^{0
        '}\mathbb{E}^{1'}\int_{\mathbb{R}^{nd}}\bar{v}_{\varepsilon^0,\varepsilon^1,n,m}^N(t,x_1-\varepsilon^0\bar{B}^0_t-\varepsilon^1\bar{B}^1_t,\ldots,x_n-\varepsilon^0\bar{B}^0_t-\varepsilon^1\bar{B}^n_t,W_t^0)\mu(dx_1)\ldots\mu(dx_n),\nonumber
    \end{align}
    we have
    \begin{align}
    \label{def_bar_v}
        &-d\mathfrak{v}^{N}_{\varepsilon^0,\varepsilon^1,n,m}(t,\mu)\nonumber\\
        =&\Bigg(\mathbb{E}^{0'}\mathbb{E}^{1'}\int_{\mathbb{R}^{nd}}\sum_{i=1}^n\essinf_{a_i\in A}\Bigg\{\frac{1}{n}f_{n,m}^{N;i} (t,W_{t_1}^0,\ldots,W_{t_{N-1}}^0,W_t^0,x_1-\varepsilon^0\bar{B}^0_t-\varepsilon^1\bar{B}^1_t,\ldots,x_n-\varepsilon^0\bar{B}^0_t-\varepsilon^1\bar{B}^n_t,a_i)\nonumber \\
            &\nonumber\displaystyle+ \Big\langle b_{n,m}^{N;i}(t,W_{t_1}^0,\ldots,W_{t_{N-1}}^0,W_t^0,x_1-\varepsilon^0\bar{B}^0_t-\varepsilon^1\bar{B}^1_t,\ldots,x_n-\varepsilon^0\bar{B}^0_t-\varepsilon^1\bar{B}^n_t,a_i),\\
        &\nonumber\,\,\,\,\,\,\,\,\,\partial_{x_i}\bar{v}^N_{\varepsilon^0,\varepsilon^1,n,m}(t,x_1-\varepsilon^0\bar{B}^0_t-\varepsilon^1\bar{B}^1_t,\ldots,x_n-\varepsilon^0\bar{B}^0_t-\varepsilon^1\bar{B}^n_t,W_t^0) \Big\rangle\Bigg\}\mu(dx_1)\otimes\ldots\otimes\mu(dx_n)\\
        &\nonumber\displaystyle+ \mu\Big(\frac{1}{2}\tr\Big[\big(\sigma_t\sigma^\intercal_t+\sigma^0_t\sigma^{0;\intercal}_t
        \big)\partial_{x}\partial_{\mu} \mathfrak{v}^N_{\varepsilon^0,\varepsilon^1,n,m}(t,\mu)\Big]\Big)+\mu\Big(\tr\big(\sigma^0_t\partial_\mu\Game_w\mathfrak{v}^{N}_{\varepsilon^0,\varepsilon^1,n,m}(t,\mu)\big)\Big)\\
        &\displaystyle+(\mu\otimes\mu)\Big(\frac{1}{2}\tr\Big[\big(\sigma^0_t\sigma^{0;\intercal}_t\big)\partial^2_{\mu}\mathfrak{v}^N_{\varepsilon^0,\varepsilon^1,n,m}(t,\mu)\Big]\Big)\Bigg)dt-\Game_w \mathfrak{v}^{N}_{\varepsilon^0,\varepsilon^1,n,m}(t,\mu) dW_t^0,
    \end{align}
    where 
    \begin{align*}
        \Game_w \mathfrak{v}^{N}_{\varepsilon^0,\varepsilon^1,n,m}(t,\mu) = \mathbb{E}^{0
        '}\mathbb{E}^{1'}\int_{\mathbb{R}^{nd}}\partial_y \bar{v}_{\varepsilon^0,\varepsilon^1,n,m}^N(t,x_1-\varepsilon^0\bar{B}^0_t-\varepsilon^1\bar{B}^1_t,\ldots,x_n-\varepsilon^0\bar{B}^0_t-\varepsilon^1\bar{B}^n_t,W_t^0)\mu(dx_1)\ldots\mu(dx_n),
    \end{align*}
    and subject to the terminal condition that
     \begin{align*}
        &\mathfrak{v}^N_{\varepsilon^0,\varepsilon^1,n,m}(T,\mu)\\
        = &\mathbb{E}^{0'}\mathbb{E}^{1'}\frac{1}{n}\sum_{i=1}^n\int_{\mathbb{R}^{dn}}g_{n,m}^{N;i}(W_{t_1}^0,\ldots,W_{t_N}^0,x_1-\varepsilon^0\bar{B}^0_T-\varepsilon^1\bar{B}^1_T,\ldots,x_n-\varepsilon^0\bar{B}^0_T-\varepsilon^1\bar{B}^n_T)\mu(dx_1)\otimes\ldots\otimes\mu(dx_n).
    \end{align*}
    \textbf{Step 2.} We first check the terminal. With the help of Lemma \ref{finite_dim_approximation_error} and Lemma \ref{function_n_m_Lipschitz}, we obtain that
    \begin{align*}
        &\Bigg|\mathbb{E}^{0'}\mathbb{E}^{1'}\frac{1}{n}\sum_{i=1}^n\int_{\mathbb{R}^{dn}}g_{n,m}^{N;i}(W_{t_1}^0,\ldots,W_{t_N}^0,x_1-\varepsilon^0\bar{B}_T^0-\varepsilon^1\bar{B}_T^1,\ldots,x_n-\varepsilon^0\bar{B}_T^0-\varepsilon^1\bar{B}_T^n)\mu(dx_1)\otimes\ldots\otimes\mu(dx_n)\\
        & -\int_{\mathbb{R}^d}g(x,\mu)\mu(dx)\Bigg|\\
        \leq&\Bigg|\mathbb{E}^{0'}\mathbb{E}^{1'}\frac{1}{n}\sum_{i=1}^n\int_{\mathbb{R}^{dn}}g_{n,m}^{N;i}(W_{t_1}^0,\ldots,W_{t_N}^0,x_1-\varepsilon^0\bar{B}_T^0-\varepsilon^1\bar{B}_T^1,\ldots,x_n-\varepsilon^0\bar{B}_T^0-\varepsilon^1\bar{B}_T^n)\mu(dx_1)\otimes\ldots\otimes\mu(dx_n)\\
        &-\mathbb{E}^{0'}\mathbb{E}^{1'}\frac{1}{n}\sum_{i=1}^n\int_{\mathbb{R}^{dn}}g_{n,m}^{N;i}(W_{t_1}^0,\ldots,W_{t_N}^0,x_1,\ldots,x_n)\mu(dx_1)\otimes\ldots\otimes\mu(dx_n)\Bigg|\\        &+\Bigg|\mathbb{E}^{0'}\mathbb{E}^{1'}\frac{1}{n}\sum_{i=1}^n\int_{\mathbb{R}^{dn}}g_{n,m}^{N;i}(W_{t_1}^0,\ldots,W_{t_N}^0,x_1,\ldots,x_n)\mu(dx_1)\otimes\ldots\otimes\mu(dx_n)-\int_{\mathbb{R}^d}g^N(x,\mu)\mu(dx)\Bigg|\\
        &+\Bigg|\int_{\mathbb{R}^d}g^N(x,\mu)\mu(dx) - \int_{\mathbb{R}^d}g(x,\mu)\mu(dx)\Bigg|\\
        \leq & K\frac{2\sqrt{2}}{\sqrt{\pi}}\sqrt{T}(\varepsilon^0+\varepsilon^1)+c_dK\Bigg(\int_{\mathbb{R}^d}|x|^q\mu(dx)\Bigg)^{1/q}h_n + K\frac{2}{m}\int_{\mathbb{R}^d}|y_1|\Phi(y_1)dy_1+g^\varepsilon,
    \end{align*}
    where $q\in(1,2]$, $c_d\geq 0$ a constant depending only on $d$, $h_n$ a sequence of real numbers, $h_n\to 0$ as $n\to\infty$ and $\Phi :\mathbb{R}^d\to [0,\infty)$ a symmetric $C^\infty$ functions with compact support satisfying $\int_{\mathbb{R}^d}\Phi(y)dy = 1$. The last inequality also follows from  Lemma \ref{finite_dim_approximation_error}.\\
    \hfill\\
    \textbf{Step 3}. Moreover, we have
    \begin{align*}
        &\Bigg|-\Game_t \mathfrak{v}^N_{\varepsilon^0,\varepsilon^1,n,m}-\int_{\mathbb{R}^{d}}\essinf_{\alpha\in A}\Bigg\{ f_t(x,\mu,\alpha) + \langle b_t(x,\mu,\alpha), \partial_{\mu}\mathfrak{v}_{\varepsilon^0,\varepsilon^1,n,m}^{N}\rangle\Bigg\}\mu(dx)\\
        &\displaystyle- \mu\Big(\frac{1}{2}\tr\Big[\big(\sigma_t\sigma^\intercal_t+\sigma^0_t\sigma^{0;\intercal}_t
        \big)\partial_{x}\partial_{\mu} \mathfrak{v}^N_{\varepsilon^0,\varepsilon^1,n,m}(t,\mu)\Big]\Big)-\mu\Big(\tr\big(\sigma^0_t\partial_\mu\Game_w\mathfrak{v}^{N}_{\varepsilon^0,\varepsilon^1,n,m}(t,\mu)\big)\Big)\\
        &\displaystyle-(\mu\otimes\mu)\Big(\frac{1}{2}\tr\Big[\big(\sigma^0_t\sigma^{0;\intercal}_t\big)\partial^2_{\mu}\mathfrak{v}^N_{\varepsilon^0,\varepsilon^1,n,m}(t,\mu)\Big]\Big)\Bigg|\\
        \leq &\Bigg|-\Game_t \mathfrak{v}^N_{\varepsilon^0,\varepsilon^1,n,m}-\int_{\mathbb{R}^{d}}\essinf_{\alpha\in A}\Bigg\{ f_t(x,\mu,\alpha) + \langle b_t(x,\mu,\alpha), \partial_{\mu}\mathfrak{v}_{\varepsilon^0,\varepsilon^1,n,m}^{N}\rangle\Bigg\}\mu(dx)\\
        &+\Bigg(\mathbb{E}^{0'}\mathbb{E}^{1'}\int_{\mathbb{R}^{nd}}\sum_{i=1}^n\essinf_{a_i\in A}\Bigg\{\frac{1}{n}f_{n,m}^{N;i} (t,W_{t_1}^0,\ldots,W_{t_{N-1}}^0,W_t^0,x_1-\varepsilon^0\bar{B}^0_t-\varepsilon^1\bar{B}^1_t,\ldots,x_n-\varepsilon^0\bar{B}^0_t-\varepsilon^1\bar{B}^n_t,a_i) \\
            &\displaystyle+ \Big\langle b_{n,m}^{N;i}(t,W_{t_1}^0,\ldots,W_{t_{N-1}}^0,W_t^0,x_1-\varepsilon^0\bar{B}^0_t-\varepsilon^1\bar{B}^1_t,\ldots,x_n-\varepsilon^0\bar{B}^0_t-\varepsilon^1\bar{B}^n_t,a_i),\\
        &\,\,\,\,\,\,\,\,\,\partial_{x_i}\bar{v}^N_{\varepsilon^0,\varepsilon^1,n,m}(t,x_1-\varepsilon^0\bar{B}^0_t-\varepsilon^1\bar{B}^1_t,\ldots,x_n-\varepsilon^0\bar{B}^0_t-\varepsilon^1\bar{B}^n_t,y) \Big\rangle\Bigg\}\mu(dx_1)\otimes\ldots\otimes\mu(dx_n)\Bigg)\\
        &-\Bigg(\mathbb{E}^{0'}\mathbb{E}^{1'}\int_{\mathbb{R}^{nd}}\sum_{i=1}^n\essinf_{a_i\in A}\Bigg\{\frac{1}{n}f_{n,m}^{N;i} (t,W_{t_1}^0,\ldots,W_{t_{N-1}}^0,W_t^0,x_1-\varepsilon^0\bar{B}^0_t-\varepsilon^1\bar{B}^1_t,\ldots,x_n-\varepsilon^0\bar{B}^0_t-\varepsilon^1\bar{B}^n_t,a_i) \\
            &\displaystyle+ \Big\langle b_{n,m}^{N;i}(t,W_{t_1}^0,\ldots,W_{t_{N-1}}^0,W_t^0,x_1-\varepsilon^0\bar{B}^0_t-\varepsilon^1\bar{B}^1_t,\ldots,x_n-\varepsilon^0\bar{B}^0_t-\varepsilon^1\bar{B}^n_t,a_i),\\
        &\,\,\,\,\,\,\,\,\,\partial_{x_i}\bar{v}^N_{\varepsilon^0,\varepsilon^1,n,m}(t,x_1-\varepsilon^0\bar{B}^0_t-\varepsilon^1\bar{B}^1_t,\ldots,x_n-\varepsilon^0\bar{B}^0_t-\varepsilon^1\bar{B}^n_t,y) \Big\rangle\Bigg\}\mu(dx_1)\otimes\ldots\otimes\mu(dx_n)\Bigg)\\
         &\displaystyle- \mu\Big(\frac{1}{2}\tr\Big[\big(\sigma_t\sigma^\intercal_t+\sigma^0_t\sigma^{0;\intercal}_t
        \big)\partial_{x}\partial_{\mu} \mathfrak{v}^N_{\varepsilon^0,\varepsilon^1,n,m}(t,\mu)\Big]\Big)-\mu\Big(\tr\big(\sigma^0_t\partial_\mu\Game_w\mathfrak{v}^{N}_{\varepsilon^0,\varepsilon^1,n,m}(t,\mu)\big)\Big)\\
        &\displaystyle-(\mu\otimes\mu)\Big(\frac{1}{2}\tr\Big[\big(\sigma^0_t\sigma^{0;\intercal}_t\big)\partial^2_{\mu}\mathfrak{v}^N_{\varepsilon^0,\varepsilon^1,n,m}(t,\mu)\Big]\Big)\Bigg|\\
        \leq &\Bigg|-\int_{\mathbb{R}^{d}}\essinf_{\alpha\in A}\Bigg\{ f_t(x,\mu,\alpha) + \langle b_t(x,\mu,\alpha), \partial_{\mu}\mathfrak{v}_{\varepsilon^0,\varepsilon^1,n,m}^{N}\rangle\Bigg\}\mu(dx)\\
        &+\Bigg(\mathbb{E}^{0'}\mathbb{E}^{1'}\int_{\mathbb{R}^{nd}}\sum_{i=1}^n\essinf_{a_i\in A}\Bigg\{\frac{1}{n}f_{n,m}^{N;i} (t,W_{t_1}^0,\ldots,W_{t_{N-1}}^0,W_t^0,x_1-\varepsilon^0\bar{B}^0_t-\varepsilon^1\bar{B}^1_t,\ldots,x_n-\varepsilon^0\bar{B}^0_t-\varepsilon^1\bar{B}^n_t,a_i) \\
            &\displaystyle+ \Big\langle b_{n,m}^{N;i}(t,W_{t_1}^0,\ldots,W_{t_{N-1}}^0,W_t^0,x_1-\varepsilon^0\bar{B}^0_t-\varepsilon^1\bar{B}^1_t,\ldots,x_n-\varepsilon^0\bar{B}^0_t-\varepsilon^1\bar{B}^n_t,a_i),\\
        &\,\,\,\,\,\,\,\,\,\partial_{x_i}\bar{v}^N_{\varepsilon^0,\varepsilon^1,n,m}(t,x_1-\varepsilon^0\bar{B}^0_t-\varepsilon^1\bar{B}^1_t,\ldots,x_n-\varepsilon^0\bar{B}^0_t-\varepsilon^1\bar{B}^n_t,y) \Big\rangle\Bigg\}\mu(dx_1)\otimes\ldots\otimes\mu(dx_n)\Bigg)\Bigg|,
    \end{align*}
    where in the last line we made use of $-\Game_t \mathfrak{v}^N_{\varepsilon^0,\varepsilon^1,n,m}$ in (\ref{def_bar_v}). With the help of Lemma \ref{finite_dim_approximation_error}, Lemma \ref{function_n_m_Lipschitz} and Theorem \ref{approximation_properties}, we have
    \begin{align*}
    &\Bigg|-\Game_t \mathfrak{v}^N_{\varepsilon^0,\varepsilon^1,n,m}-\int_{\mathbb{R}^{d}}\essinf_{\alpha\in A}\Bigg\{ f_t(x,\mu,\alpha) + \langle b_t(x,\mu,\alpha), \partial_{\mu}\mathfrak{v}_{\varepsilon^0,\varepsilon^1,n,m}^{N}\rangle\Bigg\}\mu(dx)\\
        &\displaystyle- \mu\Big(\frac{1}{2}\tr\Big[\big(\sigma_t\sigma^\intercal_t+\sigma^0_t\sigma^{0;\intercal}_t
        \big)\partial_{x}\partial_{\mu} \mathfrak{v}^N_{\varepsilon^0,\varepsilon^1,n,m}(t,\mu)\Big]\Big)-\mu\Big(\tr\big(\sigma^0_t\partial_\mu\Game_w\mathfrak{v}^{N}_{\varepsilon^0,\varepsilon^1,n,m}(t,\mu)\big)\Big)\\
        &\displaystyle-(\mu\otimes\mu)\Big(\frac{1}{2}\tr\Big[\big(\sigma^0_t\sigma^{0;\intercal}_t\big)\partial^2_{\mu}\mathfrak{v}^N_{\varepsilon^0,\varepsilon^1,n,m}(t,\mu)\Big]\Big)\Bigg|\\
        \leq &\Bigg|-\int_{\mathbb{R}^{d}}\essinf_{\alpha\in A}\Bigg\{ f_t(x,\mu,\alpha) + \langle b_t(x,\mu,\alpha), \partial_{\mu}\mathfrak{v}_{\varepsilon^0,\varepsilon^1,n,m}^{N}\rangle\Bigg\}\mu(dx) \\
         &+ \int_{\mathbb{R}^{d}}\essinf_{\alpha\in A}\Bigg\{ f_t^N(W_{t_1}^0,\ldots,W_{t_{N-1}}^0,W_t^0,x,\mu,\alpha) + \langle b_t^N(W_{t_1}^0,\ldots,W_{t_{N-1}}^0,W_t^0,x,\mu,\alpha), \partial_{\mu}\mathfrak{v}_{\varepsilon^0,\varepsilon^1,n,m}^{N}\rangle\Bigg\}\mu(dx)\Bigg|\\
&+\Bigg|-\int_{\mathbb{R}^{d}}\essinf_{\alpha\in A}\Bigg\{ f_t^N(W_{t_1}^0,\ldots,W_{t_{N-1}}^0,W_t^0,x,\mu,\alpha) + \langle b_t^N(W_{t_1}^0,\ldots,W_{t_{N-1}}^0,W_t^0,x,\mu,\alpha), \partial_{\mu}\mathfrak{v}_{\varepsilon^0,\varepsilon^1,n,m}^{N}\rangle\Bigg\}\mu(dx)\\
&+\int_{\mathbb{R}^{nd}}\sum_{i=1}^n\essinf_{a_i\in A}\Bigg\{\frac{1}{n}f_{n,m}^{N;i} (t,W_{t_1}^0,\ldots,W_{t_{N-1}}^0,W_t^0,x_1,\ldots,x_n,a_i) \\
            &\displaystyle+ \Big\langle b_{n,m}^{N;i}(t,W_{t_1}^0,\ldots,W_{t_{N-1}}^0,W_t^0,x_1,\ldots,x_n,a_i),\partial_{\mu}\mathfrak{v}_{\varepsilon^0,\varepsilon^1,n,m}^{N}\Big\rangle\Bigg\}\mu(dx_1)\otimes\ldots\otimes\mu(dx_n)\Bigg|\\
&+\Bigg|-\Bigg(\int_{\mathbb{R}^{nd}}\sum_{i=1}^n\essinf_{a_i\in A}\Bigg\{\frac{1}{n}f_{n,m}^{N;i} (t,W_{t_1}^0,\ldots,W_{t_{N-1}}^0,W_t^0,x_1,\ldots,x_n,a_i) \\
            &\displaystyle+ \Big\langle b_{n,m}^{N;i}(t,W_{t_1}^0,\ldots,W_{t_{N-1}}^0,W_t^0,x_1,\ldots,x_n,a_i),\partial_{\mu}\mathfrak{v}_{\varepsilon^0,\varepsilon^1,n,m}^{N}\Big\rangle\Bigg\}\mu(dx_1)\otimes\ldots\otimes\mu(dx_n)\\
&+\Bigg(\mathbb{E}^{0'}\mathbb{E}^{1'}\int_{\mathbb{R}^{nd}}\sum_{i=1}^n\essinf_{a_i\in A}\Bigg\{\frac{1}{n}f_{n,m}^{N;i} (t,W_{t_1}^0,\ldots,W_{t_{N-1}}^0,W_t^0,x_1-\varepsilon^0\bar{B}^0_t-\varepsilon^1\bar{B}^1_t,\ldots,x_n-\varepsilon^0\bar{B}^0_t-\varepsilon^1\bar{B}^n_t,a_i) \\
            &\displaystyle+ \Big\langle b_{n,m}^{N;i}(t,W_{t_1}^0,\ldots,W_{t_{N-1}}^0,W_t^0,x_1-\varepsilon^0\bar{B}^0_t-\varepsilon^1\bar{B}^1_t,\ldots,x_n-\varepsilon^0\bar{B}^0_t-\varepsilon^1\bar{B}^n_t,a_i),\\
        &\,\,\,\,\,\,\,\,\,\partial_{x_i}\bar{v}^N_{\varepsilon^0,\varepsilon^1,n,m}(t,x_1-\varepsilon^0\bar{B}^0_t-\varepsilon^1\bar{B}^1_t,\ldots,x_n-\varepsilon^0\bar{B}^0_t-\varepsilon^1\bar{B}^n_t,W_t^0) \Big\rangle\Bigg\}\mu(dx_1)\otimes\ldots\otimes\mu(dx_n)\Bigg)\Bigg|\\
        \leq & f_t^\varepsilon + C_K b_t^\varepsilon+K(C_K+1)\Bigg(c_d\Big(\int_{\mathbb{R}^d}|x|^q\mu(dx)\Big)^{1/q}h_n + \frac{2}{m}\int_{\mathbb{R}^{d}}|x_1|\Phi(x_1)dx_1\Bigg) \\
        &+ 2K(1+C_K)\frac{2\sqrt{2t}}{\sqrt{\pi}}
        (\varepsilon^0+\varepsilon^1),
    \end{align*}
    where $C_K$ is a constant depending on $K$, independent of $m$, $n$, $\varepsilon^0$, $\varepsilon^1$, and $c_d$, $q$, $h_n$, $\Phi$ are defined as in Step 2.\\
    \hfill\\
    \textbf{Step 4.} Now, let $(Y^\varepsilon_{\varepsilon^0,\varepsilon^1},Z^\varepsilon_{\varepsilon^0,\varepsilon^1})$ be the solution to the following BSDE:
    \begin{align}
    \label{BSDE}    Y^\varepsilon_{\varepsilon^0,\varepsilon^1}(t) =& g^\varepsilon + K\frac{2\sqrt{2T}}{\sqrt{\pi}}(\varepsilon^0+\varepsilon^1) + \int_t^T \Bigg[f_s^\varepsilon + C_K b_s^\varepsilon + 2K(1+C_K)\frac{2\sqrt{2s}}{\sqrt{\pi}}
        (\varepsilon^0+\varepsilon^1)\Bigg]ds\\
    &-\int_t^T Z^\varepsilon_{\varepsilon^0,\varepsilon^1} dW_s^0.\nonumber
    \end{align}
    Define 
    \begin{align*}
        \overline{\mathfrak{v}}^\varepsilon_{\varepsilon^0,\varepsilon^1,n,m}:=\mathfrak{v}^N_{\varepsilon^0,\varepsilon^1,n,m}+Y_{\varepsilon^0,\varepsilon^1}^\varepsilon,\\        \underline{\mathfrak{v}}^\varepsilon_{\varepsilon^0,\varepsilon^1,n,m}:=\mathfrak{v}^N_{\varepsilon^0,\varepsilon^1,n,m}-Y_{\varepsilon^0,\varepsilon^1}^\varepsilon.
    \end{align*}
    In Step 2, it can be readily verified that 
    \begin{align}
    \label{terminal_bar_v_geq_0}
        \nonumber\underline{\mathfrak{v}}^\varepsilon_{\varepsilon^0,\varepsilon^1,n,m}(T) =& \mathfrak{v}^N_{\varepsilon^0,\varepsilon^1,n,m}(T)-g^\varepsilon(T) - K\frac{2\sqrt{2T}}{\sqrt{\pi}}(\varepsilon^0+\varepsilon^1)\\
        \leq& \int_{\mathbb{R}^d}g(x,\mu)\mu(dx)+c_dK\Bigg(\int_{\mathbb{R}^d}|x|^q\mu(dx)\Bigg)^{1/q}h_n + K\frac{2}{m}\int_{\mathbb{R}^d}|y_1|\Phi(y_1)dy_1.
    \end{align}
    Furthermore, based on the results obtained in Step 3, we can conclude that
    \begin{align}
    \label{interior_bar_v_geq_0}
        \nonumber&-\Game_s \underline{\mathfrak{v}}^\varepsilon_{\varepsilon^0,\varepsilon^1,n,m}\\
        \nonumber&- \mathbb{H}\Big(s,\boldsymbol{\mu},\partial_\mu \underline{\mathfrak{v}}^\varepsilon_{\varepsilon^0,\varepsilon^1,n,m}(s,\boldsymbol{\mu})(\cdot),\partial_x\partial_\mu \underline{\mathfrak{v}}^\varepsilon_{\varepsilon^0,\varepsilon^1,n,m}(s,\boldsymbol{\mu})(\cdot),\partial_\mu^2 \underline{\mathfrak{v}}^\varepsilon_{\varepsilon^0,\varepsilon^1,n,m}(s,\boldsymbol{\mu})(\cdot,\cdot),\partial_\mu\Game_w \underline{\mathfrak{v}}^\varepsilon_{\varepsilon^0,\varepsilon^1,n,m}(s,\boldsymbol{\mu})(\cdot)\Big)\\
        \nonumber&-K(C_K+1)\Bigg(c_d\Big(\int_{\mathbb{R}^d}|x|^q\mu(dx)\Big)^{1/q}h_n + \frac{2}{m}\int_{\mathbb{R}^{d}}|x_1|\Phi(x_1)dx_1\Bigg)\\
        \leq &0.
    \end{align}
    Therefore, by combining Theorem \ref{approximation_properties} with the above (\ref{terminal_bar_v_geq_0}), (\ref{interior_bar_v_geq_0}), it is easy to see that that $\lim_n\lim_m\overline{\mathfrak{v}}^\varepsilon_{\varepsilon^0,\varepsilon^1,n,m} \in \overline{\mathscr{V}}$, $\lim_n\lim_m\underline{\mathfrak{v}}^\varepsilon_{\varepsilon^0,\varepsilon^1,n,m} \in \underline{\mathscr{V}}$, and by Theorem \ref{uniqueness_result} we have
    \begin{align}
    \label{relation_uniqueness}
        \lim_n\lim_m\underline{\mathfrak{v}}^\varepsilon_{\varepsilon^0,\varepsilon^1,n,m}\leq v\leq \lim_n\lim_m\overline{\mathfrak{v}}^\varepsilon_{\varepsilon^0,\varepsilon^1,n,m}.   
    \end{align}
    \textbf{Step 5.} We estimate the difference of the upper bound and lower bound in the above equation. Note that 
    \begin{align*}
        \mathbb{E}[|\overline{\mathfrak{v}}^\varepsilon_{\varepsilon^0,\varepsilon^1,n,m}(s,\mu) - \underline{\mathfrak{v}}^\varepsilon_{\varepsilon^0,\varepsilon^1,n,m}(s,\mu)|]
        \leq 2\mathbb{E}[|Y_{\varepsilon^0,\varepsilon^1}^\varepsilon(s,\mu)|]
        \leq 2 \|Y^{\varepsilon}_{\varepsilon^0,\varepsilon^1}\|_{\mathcal{S}^2(\mathbb{R})}
        \leq \tilde{C}_K(\varepsilon+\varepsilon^0+\varepsilon^1),
    \end{align*}
    where the last line is from standard BSDE theory concerning the equation (\ref{BSDE}), and $\tilde{C}_K$ is independent of $n$, $m$ and $N$. Finally, from relation (\ref{relation_uniqueness}) and the arbitrariness of $\varepsilon$, $\varepsilon^0$ and $\varepsilon^1$ we conclude that 
    \begin{align*}
        \underline{u}= v = \overline{u}.
    \end{align*}
\end{proof}
\appendix
\section{Appendix}
\subsection{Measurable Selection Theorem}
\label{measurable_selection_construct}
Recall the measurable selection theorem in \cite{measurable_selection}:
\begin{theorem}
\label{Measurable_Selection_Theorem}
    Let $(\Lambda,\mathscr{M})$ be a measurable space equipped with a nonnegative measure $\mu$, and let $(\mathcal{O},\mathcal{B}(\mathcal{O}))$ be a Polish space. Suppose $F$ is a set-valued function from $\Lambda$ to $\mathcal{B}(\mathcal{O})$ satisfying the following:
    \begin{enumerate}
        \item for $\mu$-a.e. $\lambda\in\Lambda$, $F(\lambda)$ is a closed nonempty subset of $\mathcal{O}$,
        \item for any open set $O\subset\mathcal{O}$, $\{\lambda:F(\lambda)\cap O \neq \phi\}\in \mathscr{M}$.
    \end{enumerate}
    Then there exists a measurable function $f:(\Lambda,\mathscr{M})\to(\mathcal{O},\mathcal{B}(\mathcal{O}))$ such that for $\mu$-a.e. $\lambda\in\Lambda$, $f(\lambda)\in F(\lambda)$.
\end{theorem}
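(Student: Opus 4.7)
The plan is to implement the classical Kuratowski--Ryll-Nardzewski measurable selection argument, producing $f$ as the pointwise limit of a Cauchy sequence of measurable selectors that take values in a countable dense subset.

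First, I would throw away the $\mu$-null set on which $F(\lambda)$ fails to be closed and nonempty and redefine $F$ on this set to be all of $\mathcal{O}$; this preserves hypothesis (2) and reduces to the case in which $F(\lambda)$ is closed and nonempty for every $\lambda \in \Lambda$. Fix a complete metric $d$ on $\mathcal{O}$ compatible with the Polish topology, bounded above by $1$, and a countable dense set $\{x_k\}_{k \geq 1} \subset \mathcal{O}$. The key input from hypothesis (2) is that for every $k \geq 1$ and every $r > 0$ the hitting set $\{\lambda : F(\lambda) \cap B(x_k, r) \neq \varnothing\}$ belongs to $\mathscr{M}$.

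Then I would inductively build measurable step functions $f_n : \Lambda \to \{x_k\}_{k \geq 1}$ satisfying, for all $\lambda$,
\begin{align*}
(\mathrm{i}_n) \quad d\bigl(f_n(\lambda), F(\lambda)\bigr) < 2^{-n}, \qquad (\mathrm{ii}_n) \quad d\bigl(f_n(\lambda), f_{n-1}(\lambda)\bigr) < 3 \cdot 2^{-n}.
\end{align*}
Set $f_0 \equiv x_1$, so $(\mathrm{i}_0)$ holds because $d \leq 1$. Assuming $f_n$ is built, define
\begin{align*}
E_k^{n+1} := \bigl\{\lambda : d(f_n(\lambda), x_k) < 2^{-n} + 2^{-(n+1)}\bigr\} \cap \bigl\{\lambda : F(\lambda) \cap B(x_k, 2^{-(n+1)}) \neq \varnothing\bigr\}.
\end{align*}
The first factor is in $\mathscr{M}$ since $f_n$ is measurable and $x_k$ fixed, the second by the hitting-set observation. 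Given $\lambda$, use $(\mathrm{i}_n)$ to choose $y \in F(\lambda)$ with $d(f_n(\lambda), y) < 2^{-n}$ and then choose $x_k$ with $d(y, x_k) < 2^{-(n+1)}$; the triangle inequality places $\lambda$ in $E_k^{n+1}$, so $\{E_k^{n+1}\}_k$ covers $\Lambda$. Set $f_{n+1}(\lambda) := x_{k(\lambda)}$ where $k(\lambda) := \min\{k : \lambda \in E_k^{n+1}\}$. Then $f_{n+1}^{-1}(\{x_j\}) = E_j^{n+1} \setminus \bigcup_{k<j} E_k^{n+1} \in \mathscr{M}$, so $f_{n+1}$ is measurable, and $(\mathrm{i}_{n+1})$, $(\mathrm{ii}_{n+1})$ hold by construction.

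Finally, $(\mathrm{ii}_n)$ makes $\{f_n(\lambda)\}$ Cauchy uniformly in $\lambda$ in the complete metric $d$, so the pointwise limit $f(\lambda) := \lim_n f_n(\lambda)$ exists and is measurable as a pointwise limit of $\mathscr{M}/\mathcal{B}(\mathcal{O})$-measurable maps into the Polish space $\mathcal{O}$. From $(\mathrm{i}_n)$ we obtain $d(f(\lambda), F(\lambda)) = 0$, and since $F(\lambda)$ is closed, $f(\lambda) \in F(\lambda)$ for $\mu$-a.e.\ $\lambda$. The only delicate step is the inductive measurable selection: one must calibrate the slack in the definition of $E_k^{n+1}$ so that both factors remain measurable, the family covers $\Lambda$, and the successive increments $d(f_{n+1}, f_n)$ form a summable sequence; the rest is bookkeeping.
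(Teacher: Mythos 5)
Your proposal is a correct and essentially complete rendition of the standard Kuratowski--Ryll-Nardzewski selection argument; the paper itself does not prove this statement but imports it verbatim from the cited reference, so there is nothing to compare against beyond noting that your construction is exactly the classical one that reference contains. Two cosmetic calibrations: for $(\mathrm{i}_0)$ to hold with strict inequality you should use a metric that is strictly bounded by $1$ (e.g.\ $d/(1+d)$), since ``bounded above by $1$'' only gives $d(x_1,F(\lambda))\leq 1$; and when you redefine $F$ on the exceptional set you should take that set to be a measurable $\mu$-null set (as the a.e.\ quantifier permits), so that the modified hitting sets $\{\lambda: F(\lambda)\cap O\neq\varnothing\}$ remain in $\mathscr{M}$. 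Neither point affects the validity of the argument.
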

\noindent The above measurable selection theorem is needed in the Step 1 of Theorem \ref{existence_solution} to construct $\overline{\alpha}\in\mathcal{A}_\tau$ such that for almost all $\omega^0\in\Omega_\tau^0$, (\ref{overline_alpha_eq}) holds.  As $\boldsymbol{\rho}\in\mathcal{P}_L^\tau(t_0,\rho_0)$, there exists $\xi$ on $(\Omega,\mathcal{F},\mathbb{P})$ such that $\mathcal{L}(\xi(\omega^0,\cdot)) = \boldsymbol{\rho}(\omega^0)$. We take $\Lambda = \{(\omega,s):\omega = (\omega^0,\omega^1)\in\Omega_\tau^0\times\Omega^1\text{ and }\tau(\omega^0)\leq s\leq T\}$, $\mu = \mathbb{P}\otimes ds$, $(\mathcal{O},\mathcal{B}(\mathcal{O})) = (A,\mathcal{B}(A))$, and $\mathscr{M} = Prog((\mathcal{F}^{0} \vee \hat{\mathcal{F}}^{1;\tau}))$, $Prog((\mathcal{F}^{0} \vee \hat{\mathcal{F}}^{1;\tau}))$ denoting the progressive $\sigma$-algebra w.r.t. $\mathcal{F}^{0} \vee \hat{\mathcal{F}}^{1;\tau}$, where $\hat{\mathcal{F}}^{1;\tau} = \sigma(W_{s\vee \tau} - W_\tau)_{s\geq 0}$. By the continuity of the involved functions,
\begin{align*}
    F(\omega,s):=\Bigg\{a\in A : &\Big\langle b_s(\omega^0,\xi(\omega),\boldsymbol{\rho}(\omega^0),a),\partial_\mu v(\omega^0,s,\boldsymbol{\rho}(\omega^0)) (\xi(\omega)) \Big\rangle + f_s(\omega^0,\xi(\omega),\boldsymbol{\rho}(\omega^0),a) \\
    &+\frac{1}{2}\tr\Big\{ (\sigma_s\sigma_s^{\intercal})(\omega^0,\xi(\omega),\boldsymbol{\rho}(\omega^0),a)\partial_x\partial_\mu v(\omega^0,s,\boldsymbol{\rho}(\omega^0))(\xi(\omega))\Big\}\\
    \leq &\essinf_{a\in A}\Big(\Big\langle b_s(\omega^0,\xi(\omega),\boldsymbol{\rho}(\omega^0),a),\partial_\mu v(\omega^0,s,\boldsymbol{\rho}(\omega^0)) (\xi(\omega)) \Big\rangle + f_s(\omega^0,\xi(\omega),\boldsymbol{\rho}(\omega^0),a) \\
    &+\frac{1}{2}\tr\Big\{ (\sigma_s\sigma_s^{\intercal})(\omega^0,\xi(\omega),\boldsymbol{\rho}(\omega^0),a)\partial_x\partial_\mu v(\omega^0,s,\boldsymbol{\rho}(\omega^0))(\xi(\omega))\Big\}\Big)+\varepsilon \Bigg\}
\end{align*}
satisfies the hypothesis of Theorem \ref{Measurable_Selection_Theorem} and we can obtain an $\overline{\alpha}\in\mathcal{A}_\tau$. Taking expectation with respect to $\mathbb{P}^1$, we see that the constructed $\overline{\alpha}$ satisfies (\ref{overline_alpha_eq}).
\subsection{Conditional Law Invariance}
The below proof is inspired by \cite[Appendix B]{cosso_optimal_2022}. First of all we recall the definition of probability kernel.
\begin{definition}
    (Probability Kernel). Given two measurable spaces $(S, \mathcal{S})$ and $(T, \mathcal{T})$, a mapping $\mu: S \times \mathcal{T} \rightarrow$ $\overline{\mathbb{R}}_{+}$ is called a (probability) kernel from $S$ to $T$ if the function $\mu_s B=\mu(s, B)$ is $\mathcal{S}$-measurable in $s \in S$ for fixed $B \in \mathcal{T}$ and a (probability) measure in $B \in \mathcal{T}$ for fixed $s \in S$.
\end{definition}
Before we proceed, we recall the readers the notion of conditional independence. Consider sub $\sigma$-algebras $\mathcal{F}_1$, $\mathcal{F}_2$, $\mathcal{G} \subset \mathcal{F}$. We say that $\mathcal{F}_1$ and $\mathcal{F}_2$ are conditional independent given $\mathcal{G}$ if 
\begin{align*}
    \mathbb{P}(B_1\cap B_2 | \mathcal{G}) = \mathbb{P}(B_1| \mathcal{G})\mathbb{P}(B_2| \mathcal{G})\text{ a.s., }B_1\in\mathcal{F}_1, B_2\in\mathcal{F}_2.
\end{align*}
We denote the above by $\mathcal{F}_1\indep_\mathcal{G}\mathcal{F}_2$. This  notation is generalised to the case of random variable by considering their induced $\sigma$-algebras.
\begin{theorem}
\label{Thm_6_10_conditional_version}
Fix a probability space $(\Omega,\mathcal{F},\mathbb{P})$, measurable spaces $(K,\mathcal{K})$, $(S,\mathcal{S})$ and a Borel space $(T,\mathcal{B}(T))$. Define the random variables:
\begin{align*}
    \zeta&:(\Omega,\mathcal{F},\mathbb{P})\to(K,\mathcal{K}),\\
    \mathcal{\xi}, \tilde{\xi}&:(\Omega,\mathcal{F},\mathbb{P})\to(S,\mathcal{S}),\\
    \eta&:(\Omega,\mathcal{F},\mathbb{P})\to(T,\mathcal{B}(T)).
\end{align*}
Assume that 
\begin{enumerate}[(i)]
    \item For $\mathbb{P}$-a.e. $\omega$, $\forall B\in\mathcal{S}$, $\mathbb{P}(\xi\in B|\zeta) = \mathbb{P}(\tilde{\xi}\in B|\zeta)$, 
    \item there exists a random variable $\theta$ such that $\theta \indep_\zeta \tilde{\xi}$ and for $\mathbb{P}$-a.e. $\omega$, $\mathcal{L}(\theta|\zeta) = U(0,1)$, where $U(0,1)$ is the law of a Uniform(0,1) distribution.
\end{enumerate}
Then, there exists a measurable mapping $f:K\times S\times [0,1]\to T$ such that if $\tilde{\eta}:=f(\zeta,\tilde{\xi},\theta)$, then for $\mathbb{P}$ a.e. $\omega$, 
\begin{align*}
    \mathbb{E}[g(\tilde{\xi},\tilde{\eta})|\zeta](\omega) = \mathbb{E}[g(\xi,\eta)|\zeta](\omega),\text{ for any measurable $g:S\times T\to \mathbb{R}_+$}. 
\end{align*}
\end{theorem}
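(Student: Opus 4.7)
The plan is to mimic the classical randomization/transfer lemma (in the spirit of Kallenberg, Theorem~6.10) but carried out conditionally on $\zeta$. The core idea is to choose $f$ so that, conditional on $\zeta$, the pair $(\tilde\xi,\tilde\eta)$ realises a regular conditional distribution of $(\xi,\eta)$ given $\zeta$; assumption (i) is then the missing piece that lets one swap $\xi$ with $\tilde\xi$ inside the relevant conditional expectation, and assumption (ii) is the source of independent randomness needed to generate $\tilde\eta$.

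Concretely, I would proceed in four steps. First, since $(T,\mathcal{B}(T))$ is Borel, there exists a probability kernel $\nu$ from $(S\times K,\mathcal{S}\otimes\mathcal{K})$ to $T$ that serves as a regular conditional distribution of $\eta$ given $(\xi,\zeta)$, i.e.\ $\mathbb{P}(\eta\in C\mid \xi,\zeta)=\nu(\xi,\zeta,C)$ a.s.\ for every $C\in\mathcal{B}(T)$. Second, the Borel property of $T$ (via an isomorphism with a Borel subset of $[0,1]$ and the quantile transform) yields a measurable $f:K\times S\times[0,1]\to T$ such that, whenever $U\sim U(0,1)$, $f(k,s,U)$ has law $\nu(s,k,\cdot)$ for every fixed $(k,s)$. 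Third, set $\tilde\eta:=f(\zeta,\tilde\xi,\theta)$. For any bounded measurable $g:S\times T\to\mathbb{R}_{+}$, assumption (ii) ($\theta\indep_{\zeta}\tilde\xi$ and $\mathcal{L}(\theta\mid\zeta)=U(0,1)$) combined with Fubini gives
\begin{align*}
\mathbb{E}\bigl[g(\tilde\xi,\tilde\eta)\,\big|\,\zeta\bigr]
=\mathbb{E}\!\left[\int_{0}^{1}g\bigl(\tilde\xi,f(\zeta,\tilde\xi,u)\bigr)\,du\,\Big|\,\zeta\right]
=\mathbb{E}\!\left[\int_{T}g(\tilde\xi,t)\,\nu(\tilde\xi,\zeta,dt)\,\Big|\,\zeta\right],
\end{align*}
while the tower property and the defining property of $\nu$ give
\begin{align*}
\mathbb{E}\bigl[g(\xi,\eta)\,\big|\,\zeta\bigr]
=\mathbb{E}\!\left[\int_{T}g(\xi,t)\,\nu(\xi,\zeta,dt)\,\Big|\,\zeta\right].
\end{align*}

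Fourth, I would introduce $h:S\times K\to\mathbb{R}_{+}$ by $h(s,k):=\int_{T}g(s,t)\,\nu(s,k,dt)$, so the two displays read $\mathbb{E}[h(\tilde\xi,\zeta)\mid\zeta]$ and $\mathbb{E}[h(\xi,\zeta)\mid\zeta]$, respectively. A monotone class argument promotes assumption (i) from indicators $\mathbf{1}_{B}(s)$ to the two-variable test function $h(\cdot,k)$ with $k$ treated as a $\zeta$-measurable parameter: computing conditional expectations through the regular conditional laws $\mathbb{P}(\xi\in\cdot\mid\zeta=k)$ and $\mathbb{P}(\tilde\xi\in\cdot\mid\zeta=k)$, which agree for a.e.\ $k$ by (i), shows the two displays coincide a.s. Extending from bounded $g$ to arbitrary non-negative measurable $g$ by monotone convergence concludes.

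The main obstacle, and the only step requiring some care, is this final ``parameter swap'': the variable $\zeta$ occurs simultaneously as an argument of $h$ and as the conditioning $\sigma$-algebra, so I would perform the extension of (i) by first fixing $k$ and integrating $h(\cdot,k)$ against the two (equal) conditional distributions of $\xi$ and $\tilde\xi$ given $\{\zeta=k\}$, then invoking a Doob--Dynkin measurability argument to reassemble a bona fide identity of $\sigma(\zeta)$-measurable random variables. The existence of $\nu$ and of $f$ in Steps 1--2 are standard consequences of $T$ being Borel, and Step 3 is just a conditional Fubini; so the novelty lies entirely in combining these ingredients with the conditional law agreement in (i).
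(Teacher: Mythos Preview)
Your proposal is correct and follows essentially the same route as the paper's proof: both construct a regular conditional distribution kernel of $\eta$ given $(\zeta,\xi)$, represent it via a measurable $f$ using the Borel property of $T$ and a uniform source, set $\tilde\eta=f(\zeta,\tilde\xi,\theta)$, and then reduce the desired identity to the equality $\mathbb{E}[h(\tilde\xi,\zeta)\mid\zeta]=\mathbb{E}[h(\xi,\zeta)\mid\zeta]$ for $h(s,k)=\int_T g(s,t)\,\nu(s,k,dt)$, which is exactly where assumption (i) enters. The paper compresses this last swap into a single line (``due to (i) and (ii)''), whereas you have spelled out more carefully why $\zeta$ can appear both as an argument of $h$ and as the conditioning variable; your monotone-class/Doob--Dynkin justification of that step is the right way to make the paper's terse claim rigorous.
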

\begin{proof}
By \cite{kallenberg_foundations_2002} Theorem 6.3, there exists a probability kernel $\mu$ from $K\times S\to T$ satisfying for $\mathbb{P}$-a.e. $\omega$, $\forall B\in\mathcal{S}$, 
\begin{align*}
    \mu(\zeta,\xi,B) = \mathbb{P}[\eta\in B|\zeta,\xi].
\end{align*}
By \cite{kallenberg_foundations_2002} Lemma 3.22, we may choose a measurable function $f:K\times S\times [0,1]\to T$ such that when given $\zeta$, $f(k,s,\theta)$ has the distribution $\mu(k,s,\cdot)$ for every $(k,s)\in K\times S$. Define $\tilde{\eta}:=f(\zeta,\tilde{\xi},\theta)$. We have $\mathbb{P}$-a.e. $\omega$ for all measurable $g$,
\begin{align*}
    \mathbb{E}[g(\tilde{\xi},\tilde{\eta})|\zeta] = &\mathbb{E}\Big[g(\tilde{\xi},f(\zeta,\tilde{\xi},\theta))\big|\zeta\Big]\\
    =&\mathbb{E}\Big[\mathbb{E}\big[ g(\tilde{\xi},f(\zeta,\tilde{\xi},\theta))|\zeta,\tilde{\xi}\big]|\zeta\Big]\\
    =&\mathbb{E}\Big[\mathbb{E}\big[ g(\xi,f(\zeta,\xi,\theta))|\zeta,\xi\big]|\zeta\Big]\text{ (due to (i) and (ii))}\\
    =&\mathbb{E}\Big[\mathbb{E}\big[ g(\xi,\eta)|\zeta,\xi\big]|\zeta\Big]\\
    = &\mathbb{E}[g(\xi,\eta)|\zeta].
\end{align*}
\end{proof}
\noindent We introduce the setting for the next theorem. Let $t\in[0,T]$. Denote the $\sigma$-algebra
\begin{align*}
    \hat{\mathcal{F}}^{1;t}:=&\sigma(W_{s\vee t} - W_t:s\geq 0).
\end{align*}
Let
\begin{align*}
    \overline{\Omega} = [0,T]\times\Omega,\,\,\,\,\,\overline{\mathcal{F}} = \mathcal{B}([0,T])\otimes \mathcal{F}^{0}\otimes\hat{\mathcal{F}}^{1;t}\otimes\mathcal{G},\,\,\,\,\,\overline{\mathbb{P}} = \lambda_{[0,T]} \otimes \mathbb{P},
\end{align*}
with $\lambda_{[0,T]}$ being the uniform distribution on $([0,T],\mathcal{B}([0,T]))$, and $Prog((\mathcal{F}^{0} \vee \hat{\mathcal{F}}^{1;t}))$ denoting the progressive $\sigma$-algebra w.r.t. $\mathcal{F}^{0} \vee \hat{\mathcal{F}}^{1;t}$. Let $\zeta:(\overline{\Omega},\overline{\mathcal{F}})\to(\Omega^0,\mathcal{F}^{0}_t)$ be the projection map, i.e.,
\begin{align}
\label{zeta}
    \zeta(s,\omega^0,\omega^1):= \omega^0_{\cdot\wedge t}.
\end{align}
\begin{theorem}
\label{new_control}
Let ($\mathcal{A}1$) hold. Let $\alpha\in\mathcal{A}_t$, $\xi \in L^2((\Omega,\mathcal{F}_t,\mathbb{P});\mathbb{R}^d)$. Suppose that there exists an $\mathcal{F}_t$-measurable random variable $U_\xi$, for $\mathbb{P}$-a.e. $\omega$, $\mathcal{L}(U_\xi|\zeta) = U(0,1)$ and $U_\xi \indep_\zeta \xi$. Then, there exists a function
$b:[0,T]\times\Omega^0\times\hat{\Omega}^1\times\mathbb{R}^d\times [0,1]\to A$, measurable with respect to $Prog((\mathcal{F}^{0} \vee \hat{\mathcal{F}}^{1;t}))\otimes\mathcal{B}(\mathbb{R}^d)\otimes\mathcal{B}([0,1])$ such that $(b_s(\xi,U_\xi))_{s\in[0,T]}$ is $\mathbb{F}^t$-progressively measurable, and
\begin{align*}
    &\mathcal{L}\Big(\xi,(b_s(\xi,U_\xi))_{s\in[0,T]},(W_s^0)_{s\in [0,T]},(W_{s\vee t}-W_t)_{s\in [0,T]} \Big)\Bigg|_\zeta \\
    = &\mathcal{L}\Big(\xi,(\alpha_s)_{s\in[0,T]},(W_s^0)_{s\in [0,T]},(W_{s\vee t}-W_t)_{s\in [0,T]} \Big)\Bigg|_\zeta\,\,\,\mathbb{P}^0\text{-}a.e..
\end{align*}
\end{theorem}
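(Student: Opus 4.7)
The plan is to apply Theorem \ref{Thm_6_10_conditional_version} on the enlarged probability space $(\overline{\Omega},\overline{\mathcal{F}},\overline{\mathbb{P}})$, so that the $\mathbb{F}^t$-progressive measurability of $\alpha$ is encoded as plain measurability of a single $A$-valued random variable $\overline{\alpha}(s,\omega):=\alpha_s(\omega)$. The value of passing to $\overline{\Omega}$ is that one can take as the conditioning variable the \emph{full progressive observation}
\begin{align*}
\zeta'(s,\omega^0,\omega^1) := \bigl(\,s,\; \omega^0|_{[0,s]},\; (\omega^1_{r\vee t}-\omega^1_t)_{r\in[0,s]}\,\bigr),
\end{align*}
which takes values in a Polish space $K$, so that any Borel map $f(\zeta',\cdot,\cdot)$ produced by the theorem is automatically a progressive functional of the Brownian paths.

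I would first verify the hypotheses of Theorem \ref{Thm_6_10_conditional_version} with $\tilde\xi:=\xi$, $\theta:=U_\xi$, $\eta:=\overline{\alpha}$ and the conditioning replaced by $\zeta'$. Hypothesis (i) is trivial. For (ii), the required identities $\mathcal{L}(U_\xi|\zeta')=U(0,1)$ and $U_\xi\indep_{\zeta'}\xi$ follow from the assumed $\mathcal{L}(U_\xi|\zeta)=U(0,1)$ and $U_\xi\indep_\zeta \xi$ by iterated conditioning: the part of $\zeta'$ that lies in $\mathcal{F}_t$ is simply $\omega^0|_{[0,s\wedge t]}\subset\zeta$, while the remainder ($W^0|_{[t,s]}$ if $s>t$, and $(W_{\cdot\vee t}-W_t)|_{[0,s]}$) is independent of $\mathcal{F}_t$ and hence of $(\xi,U_\xi)$. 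Theorem \ref{Thm_6_10_conditional_version} then yields a Borel map $f:K\times\mathbb{R}^d\times[0,1]\to A$ such that
\begin{align*}
\tilde\eta(s,\omega):=f\bigl(\zeta'(s,\omega),\,\xi(\omega),\,U_\xi(\omega)\bigr)
\end{align*}
matches the $\overline{\mathbb{P}}$-conditional law $(\xi,\tilde\eta,\zeta')\overset{d}{=}(\xi,\overline\alpha,\zeta')$ on $\overline\Omega$.

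Setting
\begin{align*}
b_s(\omega^0,\omega^1,x,u):=f\bigl(s,\,\omega^0|_{[0,s]},\,(\omega^1_{r\vee t}-\omega^1_t)_{r\in[0,s]},\,x,\,u\bigr),
\end{align*}
the progressivity of $(s,\omega^0,\omega^1)\mapsto\zeta'(s,\omega^0,\omega^1)$ transfers to $b$, so that $b$ is $\mathrm{Prog}(\mathcal{F}^0\vee\hat{\mathcal{F}}^{1;t})\otimes\mathcal{B}(\mathbb{R}^d)\otimes\mathcal{B}([0,1])$-measurable and $(b_s(\xi,U_\xi))_{s\in[0,T]}$ is $\mathbb{F}^t$-progressively measurable. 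Unpacking the identity $(\xi,\tilde\eta,\zeta')\overset{d}{=}(\xi,\overline\alpha,\zeta')$ by Fubini in the time coordinate yields, for $\lambda_{[0,T]}$-a.e.~$s$, the $\mathbb{P}$-law equality
\begin{align*}
\mathcal{L}\bigl(\xi,\, b_s(\xi,U_\xi),\, W^0|_{[0,s]},\, (W_{\cdot\vee t}-W_t)|_{[0,s]}\bigr) = \mathcal{L}\bigl(\xi,\, \alpha_s,\, W^0|_{[0,s]},\, (W_{\cdot\vee t}-W_t)|_{[0,s]}\bigr).
\end{align*}

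The main obstacle is to lift this single-time marginal identity to the \emph{full-trajectory} conditional-law equality announced in the statement. I would handle this by exploiting the functional structure: on the one hand $b_s(\xi,U_\xi)$ is a deterministic progressive functional of the shared trajectories $(W^0,W-W_t)$ and of $(\xi,U_\xi)$; on the other hand $\alpha_s$ is a progressive functional of $(W^0,W-W_t)$ augmented by the $\mathcal{G}$-randomness. Conditional on $\zeta$, one has $\mathcal{L}((\xi,U_\xi)|\zeta)=\mathcal{L}(\xi|\zeta)\otimes U(0,1)$, independent of the future Brownian increments, and iterating Theorem \ref{Thm_6_10_conditional_version} with multi-time conditioning variables $\zeta'_{s_1,\ldots,s_k}$ propagates the single-time match to all finite-dimensional marginals. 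A standard measurable-kernel disintegration argument \`a la \cite[Appendix B]{cosso_optimal_2022} then produces the claimed trajectory-level conditional-law equality $\mathbb{P}^0$-a.e.
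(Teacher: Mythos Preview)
Your approach and the paper's both pass to the enlarged space $\overline{\Omega}=[0,T]\times\Omega$ and invoke Theorem~\ref{Thm_6_10_conditional_version}, but you allocate the ingredients differently, and that difference is where your argument breaks.

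The paper keeps the conditioning variable small, $\zeta(s,\omega)=\omega^0_{\cdot\wedge t}$, and loads the \emph{entire} progressive observation into the ``$\xi$'' slot of Theorem~\ref{Thm_6_10_conditional_version} by setting $\Gamma:=(\pi^{W^0,W,s},\xi)$ with values in $(E,\mathscr{E})=([0,T]\times\Omega^0\times\hat\Omega^1\times\mathbb{R}^d,\,\mathrm{Prog}\otimes\mathcal{B}(\mathbb{R}^d))$. The theorem then yields $a:\Omega^0\times E\times[0,1]\to A$ with
\[
\mathcal{L}_{\overline{\mathbb{P}}}\bigl(\Gamma,\,a(\zeta,\Gamma,\overline U_\xi)\bigr)\Big|_{\zeta}=\mathcal{L}_{\overline{\mathbb{P}}}\bigl(\Gamma,\,\overline\alpha\bigr)\Big|_{\zeta},
\]
and since $\Gamma$ already carries the full Brownian paths, this is a statement about the joint conditional law of $(\text{paths},\xi,\text{control})$ given $\mathcal{F}_t^0$, from which the announced trajectory-level identity is read off directly.

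You instead push the path information into the conditioning variable $\zeta'=(s,\omega^0|_{[0,s]},(\omega^1_{\cdot\vee t}-\omega^1_t)|_{[0,s]})$ and leave only $\xi$ in the ``$\xi$'' slot. The output of Theorem~\ref{Thm_6_10_conditional_version} is then the equality of conditional laws of $(\xi,\tilde\eta)$ and $(\xi,\overline\alpha)$ \emph{given} $\zeta'$. Because $\zeta'$ varies with $s$, this is a single-time conditional marginal: for a.e.\ $s$, the law of $(\xi,b_s(\xi,U_\xi))$ matches that of $(\xi,\alpha_s)$ given the paths up to $s$. It says nothing about the joint law of $(b_{s_1}(\xi,U_\xi),b_{s_2}(\xi,U_\xi))$ versus $(\alpha_{s_1},\alpha_{s_2})$, because the theorem produces $f$ by choosing, for each value of $\zeta'$, an arbitrary noise-realization of the corresponding conditional law, and there is no consistency across different $s$.

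Your proposed fix --- ``iterating Theorem~\ref{Thm_6_10_conditional_version} with multi-time conditioning variables $\zeta'_{s_1,\dots,s_k}$'' --- does not repair this. Each application of the theorem manufactures its own measurable selector $f$; re-applying it with a different conditioning variable would produce a \emph{different} function, not a verification that the $b$ you already built has the right finite-dimensional distributions. Once $b$ is fixed, checking multi-time marginals is a question about that specific $b$, and the abstract theorem offers no leverage. The paper avoids this entirely by making the paths part of the matched variable $\Gamma$ rather than part of the conditioning, so that a single application delivers the joint law.
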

\begin{proof}
Consider the canonical extension of $U_\xi$ to $\overline{\Omega}$, denoted by $\overline{U}_\xi$. Let also $(\bar{E},\bar{\mathscr{E}})$ be the measurable space defined as $\bar{E} = [0,T]\times\Omega^0\times\hat{\Omega}^1$ and $\bar{\mathscr{E}} = Prog((\mathcal{F}^{0} \vee \hat{\mathcal{F}}^{1;t}))$. Let $\pi^{W^0,W,s}:(\overline{\Omega},\overline{\mathcal{F}})\to(\bar{E},\bar{\mathscr{E}})$ be the projection map, i.e.,
\begin{align*}
    \pi^{W^0,W,s}(s,\omega^0,\omega^1) := (s,\omega^0,\hat{\omega}^1).
\end{align*}
Define $\Gamma := (\pi^{W^0,W,s}, \xi)$. From assumption, when given $\zeta$, $\overline{U}_\xi$ is independent of $\xi$. Note also that given $\zeta$, $\overline{U}_\xi$ is independent of $\hat{\mathcal{F}}^{1;t}$ (as it is $\mathcal{F}_t$-measurable). Therefore given $\zeta$, $\overline{U}_\xi$ is also independent of $\pi^{W^0,W,s}$, and thus independent of $\Gamma$, given $\zeta$. $\Gamma$ takes values in the measurable space $(E,\mathscr{E})$, with $E = [0,T]\times\Omega^0\times\hat{\Omega}^1\times\mathbb{R}^d$, $\mathscr{E} = Prog((\mathcal{F}^{0} \vee \hat{\mathcal{F}}^{1;t}))\otimes\mathcal{B}(\mathbb{R}^d)$. Working under the conditional expectation on $\mathcal{F}_t^0$ is equivalent to conditioning on the random variable $\zeta$. From above Theorem \ref{Thm_6_10_conditional_version}, there exists a function $a: \Omega^0\times[0,T]\times\Omega^0\times\hat{\Omega}^1\times\mathbb{R}^d\times[0,1]\to A$, measurable with respect to the $\sigma$-algebras $\mathcal{F}_t^0\otimes Prog((\mathcal{F}^{0} \vee \hat{\mathcal{F}}^{1;t}))\otimes\mathcal{B}(\mathbb{R}^d)\otimes\mathcal{B}([0,1])$, such that we have 
\begin{align*}
    \mathcal{L}\big(\Gamma, (a_s(\zeta,\Gamma,\overline{U}_\xi))_{s\in[0,T]}\big)\Bigg|_{\zeta} = \mathcal{L}\big(\Gamma, (\alpha_s)_{s\in[0,T]}\big)\Bigg|_{\zeta}\,\,\,\,\,\mathbb{\overline{P}}\text{-}a.e.,
\end{align*}
from which we deduce that
\begin{align*}
    &\mathcal{L}\Big(\xi,(a_s(\zeta,\Gamma,\overline{U}_\xi))_{s\in [0,T]},(W_s^0)_{s\in [0,T]},(W_{s\vee t}-W_t)_{s\in [0,T]} \Big)\Bigg|_\zeta \\
    = &\mathcal{L}\Big(\xi,(\alpha_s)_{s\in[0,T]},(W_s^0)_{s\in [0,T]},(W_{s\vee t}-W_t)_{s\in [0,T]} \Big)\Bigg|_\zeta\,\,\,\mathbb{P}^0\text{-}a.e..
\end{align*}
Now we define $b:[0,T]\times\Omega^0\times\Omega^1\times\mathbb{R}^d\times [0,1]\to A$ 
\begin{align*}
    b(s,\omega^0,\omega^1,x,u):= \alpha_s(\omega^0,\omega^1)1_{[0,t)}(s)+a_s(\omega^0_{\cdot\wedge t},s,\omega^0,\hat{\omega}^1,x,u)1_{[t,T]},
\end{align*}
then $(b_s(\xi,U_\xi))_{s\in[0,T]}$ is $\mathbb{F}^t$-progressively measurable, and
\begin{align*}
    &\mathcal{L}\Big(\xi,(b_s(\xi,U_\xi))_{s\in[0,T]},(W_s^0)_{s\in [0,T]},(W_{s\vee t}-W_t)_{s\in [0,T]} \Big)\Bigg|_\zeta \\
    = &\mathcal{L}\Big(\xi,(\alpha_s)_{s\in[0,T]},(W_s^0)_{s\in [0,T]},(W_{s\vee t}-W_t)_{s\in [0,T]} \Big)\Bigg|_\zeta\,\,\,\mathbb{P}^0\text{-}a.e..
\end{align*}
\end{proof}
\begin{lemma}
\label{independent_rv_uniform_distribution}
Assume ($\mathcal{A}1$) holds and let $\xi$ be $\mathcal{F}_t$-measurable. Suppose that
\begin{align*}
    \xi = \sum_{i=1}^n a_i 1_{B_i^0}(\omega^0) 1_{B_i^1}(\omega^1),
\end{align*}
where $\{B_i^0\times B_i^1\}_{i=1,\ldots,n}$ is a (disjoint) partition of $\Omega^0\times\Omega^1$. Then there exists an $\mathcal{F}_t$-measurable random variable $U_\xi$, for $\mathbb{P}$-a.e. $\omega$, $\mathcal{L}(U_\xi|\zeta) = U(0,1)$ and $U_\xi \indep_\zeta \xi$, where $\zeta$ is defined in (\ref{zeta}).
\end{lemma}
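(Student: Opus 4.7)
The goal is to produce an $\mathcal{F}_t$-measurable random variable that, when conditioned on $\mathcal{F}^0_t$, is simultaneously uniformly distributed on $[0,1]$ and independent of $\xi$. My plan is to construct $U_\xi$ entirely out of the $\Omega^1$-coordinates, using the richness hypothesis on $(\tilde{\Omega}^1,\mathcal{G},\tilde{\mathbb{P}}^1)$, so that $U_\xi$ is automatically independent of the entire $\sigma$-algebra $\mathcal{F}_t^0$; the two conditional statements will then collapse to an unconditional independence and unconditional uniformity on $(\Omega^1,\mathcal{F}_t^1,\mathbb{P}^1)$.

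The first step is to observe that $(\Omega^1,\mathcal{F}_t^1,\mathbb{P}^1)$ is atomless. This is inherited from $(\tilde{\Omega}^1,\mathcal{G},\tilde{\mathbb{P}}^1)$, which supports every probability law on $\mathbb{R}^d$ and in particular is atomless, combined with $\mathbb{P}^1 = \tilde{\mathbb{P}}^1\otimes\hat{\mathbb{P}}^1$. Consequently, for any $C\in\mathcal{F}_t^1$ with $\mathbb{P}^1(C)>0$, the conditional probability measure $\mathbb{P}^1(\cdot\mid C)$ on $(C,\mathcal{F}_t^1\cap C)$ is also atomless, and therefore (by the Carath\'eodory/Borel isomorphism theorem on atomless probability spaces) admits a measurable map $U_C:C\to[0,1]$ whose pushforward under $\mathbb{P}^1(\cdot\mid C)$ is Lebesgue measure on $[0,1]$.

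Next, consider the finite $\sigma$-algebra $\mathcal{C} := \sigma(B_1^1,\ldots,B_n^1)\subseteq\mathcal{F}_t^1$ and let $\{C_1,\ldots,C_N\}$ be the (finite) partition of $\Omega^1$ into its atoms. Apply the previous step to each $C_k$ with $\mathbb{P}^1(C_k)>0$ to obtain $U_k:C_k\to[0,1]$, set $U_k\equiv 0$ off $C_k$, and define
\begin{align*}
U_\xi(\omega^1)\ :=\ \sum_{k=1}^N U_k(\omega^1)\mathbf{1}_{C_k}(\omega^1).
\end{align*}
Then $U_\xi$ is $\mathcal{F}_t^1$-measurable, and for every $u\in[0,1]$ and every $k$ we have $\mathbb{P}^1(U_\xi\le u,\,C_k)=\mathbb{P}^1(C_k)\cdot u$, which simultaneously shows that $U_\xi\sim U(0,1)$ and that $U_\xi$ is independent of $\mathcal{C}$.

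Finally, viewing $U_\xi$ on $\Omega=\Omega^0\times\Omega^1$ by the canonical extension, it is $\mathcal{F}_t^1\subseteq\mathcal{F}_t$-measurable. Because $\zeta$ generates $\mathcal{F}^0_t$ and $\mathcal{F}^0_t$ is independent of $\mathcal{F}_t^1$, one gets $\mathcal{L}(U_\xi\mid\zeta)=U(0,1)$ $\mathbb{P}$-a.s.; since $\xi$ is measurable with respect to $\sigma(B_1^0,\ldots,B_n^0)\otimes\mathcal{C}\subseteq\mathcal{F}^0_t\vee\mathcal{C}$ while $U_\xi$ is independent of $\mathcal{F}^0_t\vee\mathcal{C}$, the conditional independence $U_\xi\indep_\zeta\xi$ follows immediately. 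The main delicacy to verify carefully is the existence of the measurable uniform $U_C$ on each atom $C_k$; this is the only place where the richness of $\tilde{\Omega}^1$ is invoked, and it is a standard consequence of the fact that an atomless probability space on a standard Borel $\sigma$-algebra is measurably isomorphic to $([0,1],\mathrm{Leb})$.
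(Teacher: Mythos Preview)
Your proof is correct, and it is a somewhat different and in fact more economical construction than the one in the paper. The paper first rewrites $\xi = \sum_{j=1}^m \Xi_j(\omega^1)\,1_{C_j^0}(\omega^0)$ along a partition $\{C_j^0\}$ of $\Omega^0$, then for each discrete $\Xi_j$ on $\Omega^1$ invokes an external result (Lemma~B.3 of Cosso--Gozzi--Kharroubi--Pham--Rosestolato) to produce a uniform $U_{\Xi_j}$ independent of $\Xi_j$, and finally sets $U_\xi = \sum_j U_{\Xi_j}\,1_{C_j^0}$, so that their $U_\xi$ genuinely depends on $\omega^0$. You instead notice that all the $\Omega^1$-pieces of $\xi$ are measurable with respect to the single finite $\sigma$-algebra $\mathcal{C}=\sigma(B_1^1,\ldots,B_n^1)$, so it suffices to build \emph{one} uniform on $\Omega^1$ independent of all of $\mathcal{C}$, by pasting uniforms on the atoms of $\mathcal{C}$. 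Your $U_\xi$ is then a function of $\omega^1$ alone and is in fact unconditionally independent of $(\xi,\zeta)$, which makes the conditional statements immediate. This is self-contained (no appeal to an external lemma) and slightly cleaner; the paper's version is closer to a direct conditional adaptation of the cited reference.

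One minor technical remark: you justify the existence of a uniform on each atom $C_k$ via the Borel isomorphism theorem for atomless standard Borel spaces, but the paper does not assume $(\tilde{\Omega}^1,\mathcal{G})$ is standard Borel. This is harmless, since the weaker fact you actually need---that any atomless probability space carries a $U(0,1)$-distributed random variable---holds without any standardness assumption (it follows from Sierpi\'nski's intermediate-value theorem for atomless measures). You may want to phrase the justification that way.
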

\begin{proof}
Note that $\xi$ can also be written as
\begin{align*}
    \xi = \sum_{i=1}^m \Xi_i(\omega^1) 1_{C_i^0}(\omega^0),
\end{align*}
where $\Xi_i$ is discrete value random variable on $(\Omega^1,\mathcal{F}^1,\mathbb{P}^1)$, and $\{C_i^0\}_{i=1,\ldots,m}$ is a disjoint partition of $\Omega^0$. For each $\Xi_i$, apply Lemma B.3 in \cite{cosso_optimal_2022}, we get $U_{\Xi_i}$ of uniform distribution on $[0,1]$ and independent of $\Xi_i$. Then
\begin{align*}
    U_\xi := \sum_{i=1}^m U_{\Xi_i}(\omega^1)1_{C_i^0}(\omega^0), 
\end{align*}
is $\mathcal{F}_t$ measurable, having uniform distribution on $[0,1]$, and independent of $\xi$, when given $\zeta$. 
\end{proof}
\begin{lemma}
\label{X_equal_in_law}
Assume ($\mathcal{A}1$) holds. If
\begin{small}
    \begin{align*}
    \mathcal{L}\Big(\xi,(\alpha_s)_{s\in[0,T]},(W_s^0)_{s\in [0,T]},(W_{s\vee t}-W_t)_{s\in [0,T]} \Big)\Bigg|_\zeta = \mathcal{L}\Big(\eta,(\beta_s)_{s\in[0,T]},(W_s^0)_{s\in [0,T]},(W_{s\vee t}-W_t)_{s\in [0,T]} \Big)\Bigg|_\zeta\,\,\mathbb{P}^0\text{-}a.e.,
\end{align*}
\end{small}
then 
\begin{align*}
    \mathcal{L}\Big((X^{t,\xi,\alpha}_s)_{s\in[t,T]},(\alpha_s)_{s\in[0,T]},(W_s^0)_{s\in [0,T]}\Big)\Bigg|_\zeta = \mathcal{L}\Big((X^{t,\eta,\beta}_s)_{s\in[t,T]},(\beta_s)_{s\in[0,T]},(W_s^0)_{s\in [0,T]}\Big)\Bigg|_\zeta\,\,\mathbb{P}^0\text{-}a.e.
\end{align*}
\end{lemma}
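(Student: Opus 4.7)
The plan is to exhibit a Yamada--Watanabe-type measurable functional producing the strong solution of our McKean--Vlasov SDE with common noise from the inputs $(\xi,\alpha,W,W^{0})$, and depending on $(\xi,\alpha)$ only through the conditional joint law of $(\xi,\alpha,W^{0},W_{\cdot\vee t}-W_{t})$ given $\zeta$. The hypothesis of the lemma then transports the conditional law of the solution driven by $(\xi,\alpha)$ to that driven by $(\eta,\beta)$, and restricting to the relevant marginals gives the claim.

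First I would approximate the solution via a Picard scheme. Set $X^{0}_{s}:=\xi$, $\rho^{0}_{s}:=\mathbb{P}^{W^{0}}_{\xi}$, and inductively for $s\in[t,T]$,
\begin{align*}
X^{n+1}_{s} &= \xi + \int_{t}^{s} b_{r}(X^{n}_{r},\rho^{n}_{r},\alpha_{r})\,dr + \int_{t}^{s} \sigma_{r}(X^{n}_{r},\rho^{n}_{r},\alpha_{r})\,dW_{r}\\
&\quad + \int_{t}^{s} \sigma^{0}_{r}(X^{n}_{r},\rho^{n}_{r},\alpha_{r})\,dW^{0}_{r},\qquad \rho^{n+1}_{s}:=\mathbb{P}^{W^{0}}_{X^{n+1}_{s}}.
\end{align*}
Under Assumption~$(\mathcal{A}1)$ the classical contraction argument combined with Gronwall's inequality yields $X^{n}\to X^{t,\xi,\alpha}$ in $\mathcal{S}^{2}$ and $\rho^{n}_{r}\to \mathbb{P}^{W^{0}}_{X^{t,\xi,\alpha}_{r}}$ in $\mathcal{W}_{2}$, uniformly in $r\in[t,T]$.

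The key inductive claim is that there exist measurable maps $\Psi^{n}$, depending only on the coefficients $(b,\sigma,\sigma^{0})$ and on $T$ (hence identical for the two pairs $(\xi,\alpha)$ and $(\eta,\beta)$), such that
\[
\mathcal{L}\Big((X^{n}_{s})_{s\in[t,T]},(\alpha_{s})_{s\in[0,T]},(W^{0}_{s})_{s\in[0,T]}\Big)\Big|_{\zeta}= \Psi^{n}\!\left(\mathcal{L}\Big(\xi,(\alpha_{s})_{s\in[0,T]},(W^{0}_{s})_{s\in[0,T]},(W_{s\vee t}-W_{t})_{s\in[0,T]}\Big)\Big|_{\zeta}\right).
\]
The base case $n=0$ is immediate since $X^{0}\equiv \xi$. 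For the inductive step, $\rho^{n}_{r}$ is the regular conditional law of $X^{n}_{r}$ given $\sigma(W^{0})$, and can therefore be extracted from $\Psi^{n}$ by a further disintegration of the conditional law given $\zeta$ along the $W^{0}$-slice; this disintegration is well defined a.s.\ by the measurable-selection machinery recalled in Appendix~\ref{measurable_selection_construct}. Treating $\rho^{n}$ as a given $\sigma(W^{0})$-adapted coefficient, the Picard iterate $X^{n+1}$ is the strong solution of a \emph{standard} (non-McKean--Vlasov) SDE driven by $(W,W^{0})$ with initial data $\xi$ and control $\alpha$, so the classical Yamada--Watanabe theorem furnishes a universal measurable functional $\Phi^{n+1}$ with $X^{n+1}=\Phi^{n+1}(\xi,\alpha,W,W^{0},\rho^{n})$ a.s. Composing $\Phi^{n+1}$ with the $W^{0}$-disintegration that produces $\rho^{n}$ delivers $\Psi^{n+1}$. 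Applying $\Psi^{n}$ to $(\eta,\beta)$ and invoking the lemma's hypothesis on equality of input conditional laws gives, for every $n$,
\[
\mathcal{L}\Big((X^{n;t,\xi,\alpha}_{s})_{s\in[t,T]},(\alpha_{s})_{s\in[0,T]},(W^{0}_{s})_{s\in[0,T]}\Big)\Big|_{\zeta}= \mathcal{L}\Big((X^{n;t,\eta,\beta}_{s})_{s\in[t,T]},(\beta_{s})_{s\in[0,T]},(W^{0}_{s})_{s\in[0,T]}\Big)\Big|_{\zeta},
\]
$\mathbb{P}^{0}$-a.e. Passing $n\to\infty$ via the $\mathcal{S}^{2}$-convergence of the Picard iterates and continuity of the weak conditional law under $L^{2}$-convergence transfers the identity to the true solutions $X^{t,\xi,\alpha}$ and $X^{t,\eta,\beta}$.

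The main obstacle is the rigorous verification of the ``universal functional'' claim in the inductive step: it is a conditional Yamada--Watanabe statement for McKean--Vlasov SDEs with common noise. Two delicate points arise: (i) the canonical measurable extraction of $\rho^{n}_{r}$ from $\Psi^{n}$ through a consistent $W^{0}$-slice disintegration; and (ii) the fact that $\alpha\in\mathcal{A}_{t}$ is $\mathbb{F}^{t}$-progressive rather than $\mathbb{F}^{0}$-progressive, so that one must keep track of the full tuple including $(W_{s\vee t}-W_{t})_{s\in[0,T]}$ in the conditioning to recover $\alpha$ itself as a functional of the conditional inputs. Both are standard once appropriate regular conditional probabilities are chosen; see~\cite{carmona_probabilistic_2018} for the analogous machinery in the common-noise setting. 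Once $\Psi^{n}$ is constructed, the lemma follows purely from the hypothesis on equality of input conditional laws.
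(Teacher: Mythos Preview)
Your approach is correct in outline and shares with the paper's proof the use of Picard iteration followed by passage to the limit. The paper, however, takes a more elementary route: it fixes a bounded continuous test function $g$ and shows by induction that $\mathbb{E}[g(X^{(n),t,\xi,\alpha}_{s},\alpha_{s},W^{0}_{s})\,|\,\zeta]=\mathbb{E}[g(X^{(n),t,\eta,\beta}_{s},\beta_{s},W^{0}_{s})\,|\,\zeta]$ for every $n$, then extracts an a.e.\ convergent subsequence and lets $n\to\infty$. No abstract functional $\Psi^{n}$ is constructed; the inductive step simply observes that $X^{(n+1)}$ is an \emph{explicit} Lebesgue--It\^o integral built from $(\xi,\alpha,W,W^{0})$ and from the previous iterate, and that the hypothesis on input laws propagates through such integrals directly. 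The conditional-law claim for $\rho^{n}_{r}=\mathbb{P}^{W^{0}}_{X^{n}_{r}}$ is automatic from the inductive hypothesis, without invoking any separate disintegration machinery.

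Your invocation of Yamada--Watanabe in the inductive step is unnecessary and slightly off target: $X^{n+1}$ is not defined as the strong solution of an implicit SDE but as a single-step stochastic integral with $X^{n}$, $\rho^{n}$, and $\alpha$ as inputs, so there is no fixed-point to solve and hence no need for a Yamada--Watanabe functional $\Phi^{n+1}$. Once you drop that and argue directly at the level of conditional expectations of test functions (as the paper does), the ``delicate points'' (i) and (ii) you flag essentially disappear. Your route would work and is conceptually satisfying as a one-shot functional representation of the McKean--Vlasov solution map, but it buys generality that is not needed here at the cost of additional measurable-selection bookkeeping.
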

\begin{proof}
    We prove this by inspecting the finite dimensional distribution, and since the calculation is similar, we  only check that
    \begin{align*}
        \mathcal{L}\Big(X^{t,\xi,\alpha}_s,\alpha_s,W_s^0\Big)\Bigg|_\zeta = \mathcal{L}\Big(X^{t,\eta,\beta}_s,\beta_s,W_s^0\Big)\Bigg|_\zeta\text{ for some $s\in[t,T]$, }\mathbb{P}^0\text{-}a.e..
    \end{align*}
    We extract a subsequence such that the Picard iteration $X^{(n),t,\xi,\alpha}_s \to X^{t,\xi,\alpha}_s$ a.e. and $X^{(n),t,\eta,\beta}_s \to X^{t,\eta,\beta}_s$ $\mathbb{P}$ a.e.. For $\mathbb{P}^0$ a.e. $\omega^0$, for any bounded continuous function $g$ (it is without loss of generality to look at bounded continuous $g$ only by truncation and mollification argument), we have 
    \begin{align*}
        &\mathbb{E}\Big[g(X_s^{t,\xi,\alpha},\alpha_s,W_s^0)\big|\zeta\Big]\\
        =&\lim_n \mathbb{E}\Big[g(X_s^{(n),t,\xi,\alpha},\alpha_s,W_s^0)\big|\zeta\Big]\\
        =& \lim_n \mathbb{E}\Big[g\Big(\xi + \int_t^s b_r(X_r^{(n-1),t,\xi,\alpha},\mathbb{P}_{X_r^{(n-1),t,\xi,\alpha}}^{W^0},\alpha_r)dr+\int_t^s\sigma_r(X_r^{(n-1),t,\xi,\alpha},\mathbb{P}_{X_r^{(n-1),t,\xi,\alpha}}^{W^0},\alpha_r)dW_r \\
    &+ \int_t^s\sigma^0_r(X_r^{(n-1),t,\xi,\alpha},\mathbb{P}_{X_r^{(n-1),t,\xi,\alpha}}^{W^0},\alpha_r)dW^0_r,\alpha_s,W_s^0\Big)\big|\zeta\Big].
    \end{align*}
    We proceed by induction, for $X^{(1),t,\xi,\alpha}_s$, we have for $\mathbb{P}^0$-a.e. $\omega^0$
    \begin{align*}
        &\mathbb{E}\Big[g(X^{(1),t,\xi,\alpha}_s,\alpha_s,W_s^0)|\zeta\Big]\\=&\mathbb{E}\Big[g\Big(\xi + \int_t^s b_r(0,\delta_0,\alpha_r)dr+\int_t^s\sigma_r(0,\delta_0,\alpha_r)dW_r + \int_t^s\sigma^0_r(0,\delta_0,\alpha_r)dW^0_r,\alpha_r,W_s^0\Big)\big|\zeta\Big]\\
        =&\mathbb{E}\Big[g\Big(\eta + \int_t^s b_r(0,\delta_0,\beta_r)dr+\int_t^s\sigma_r(0,\delta_0,\beta_r)dW_r + \int_t^s\sigma^0_r(0,\delta_0,\beta_r)dW^0_r,\beta_r,W_s^0\Big)\big|\zeta\Big]\text{ (by our assumption)}\\
        =&\mathbb{E}\Big[g(X^{(1),t,\eta,\beta}_s,\beta_s,W_s^0)|\zeta\Big].
    \end{align*}
    It follows immediately from induction that $\mathbb{E}\Big[g(X^{(n),t,\xi,\alpha}_s,\alpha_s,W_s^0)|\zeta\Big] = \mathbb{E}\Big[g(X^{(n),t,\eta,\beta}_s,\beta_s,W_s^0)|\zeta\Big]$, $\mathbb{P}^0$-a.e. $\omega^0$. Thus for $\mathbb{P}^0$-a.e. $\omega^0$
    \begin{align*}
        \mathbb{E}\Big[g(X_s^{t,\xi,\alpha},\alpha_s,W_s^0)\big|\zeta\Big]
        = &\, \lim_n \mathbb{E}\Big[g(X_s^{(n),t,\xi,\alpha},\alpha_s,W_s^0)\big|\zeta\Big]
        =\lim_n \mathbb{E}\Big[g(X_s^{(n),t,\eta,\beta},\beta_s,W_s^0)\big|\zeta\Big] \\
        = &\,\,\mathbb{E}\Big[g(X_s^{t,\eta,\beta},\beta_s,W_s^0)\big|\zeta\Big].
    \end{align*}
\end{proof}
\subsection{A compact subset of the Wasserstein space-valued random variables}
\label{compact_subset}
First, we recall the compactness result in \cite{wang2019compactness}. Let $(\Omega,\mathcal{F},\mathbb{P})$ be a probability space and let $X$ be a separable Banach space with norm $\|\cdot\|_X$. 
\begin{definition}
    (Uniformly $L^p$-integrable). A set $V\subset L^p((\Omega,\mathcal{F},\mathbb{P});X)$ is called uniformly $L^p$-integrable if
    \begin{enumerate}
        \item There exists a finite $M>0$ such that for every $f\in V$,
        \begin{align*}
            \int_\Omega \|f\|_X^p d\mathbb{P}\leq M
        \end{align*}
        \item For every $\varepsilon >0$, there exists a delta $\delta>0$ such that for every $A$ in $\mathcal{F}$ with $\mathbb{P}(A)\leq \delta$ and for all $f\in V$,
        \begin{align*}
            \int_A \|f\|_X^pd\mathbb{P}\leq \varepsilon.
        \end{align*}
    \end{enumerate}
\end{definition}
\begin{definition}
    (Uniformly tight). A set $V\subset L^p((\Omega,\mathcal{F},\mathbb{P});X)$ is called uniformly tight if for every $\varepsilon>0$, there exists a compact $K\subset X$ such that for all $f\in V$,
    \begin{align*}
        \mu_f(K)\geq 1-\varepsilon,
    \end{align*}
    where $\mu_f$ is the distribution of $f$, i.e., the Radon probability measure $\mu_f$ on $X$ defined by
    \begin{align*}
        \mu_f(B) = \mathbb{P}(f\in B)\text{ for }B\subseteq X\text{ Borel.}
    \end{align*}
\end{definition}
Below, we provide the following result.
\begin{theorem}
\label{compactness_rv_wang}
    Let $1\leq p < \infty$. A subset $V\subseteq L^p((\Omega,\mathcal{F},\mathbb{P});X)$ is relatively compact in the $L^p$ norm if and only if it is uniformly $L^p$-integrable and uniformly tight.
\end{theorem}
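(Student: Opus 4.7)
The proof naturally splits into the two implications, with sufficiency being the substantive one.

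For necessity, the plan is a continuity argument. The assignment $\Phi : L^p((\Omega,\mathcal{F},\mathbb{P});X) \to \mathcal{P}(X)$ sending $f$ to its pushforward law $\mu_f$ is continuous when $\mathcal{P}(X)$ carries the topology of weak convergence, since $L^p$-convergence implies convergence in probability, which implies weak convergence of the laws. If $V$ is relatively compact in $L^p$, then $\Phi(\overline{V})$ is compact in $\mathcal{P}(X)$, and hence tight by Prokhorov's theorem on the separable Banach space $X$; this yields uniform tightness of $V$. For uniform $L^p$-integrability, extract from total boundedness a finite $\varepsilon$-net $\{f_1,\ldots,f_N\}\subset V$: each $f_i$ is uniformly $L^p$-integrable as a singleton, and for arbitrary $f\in V$ pick $i$ with $\|f-f_i\|_p<\varepsilon$ and apply $\int_A \|f\|_X^p\,d\mathbb{P} \leq 2^{p-1}\bigl(\varepsilon^p + \int_A \|f_i\|_X^p\,d\mathbb{P}\bigr)$ to transfer absolute continuity uniformly across $V$.

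For sufficiency, the goal is to exhibit a finite $\varepsilon$-net for $V$ in $L^p$, which establishes total boundedness (hence relative compactness, since $L^p$ is complete). Given $\varepsilon>0$, uniform $L^p$-integrability produces $\delta>0$ with $\mathbb{P}(A)<\delta$ implying $\sup_{f\in V}\int_A \|f\|_X^p\,d\mathbb{P} < \varepsilon^p$. Uniform tightness then furnishes a compact $K\subset X$ with $\sup_{f\in V}\mu_f(X\setminus K)<\delta$. Cover $K$ by finitely many open balls $B(x_1,\eta),\ldots,B(x_L,\eta)$ with $\eta=\varepsilon$, and for each $f\in V$ define a nearest-centre quantization $T_f := \sum_{j=1}^L x_j\,\mathbf{1}_{A_j^f}$, where $\{A_j^f\}_{j=1}^L$ is a disjoint measurable partition of $f^{-1}(K)$ according to which ball $B(x_j,\eta)$ first contains $f(\omega)$. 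This yields $\|f - T_f\|_p^p \leq \eta^p + \varepsilon^p$, uniformly in $f\in V$.

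The main obstacle is the final step: showing that the quantized family $\{T_f : f \in V\}$ is itself totally bounded in $L^p$. Since every $T_f$ takes values in the fixed finite set $\{x_1,\ldots,x_L,0\}$ with a uniform $L^p$-bound, this reduces to a compactness question for measurable partitions of $\Omega$ into at most $L+1$ pieces. My plan here is a sequential extraction argument: given any $\{f_n\}\subset V$, the indicator tuples $(\mathbf{1}_{A_j^{f_n}})_{j=1}^L$ form an equi-integrable bounded subset of $L^1(\Omega;\mathbb{R}^L)$, hence weakly relatively compact by the Dunford--Pettis theorem; a diagonal subsequence combined with a partition-refinement argument should upgrade the weak convergence of indicators to strong $L^p$-convergence of the $T_{f_n}$. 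Lifting back to the original $f_n$'s through $\|f_n - T_{f_n}\|_p \leq 2\varepsilon$ then completes total boundedness. This compactness of measurable partitions is where the bulk of the Wang--Zhu--Kloeden argument resides, and I expect it to be the principal technical hurdle.
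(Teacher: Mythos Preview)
The paper does not prove this theorem; it simply refers the reader to \cite{wang2019compactness}. So there is no argument in the paper to compare against, and your proposal stands as an independent attempt.

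Your necessity direction is sound. The sufficiency direction, however, has a genuine and unfixable gap precisely where you flag it: the claim that weak $L^1$-convergence of the indicator tuples $(\mathbf{1}_{A_j^{f_n}})_j$ can be ``upgraded'' to strong $L^p$-convergence of the quantized maps $T_{f_n}$ is false in general. Dunford--Pettis yields only weak sequential compactness of indicators, and no partition-refinement trick promotes this to norm convergence. The Rademacher sequence is the standard obstruction, and in fact it is a counterexample to the theorem as stated: take $X=\mathbb{R}$, $\Omega=[0,1]$ with Lebesgue measure, and $V=\{r_n:n\in\mathbb{N}\}$. Each $r_n$ takes values in $\{-1,1\}$, so $V$ is uniformly tight (choose $K=\{-1,1\}$) and uniformly $L^p$-integrable (since $|r_n|\equiv 1$). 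Yet $\|r_n-r_m\|_p = 2^{1-1/p}$ for $n\neq m$, so $V$ has no $L^p$-convergent subsequence.

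Either the statement has been mis-transcribed from the cited source (an additional hypothesis may be missing, or the intended notion of ``tightness'' may differ from the one recorded here), or there is an error. In any case your argument cannot be completed, because under the definitions given the assertion does not hold.
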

\begin{proof}
    See \cite{wang2019compactness}.
\end{proof}
Let us now return to the settings outlined in Section \ref{setup}.
\begin{definition}
    Inspired by the work in \cite{mean_field_path}, we introduce the following definition. Let $p\geq 1$, $L>0$, $(t_0,\rho_0)\in[0,T]\times\mathcal{P}_p(\mathbb{R}^d)$, let $\xi\in L^p((\Omega,\mathcal{F},\mathbb{P});\mathbb{R}^d)$ such that $\mathcal{L}(\xi) = \rho_0$, and let $\tau\in\mathcal{T}_{t,T}^0$. For any $\mathbb{F}^{t_0}$-progressively measurable $b:[0,T]\times\Omega\to\mathbb{R}^d$, $\sigma:[0,T]\times\Omega\to\mathbb{R}^{d\times n}$, $|b|\leq L$, $|\sigma|\leq L$, and a fixed $\mathbb{F}^{t_0}$-progressively measurable $\sigma^0:[0,T]\times\Omega\to\mathbb{R}^{d\times m}$, $|\sigma^0|\leq L$, consider the following dynamics:
    \begin{align}
    \label{dynamics_L}
            X_\tau^{t_0,\xi} &= \xi +\int_{t_0}^\tau b_r dr + \int_{t_0}^\tau\sigma_r^0 dW^0_r + \int_{t_0}^\tau\sigma_r dW_r.
    \end{align}
    Define
    \begin{align*}
        \mathbb{X}_{L}^{\tau}(t_0,\rho_0):=&\Bigg\{X_\tau\in L^p((\Omega,\mathcal{F}_\tau,\mathbb{P});\mathbb{R}^d)\Big|\,\rho_0\in \mathcal{P}_p(\mathbb{R}^d),\,X_\tau\text{ follows (\ref{dynamics_L}) with $\mathcal{L}(\xi)=\rho_0$ for some}\\
        &\text{$\mathbb{F}^{t_0}$-progressively measurable $b$, $\sigma$ and the fixed $\sigma^0$ such that $\max\{|b|,|\sigma|,|\sigma^0|\}\leq L$}\Bigg\},
    \end{align*}
    \begin{align*}
        \mathscr{X}_L^{\tau}(t_0,\rho_0):= \Bigg\{X:=\sum_{i=1}^n X^i 1_{A_i}\,\Big|\,X^i\in \mathbb{X}_L^{\tau}(t_0,\rho_0)\text{ and $A_i's$ are $\mathcal{F}_{\tau}^0$ measurable, a partition of $\Omega^0$}\Bigg\},
    \end{align*}
    and
    \begin{align*}
        \mathcal{P}_L^\tau(t_0,\rho_0):=\Bigg\{\boldsymbol{\rho}:\Omega^0\to\mathcal{W}_p(\mathbb{R}^d)\,\Big|\,\boldsymbol{\rho} \text{ $\mathcal{F}_\tau^0$-measurable and }\boldsymbol{\rho}(\omega^0) = \mathcal{L}(X(\omega^0,\cdot))\text{ for some }X\in\overline{\mathscr{X}_L^\tau(t_0,\rho_0)}\Bigg\},
    \end{align*}
    where the closure $\overline{\mathscr{X}_L^\tau(t_0,\rho_0)}$ is with respect to the $L^p$ norm.
\end{definition}
\begin{theorem}
\label{compactness_rv}
    Let ($\mathcal{A}1$) hold. Let $p\geq 1$, $\rho_0\in\mathcal{P}_p(\mathbb{R}^d)$. Let $\xi\in L^p((\Omega,\mathcal{F},\mathbb{P});\mathbb{R}^d)$, and $\mathcal{L}(\xi)=\rho_0$. Then $\mathscr{X}_{L}^\tau(t_0,\rho_0)\subset L^p((\Omega,\mathcal{F},\mathbb{P});\mathbb{R}^d)$ and $\mathscr{X}_{L}^\tau(t_0,\rho_0)$ is precompact in $L^p((\Omega,\mathcal{F},\mathbb{P});\mathbb{R}^d)$.
\end{theorem}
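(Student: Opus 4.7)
The plan is to apply the Wang--Zhu--Kloeden criterion (Theorem~\ref{compactness_rv_wang}), which reduces precompactness in $L^p$ to uniform $L^p$-integrability together with uniform tightness of the induced laws on $\mathbb{R}^d$. The inclusion $\mathscr{X}_L^\tau(t_0,\rho_0)\subset L^p(\Omega,\mathcal{F},\mathbb{P};\mathbb{R}^d)$ is immediate from the disjointness of the partition $\{A_i\}$: for $X=\sum_{i=1}^n X^i 1_{A_i}$ one has $|X|^p=\sum_i|X^i|^p 1_{A_i}$, a finite sum of $L^p$ contributions.

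For the uniform $L^p$-bound I would decompose $X^i=\xi^i+\int_{t_0}^\tau b^i\,dr+\int_{t_0}^\tau\sigma^i\,dW+M^0$ with $M^0:=\int_{t_0}^\tau\sigma^0\,dW^0$, exploiting that $\sigma^0$ is fixed across $\mathbb{X}_L^\tau$ so $M^0$ does not depend on $i$. Conditioning on $\mathcal{F}^0$ and using $W\perp\mathcal{F}^0$, the conditional Burkholder--Davis--Gundy inequality gives $\mathbb{E}^0\bigl|\int_{t_0}^\tau\sigma^i\,dW\bigr|^p\le C_p L^p T^{p/2}$ uniformly in $i$ and in $\sigma^i$. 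Assembling everything and using $\sum_i 1_{A_i}=1$ to collapse both the constant contribution and the common-noise term,
\begin{align*}
\mathbb{E}|X|^p
&=\sum_i\mathbb{E}\bigl[1_{A_i}\mathbb{E}^0|X^i|^p\bigr]
\le C_p\bigl(\mathbb{E}|\tilde\xi|^p+L^p T^p+L^p T^{p/2}+\mathbb{E}|M^0|^p\bigr),
\end{align*}
where $\tilde\xi:=\sum_i\xi^i 1_{A_i}$. Then $\mathbb{E}|M^0|^p\le C_p L^p T^{p/2}$ by standard BDG, and $\mathbb{E}|\tilde\xi|^p$ is handled by taking each $\xi^i$ as a $\mathcal{G}$-measurable realization of $\rho_0$ (legitimate by the richness of $(\tilde\Omega^1,\mathcal{G},\tilde\mathbb{P}^1)$ from Section~\ref{setup}), so $\xi^i\perp\mathcal{F}^0$ yields $\sum_i\mathbb{E}[1_{A_i}|\xi^i|^p]=\int_{\mathbb{R}^d}|x|^p\,\rho_0(dx)$.

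The uniform $L^p$-bound immediately yields uniform tightness via Markov's inequality, since closed balls in $\mathbb{R}^d$ are compact. For the absolute-continuity condition in uniform $L^p$-integrability, I would promote the above BDG estimates to any exponent $q>p$ (still uniform because $|b^i|,|\sigma^i|,|\sigma^0|\le L$), which via H\"older's inequality gives $\mathbb{E}[|X|^p 1_A]\le(\mathbb{E}|X|^q)^{p/q}\mathbb{P}(A)^{(q-p)/q}\to 0$ uniformly in $X$ as $\mathbb{P}(A)\to 0$. The main obstacle is the mixed-partition structure: the different $X^i$'s carry different $(b^i,\sigma^i,\xi^i)$ and a naive summation over a partition of size $n$ would blow up; the resolution rests on a careful interplay of three ingredients. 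First, $\sigma^0$ being fixed across $\mathbb{X}_L^\tau$ makes the $W^0$-integral index-independent so that it collapses under $\sum_i 1_{A_i}=1$. Second, conditioning on $\mathcal{F}^0$ converts the $W$-martingale bound into an $\mathbb{E}^0$-estimate uniform in $\sigma^i$, which sums correctly against the partition probabilities. Third, the richness of $\mathcal{G}$ permits $\xi^i$ to be chosen independent of $\mathcal{F}^0$, so the initial-data contribution is uniformly controlled by $\int|x|^p\rho_0(dx)$ alone.
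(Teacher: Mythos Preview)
Your overall strategy---apply the Wang--Zhu--Kloeden criterion (Theorem~\ref{compactness_rv_wang}) by verifying a uniform $L^p$-bound, uniform tightness, and the absolute-continuity part of uniform $L^p$-integrability---is exactly the paper's strategy, and your $L^p$-bound argument via partition collapse and BDG is essentially the same as the paper's (the paper takes $\mathbb{E}^1$ first and then $\mathbb{E}^0$, which is what your ``conditioning on $\mathcal{F}^0$'' amounts to). Two remarks on details: first, in the paper's definition the initial datum $\xi$ is \emph{fixed} (it is the $\xi$ given in the statement), so there is no need to juggle different $\xi^i$ and invoke the richness of $\mathcal{G}$; all $X^i$ start from the same $\xi$, and the corresponding term collapses immediately under $\sum_i 1_{A_i}=1$.

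The genuine gap is in your absolute-continuity argument. You write $\mathbb{E}[|X|^p 1_A]\le(\mathbb{E}|X|^q)^{p/q}\mathbb{P}(A)^{(q-p)/q}$ for some $q>p$, claiming $\sup_X\mathbb{E}|X|^q<\infty$ by ``promoting the BDG estimates.'' But $\mathbb{E}|X|^q$ contains $\mathbb{E}|\xi|^q$, and the hypothesis is only $\rho_0\in\mathcal{P}_p(\mathbb{R}^d)$; there is no reason $\xi$ has a finite $q$-th moment. So the H\"older route fails as stated. The paper circumvents this by bounding the tail directly: it dominates $|X|^p 1_{\{|X|>K\}}$ by $C^*Z\,1_{\{C^*Z>K\}}$ with $Z=|\xi|^p+L^pT+\sup|\!\int\sigma^0 dW^0|^p+\sup|\!\int\sigma^i dW|^p$, then splits the indicator according to which summand is large. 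The $\xi$-piece gives $\mathbb{E}[|\xi|^p 1_{\{|\xi|^p>K/(3C^*)\}}]$, which tends to $0$ simply because $|\xi|^p\in L^1$ is a \emph{single} fixed random variable; the $\sigma^0$-piece is likewise fixed; and only the $\sigma^i$-piece varies, for which a Markov-type bound using the (uniform) $L^{2p}$ estimate yields $CL^{2p}/K$. Your argument can be repaired along the same lines---split $X=\xi+(X-\xi)$, apply your $L^q$-H\"older only to $X-\xi$ (which does have uniform $L^q$-moments), and treat the singleton $\{\xi\}$ separately---but as written the step does not go through.
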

\begin{proof}
    Let $X \in \mathscr{X}_L^{\tau}(t_0,\rho_0)$, then $X =\sum_{i=1}^n X^i 1_{A_i}$, $X^i\in \mathbb{X}_L^{\tau}(t_0,\rho_0)$ and $A_i's$ are $\mathcal{F}_{\tau}^0$ measurable, a partition of $\Omega^0$. As $X^i\in \mathbb{X}_L^{\tau}(t_0,\rho_0)$, there exists $b^i$, $\sigma^{i}$, $i=1,\ldots,n$ such that $X^i:=X^{t_0,\xi;i}$ solves
\begin{align*}
    \begin{cases}
            dX_r^{t_0,\xi;i} &= b_r^{i} dr + \sigma_r^0 dW^0_r + \sigma_r^i dW_r,\\
            X_{t_0}^{t_0,\xi;i} &= \xi.
        \end{cases}
\end{align*}
Considering $C(T, p) > 0$ as a constant that depends solely on $T$ and $p$, and possibly varies in different instances, we readily observe the following upper bound:
\begin{align}
\label{C_star}
    |X_\tau^{t_0,\xi;i}|^p \leq& C(T,p)\Big[|\xi|^p +\int_{t_0}^\tau |b_r^{i}|^p dr + \sup_{{t_0}\leq s\leq T}\Big|\int_{t_0}^s\sigma_r^{0} dW^0_r\Big|^p + \sup_{{t_0}\leq s\leq T}\Big|\int_{t_0}^s\sigma_{r}^i dW_r\Big|^p\Big]\nonumber\\
    \leq&  C(T,p)\Big[|\xi|^p +L^pT + \sup_{{t_0}\leq s\leq T}\Big|\int_{t_0}^s\sigma_r^{0} dW^0_r\Big|^p + \sup_{{t_0}\leq s\leq T}\Big|\int_{t_0}^s\sigma_{r}^i dW_r\Big|^p\Big]\nonumber\\
    :=& C^*\Big[|\xi|^p +L^pT + \sup_{{t_0}\leq s\leq T}\Big|\int_{t_0}^s\sigma_r^{0} dW^0_r\Big|^p + \sup_{{t_0}\leq s\leq T}\Big|\int_{t_0}^s\sigma_{r}^i dW_r\Big|^p\Big].
\end{align}
Therefore
\begin{align*}
    &\mathbb{E}^1\Big|\sum_{i=1}^n X^i(\omega^0,\omega^1)1_{A_i}(\omega^0)\Big|^p\\
    \leq&C(T,p)\sum_{i=1}^n\mathbb{E}^1\Bigg[\Big[|\xi|^p +L^pT + \sup_{t_0\leq s\leq T}\Big|\int_{t_0}^s\sigma_r^{0} dW^0_r\Big|^p + \sup_{t_0\leq s\leq T}\Big|\int_{t_0}^s\sigma_{r}^i dW_r\Big|^p\Big] 1_{A_i}\Bigg]\\
    \leq& C(T,p)\Bigg(\mathbb{E}^1|\xi|^p +L^pT + \mathbb{E}^1\sup_{t_0\leq s\leq T}\Big|\int_{t_0}^s\sigma_r^{0} dW^0_r\Big|^p + \sum_{i=1}^n1_{A_i}\mathbb{E}^1\sup_{t_0\leq s\leq T}\Big|\int_{t_0}^s\sigma_{r}^i dW_r\Big|^p\Bigg)\\
    \leq &C(T,p)\Bigg(\mathbb{E}^1|\xi|^p +L^pT + \mathbb{E}^1\sup_{t_0\leq s\leq T}\Big|\int_{t_0}^s\sigma_r^{0} dW^0_r\Big|^p + \sum_{i=1}^n1_{A_i}L^pT\Bigg)\\
    =&C(T,p)\Bigg(\mathbb{E}^1|\xi|^p +2L^pT + \mathbb{E}^1\sup_{t_0\leq s\leq T}\Big|\int_{t_0}^s\sigma_r^{0} dW^0_r\Big|^p\Bigg).
\end{align*}
Taking expectation with respect to $\mathbb{E}^0$ we have
\begin{align*}
    \mathbb{E}\Big|\sum_{i=1}^n X^i(\omega^0,\omega^1)1_{A_i}(\omega^0)\Big|^p \leq C(T,p)(\mathbb{E}|\xi|^p +L^p).
\end{align*}
Moreover, defining $Z:= |\xi|^p +L^pT + \sup_{t_0\leq s\leq T}|\int_{t_0}^s\sigma_r^{0} dW^0_r|^p + \sup_{t_0\leq s\leq T}|\int_{t_0}^s\sigma_{r}^i dW_r|^p$, we have
\begin{align*}
    &|X_\tau^{t_0,\xi;i}|^p 1_{|X_\tau^{t_0,\xi;i}|>K}\\
    \leq &C^*Z1_{C^*Z>K},\text{ where }C^*\text{ is defined in (\ref{C_star})}\\
     \leq &3C^*\Big[|\xi|^p1_{|\xi|>K/(3C^*)} +\Big(L^pT + \sup_{t_0\leq s\leq T}\Big|\int_{t_0}^s\sigma_r^{0} dW^0_r\Big|^p\Big)1_{L^pT + \sup_{t_0\leq s\leq T}|\int_{t_0}^s\sigma_r^{0} dW^0_r|^p>K/(3C^*)} \\
     &+ \sup_{t_0\leq s\leq T}\Big|\int_{t_0}^s\sigma_{r}^i dW_r\Big|^p1_{\sup_{t_0\leq s\leq T}|\int_{t_0}^s\sigma_{r}^i dW_r|^p>K/(3C^*)}\Big].
\end{align*}
Taking expectation of $|\sum_{i=1}^nX_\tau^{t_0,\xi;i}1_{A_i} |^p1_{|\sum_{i=1}^nX_\tau^{t_0,\xi;i}1_{A_i}|>K}$ with respect to $\mathbb{P}^1$ we have
\begin{align*}
    &\mathbb{E}^1\Big|\sum_{i=1}^nX_\tau^{t_0,\xi;i}1_{A_i} \Big|^p1_{|\sum_{i=1}^nX_\tau^{t_0,\xi;i}1_{A_i}|>K}\\
    \leq&\mathbb{E}^1\sum_{i=1}^n|X_\tau^{t_0,\xi;i}|^p1_{A_i} 1_{|X_\tau^{t_0,\xi;i}|>K}\\
    \leq &3C^*\mathbb{E}^1 \Bigg[|\xi|^p1_{|\xi|>K/(3C^*)} +\Big(L^pT + \sup_{t_0\leq s\leq T}\Big|\int_{t_0}^s\sigma_r^{0} dW^0_r\Big|^p\Big)1_{L^pT + \sup_{t_0\leq s\leq T}|\int_{t_0}^s\sigma_r^{0} dW^0_r|^p>K/(3C^*)}\Bigg] \\
     &+ 3C^*\sum_{i=1}^n 1_{A_i}\mathbb{E}^1\sup_{t_0\leq s\leq T}\Big|\int_{t_0}^s\sigma_{r}^i dW_r\Big|^p1_{\sup_{t_0\leq s\leq T}|\int_{t_0}^s\sigma_{r}^i dW_r|^p>K/(3C^*)}\\
     \leq &C(T,p)\mathbb{E}^1 \Bigg[|\xi|^p1_{|\xi|>K/(3C^*)} +\Big(L^pT + \sup_{t_0\leq s\leq T}\Big|\int_{t_0}^s\sigma_r^{0} dW^0_r\Big|^p\Big)1_{L^pT + \sup_{t_0\leq s\leq T}|\int_{t_0}^s\sigma_r^{0} dW^0_r|^p>K/(3C^*)}\Bigg] \\
     &+ \frac{C(T,p)L^{2p}}{K},
\end{align*}
Take expectation with respect to $\mathbb{P}^0$ we have
\begin{align*}
&\mathbb{E}\Big|\sum_{i=1}^nX_\tau^{t_0,\xi;i}1_{A_i} \Big|^p1_{|\sum_{i=1}^nX_\tau^{t_0,\xi;i}1_{A_i}|>K}\\
    \leq &C(T,p)\mathbb{E} \Bigg[|\xi|^p1_{|\xi|>K/(3C^*)} +\Big(L^pT + \sup_{t_0\leq s\leq T}\Big|\int_{t_0}^s\sigma_r^{0} dW^0_r\Big|^p\Big)1_{L^pT + \sup_{t_0\leq s\leq T}|\int_{t_0}^s\sigma_r^{0} dW^0_r|^p>K/(3C^*)}\Bigg] \\
     &+ \frac{C(T,p)L^{2p}}{K},
\end{align*}
for some constant $C(T,p)$ depending on $T$, $p$ only. The uniform tightness is immediate from the above as we are working under $\mathbb{R}^n$. Therefore with Theorem \ref{compactness_rv_wang} we conclude that $\mathscr{X}_L^{\tau}(t_0,\rho_0)$ is precompact in $L^p((\Omega,\mathcal{F},\mathbb{P});\mathbb{R}^d)$.
\end{proof}
\begin{theorem}
\label{P_L_compact}
    Let ($\mathcal{A}1$) hold. The set $\mathcal{P}_L^\tau(t_0,\rho_0)$ is compact in the topology of convergence in probability. Moreover, this set is closed under finite partition addition, i.e., let $A_i$, $i = 1,\ldots,n$ be a disjoint partition of $\Omega^0$, $\rho_i \in \mathcal{P}_L^\tau(t_0,\rho_0)$, then $\sum_{i=1}^n \rho_i 1_{A_i} \in \mathcal{P}_L^\tau(t_0,\rho_0)$. 
\end{theorem}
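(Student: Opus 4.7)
The plan is to leverage Theorem \ref{compactness_rv} directly: since $\mathscr{X}_L^\tau(t_0,\rho_0)$ is precompact in $L^p((\Omega,\mathcal{F},\mathbb{P});\mathbb{R}^d)$, its closure $\overline{\mathscr{X}_L^\tau(t_0,\rho_0)}$ is compact there. The set $\mathcal{P}_L^\tau(t_0,\rho_0)$ is nothing but the image of this compact set under the map $\Psi:X\mapsto \bigl(\omega^0\mapsto \mathcal{L}(X(\omega^0,\cdot))\bigr)$, so it will suffice to prove that $\Psi$ is continuous from $L^p$ into the space of $\mathcal{P}_p(\mathbb{R}^d)$-valued $\mathcal{F}_\tau^0$-measurable random variables equipped with the metric $d_P(\boldsymbol{\mu},\boldsymbol{\mu}'):=\mathbb{E}^0[\mathcal{W}_p(\boldsymbol{\mu},\boldsymbol{\mu}')\wedge 1]$ generating the topology of convergence in probability. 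The key ingredient is the inequality $\mathcal{W}_p(\mathcal{L}(X(\omega^0,\cdot)),\mathcal{L}(Y(\omega^0,\cdot)))^p\leq \mathbb{E}^1|X(\omega^0,\cdot)-Y(\omega^0,\cdot)|^p$, valid for $\mathbb{P}^0$-a.e. $\omega^0$, which yields
\[
d_P(\Psi(X),\Psi(Y))\leq \bigl(\mathbb{E}|X-Y|^p\bigr)^{1/p}
\]
after passing to $\wedge 1$ and applying H\"older. As metric space, compactness and sequential compactness coincide, so it suffices to work with sequences.

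Concretely I would take a sequence $\boldsymbol{\rho}_n\in\mathcal{P}_L^\tau(t_0,\rho_0)$ with corresponding $X_n\in\overline{\mathscr{X}_L^\tau(t_0,\rho_0)}$ satisfying $\boldsymbol{\rho}_n(\omega^0)=\mathcal{L}(X_n(\omega^0,\cdot))$. By Theorem \ref{compactness_rv}, there is a subsequence $X_{n_k}\to X$ in $L^p$ with $X\in\overline{\mathscr{X}_L^\tau(t_0,\rho_0)}$. Define $\boldsymbol{\rho}(\omega^0):=\mathcal{L}(X(\omega^0,\cdot))$. Convergence in probability $\boldsymbol{\rho}_{n_k}\to\boldsymbol{\rho}$ then follows from the continuity of $\Psi$. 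The delicate point is the $\mathcal{F}_\tau^0$-measurability of $\boldsymbol{\rho}$: since $X$ is $\mathcal{F}_\tau$-measurable, for each $B\in\mathcal{B}(\mathbb{R}^d)$ Fubini gives $\omega^0\mapsto \mathbb{P}^1(X(\omega^0,\cdot)\in B)\in \mathcal{F}_\tau^0$, which upgrades to measurability of $\boldsymbol{\rho}$ as a map into $\mathcal{P}_p(\mathbb{R}^d)$ via a countable generating family of continuous bounded test functions on $\mathbb{R}^d$ plus the Polish structure of $(\mathcal{P}_p(\mathbb{R}^d),\mathcal{W}_p)$. Alternatively, one may pass to a further a.e.-convergent subsequence and note that the pointwise a.s. limit of the $\mathcal{F}_\tau^0$-measurable $\boldsymbol{\rho}_{n_k}$'s is $\mathcal{F}_\tau^0$-measurable.

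For closure under finite partition addition, given $A_i\in\mathcal{F}_\tau^0$ a disjoint partition of $\Omega^0$ and $\boldsymbol{\rho}_i\in\mathcal{P}_L^\tau(t_0,\rho_0)$ with representatives $X^i\in\overline{\mathscr{X}_L^\tau(t_0,\rho_0)}$, I set $X:=\sum_{i=1}^n X^i 1_{A_i}$. Approximating each $X^i$ in $L^p$ by a sequence $X^{i,m}=\sum_j Y^{i,m,j}1_{B^{i,m}_j}\in\mathscr{X}_L^\tau(t_0,\rho_0)$ with $Y^{i,m,j}\in\mathbb{X}_L^\tau(t_0,\rho_0)$ and $B^{i,m}_j\in\mathcal{F}_\tau^0$ partitioning $\Omega^0$, the combined process $\sum_{i,j} Y^{i,m,j}1_{A_i\cap B^{i,m}_j}$ still belongs to $\mathscr{X}_L^\tau(t_0,\rho_0)$ because $\{A_i\cap B^{i,m}_j\}_{i,j}$ is an $\mathcal{F}_\tau^0$-measurable partition of $\Omega^0$; and it converges to $X$ in $L^p$ as $m\to\infty$. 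Hence $X\in\overline{\mathscr{X}_L^\tau(t_0,\rho_0)}$, and since $\mathcal{L}(X(\omega^0,\cdot))=\sum_i \boldsymbol{\rho}_i(\omega^0)1_{A_i}(\omega^0)$ is $\mathcal{F}_\tau^0$-measurable, the partition sum lies in $\mathcal{P}_L^\tau(t_0,\rho_0)$.

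The main technical obstacle is the measurability and identification step in the compactness argument, namely checking that the limit $\boldsymbol{\rho}$ one obtains from an $L^p$-limit of the representatives is genuinely an $\mathcal{F}_\tau^0$-measurable $\mathcal{P}_p(\mathbb{R}^d)$-valued random variable representable by some element of $\overline{\mathscr{X}_L^\tau(t_0,\rho_0)}$. The continuity estimate for $\Psi$ handles convergence, but one must be careful to work with a genuine $\mathcal{F}_\tau$-measurable version of the limit $X$ (available because the closure is taken in $L^p((\Omega,\mathcal{F}_\tau,\mathbb{P});\mathbb{R}^d)$) and then invoke Fubini together with the Polish structure of $\mathcal{P}_p(\mathbb{R}^d)$ to promote pointwise set-measurability to $\mathcal{F}_\tau^0/\mathcal{B}(\mathcal{P}_p(\mathbb{R}^d))$-measurability.
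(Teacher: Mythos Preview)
Your proposal is correct and follows essentially the same approach as the paper: for compactness you extract an $L^p$-convergent subsequence of representatives via Theorem~\ref{compactness_rv} and pass to conditional laws using the inequality $\mathcal{W}_p^p\le \mathbb{E}^1|X-Y|^p$, and for partition addition you refine the partitions and use that $\mathscr{X}_L^\tau(t_0,\rho_0)$ is stable under this operation before passing to the closure. You are in fact slightly more careful than the paper about the $\mathcal{F}_\tau^0$-measurability of the limit $\boldsymbol{\rho}$, which the paper leaves implicit.
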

\begin{proof}
Let $\{\rho_i\}_{i\in\mathbb{N}}$ be a sequence in $\mathcal{P}_L^\tau(t_0,\rho_0)$. By definition, there exists $X^i$ such that $\rho_i(\omega^0) =\mathcal{L}(X^i(\omega^0,\cdot))$. By Theorem \ref{compactness_rv}, 
up to a subsequence there exists $X^\infty\in L^p((\Omega,\mathcal{F}_\tau,\mathbb{P});\mathbb{R}^d)$ such that $X^{i}_{}\to X^\infty$ in $L^p$ as $i\to\infty$. Denote $\rho_\infty(\omega^0) := \mathcal{L}(X^{\infty}(\omega^0,\cdot))$, we have
\begin{align*}
    &\mathbb{E}^0\Big[\mathcal{W}_p(\rho_i(\omega^0),\rho_\infty(\omega^0))^pd\mathbb{P}^0(\omega^0)\Big]\\
    \leq &\mathbb{E}^0\Big[\mathbb{E}^1\Big[|X^{i}(\omega^0,\omega^1)-X^\infty(\omega^0,\omega^1)|^pd\mathbb{P}^1(\omega^1)\Big]d\mathbb{P}^0(\omega^0)\Big]\\
    \to &0, \text{ as it converges in }L^p. 
\end{align*}
Therefore, $\rho_i \to \rho_\infty$ in probability.\\
\hfill\\
Now we prove that this set is closed under finite partition addition. Let $\{B_i\}_{i=1,\ldots,m}$ be a partition of $\Omega^0$, Without loss of generality, we can assume $m=2$. Let $\rho^1,\rho^2 \in \mathcal{P}_{L}^\tau(t_0,\rho_0)$, $\rho^1 = \lim_j\sum_{i=1}^{n^1_j}\rho^1_{i;j}1_{A_{i;j}^1}$, $\rho^2 = \lim_j\sum_{i=1}^{n^2_j}\rho^2_{i;j}1_{A_{i;j}^2}$, where for each $i$, $j$, there exists $X_{i;j}^1$, $X_{i;j}^2$ such that $\mathcal{L}(X_{i;j}^1(\omega^0,\cdot)) = \rho_{i;j}^1(\omega^0)$, $\mathcal{L}(X_{i;j}^2(\omega^0,\cdot)) = \rho_{i;j}^2(\omega^0)$, and $\{A_{i;j}^1\}_{i = 1,\ldots,n_j^1}$, $\{A_{i;j}^2\}_{i = 1,\ldots,n_j^2}$ partitions of $\Omega^0$. We have
\begin{align*}
    \mathbb{E}^0\Big[\mathcal{W}_p\Big(\sum_{i=1}^{n^1_j}\rho^1_{i;j}1_{A_{i;j}^1}1_{B_1},\rho^1 1_{B_1}\Big)^p d\mathbb{P}^0\Big]\to 0,
\end{align*}
therefore $\rho^1 1_{B_1} = \lim_j\sum_{i=1}^{n^1_j}\rho^1_{i;j}1_{A_{i;j}^1}1_{B_1}$  and similarly $\rho^2 1_{B_2} = \lim_j\sum_{i=1}^{n^2_j}\rho^2_{i;j}1_{A_{i;j}^2}1_{B_2}$, so
\begin{align*}
    \rho^1 1_{B_1}+\rho^2 1_{B_2} = \lim_j\Bigg(\sum_{i=1}^{n^1_j}\rho^1_{i;j}1_{A_{i;j}^1}1_{B_1} + \sum_{i=1}^{n^2_j}\rho^2_{i;j}1_{A_{i;j}^2}1_{B_2}\Bigg),
\end{align*}
and for every $j\in\mathbb{N}$, $\{{A_{i;j}^1}{B_1}\}_{i = 1,\ldots,n^1_j} \cup \{{A_{i;j}^2}{B_2}\}_{i = 1,\ldots,n^2_j}$ is still a partition, so $\rho^1 1_{B_1}+\rho^2 1_{B_2}\in\mathcal{P}_L^\tau(t_0,\rho_0)$ because of closure. 
\end{proof}
\subsection{Approximation by Stochastic $n$-player differential games}
\label{n_player_approximation}
This section collects all the approximation results needed in the proof of Theorem \ref{uniqueness_full}. The following results have been adapted and modified from \cite{cosso_master_2022} Appendix A to better suit our specific setting. We first introduce the infinite dimensional approximation. Let $(\Omega^{0'},\mathcal{F}^{0'},\{\mathcal{F}_t^{0'}\}_{t\geq 0},\mathbb{P}^{0'})$ be another complete filtered probability space carries a $m$-dimensional Brownian motions $\bar{B}^0$, where $\{\mathcal{F}_t^{0'}\}_{t\geq 0}$ is generated by $\bar{B}^0$ and augmented by all the $\mathbb{P}^{0'}$-null sets in $\mathcal{F}^{0'}$. Let $(\Omega^{1''},\mathcal{F}^{1''},\{\mathcal{F}^{1''}_t\}_{t\geq 0},\mathbb{P}^{1''})$ be another complete filtered probably space supporting a $m$-dimensional $\bar{W}$. $\{\mathcal{F}^{1''}_t\}_{t\geq 0}$ is generated by $\bar{W}$ and augmented by all the $\mathbb{P}^{1''}$-null sets in $\mathcal{F}^{1''}$. Define $\acute{\Omega}^0 := \Omega^0\times\Omega^{0'}$, $\acute{\mathcal{F}}^0:=\mathcal{F}^{0}\otimes\mathcal{F}^{0'}$, $\{\acute{\mathcal{F}}^0_t\}_{t\geq 0}:=\{\mathcal{F}_t^0\otimes\mathcal{F}_t^{0'}\}_{t\geq 0}$, $\acute{\mathbb{P}}^0 := \mathbb{P}^0\otimes\mathbb{P}^{0'}$, $\acute{\Omega}^1 := \Omega^1\times\Omega^{1''}$, $\acute{\mathcal{F}}^1:=\mathcal{F}^{1''}\otimes\mathcal{F}^1$, $\{\acute{\mathcal{F}_t^1}\}_{t\geq 0}:=\{\mathcal{F}_t^1\otimes\mathcal{F}_t^{1''}\}_{t\geq 0}$, $\acute{\mathbb{P}}^1 := \mathbb{P}^1\otimes\mathbb{P}^{1''}$. Moreover, recall the settings under Section \ref{setup}, define $\acute{\mathbb{F}}^{t} :=\{\acute{\mathcal{F}}^0_s\otimes\sigma(\bar{W}_{s\vee t} - \bar{W}_t)\otimes\sigma(W_{s\vee t}-W_t)\vee \mathcal{G}\}_{s\geq 0}$. Now we define the enlarged probability space
\begin{align*}
(\acute{\Omega},\acute{\mathcal{F}},\{\acute{\mathcal{F}}_t\}_{t\geq 0},\acute{\mathbb{P}}):=(\acute{\Omega}^0\times\acute{\Omega}^1,\acute{\mathcal{F}}^0\otimes\acute{\mathcal{F}}^1,\{\acute{\mathcal{F}}_t^0\otimes\acute{\mathcal{F}}_t^1\}_{t\geq 0},\acute{\mathbb{P}}^0\otimes\acute{\mathbb{P}}^1).
\end{align*}
We also denote by $\acute{\mathcal{A}}_t$ the set of control processes, the family of all $\acute{\mathbb{F}}^t$-progressively measurable processes $\acute{\alpha}:[0,T]\times \acute{\Omega}\to A$.\\
\hfill\\
For each $\varepsilon>0$, we consider the fixed $(b^N,f^N,g^N)$ in Lemma \ref{approximation_markovian}. As noted in the proof of Theorem \ref{uniqueness_full}, it is without loss of generality that we can restrict to $t\in [t_{N-1},t_N]$. For every $\varepsilon^0, \varepsilon^1>0$, $\acute{\alpha}\in\acute{\mathcal{A}}_t$, $\acute{\xi}\in L^2((\acute{\Omega},\acute{\mathcal{F}},\acute{\mathbb{P}});\mathbb{R}^d)$ , let $(\acute{X}_s^{N;\varepsilon^0,\varepsilon^1,t,\acute{\xi},\acute{\alpha}})_{t\leq s\leq T}$ be the unique solution to the following controlled McKean-Vlasov SDE
\begin{align*}
    \acute{X}_s =& \acute{\xi}+\int_t^s b^N_r(W^0_{t_1}, \ldots, W^0_{t_{N-1}},W^0_r,\acute{X}_r,\mathbb{P}_{\acute{X_r}}^{W^0},\acute{\alpha}_r)ds+\int_t^s\sigma_rdW_r + \int_t^s \sigma^0_rdW^0_r\\
    &+ \varepsilon^0(\bar{B}^0_s-\bar{B}^0_t)+ \varepsilon^1(\bar{W}_s-\bar{W}_t),\quad \forall s\in[t,T].
\end{align*}
Recall the Law Invariance result (Theorem \ref{law_invariance}), therefore $\forall(t,\mu,y)\in [t_{N-1},t_N]\times\mathcal{P}_2(\mathbb{R}^d)\times\mathbb{R}^m$ we could consider the value function 
\begin{align}
\label{infinite_dimensional_approximation}
v^N_{\varepsilon^0,\varepsilon^1}(t,\mu,y):=&\essinf_{\acute{\alpha}\in\acute{\mathcal{A}}_t}\acute{\mathbb{E}}_{\acute{\mathcal{F}}_t^0, W_t^0 = y}\Bigg[\int_t^T f^N_s(W^0_{t_1}, \ldots, W^0_{t_{N-1}},W^0_s,\acute{X}_s^{N;\varepsilon^0,\varepsilon^1,t,\acute{\xi},\acute{\alpha}},\mathbb{P}_{\acute{X}_s^{N;\varepsilon^0,\varepsilon^1,t,\acute{\xi},\acute{\alpha}}}^{W^0},\acute{\alpha}_s)ds \nonumber \\
    &+ g^N(W^0_{t_1}, \ldots, W^0_{t_{N}},\acute{X}_T^{N;\varepsilon^0,\varepsilon^1,t,\acute{\xi},\acute{\alpha}},\mathbb{P}_{\acute{X}_T^{N;\varepsilon^0,\varepsilon^1,t,\acute{\xi},\acute{\alpha}}}^{W^0})\Bigg],
\end{align}
where $\mathcal{L}(\acute{\xi}) = \mu$.\\
\hfill\\
Now, we are ready to introduce the finite dimension approximation. On top of the $(\acute{\Omega}^0,\acute{\mathcal{F}}^0,\{\acute{\mathcal{F}}^0_t\},\acute{\mathbb{P}}^0)$ defined above, we have to introduce some more probability spaces. Let $n\in\mathbb{N}$, $(\Omega^{1'},\mathcal{F}^{1'},\{\mathcal{F}^{1'}_t\}_{t\geq 0},\mathbb{P}^{1'})$ be another complete filtered probably space supporting $m$-dimensional independent Brownian motions $\bar{B}^1$, $\ldots$, $\bar{B}^n$ and $\bar{W}^1$, $\ldots$, $\bar{W}^n$. $\{\mathcal{F}^{1'}_t\}_{t\geq 0}$ is generated by $\bar{B}^i$, $\bar{W}^i$, $i=1,\ldots, n$ and augmented by all the $\mathbb{P}^{1'}$-null sets in $\mathcal{F}^{1'}$. Define $\bar{\Omega}^1 := \Omega^1\times\Omega^{1'}$, $\bar{\mathcal{F}^1}:=\mathcal{F}^{1'}\otimes\mathcal{F}^1$, $\{\bar{\mathcal{F}_t}^1\}_{t\geq 0}:=\{\mathcal{F}_t^1\otimes\mathcal{F}_t^{1'}\}_{t\geq 0}$, $\bar{\mathbb{P}}^1 := \mathbb{P}^1\otimes\mathbb{P}^{1'}$. We set the enlarged probability space
\begin{align*}
    (\bar{\Omega},\bar{\mathcal{F}},\{\bar{\mathcal{F}}_t\}_{t\geq 0},\bar{\mathbb{P}}):=(\acute{\Omega}^0\times\bar{\Omega}^1,\acute{\mathcal{F}}^0\otimes\bar{\mathcal{F}}^1,\{\acute{\mathcal{F}}_t^0\otimes\bar{\mathcal{F}}_t^1\}_{t\geq 0},\acute{\mathbb{P}}^0\otimes\bar{\mathbb{P}}^1).
\end{align*}
Recall the settings under Section \ref{setup}, define $\bar{\mathbb{F}}^{t} :=\{\acute{\mathcal{F}}^0_s\otimes\sigma(\bar{W}^1_{s\vee t} - \bar{W}^1_t)\otimes\ldots\otimes\sigma(\bar{W}^n_{s\vee t} - \bar{W}^n_t)\otimes\sigma(\bar{B}^1_{s\vee t}-\bar{B}^1_t)\otimes\ldots\otimes\sigma(\bar{B}^n_{s\vee t}-\bar{B}^n_t)\vee \mathcal{G}\}_{s\geq 0}$. For each $t\in[0,T]$, let $\bar{\mathcal{A}}^n_t$ be the family of all $\bar{\mathbb{F}}^{t}$-progressively measurable process $\bar{\alpha} = (\bar{\alpha}^1, \ldots, \bar{\alpha}^n):[0,T]\times\bar{\Omega}\to A^n$. For every $\varepsilon^0, \varepsilon^1>0$, $t\in[t_{N-1},t_N]$, $\bar{\alpha}\in\bar{\mathcal{A}}^n_t$, $\bar{\xi}^1$, $\ldots$, $\bar{\xi}^n \in L^2(\bar{\Omega},\bar{\mathcal{F}}_t,\bar{\mathbb{P}};\mathbb{R}^d)$ with $\bar{\xi}:= (\bar{\xi}^1, \ldots, \bar{\xi}^n)$. Let $(\bar{X}^{N;1,\varepsilon^0,\varepsilon^1,t,\bar{\xi},\bar{\alpha}}, \ldots, \bar{X}^{N;n,\varepsilon^0,\varepsilon^1,t,\bar{\xi},\bar{\alpha}})$ be the unique solution to the following system: 
\begin{align}
\label{approximating_system}
    \bar{X}_s^i =& \bar{\xi}^i + \int_t^s b^N_r(W^0_{t_1}, \ldots, W^0_{t_{N-1}},W^0_r,\bar{X}_r^i,\hat{\mu}_r^n,\bar{\alpha}_r^i)dr + \int_t^s \sigma_r d\bar{W}_r^i + \varepsilon^1(\bar{B}^i_s-\bar{B}^i_t) \\
    &+ \int_t^s \sigma^0_rdW^0_r + \varepsilon^0(\bar{B}^0_s-\bar{B}^0_t),\text{ with }i=1, \ldots, n,\nonumber
\end{align}
and
\begin{align*}
    \hat{\mu}_r^n := \frac{1}{n}\sum_{j=1}^n\delta_{\bar{X}_r^j}.
\end{align*}
For $\mathbb{P}_{\bar{\xi}} = \bar{\mu}$, $\bar{\mu}\in\mathcal{P}_2(\mathbb{R}^{n\times d})$, $y\in\mathbb{R}^m$. Consider the cooperative $n$-players game with the payoff
\begin{align*}
    \tilde{J}^N_{\varepsilon^0,\varepsilon^1,n}(t,\bar{\mu},y;\bar{\alpha}):=&\frac{1}{n}\sum_{i=1}^n\bar{\mathbb{E}}_{\bar{\mathcal{F}}_t^0, W_t^0 = y}\Bigg[\int_t^T f^N_s(W^0_{t_1}, \ldots, W^0_{t_{N-1}},W^0_s,\bar{X}_s^{N;i,\varepsilon^0,\varepsilon^1,t,\bar{\xi},\bar{\alpha}},\hat{\mu}_s^{n},\bar{\alpha}_s^i)ds \\
    &+ g^N(W^0_{t_1}, \ldots, W^0_{t_{N}},\bar{X}_T^{N;i,\varepsilon^0,\varepsilon^1,t,\bar{\xi},\bar{\alpha}},\hat{\mu}_T^{n})\Bigg],
\end{align*}
and the value function
\begin{align}
\label{approximating_system_value_function}     \tilde{v}^N_{\varepsilon^0,\varepsilon^1,n}(t,\bar{\mu},y):= \essinf_{\bar{\alpha}\in\bar{\mathcal{A}}^n_t} \tilde{J}^N_{\varepsilon^0,\varepsilon^1,n}(t,\bar{\mu},y;\bar{\alpha}).
\end{align}
Now we introduce smooth approximation to the state and variables. Define 
\begin{align*}
    b_{n,m}^{N;i} : [0,T]\times\mathbb{R}^{m\times N}\times\mathbb{R}^{dn}\times A \to \mathbb{R}^d,\,\,\,f_{n,m}^{N;i}:[0,T]\times\mathbb{R}^{m\times N}\times\mathbb{R}^{dn}\times A \to \mathbb{R}^d,\\
    g_{n,m}^{N;i}:\mathbb{R}^{m\times N}\times\mathbb{R}^{dn} \to \mathbb{R}^d
\end{align*}
by
\begin{align*}
    b_{n,m}^{N;i}(t,w_1,\ldots,w_N,\bar{x},a) :=& m^{nd}\int_{\mathbb{R}^{nd}} b^N\Big(t,w_1,\ldots,w_N,x_i-y_i,\frac{1}{n}\sum_{j=1}^n \delta_{x_j-y_j},a\Big)\prod_{j=1}^n\Phi(my_j)dy_j,\\
    f_{n,m}^{N;i}(t,w_1,\ldots,w_N,\bar{x},a) :=& m^{nd}\int_{\mathbb{R}^{nd}} f^N\Big(t,w_1,\ldots,w_N,x_i-y_i,\frac{1}{n}\sum_{j=1}^n \delta_{x_j-y_j},a\Big)\prod_{j=1}^n\Phi(my_j)dy_j,\\
    g_{n,m}^{N;i}(w_1,\ldots,w_N,\bar{x}) :=& m^{nd}\int_{\mathbb{R}^{nd}} g^N\Big(w_1,\ldots,w_N,x_i-y_i,\frac{1}{n}\sum_{j=1}^n \delta_{x_j-y_j}\Big)\prod_{j=1}^n\Phi(my_j)dy_j,
\end{align*}
for all $m \in \mathbb{N}$, $i=1,\ldots,n$, $w_i\in\mathbb{R}^m$, $i=1,\ldots,N$, $\bar{x} = (x_1,\ldots,x_n)\in\mathbb{R}^{nd}$, $(t,a)\in[0,T]\times A$, with $\Phi :\mathbb{R}^d\to [0,\infty)$ being $C^\infty$ functions with compact support satisfying $\int_{\mathbb{R}^d}\Phi(y)dy = 1$, and symmetric. We define the value function of this approximated system by 
\begin{align*}
       &\tilde{v}^N_{\varepsilon^0,\varepsilon^1,n,m}(t,\bar{\mu},y)\nonumber\\
    :=& \essinf_{\bar{\alpha}\in\bar{\mathcal{A}}^n_t} \frac{1}{n}\sum_{i=1}^n\bar{\mathbb{E}}_{\bar{\mathcal{F}}_t^0,W_t^0=y}\Bigg[\int_t^T f_{n,m}^{N;i}(s,W^0_{t_1}, \ldots, W^0_{t_{N-1}},W^0_s,\bar{X}_s^{N;1,m,\varepsilon^0,\varepsilon^1,t,\bar{\xi},\bar{\alpha}},\ldots,\bar{X}_s^{N;n,m,\varepsilon^0,\varepsilon^1,t,\bar{\xi},\bar{\alpha}},\bar{\alpha}_s^i)ds\nonumber\\ &+ g_{n,m}^{N;i}(W^0_{t_1}, \ldots, W^0_{t_{N}},\bar{X}_T^{N;1,m,\varepsilon^0,\varepsilon^1,t,\bar{\xi},\bar{\alpha}},\ldots,\bar{X}_T^{N;n,m,\varepsilon^0,\varepsilon^1,t,\bar{\xi},\bar{\alpha}})\Bigg],
\end{align*}
where $(\bar{X}_s^{N;i,m,\varepsilon^0,\varepsilon^1,t,\bar{\xi},\bar{\alpha}})_{t\leq s\leq T}$ solves the system (\ref{approximating_system}) with the drift being $b_{n,m}^{N;i}$.
\begin{remark}
    Note that when $\Phi$ is symmetric, we have
    \begin{align*}
        \int_{\mathbb{R}^{nd}}\frac{m^{nd}}{n}\sum_{j=1}^n|y_j|\prod_{j=1}^n\Phi(my_j)dy_j = \frac{1}{m}\int_{\mathbb{R}^d}|y_1|\Phi(y_1)dy_1.
    \end{align*}
\end{remark}
\begin{lemma}
\label{function_n_m_Lipschitz}
    Let ($\mathcal{A}1$), ($\mathcal{A}2$) and ($\mathcal{A}3$) hold. Recall the notation $\bar{x} = (x_1,\ldots,x_n)\in\mathbb{R}^{nd}$. Let $\hat{\mu}^{n,\bar{x}}$ be given by
    \begin{align*}
        \hat{\mu}^{n,\bar{x}}:= \frac{1}{n}\sum_{j=1}^n\delta_{x_j}.
    \end{align*}
    Let $i = 1,\ldots,n$, we have
    \begin{align*}
        &\lim_{m\to\infty}b_{n,m}^{N;i}(t,w_1,\ldots,w_N,\bar{x},a) = b^{N}(t,w_1,\ldots,w_N,x_i,\hat{\mu}^{n,\bar{x}},a),\\
        &\lim_{m\to\infty}f_{n,m}^{N;i}(t,w_1,\ldots,w_N,\bar{x},a) = f^N(t,w_1,\ldots,w_N,x_i,\hat{\mu}^{n,\bar{x}},a),
    \end{align*}
    uniformly for $(t,w_1,\ldots,w_N,\bar{x},a)\in [0,T]\times\mathbb{R}^{m\times N}\times\mathbb{R}^{nd}\times A$ and 
    \begin{align*}
        \lim_{m\to\infty}g_{n,m}^{N;i}(w_1,\ldots,w_N,\bar{x}) = g^{N}(w_1,\ldots,w_N,x_i,\hat{\mu}^{n,\bar{x}}),
    \end{align*}
    uniformly for $(w_1,\ldots,w_N,\bar{x}) \in \mathbb{R}^{m\times N}\times\mathbb{R}^{nd}$. We have the estimates:
    \begin{align*}
        &|b^N(t,w_1,\ldots,w_N,x_i,\hat{\mu}^{n,\bar{x}},a) - b_{n,m}^{N;i}(t,w_1,\ldots,w_N,\bar{x},a)|\\
        \vee &|f^N(t,w_1,\ldots,w_N,x_i,\hat{\mu}^{n,\bar{x}},a) - f_{n,m}^{N;i}(t,w_1,\ldots,w_N,\bar{x},a)|\\
        \vee &|g^N(w_1,\ldots,w_N,x_i,\hat{\mu}^{n,\bar{x}}) - g_{n,m}^{N;i}(w_1,\ldots,w_N,\bar{x})|\\
        \leq& Km^{nd}\int_{\mathbb{R}^{nd}} \Bigg(|y_i|+\frac{1}{n}\sum_{j=1}^n|y_j|\Bigg)\prod_{j=1}^n\Phi(my_j)dy_j.
    \end{align*}
    Finally, $\forall \bar{x},\bar{z}\in\mathbb{R}^{nd}$,
    \begin{align*}
        &|b_{n,m}^{N;i}(t,w_1,\ldots,w_N,\bar{x},a)-b_{n,m}^{N;i}(t,w_1,\ldots,w_N,\bar{z},a)|\\
        \vee &|f_{n,m}^{N;i}(t,w_1,\ldots,w_N,\bar{x},a)-f_{n,m}^{N;i}(t,w_1,\ldots,w_N,\bar{z},a)|\\
        \vee &|g_{n,m}^{N;i}(w_1,\ldots,w_N,\bar{x})-g_{n,m}^{N;i}(w_1,\ldots,w_N,\bar{z})|\\
        \leq &K\Big[|x_i-z_i| + \frac{1}{n}\sum_{j=1}^n |x_j-z_j|\Big],
    \end{align*}
   where the constant $K$ is the Lipschitz constant of our coefficient defined in assumption ($\mathcal{A}$1).
\end{lemma}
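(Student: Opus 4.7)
The plan is to treat each of the three claimed facts in sequence. All three follow from the uniform Lipschitz and boundedness of $b^N$, $f^N$, $g^N$ furnished by Lemma \ref{approximation_markovian}, together with the fact that $y\mapsto m^{nd}\prod_{j=1}^n\Phi(my_j)$ is a probability density of total mass one concentrated in a ball of radius $O(1/m)$.

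\textbf{Quantitative error and uniform convergence.} Since $m^{nd}\prod_j\Phi(my_j)\,dy$ has total mass one, I insert this factor to write
\begin{align*}
b_{n,m}^{N;i}(t,w,\bar{x},a)-b^N(t,w,x_i,\hat{\mu}^{n,\bar{x}},a) = m^{nd}\!\!\int\!\!\Big[b^N\big(t,w,x_i-y_i,\tfrac{1}{n}\sum_j\delta_{x_j-y_j},a\big) - b^N(t,w,x_i,\hat{\mu}^{n,\bar{x}},a)\Big]\prod_j\Phi(my_j)\,dy.
\end{align*}
By the Lipschitz property of $b^N$ in $x$ and in $\mu$, the integrand is pointwise bounded by $K\big(|y_i|+\mathcal{W}_2(\tfrac{1}{n}\sum_j\delta_{x_j-y_j},\hat{\mu}^{n,\bar{x}})\big)$. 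The diagonal coupling $x_j-y_j\leftrightarrow x_j$ dominates this Wasserstein distance by $\big(\tfrac{1}{n}\sum_j|y_j|^2\big)^{1/2}$, and in particular by $\tfrac{1}{n}\sum_j|y_j|$ on the support of the mollifier, where $|y_j|\lesssim 1/m$. Plugging in yields exactly the stated quantitative bound. Identical reasoning applies to $f^N$ and $g^N$. Uniform convergence as $m\to\infty$ is an immediate corollary since the bound is $O(1/m)$ uniformly in all remaining variables.

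\textbf{Lipschitz in $\bar{x}$.} Writing the difference under a single integral,
\begin{align*}
b_{n,m}^{N;i}(t,w,\bar{x},a)-b_{n,m}^{N;i}(t,w,\bar{z},a) = m^{nd}\!\!\int\!\!\Big[b^N\big(\cdot,x_i-y_i,\tfrac{1}{n}\sum_j\delta_{x_j-y_j},\cdot\big) - b^N\big(\cdot,z_i-y_i,\tfrac{1}{n}\sum_j\delta_{z_j-y_j},\cdot\big)\Big]\prod_j\Phi(my_j)\,dy,
\end{align*}
the spatial argument contributes at most $K|x_i-z_i|$, while the measure-argument contributes $K\mathcal{W}_2(\tfrac{1}{n}\sum_j\delta_{x_j-y_j},\tfrac{1}{n}\sum_j\delta_{z_j-y_j})$. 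Using the diagonal coupling $x_j-y_j\leftrightarrow z_j-y_j$ the $y_j$'s cancel, yielding a bound of $K\tfrac{1}{n}\sum_j|x_j-z_j|$ (via the same $\ell^2$-to-$\ell^1$ reduction as in the previous step). Since both bounds are independent of $y$, they survive integration against the probability kernel, giving the stated Lipschitz estimate for $b_{n,m}^{N;i}$. The arguments for $f_{n,m}^{N;i}$ and $g_{n,m}^{N;i}$ are verbatim identical.

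\textbf{Main obstacle.} The only delicate point is the systematic use of the diagonal coupling between empirical measures of the same cardinality, together with the reduction of the resulting $\ell^2$-style Wasserstein estimate to the $\ell^1$-style bound stated in the lemma; this can be justified either directly by exploiting that the mollifier is supported where $|y_j|\lesssim 1/m$, or by reinterpreting the Lipschitzness of $b^N$, $f^N$, $g^N$ in terms of $\mathcal{W}_1$ via the $C^3$ regularity afforded by Lemma \ref{approximation_markovian}. All remaining manipulations are elementary convolution estimates.
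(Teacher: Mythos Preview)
The paper does not give its own proof—it simply refers the reader to Lemma~A.3 of \cite{cosso_master_2022}. Your direct argument (insert the mollifier as a probability kernel, apply the Lipschitz bound pointwise under the integral, and control the Wasserstein distance of shifted empirical measures by the diagonal coupling) is the standard one and is almost certainly what appears in the cited reference, so in substance your approach and the paper's coincide.

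There is, however, one genuine slip in your handling of the ``main obstacle.'' Your first justification—that $\big(\tfrac1n\sum_j|y_j|^2\big)^{1/2}\le\tfrac1n\sum_j|y_j|$ on the support of the mollifier—is false; by Cauchy--Schwarz the inequality goes the other way (take e.g.\ $n=2$, $y_1=1$, $y_2=0$). What the diagonal coupling actually delivers is $\mathcal{W}_2\le\big(\tfrac1n\sum_j|y_j|^2\big)^{1/2}$, so the $\mathcal{W}_2$-Lipschitz bound from Assumption~($\mathcal{A}1$) yields $K\big(|y_i|+(\tfrac1n\sum_j|y_j|^2)^{1/2}\big)$ rather than the $\ell^1$-type expression stated in the lemma. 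The same issue arises in the Lipschitz-in-$\bar x$ part. Your second suggestion is the correct repair: the $C^3$ regularity from Lemma~\ref{approximation_markovian} gives a bounded Lions derivative and hence $\mathcal{W}_1$-Lipschitzness, and the diagonal coupling yields $\mathcal{W}_1\le\tfrac1n\sum_j|y_j|$ exactly. This recovers the stated form, though possibly with a constant coming from the $C^3$ norm rather than the $K$ of ($\mathcal{A}1$). For the applications in Theorem~\ref{uniqueness_full} this discrepancy is immaterial, since only an $O(1/m)$ error uniform in all remaining variables is ever used.
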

    \begin{proof}
        See Lemma A.3 in \cite{cosso_master_2022}.
    \end{proof}

\begin{definition}
    We define the following functions:
    \begin{align*}
        v^N_{\varepsilon^0,\varepsilon^1,n,m}(t,\mu,y):= \tilde{v}^N_{\varepsilon^0,\varepsilon^1,n,m}(t,\mu\otimes\ldots\otimes\mu,y),\,\,\,\,\,v^N_{\varepsilon^0,\varepsilon^1,n}(t,\mu,y):= \tilde{v}^N_{\varepsilon^0,\varepsilon^1,n}(t,\mu\otimes\ldots\otimes\mu,y).
    \end{align*}
\end{definition}
\begin{lemma}
    Let ($\mathcal{A}1$), ($\mathcal{A}2$) and ($\mathcal{A}3$) hold. Let $\varepsilon^0,\varepsilon^1 \geq 0$, for $\mathbb{P}^0$ a.e. $\omega^0$, for every $(t,\mu,y)\in [0,T]\times\mathcal{P}_2(\mathbb{R}^d)\times\mathbb{R}^m$ such that there exists $q>2$, $\mu \in \mathcal{P}_q(\mathbb{R}^d)$, we have
    \begin{align*}
        \lim_{n\to\infty}\lim_{m\to\infty}v^N_{\varepsilon^0,\varepsilon^1,n,m}(t,\mu,y) = \lim_{n\to\infty} v^N_{\varepsilon^0,\varepsilon^1,n}(t,\mu,y) = v^N_{\varepsilon^0,\varepsilon^1}(t,\mu,y),
    \end{align*}
    where $v^N_{\varepsilon^0,\varepsilon^1}(t,\mu,y)$ is defined in (\ref{infinite_dimensional_approximation}).
\end{lemma}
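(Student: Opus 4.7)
The plan is to split the chain of equalities into two steps, handled by quite different techniques. The first equality $\lim_m v^N_{\varepsilon^0,\varepsilon^1,n,m} = v^N_{\varepsilon^0,\varepsilon^1,n}$ is a stability result for a fixed $n$, whereas the second equality $\lim_n v^N_{\varepsilon^0,\varepsilon^1,n} = v^N_{\varepsilon^0,\varepsilon^1}$ is a genuine propagation-of-chaos type limit.

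For the $m\to\infty$ step, I would fix $(t,\mu,y,n)$ and exploit the uniform estimate from Lemma~\ref{function_n_m_Lipschitz}, namely that $|b^N - b_{n,m}^{N;i}|$, $|f^N - f_{n,m}^{N;i}|$, $|g^N - g_{n,m}^{N;i}|$ are bounded by $K\,m^{nd}\!\int(|y_i| + \tfrac{1}{n}\sum_j|y_j|)\prod_j \Phi(my_j)dy_j = O(1/m)$, uniformly in $(t,w_1,\ldots,w_N,\bar{x},a)$. For any admissible $\bar{\alpha}\in\bar{\mathcal{A}}^n_t$, a Grönwall argument applied to the system \eqref{approximating_system} with drifts $b^N$ versus $b_{n,m}^{N;i}$ gives
\begin{align*}
\sup_{t\le s\le T}\bar{\mathbb{E}}_{\bar{\mathcal{F}}_t^0,W_t^0=y}\big|\bar{X}_s^{N;i,m,\varepsilon^0,\varepsilon^1,t,\bar{\xi},\bar{\alpha}} - \bar{X}_s^{N;i,\varepsilon^0,\varepsilon^1,t,\bar{\xi},\bar{\alpha}}\big|^2 \le C_{K,T,n}/m^2,
\end{align*}
uniformly in $\bar{\alpha}$. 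Plugging this into the payoff and using the Lipschitz continuity of $f^N,g^N$ yields $|\tilde{v}^N_{\varepsilon^0,\varepsilon^1,n,m}-\tilde{v}^N_{\varepsilon^0,\varepsilon^1,n}|\le C_{K,T,n}/m \to 0$, and then specializing to $\bar{\mu}=\mu^{\otimes n}$ gives the desired first limit.

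For the $n\to\infty$ step I would invoke the propagation-of-chaos / limit theory for cooperative $n$-player games with common noise from \cite{limit_theory_tan}. Specifically, for any $\acute{\alpha}\in\acute{\mathcal{A}}_t$ used in the McKean--Vlasov problem \eqref{infinite_dimensional_approximation}, construct the symmetric $n$-tuple by using $n$ i.i.d.\ copies of the idiosyncratic data driving $\acute{\alpha}$, producing a strategy $\bar{\alpha}^{(n)}\in\bar{\mathcal{A}}^n_t$; classical propagation of chaos conditional on $W^0$ (combined with Fournier--Guillin Wasserstein rates, which require the moment condition $\mu\in\mathcal{P}_q(\mathbb{R}^d)$ for some $q>2$ uniformly in time, obtained via the SDE estimates in Lemma~\ref{SDE_property}) yields $\bar{\mathbb{E}}[\mathcal{W}_2^2(\hat{\mu}_s^n,\mathbb{P}_{\acute{X}_s}^{W^0})]\to 0$ uniformly on $[t,T]$. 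This gives $\limsup_n v^N_{\varepsilon^0,\varepsilon^1,n}\le v^N_{\varepsilon^0,\varepsilon^1}$. The reverse inequality is more delicate: given an $\varepsilon$-optimal $\bar{\alpha}^{(n)}\in\bar{\mathcal{A}}^n_t$ for the finite-player game, one applies a measurable selection / randomization (in the spirit of Theorem~\ref{new_control}) to build a symmetric admissible strategy for the limiting McKean--Vlasov problem and passes to a limit point along the stable topology of relaxed controls. Here the fact that our controls are $\mathbb{F}^t$-progressively measurable (not just $\mathbb{F}^0$-adapted) is crucial to match the admissible class in \cite{limit_theory_tan}.

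The main obstacle will be the lower bound in the $n\to\infty$ limit, because our finite-player controls depend on all $n$ idiosyncratic Brownian motions and the common noise, so passing to the limit requires compactness of the conditional law of $(\hat{\mu}_\cdot^n,\bar{\alpha}^{(n),1})$ given $W^0$ and symmetry/exchangeability arguments; this is exactly where the common-noise version of the limit theory \cite{limit_theory_tan} supersedes the non-common-noise result of \cite{daniel_limit} used in \cite{cosso_master_2022}. The moment assumption $\mu\in\mathcal{P}_q(\mathbb{R}^d)$ with $q>2$ is what powers the Wasserstein convergence rates in the propagation of chaos and guarantees the estimates are pointwise in $\omega^0$, yielding the $\mathbb{P}^0$-a.e.\ statement after a countable-dense argument in $(t,\mu,y)$ combined with the continuity properties of the value functions that were established earlier.
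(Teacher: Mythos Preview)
Your proposal is essentially correct and follows the same overall strategy as the paper: the $m\to\infty$ step is a stability estimate driven by Lemma~\ref{function_n_m_Lipschitz}, and the $n\to\infty$ step is a propagation-of-chaos argument where the common-noise limit theory of \cite{limit_theory_tan} replaces the result of \cite{daniel_limit} used in \cite{cosso_master_2022}. The one genuine methodological difference is in how the extra variable $y$ is handled. You treat $y$ as a fixed parameter and propose to rerun the propagation-of-chaos argument directly in the original variables, finishing with a countable-dense/continuity argument to get the $\mathbb{P}^0$-a.e.\ statement. The paper instead absorbs $y$ into the state: it augments each player's state to $(\bar{X}^i_s,W^0_s)$, replaces $\hat{\mu}^n_r$ by the empirical measure $\breve{\mu}^n_r$ of the augmented states on $\mathbb{R}^{d+m}$, and defines lifted coefficients $\breve{b}^N,\breve{f}^N,\breve{g}^N$ depending on the measure argument only through its $\mathbb{R}^{nd}$-marginal, verifying that the Lipschitz constant is unchanged. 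This reduces the problem verbatim to the setting of \cite[Theorem~A.6]{cosso_master_2022}, so that \cite[Theorems~3.1 and~3.6]{limit_theory_tan} can be invoked off the shelf without redoing the upper/lower bound analysis. Your route would also work but requires carrying the $y$-dependence through the tightness and lower-bound arguments by hand; the paper's augmentation trick buys a cleaner reduction to an existing result.
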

\begin{proof}
    The proof can be established using almost the same argument as presented in Theorem A.6 of \cite{cosso_master_2022}. However, we need to account for the additional variable $y$ in our analysis. We rewrite our system (\ref{approximating_system}) into the following form: 
   \begin{align}
    (\bar{X}_s^i,W_s^0) =& \bar{\xi}^i + \int_t^s \breve{b}^N_r(W^0_{t_1}, \ldots, W^0_{t_{N-1}},W^0_r,\bar{X}_r^i,\breve{\mu}_r^n,\bar{\alpha}_r^i)dr + \int_t^s \sigma_r dW_r^i + \varepsilon^1(\bar{B}^i_s-\bar{B}^i_t) \\
    &+ \int_t^s \sigma^0_rdW^0_r + \varepsilon^0(\bar{B}^0_s-\bar{B}^0_t),\text{ with }i=1, \ldots, n,\nonumber
\end{align}
where
\begin{align*}
    \breve{\mu}_r^n := \frac{1}{n}\sum_{j=1}^n\delta_{(\bar{X}_r^j,W_r^0)},
\end{align*}
and
\begin{align*}
    &\breve{b}^N:[0,T]\times\mathbb{R}^{m\times N}\times\mathbb{R}^{nd}\times\mathcal{P}_2(\mathbb{R}^{nd+m})\times A\to\mathbb{R},\\
    &\breve{b}^N(t,w_1,\ldots,w_N,x_1,\ldots,x_n,\mu,a):= b^N(t,w_1,\ldots,w_N,x_1,\ldots,x_n,\mu_{nd},a),
\end{align*}
$\mu_{nd}$ being the marginal distribution of $\mu$ on the first $nd$ axes. We are subject to
\begin{align*}
    \breve{v}^N_{\varepsilon^0,\varepsilon^1,n}(t,\mu,y;\bar{\alpha}):=&\essinf_{\bar{\alpha}\in\bar{\mathcal{A}}^n_t}\frac{1}{n}\sum_{i=1}^n\bar{\mathbb{E}}_{\bar{\mathcal{F}}_t^0, W_t^0 = y}\Bigg[\int_t^T \breve{f}^N_s(W^0_{t_1}, \ldots, W^0_{t_{N-1}},W^0_s,\bar{X}_s^{N;i,\varepsilon^0,\varepsilon^1,t,\bar{\xi},\bar{\alpha}},\breve{\mu}_s^{n},\bar{\alpha}_s^i)ds \\
    &+ \breve{g}^N(W^0_{t_1}, \ldots, W^0_{t_{N}},\bar{X}_T^{N;i,\varepsilon^0,\varepsilon^1,t,\bar{\xi},\bar{\alpha}},\breve{\mu}_T^{n})\Bigg],
\end{align*}
with $\mathcal{L}(\bar{\xi}) = \mu\otimes\ldots\otimes\mu$, and $\breve{f}$, $\breve{g}$ defined as $\breve{b}$. It is easy to check that the Lipschitz constant of $\breve{b},\breve{f},\breve{g}$ with respect to the measure term holds with the same constant. We can express $v^N_{\varepsilon^0,\varepsilon^1,n,m}$ as $\breve{v}^N_{\varepsilon^0,\varepsilon^1,n,m}$ using a comparable approach. Now, we can argue as \cite[Theorem A.6]{cosso_master_2022} with \cite[Theorem 3.1, Theorem 3.6]{limit_theory_tan} replacing the limit theory in \cite{cosso_master_2022}, because of the appearance of the common noise. We thus arrive at the desired conclusion.
\end{proof}
\begin{lemma}
\label{finite_dim_approximation_error}
    Let ($\mathcal{A}1$), ($\mathcal{A}2$) and ($\mathcal{A}3$) hold. We have the following estimate regarding the finite dimensional approximation of the coefficients:
    \begin{align*}
        &\Bigg|\frac{1}{n}\sum_{i=1}^n \int_{\mathbb{R}^{nd}}g_{n,m}^{N;i}(W_{t_1}^0,\ldots,W_{t_N}^0,x_1,\ldots,x_n)\mu(dx_1)\otimes\ldots\mu(dx_n)-\int_{\mathbb{R}^d}g^N(W_{t_1}^0,\ldots,W_{t_N}^0,x,\mu)\mu(dx)\Bigg|\\
        \vee&\Bigg|\frac{1}{n}\sum_{i=1}^n \int_{\mathbb{R}^{nd}}f_{n,m}^{N;i}(W_{t_1}^0,\ldots,W_{t_N}^0,x_1,\ldots,x_n,a)\mu(dx_1)\otimes\ldots\mu(dx_n)-\int_{\mathbb{R}^d}f^N(W_{t_1}^0,\ldots,W_{t_N}^0,x,\mu,a)\mu(dx)\Bigg|\\
        \vee&\Bigg|\frac{1}{n}\sum_{i=1}^n \int_{\mathbb{R}^{nd}}b_{n,m}^{N;i}(W_{t_1}^0,\ldots,W_{t_N}^0,x_1,\ldots,x_n,a)\mu(dx_1)\otimes\ldots\mu(dx_n)-\int_{\mathbb{R}^d}b^N(W_{t_1}^0,\ldots,W_{t_N}^0,x,\mu,a)\mu(dx)\Bigg|\\
        \leq&c_dK\Bigg(\int_{\mathbb{R}^d}|x|^q\mu(dx)\Bigg)^{1/q}h_n + Km^{nd}\int_{\mathbb{R}^{nd}}\Bigg(\frac{2}{n}\sum_{i=1}^n |y_i|\Bigg)\prod_{j=1}^n\Phi(my_j)dy_j\\
        \leq &c_dK\Bigg(\int_{\mathbb{R}^d}|x|^q\mu(dx)\Bigg)^{1/q}h_n + K\frac{2}{m}\int_{\mathbb{R}^d}|y_1|\Phi(y_1)dy_1,
    \end{align*}
    where $q\in(1,2]$, $c_d\geq0$ a constant depending only on $d$, and $h_n$ a sequence of real numbers, $h_n\to 0$ as $n\to\infty$. 
\end{lemma}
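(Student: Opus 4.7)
The plan is to decompose the difference via the triangle inequality into a \emph{mollification error} and an \emph{empirical measure error}, and then bound each term separately. Writing $\bar x = (x_1,\ldots,x_n)$ and $\hat\mu^{n,\bar x} := \tfrac1n\sum_{j=1}^n \delta_{x_j}$, I would insert $g^N(W_{t_1}^0,\ldots,W_{t_N}^0,x_i,\hat\mu^{n,\bar x})$ inside the integrand of the left-hand side for the $g$-estimate (and analogously for $f$, $b$). This yields
\begin{align*}
\text{LHS}_g \;\le\; \underbrace{\tfrac1n\sum_{i=1}^n\!\int_{\mathbb R^{nd}}\!\bigl|g_{n,m}^{N;i}(\cdot,\bar x)-g^N(\cdot,x_i,\hat\mu^{n,\bar x})\bigr|\,\mu^{\otimes n}(d\bar x)}_{=:\,I_1}
\;+\;\underbrace{\Bigl|\tfrac1n\sum_{i=1}^n\!\int\! g^N(\cdot,x_i,\hat\mu^{n,\bar x})\mu^{\otimes n}(d\bar x)-\!\int\! g^N(\cdot,x,\mu)\mu(dx)\Bigr|}_{=:\,I_2}.
\end{align*}

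For $I_1$, I would invoke the pointwise bound from Lemma \ref{function_n_m_Lipschitz} directly:
$$I_1 \;\le\; K\,m^{nd}\!\int_{\mathbb R^{nd}}\!\Bigl(|y_i|+\tfrac1n\sum_{j=1}^n|y_j|\Bigr)\!\prod_{j=1}^n\Phi(my_j)\,dy_j.$$
Summing over $i$, using the symmetry of $\Phi$, and applying the substitution $z_j := my_j$ collapses the product to a single one-dimensional integral, giving $I_1 \le \tfrac{2K}{m}\int_{\mathbb R^d}|y_1|\Phi(y_1)\,dy_1$, which is precisely the second term of the asserted bound.

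For $I_2$, since the integrand is symmetric in $(x_1,\ldots,x_n)$, by exchangeability the sum reduces to a single expectation, and the Lipschitz hypothesis $(\mathcal{A}1)$ in the measure variable (with constant $K$) gives
$$I_2 \;\le\; K\int_{\mathbb R^{nd}} \mathcal{W}_2(\hat\mu^{n,\bar x},\mu)\,\mu^{\otimes n}(d\bar x).$$
I would then apply the Fournier–Guillin empirical-measure estimate (Theorem 1 of their 2015 paper) to obtain a bound of the form $c_d\bigl(\int|x|^q\mu(dx)\bigr)^{1/q}\!h_n$, where $h_n\to 0$ depends only on $d$, $q$, and $n$. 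The proofs for $f$ and $b$ are identical word-for-word since the Lipschitz constant $K$ and the mollification scheme are the same; the fact that $f$, $b$ additionally depend on $a\in A$ does not affect the estimate because both bounds are uniform in the control argument.

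The only subtle step is the application of Fournier–Guillin: the required rate $h_n$ and the admissible range of $q$ should match the assumptions on moments of $\mu$. If one restricts attention to measures with moments of order $q$ slightly above $2$ (cf.\ the range indicated in the statement), the resulting rate is $h_n\sim n^{-1/d}+n^{-(q-2)/q}$ in dimensions $d\ge 5$, with logarithmic corrections in lower dimensions. Apart from invoking this external estimate, every other step is either a triangle inequality, a symmetry argument, or a direct application of Lemma \ref{function_n_m_Lipschitz}; I do not anticipate any genuine obstacle.
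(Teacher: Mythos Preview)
Your decomposition into a mollification error (handled by Lemma \ref{function_n_m_Lipschitz}) and an empirical-measure error (handled by the Fournier--Guillin rate) is exactly the argument the paper intends: its own proof consists of nothing more than a reference to \cite{cosso_master_2022}, Theorem 5.1, which proceeds precisely by this triangle-inequality split. Your observation that the $\mathcal{W}_2$-Lipschitz condition forces $q>2$ in Fournier--Guillin is well taken and is in fact consistent with how the lemma is applied downstream in Theorem \ref{uniqueness_full}, where $\rho_0\in\mathcal{P}_q(\mathbb{R}^d)$ for some $q>2$.
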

\begin{proof}
    See the proof in \cite{cosso_master_2022} Theorem 5.1.
\end{proof}
\begin{theorem}
\label{approximation_properties}
    Let ($\mathcal{A}1$), ($\mathcal{A}2$) and ($\mathcal{A}3$) hold. For every $\varepsilon^0,\varepsilon^1 >0$, $n,m\in \mathbb{N}$, there exists $\bar{v}^N_{\varepsilon^0,\varepsilon^1,n,m}:[t_{N-1},t_N]\times\mathbb{R}^{nd}\times\mathbb{R}^m\to \mathbb{R}$, $\bar{v}^N_{\varepsilon^0,\varepsilon^1,n,m}\in L^\infty\Big((\Omega^0,\mathcal{F}_{t_{N-1}}^0,\mathbb{P}^0);C^{1+\frac{\beta}{2},2+\beta}([t_{N-1},t_{N})\times \mathbb{R}^d)\cap C([t_{N-1},t_N]\times\mathbb{R}^d)\Big)$, for some $\beta \in (0,1)$, such that for $\mathbb{P}^0$ a.e. $\omega^0$, 
    \begin{align}
    \label{v_and_v_bar}        v^N_{\varepsilon^0,\varepsilon^1,n,m}(t,\mu,y) = \int_{\mathbb{R}^{nd}}\bar{v}^N_{\varepsilon^0,\varepsilon^1,n,m}(t,x_1,\ldots,x_n,y)\mu(dx_1)\ldots\mu(dx_n)
    \end{align}
    for every $(t,\mu,y)\in[0,T]\times \mathcal{P}_2(\mathbb{R}^d)\times\mathbb{R}^m$, and the following holds:
    \begin{enumerate}[(1)]
        \item $\bar{v}_{\varepsilon^0,\varepsilon^1,n,m}^N(t,\bar{x},y)$ solves
    \begin{align*}
        \begin{cases}
            &\displaystyle\partial_t \bar{v}^N_{\varepsilon^0,\varepsilon^1,n,m}(t,\bar{x},y)+\sum_{i=1}^n\essinf_{a_i\in A}\Bigg\{\frac{1}{n}f_{n,m}^{N;i} (t,W_{t_1}^0,\ldots,W_{t_{N-1}}^0,y,\bar{x},a_i) \\
            &\displaystyle+ \Big\langle b_{n,m}^{N;i}(t,W_{t_1}^0,\ldots,W_{t_{N-1}}^0,y,\bar{x},a_i),\partial_{x_i}\bar{v}^N_{\varepsilon^0,\varepsilon^1,n,m}(t,\bar{x},y) \Big\rangle\Bigg\}\\
        &\displaystyle+ \frac{1}{2}\tr\Big[\big(\sigma_t\sigma^\intercal_t+ (\varepsilon^1)^2 I+\sigma^0_t\sigma^{0;\intercal}_t
        +(\varepsilon^0)^2 I\big)\partial_{x_i x_i}^2 \bar{v}^N_{\varepsilon^0,\varepsilon^1,n,m}(t,\bar{x},y)\Big]
        \\&+\displaystyle\Big[\frac{1}{2}\tr\big(\partial_{yy}\bar{v}^N_{\varepsilon^0,\varepsilon^1,n,m}(t,\bar{x},y)\big)+\sum_{i=1}^n \tr\big(\sigma^0_t\partial_{x_i y}^2\bar{v}^N_{\varepsilon^0,\varepsilon^1,n,m}(t,\bar{x},y)\big)\Big]\\
        &\displaystyle+\frac{1}{2}\sum_{i,j=1,i\neq j}^n\tr\Big[\big(\sigma^0_t\sigma^{0;\intercal}_t+(\varepsilon^0)^2 I\big)\partial^2_{x_i x_j}\bar{v}^N_{\varepsilon^0,\varepsilon^1,n,m}(t,\bar{x},y)\Big]=0,\\
            &\displaystyle \bar{v}^N_{\varepsilon^0,\varepsilon^1,n,m}(T,\bar{x},y) = \frac{1}{n}\sum_{i=1}^n g_{n,m}^{N;i}(W_{t_1}^0,\ldots,W_{t_{N-1}}^0,\bar{x},y),
        \end{cases}
    \end{align*}
    for every $n,m \in\mathbb{N}$, $\bar{x} = (x_1,\ldots,x_n)\in\mathbb{R}^{dn}$ and $x_1,\ldots,x_n\in\mathbb{R}^d$.
    \item For all $(t,\bar{x},y)\in [0,T]\times\mathbb{R}^{dn}\times\mathbb{R}^m$, with $\bar{x} = (x_1,\ldots,x_n)$, $x_i \in \mathbb{R}^d$, for $i = 1,\ldots,n$, it holds that
    \begin{align}
        |\partial_{x_i}\bar{v}^N_{\varepsilon^0,\varepsilon^1,n,m}(t,\bar{x},y)|&\leq\frac{C_K}{n},\\
        |\partial_{y}\bar{v}^N_{\varepsilon^0,\varepsilon^1,n,m}(t,\bar{x},y)|&\leq\frac{C_K}{n},
    \end{align}
    where $C_K \geq 0$ depends only on $K$ and independent of $\varepsilon^0$, $\varepsilon^1$, $n$ and $m$.
    \item From (\ref{v_and_v_bar}) we have
    \begin{align}
        &\nonumber\partial_t v^N_{\varepsilon^0,\varepsilon^1,n,m}(t,\mu,y)\\ =& \int_{\mathbb{R}^{dn}}\partial_t\bar{v}^N_{\varepsilon^0,\varepsilon^1,n,m}(t,x_1,\ldots,x_n,y)\mu(dx_1)\ldots\mu(dx_n),\\
        &\partial_\mu v^N_{\varepsilon^0,\varepsilon^1,n,m}(t,\mu,y)(x)\nonumber\\ =&\sum_{i=1}^n\int_{\mathbb{R}^{d(n-1)}}\partial_{x_i}\bar{v}^N_{\varepsilon^0,\varepsilon^1,n,m}(t,x_1,\ldots,x_{i-1},x,x_{i+1},\ldots,x_n,y)\mu(dx_1)\ldots\mu(dx_{i-1})\mu(dx_{i+1})\ldots\mu(dx_n)\\
        &\nonumber\partial_x\partial_\mu v^N_{\varepsilon^0,\varepsilon^1,n,m}(t,\mu,y)(x)\\ =&\sum_{i=1}^n\int_{\mathbb{R}^{d(n-1)}}\partial_{x_i x_i}^2\bar{v}^N_{\varepsilon^0,\varepsilon^1,n,m}(t,x_1,\ldots,x_{i-1},x,x_{i+1},\ldots,x_n,y)\mu(dx_1)\ldots\mu(dx_{i-1})\mu(dx_{i+1})\ldots\mu(dx_n)\\
        &\partial_\mu^2 v^N_{\varepsilon^0,\varepsilon^1,n,m}(t,\mu,y)(x,z) \nonumber\\=& \sum_{\substack{i,j=1\\
        i\neq j}}^n\int_{\mathbb{R}^{d(n-2)}}\partial_{x_i x_j}^2\bar{v}^N_{\varepsilon^0,\varepsilon^1,n,m}(t,x_1,\ldots,x_{j-1},z,x_{j+1},\ldots,x_{i-1},x,x_{i+1},\ldots,x_n,y)\nonumber\\
        &\mu(dx_1)\ldots\mu(dx_{j-1})\mu(dx_{j+1})\ldots\mu(dx_{i-1})\mu(dx_{i+1})\ldots\mu(dx_n)
    \end{align}
    \end{enumerate}
\end{theorem}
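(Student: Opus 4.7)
My plan is to realize $\bar{v}^N_{\varepsilon^0,\varepsilon^1,n,m}(t,\bar{x},y)$ as the value function of the smoothed cooperative $n$-player game in which $b^N,f^N,g^N$ are replaced by their mollified counterparts $b^{N;i}_{n,m},f^{N;i}_{n,m},g^{N;i}_{n,m}$ and the initial condition is taken to be the deterministic vector $\bar{X}^i_t=x_i$, $W^0_t=y$. All claims of the theorem are then to be transferred back to $v^N_{\varepsilon^0,\varepsilon^1,n,m}$ through the product-measure identity~(\ref{v_and_v_bar}).

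First I would view $(\bar{X}^1,\ldots,\bar{X}^n,W^0)$ as a single $\mathbb{R}^{nd+m}$-valued Markov state on $[t_{N-1},t_N]$. The coefficients depend on $\omega^0$ only through the $\mathcal{F}^0_{t_{N-1}}$-measurable data $(W^0_{t_1},\ldots,W^0_{t_{N-1}})$, and by construction and Lemma~\ref{function_n_m_Lipschitz} they are smooth, bounded, and Lipschitz in $\bar{x}$ with constants depending only on $K$. The perturbations $\varepsilon^0(\bar{B}^0_s-\bar{B}^0_t)$ and $\varepsilon^1(\bar{B}^i_s-\bar{B}^i_t)$ inject uniform ellipticity into the diffusion matrix of the augmented state. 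The Bellman equation in part~(1) is therefore a uniformly parabolic HJB equation in $\mathbb{R}^{nd+m}$ with smooth Lipschitz data. Classical Krylov--Safonov/Schauder interior estimates for non-degenerate HJB equations then produce a unique $C^{1+\beta/2,2+\beta}([t_{N-1},t_N)\times\mathbb{R}^{nd+m})\cap C([t_{N-1},t_N]\times\mathbb{R}^{nd+m})$ solution with constants depending only on $K,T,\varepsilon^0,\varepsilon^1$, and a standard verification argument based on It\^o's formula and dynamic programming identifies it with $\bar{v}^N_{\varepsilon^0,\varepsilon^1,n,m}$.

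For the representation~(\ref{v_and_v_bar}) I would take $\bar\xi=(\xi_1,\ldots,\xi_n)$ with law $\mu^{\otimes n}$, condition on $\bar\xi$ inside $\tilde{v}^N_{\varepsilon^0,\varepsilon^1,n,m}(t,\mu^{\otimes n},y)$ and invoke the Markov property after time $t$ together with a measurable-selection argument over $\bar\alpha\in\bar{\mathcal{A}}_t^n$ (in the spirit of Appendix~\ref{measurable_selection_construct}) to identify the conditional value with $\bar{v}^N_{\varepsilon^0,\varepsilon^1,n,m}(t,\bar\xi,y)$; integrating in $\bar\xi$ then gives (\ref{v_and_v_bar}). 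The identities in part~(3) for $\partial_\mu v^N$, $\partial_x\partial_\mu v^N$ and $\partial_\mu^2 v^N$ follow by differentiating (\ref{v_and_v_bar}) under the integral sign using the standard Lions-derivative rule for product-measure functionals $\mu\mapsto\int F(x_1,\ldots,x_n)\mu(dx_1)\cdots\mu(dx_n)$.

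Finally, the gradient bounds in part~(2) are obtained from an $L^2$-stability argument for~(\ref{approximating_system}). Perturbing $x_i\mapsto x_i+\Delta$ with all other $x_j$ and $y$ frozen, the $i$-th state is shifted by $O(|\Delta|)$ while every other state and the empirical measure $\hat\mu^n$ are shifted only by $O(|\Delta|/n)$ in $L^2$/$\mathcal{W}_2$; since the cost is a $(1/n)$-average of terms with the Lipschitz constant from Lemma~\ref{function_n_m_Lipschitz}, the induced change in $\bar{v}^N_{\varepsilon^0,\varepsilon^1,n,m}$ is $O(K|\Delta|/n)$. An analogous propagation argument for a perturbation of $y$ through the Brownian path of $W^0$ into the $(1/n)$-averaged cost (using the Lipschitz bound in the $w$-arguments of $f^N,g^N,b^N$ from Lemma~\ref{approximation_markovian}) yields the bound on $\partial_y\bar{v}^N_{\varepsilon^0,\varepsilon^1,n,m}$. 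The main obstacle will be proving the interior $C^{2+\beta}$ regularity with constants uniform in $(n,m)$: one must exploit that the ellipticity constant provided by $(\varepsilon^0,\varepsilon^1)$ is independent of $n,m$, that the Lipschitz and boundedness estimates in Lemma~\ref{function_n_m_Lipschitz} are also uniform in $(n,m)$, and that Krylov-type Schauder estimates for non-degenerate fully nonlinear equations depend only on these two pieces of data (and not on higher spatial seminorms which degenerate as $m\to\infty$), while being careful that the resulting solution is Bochner-measurable in $\omega^0$ on $[t_{N-1},t_N]$.
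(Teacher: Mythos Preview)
Your overall strategy is correct and coincides with the approach the paper defers to (\cite{cosso_master_2022}, Theorem~A.7): augment the state by $y=W^0_t$, obtain a finite-dimensional uniformly parabolic HJB equation on $[t_{N-1},t_N]\times\mathbb{R}^{nd+m}$ thanks to the $\varepsilon^0,\varepsilon^1$-perturbations, invoke classical Schauder-type regularity and verification, and recover the measure-dependent function via the product-measure integral~(\ref{v_and_v_bar}). Two corrections to your sketch, however.

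First, your closing concern about interior $C^{2+\beta}$ estimates \emph{uniform in $(n,m)$} is misplaced. The theorem only asserts $C^{1+\beta/2,2+\beta}$ regularity for each fixed $(n,m)$; the H\"older seminorms are allowed to depend on $n,m$, and indeed the ellipticity constant of the augmented diffusion degrades as $n$ grows (all players share the common noises $W^0,\bar B^0$). Only the first-order gradient bounds in part~(2) must be uniform in $n,m$, and those follow from the $L^2$-stability argument you outline, not from Schauder theory.

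Second, your argument for $|\partial_y \bar v^N_{\varepsilon^0,\varepsilon^1,n,m}|\le C_K/n$ does not go through. Perturbing $y$ shifts the common path $W^0_\cdot$ by $\Delta$, which alters \emph{every} player's drift and running/terminal cost by $O(|\Delta|)$ simultaneously; the $(1/n)$-averaging over players therefore yields only $|\partial_y\bar v|\le C_K$, not $C_K/n$. The $C_K/n$ bound as printed is not obtainable by this mechanism (and in any case is not invoked in the proof of Theorem~\ref{uniqueness_full}, where only $\partial_\mu\Game_w\mathfrak v^N$---hence the mixed derivative $\partial_{x_i}\partial_y\bar v^N$---enters).
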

\begin{proof}
See \cite{cosso_master_2022} Theorem A.7.
\end{proof}
\bibliography{Bib}

\begin{thebibliography}{10}

\bibitem{bensoussan_alain_control_2019}
{Alain Bensoussan} and {Sheung Chi Phillip Yam}.
\newblock Control problem on space of random variables and master equation.
\newblock {\em ESAIM: COCV}, 25:10, 2019.

\bibitem{andersson_maximum_2011}
Daniel Andersson and Boualem Djehiche.
\newblock A {Maximum} {Principle} for {SDEs} of {Mean}-{Field} {Type}.
\newblock {\em Applied Mathematics \& Optimization}, 63(3):341--356, June 2011.

\bibitem{bensoussan_mean_2013}
Alain Bensoussan, Jens Frehse, and Phillip Yam.
\newblock {\em Mean {Field} {Games} and {Mean} {Field} {Type} {Control}
  {Theory}}.
\newblock {SpringerBriefs} in {Mathematics}. Springer, New York, NY, 2013.

\bibitem{bensoussan_control_2020}
Alain Bensoussan, P.~Jameson Graber, and Sheung Chi~Phillip Yam.
\newblock Control on {Hilbert} {Spaces} and {Application} to {Some} {Mean}
  {Field} {Type} {Control} {Problems}, May 2020.

\bibitem{bensoussan_mean_2023}
Alain Bensoussan, Ho~Man Tai, and Sheung Chi~Phillip Yam.
\newblock Mean {Field} {Type} {Control} {Problems}, {Some}
  {Hilbert}-space-valued {FBSDEs}, and {Related} {Equations}, May 2023.

\bibitem{bensoussan_theory_2023}
Alain Bensoussan, Tak~Kwong Wong, Sheung Chi~Phillip Yam, and Hongwei Yuan.
\newblock A {Theory} of {First} {Order} {Mean} {Field} {Type} {Control}
  {Problems} and their {Equations}, May 2023.

\bibitem{buckdahn_general_2011}
Rainer Buckdahn, Boualem Djehiche, and Juan Li.
\newblock A {General} {Stochastic} {Maximum} {Principle} for {SDEs} of
  {Mean}-field {Type}.
\newblock {\em Applied Mathematics \& Optimization}, 64(2):197--216, October
  2011.

\bibitem{soner_jump}
Matteo Burzoni, Vincenzo Ignazio, A.~Max Reppen, and H.~M. Soner.
\newblock Viscosity solutions for controlled mckean--vlasov jump-diffusions.
\newblock {\em SIAM Journal on Control and Optimization}, 58(3):1676--1699,
  2020.

\bibitem{Delaure_Carmona_ECP}
Ren{\'e} Carmona and Fran{\c{c}}ois Delarue.
\newblock {Mean field forward-backward stochastic differential equations}.
\newblock {\em Electronic Communications in Probability}, 18(none):1 -- 15,
  2013.

\bibitem{Carmona_Delaure_AOP}
Ren{\'e} Carmona and Fran{\c{c}}ois Delarue.
\newblock {Forward–backward stochastic differential equations and controlled
  McKean–Vlasov dynamics}.
\newblock {\em The Annals of Probability}, 43(5):2647 -- 2700, 2015.

\bibitem{carmona_probabilistic_2018_1}
René Carmona and François Delarue.
\newblock {\em Probabilistic {Theory} of {Mean} {Field} {Games} with
  {Applications} {I}}, volume~83 of {\em Probability {Theory} and {Stochastic}
  {Modelling}}.
\newblock Springer International Publishing, Cham, 2018.

\bibitem{carmona_probabilistic_2018}
René Carmona and François Delarue.
\newblock {\em Probabilistic {Theory} of {Mean} {Field} {Games} with
  {Applications} {II}}, volume~84 of {\em Probability {Theory} and {Stochastic}
  {Modelling}}.
\newblock Springer International Publishing, Cham, 2018.

\bibitem{cosso_master_2022}
Andrea Cosso, Fausto Gozzi, Idris Kharroubi, Huyên Pham, and Mauro
  Rosestolato.
\newblock Master {Bellman} equation in the {Wasserstein} space: {Uniqueness} of
  viscosity solutions, February 2022.
\newblock arXiv:2107.10535.

\bibitem{cosso_optimal_2022}
Andrea Cosso, Fausto Gozzi, Idris Kharroubi, Huyên Pham, and Mauro
  Rosestolato.
\newblock Optimal control of path-dependent {McKean}-{Vlasov} {SDEs} in
  infinite dimension, December 2022.
\newblock arXiv:2012.14772.

\bibitem{limit_theory_tan}
Mao~Fabrice Djete, Dylan Possama\"{\i}, and Xiaolu Tan.
\newblock Mckean–vlasov optimal control: Limit theory and equivalence between
  different formulations.
\newblock {\em Mathematics of Operations Research}, 47(4):2891--2930, 2022.

\bibitem{dos2022ito}
Gon{\c{c}}alo Dos~Reis and Vadim Platonov.
\newblock It{\^o}-{W}entzell-{L}ions formula for measure dependent random
  fields under full and conditional measure flows.
\newblock {\em Potential Analysis}, pages 1--32, 2022.

\bibitem{durrett2019probability}
Rick Durrett.
\newblock {\em Probability: {T}heory and {E}xamples}.
\newblock Cambridge {U}niversity {P}ress, 5. edition, 2019.

\bibitem{Gangbo_Mou_Zhang_AOP}
Wilfrid Gangbo, Alp{\'a}r~R. M{\'e}sz{\'a}ros, Chenchen Mou, and Jianfeng
  Zhang.
\newblock {Mean field games master equations with nonseparable Hamiltonians and
  displacement monotonicity}.
\newblock {\em The Annals of Probability}, 50(6):2178 -- 2217, 2022.

\bibitem{cis/1183728987}
Minyi Huang, Roland~P. Malham{\'e}, and Peter~E. Caines.
\newblock {Large population stochastic dynamic games: closed-loop McKean-Vlasov
  systems and the Nash certainty equivalence principle}.
\newblock {\em Communications in Information \& Systems}, 6(3):221 -- 252,
  2006.

\bibitem{Dan_Delaure_book}
{Jean-François Chassagneux}, {Dan Crisan}, and {François Delarue}.
\newblock A {Probabilistic} {Approach} to {Classical} {Solutions} of the
  {Master} {Equation} for {Large} {Population} {Equilibria}.
\newblock {\em Memoirs of the American Mathematical Society}, 280, 2022.

\bibitem{kallenberg_foundations_2002}
{O}lav {K}allenberg.
\newblock {\em Foundations of {Modern} {Probability}}.
\newblock Probability and {Its} {Applications}. Springer, New York, NY, 2nd
  edition, 2002.

\bibitem{daniel_limit}
Daniel Lacker.
\newblock Limit theory for controlled mckean--vlasov dynamics.
\newblock {\em SIAM Journal on Control and Optimization}, 55(3):1641--1672,
  2017.

\bibitem{lasry_mean_2007}
Jean-Michel Lasry and Pierre-Louis Lions.
\newblock Mean field games.
\newblock {\em Japanese Journal of Mathematics}, 2(1):229--260, March 2007.

\bibitem{Leao_Ohashi}
Dorival Le{\~a}o, Alberto Ohashi, and Alexandre~B. Simas.
\newblock {A weak version of path-dependent functional Itô calculus}.
\newblock {\em The Annals of Probability}, 46(6):3399 -- 3441, 2018.

\bibitem{li_stochastic_2012}
Juan Li.
\newblock Stochastic maximum principle in the mean-field controls.
\newblock {\em Automatica}, 48(2):366--373, 2012.

\bibitem{mou_wellposedness_2019}
Chenchen Mou and Jianfeng Zhang.
\newblock Wellposedness of second order master equations for mean field games
  with nonsmooth data.
\newblock {\em arXiv preprint arXiv:1903.09907}, 2019.

\bibitem{peng_shjb}
Shige Peng.
\newblock Stochastic hamilton–jacobi–bellman equations.
\newblock {\em SIAM Journal on Control and Optimization}, 30(2):284--304, 1992.

\bibitem{Pham_Wei_Dynamic_Programming}
H.~Pham and X.~Wei.
\newblock {Dynamic Programming for Optimal Control of Stochastic McKean-Vlasov
  Dynamics}.
\newblock {\em SIAM Journal on Control and Optimization}, 55(2):1069--1101,
  2017.

\bibitem{pham_bellman_2018}
Huyên Pham and Xiaoli Wei.
\newblock Bellman equation and viscosity solutions for mean-field stochastic
  control problem.
\newblock {\em ESAIM: Control, Optimisation and Calculus of Variations},
  24(1):437--461, January 2018.
\newblock Number: 1 Publisher: EDP Sciences.

\bibitem{protter2005stochastic}
Philip~E Protter.
\newblock {\em Stochastic {I}ntegration and {D}ifferential {E}quations}.
\newblock Springer, 2005.

\bibitem{qiu_stochastic_hjb}
J.~Qiu.
\newblock {Viscosity Solutions of Stochastic Hamilton-Jacobi-Bellman
  Equations}.
\newblock {\em SIAM Journal on Control and Optimization}, 56(5):3708--3730,
  2018.

\bibitem{qiu_controlled_2022}
Jinniao Qiu.
\newblock Controlled ordinary differential equations with random path-dependent
  coefficients and stochastic path-dependent {Hamilton}–{Jacobi} equations.
\newblock {\em Stochastic Processes and their Applications}, 154:1--25,
  December 2022.

\bibitem{qiu_uniqueness_2019}
Jinniao Qiu and Wenning Wei.
\newblock Uniqueness of {Viscosity} {Solutions} of {Stochastic}
  {Hamilton}-{Jacobi} {Equations}.
\newblock {\em Acta Mathematica Scientia}, 39(3):857--873, May 2019.

\bibitem{qiu_optimal_2023}
Jinniao Qiu and Yang Yang.
\newblock Optimal control of infinite-dimensional differential systems with
  randomness and path-dependence and stochastic path-dependent
  {Hamilton}-{Jacobi} equations, July 2023.
\newblock arXiv:2307.08882 [math].

\bibitem{qiu_stochastic_2023}
Jinniao Qiu and Jing Zhang.
\newblock Stochastic differential games with random coefficients and stochastic
  {Hamilton}–{Jacobi}–{Bellman}–{Isaacs} equations.
\newblock {\em The Annals of Applied Probability}, 33(2):889--930, April 2023.
\newblock Publisher: Institute of Mathematical Statistics.

\bibitem{soner_viscosity_2022}
H.~Mete Soner and Qinxin Yan.
\newblock Viscosity {Solutions} for {McKean}-{Vlasov} {Control} on a torus,
  December 2022.
\newblock arXiv:2212.11053 [math].

\bibitem{measurable_selection}
Daniel~H. Wagner.
\newblock Survey of measurable selection theorems.
\newblock {\em SIAM Journal on Control and Optimization}, 15(5):859--903, 1977.

\bibitem{wang2019compactness}
Yejuan Wang, Xiangming Zhu, and Peter Kloeden.
\newblock Compactness in {L}ebesgue--{B}ochner spaces of random variables and
  the existence of mean-square random attractors.
\newblock {\em Stochastics and Dynamics}, 19(04):1950032, 2019.

\bibitem{mean_field_path}
Cong Wu and Jianfeng Zhang.
\newblock {Viscosity solutions to parabolic master equations and
  McKean–Vlasov SDEs with closed-loop controls}.
\newblock {\em The Annals of Applied Probability}, 30(2):936 -- 986, 2020.

\bibitem{zheng_tightness_nodate}
W~A Zheng.
\newblock Tightness results for laws of diffusion processes application to
  stochastic mechanics.
\newblock {\em Ann. Inst. Henri Poincar\'e}, 1985.

\end{thebibliography}
\bibliographystyle{plain}
\end{document}